\xpatchcmd{\citet}{;}{,}{}{}
\def\cC{\mathcal C}
\def\cF{\mathcal F}
\def\cG{\mathcal G}
\def\cH{\mathcal H}
\def\cI{\mathcal I}
\def\cN{\mathcal N}
\def\cP{\mathcal P}
\def\cQ{\mathcal Q}
\def\cR{\mathcal R}
\def\cV{\mathcal V}
\def\cX{\mathcal X}
\def\cY{\mathcal Y}
\newcommand{\bb}{{\bf b}}
\newcommand{\bB}{{\bf B}}
\newcommand{\bd}{{\bf d}}
\newcommand{\bff}{{\bf f}}
\newcommand{\bg}{{\bf g}}
\newcommand{\bF}{{\bf F}}
\newcommand{\bj}{{\bf j}}
\newcommand{\bk}{{\bf k}}
\newcommand{\bL}{{\bf L}}
\newcommand{\bfm}{{\bf m}}
\newcommand{\bS}{{\bf S}}
\newcommand{\bw}{{\bf w}}
\newcommand{\bx}{{\bf x}}
\newcommand{\bX}{{\bf X}}
\newcommand{\by}{{\bf y}}
\newcommand{\bY}{{\bf Y}}
\newcommand{\bz}{{\bf z}}
\newcommand{\bZ}{{\bf Z}}
\newcommand{\bbE}{{\mathbb E}}
\newcommand{\bbI}{{\mathbb I}}
\newcommand{\bbR}{{\mathbb R}}
\newcommand{\bbZ}{{\mathbb Z}}
\newcommand{\bbN}{{\mathbb N}}
\newcommand{\bbP}{{\mathbb P}}
\newcommand{\E}{\mathbb{E}}
\newcommand{\balpha}{\bm{\alpha}}
\newcommand{\bgamma}{\bm{\gamma}}
\newcommand{\iid}{{\rm i.i.d.}}
\newcommand{\Holder}{H\"{o}lder}
\newcommand{\bc}{\begin{center}}
\newcommand{\ec}{\end{center}}
\newcommand{\be}{\begin{equation}}
\newcommand{\ee}{\end{equation}}
\newcommand{\ba}{\begin{array}}
\newcommand{\ea}{\end{array}}
\newcommand{\bean}{\setlength\arraycolsep{1pt}\begin{eqnarray*}}
\newcommand{\eean}{\end{eqnarray*}}
\newcommand{\bea}{\setlength\arraycolsep{1pt}\begin{eqnarray}}
\newcommand{\eea}{\end{eqnarray}}
\newcommand{\ben}{\begin{enumerate}}
\newcommand{\een}{\end{enumerate}}
\newcommand{\bed}{\begin{itemize}}
\newcommand{\eed}{\end{itemize}}
\DeclareMathOperator*{\argmin}{argmin}
\DeclareMathOperator*{\minimize}{minimize}
\def\d{{\rm d}}
\def\D{{\rm D}}
\def\defeq{ \stackrel{\rm def}{=} }
\newcommand{\vertiii}[1]{{\left\vert\kern-0.25ex\left\vert\kern-0.25ex\left\vert #1 
    \right\vert\kern-0.25ex\right\vert\kern-0.25ex\right\vert}}
\begin{document}

\title{Nonparametric Estimation of a Factorizable Density using Diffusion Models}

\author{\name Hyeok Kyu Kwon \email khkunok@postech.ac.kr \\
       \addr Department of Industrial and Management Engineering \\
       Pohang University of Science and Technology (POSTECH) \\
       Pohang, Gyeongbuk 37673, South Korea
       \AND
       \name Dongha Kim \email dongha0718@sungshin.ac.kr \\
       \addr Department of Statistics; Center for Data Science \\
       Sungshin Women’s University \\
       Seoul, 02844, South Korea
       \AND
       \name Ilsang Ohn \email ilsang.ohn@inha.ac.kr \\
       \addr Department of Statistics \\
       Inha University \\
       Incheon, 22212, South Korea
       \AND
       \name Minwoo Chae\thanks{Corresponding author.} \email mchae@postech.ac.kr \\
       \addr Department of Industrial and Management Engineering \\
       Pohang University of Science and Technology (POSTECH) \\
       Pohang, Gyeongbuk 37673, South Korea       
       }

\editor{Bryon Aragam}

\maketitle

\begin{abstract}
In recent years, diffusion models, and more generally score-based deep generative models, have achieved remarkable success in various applications, including image and audio generation.
In this paper, we view diffusion models as an implicit approach to nonparametric density estimation and study them within a statistical framework to analyze their surprising performance.
A key challenge in high-dimensional statistical inference is leveraging low-dimensional structures inherent in the data to mitigate the curse of dimensionality.
We assume that the underlying density exhibits a low-dimensional structure by factorizing into low-dimensional components, a property common in examples such as Bayesian networks and Markov random fields.
Under suitable assumptions, we demonstrate that an implicit density estimator constructed from diffusion models adapts to the factorization structure and achieves the minimax optimal rate with respect to the total variation distance.
In constructing the estimator, we design a sparse weight-sharing neural network architecture, where sparsity and weight-sharing are key features of practical architectures such as convolutional neural networks and recurrent neural networks.
\end{abstract}

\begin{keywords}
Bayesian network, diffusion model, factorizable density, Markov random field, minimax optimality, score-based generative model, weight-sharing neural network
\end{keywords}

\addtocontents{toc}{\protect\setcounter{tocdepth}{-1}}

\section{Introduction}
\label{sec:intro}

Suppose we have observations $\bX^{1}, \ldots, \bX^{n}$, which are independent and identically distributed $D$-dimensional random variables following an unknown distribution $P_{0}$ with density $p_{0}$. 
Inference of the unknown $P_0$ or its density $p_{0}$ is a fundamental task in unsupervised learning, and various methodologies and related theories have been developed over the past few decades (e.g., \citealp{hastie2009elements, tsybakov2008introduction, gine2016mathematical}).

Among these approaches, \emph{diffusion models} have recently demonstrated remarkable success across a wide range of applications. Even when compared to other modern deep generative models such as variational autoencoders (VAEs) \citep{kingma2013auto, rezende2014stochastic}, generative adversarial networks (GANs) \citep{goodfellow2014generative, arjovsky2017wasserstein, mroueh2018sobolev}, and normalizing flows \citep{dinh2015nice, rezende2015variational}, diffusion models have achieved state-of-the-art performance in domains including image \citep{rombach2022high, dhariwal2021diffusion}, video \citep{ho2022video}, and audio generation \citep{kong2021diffwave}.

Rather than directly estimating $p_0$, diffusion models, more generally referred to as score-based generative models, aim to estimate the map $\bx \mapsto \nabla \log p_0(\bx)$, commonly known as the score function. They can be regarded as an implicit approach to density estimation because, although no direct estimator of $p_0$ is constructed, the estimated score function enables sampling from the learned distribution, for example, via score-based Markov chain Monte Carlo algorithms such as Hamiltonian or Langevin Monte Carlo \citep{neal2011mcmc}.
The idea of score-function estimation was first proposed by \citet{hyvarinen2005estimation} and subsequently extended by \citet{vincent2011connection} and \citet{song2020sliced}.

A diffusion model consists of two diffusion processes, namely the forward and the backward/reverse processes. The forward process is typically a simple and well-known diffusion, such as the Ornstein--Uhlenbeck (OU) process, while the drift term in the reverse process involves the score functions corresponding to the marginal densities of the forward process. Rather than estimating a single score function $\nabla \log p_{0}(\cdot)$, diffusion models \citep{sohl2015deep, ho2020denoising, song2019generative, song2021scorebased} jointly estimate the family of score functions; see Section~\ref{sec:model} for further details.

For large $D$, inferring high-dimensional distributions becomes prohibitively difficult due to the well-known curse of dimensionality.
Why, then, do diffusion models perform so well in practice, especially in high-dimensional domains such as images and videos? A natural explanation is that although the ambient dimension $D$ is large, real-world data often lie on, or near, low-dimensional structures that effectively mitigate this curse. This perspective aligns with a long line of statistical research showing that structural assumptions, such as sparsity \citep{hastie2015statistical}, additivity \citep{hastie1990generalized}, and manifold structures \citep{genovese2012minimax}, can substantially improve statistical efficiency. In the presence of such low-dimensional structures, estimators can achieve significantly faster convergence rates even in high-dimensional settings. In practice, however, the underlying structure is typically unknown, and procedures that can adapt to such unknown structures are therefore preferred.
Deep neural networks (DNNs), for instance, have been shown to possess remarkable adaptivity in a variety of function estimation problems \citep{imaizumi2022adaptive, schmidt2020nonparametric, tang2024adaptivity, chae2023likelihood}.

Motivated by this viewpoint, we focus on a specific low-dimensional structure that captures a broad family of distributions: \emph{factorizable densities}.
We assume that the density function $p_{0}$ admits the factorization
\be \label{eq:factorization}
p_{0}(\bx) = \prod_{I \in \cI} g_{I}(\bx_{I}),
\ee
where $\cI \subseteq 2^{[D]}$ is a collection of index sets, $\bx_{I} = (x_{i})_{i \in I}$, and each $g_{I}$ is a $\lvert I \rvert$-variate function.
Here, $[D] = \{1, \ldots, D\}$ and $\lvert I \rvert$ denotes the cardinality of $I$. Such factorizable densities naturally arise in graphical model contexts \citep{liu2019nonparametric}, including Bayesian networks and Markov random fields.
In particular, the conditional-independence structures induced by undirected graphical models (Markov random fields) are well suited for modeling images, where spatially adjacent pixels tend to be strongly correlated, whereas distant pixels exhibit weak correlations \citep{ji2019probabilistic, vandermeulen2024dimension, vandermeulen2024breaking}; see Section~\ref{sec:factassm} for further details.

Although this structure is well known in the statistical community, nonparametric adaptive procedures for such models have rarely been investigated in the literature. For $\beta$-\Holder\ densities (as defined in Section \ref{sec:not}), classical nonparametric theory suggests that incorporating a factorization structure can improve the convergence rate with respect to the total variation distance, from $n^{-\beta/(D+2\beta)}$ \citep{tsybakov2008introduction, gine2016mathematical} to $n^{-\beta/(d+2\beta)}$, where $d = \max_{I\in\cI}|I|$ denotes the effective dimension corresponding to the largest component function. Here, we assume that $D$ is fixed and that all component functions have the same smoothness level $\beta$.

Once the factorization form in \eqref{eq:factorization} is known, it is not difficult to construct an estimator for $p_{0}$ that achieves the rate $n^{-\beta/(d+2\beta)}$ under suitable technical conditions.
It remains challenging, however, to construct an estimator that adapts to the unknown factorization structure.
To the best of our knowledge, theoretically adaptive estimators (though not necessarily achieving the optimal rate) have been considered only in a few recent works \citep{bos2024supervised, vandermeulen2024dimension, vandermeulen2024breaking}.

In this paper, we show that the implicit density estimator derived from diffusion models is adaptive to the underlying factorization structure and achieves the minimax-optimal convergence rate for estimating $\beta$-\Holder\ factorizable densities, up to logarithmic factors (Theorem~\ref{secthm:2}). The main theoretical challenge lies in approximating the joint score functions, which are the score functions associated with the marginal densities of the forward diffusion process, using neural networks, since these functions are defined through $D$-dimensional integrals. While prior theoretical works \citep{oko2023diffusion, tang2024adaptivity} are based on vanilla sparse neural networks, we employ a novel architecture: \emph{sparse weight-sharing neural networks}.
Here, parameters are sparse and shared within each layer to reduce model complexity.

Although sparse weight-sharing neural networks are relatively new in statistical theory, widely used architectures such as convolutional neural networks \citep[CNNs;][]{lecun1989handwritten, krizhevsky2012imagenet} and recurrent neural networks \citep{rumelhart1986learning, sutskever2014sequence} can be regarded as representative examples. From a theoretical perspective, to the best of our knowledge, only a few works have established that CNNs perform as well as vanilla feedforward networks in terms of approximation or estimation rates \citep{petersen2020equivalence, oono2019approximation, yang2024rates, fang2023optimal}.

While finalizing the revision of this article, we became aware of a very recent preprint by \citet{fan2025optimal}, which investigates essentially the same problem, namely, the same estimator, the same structural assumption, and the same convergence result. The key difference lies in the network architecture: they employ fully connected neural networks. Given this finding, our results do not demonstrate a distinct theoretical advantage of sparse weight-sharing neural networks. Nevertheless, based on our proof techniques, it is plausible to conjecture that sparse weight-sharing architectures may approximate certain general classes of functions more efficiently than fully connected ones. We believe that exploring this theoretically intriguing problem would provide valuable insights into the distinct benefits of sparse weight-sharing networks.

Before concluding the introduction, it is worthwhile to review recent advances in the statistical theory of diffusion models.
\citet{oko2023diffusion} proved that the implicit density estimator from the diffusion model is minimax optimal within the nonparametric smooth density estimation framework, using total variation and Wasserstein distances as evaluation metrics.
Subsequently, \citet{zhang2024minimax} and \citet{wibisono2024} relaxed certain technical assumptions in \citet{oko2023diffusion}.
Although these papers introduced several interesting mathematical techniques for handling diffusion models, they did not address the issue of the curse of dimensionality.
To tackle this issue, \citet{tang2024adaptivity} demonstrated that the estimator from the diffusion model is minimax optimal with respect to the Wasserstein metric under the smooth manifold assumption.
Under a similar regime, \citet{azangulov2024convergence} established tighter upper bounds for the convergence rate in terms of the ambient dimension $D$.
While the manifold structure is an interesting low-dimensional structure, an optimal estimator adaptive to this structure can also be constructed using methods other than diffusion models \citep{tang2023minimax, stephanovitch2024wasserstein}.
Diffusion models have also been analyzed under other low-dimensional structures beyond manifolds \citep{chen2023score, wang2024diffusion, yakovlev2025generalization}. 
Although minimax optimality is not guaranteed in these settings, they commonly assume that $P_{0}$ is singular with respect to the $D$-dimensional Lebesgue measure, with additional structural constraints.

In particular, while various interesting statistical theories have been developed for VAEs \citep{kwon2024minimax, chae2023likelihood} and GANs \citep{liang2021well, uppal2019nonparametric, chae2022convergence, stephanovitch2024wasserstein, tang2023minimax, puchkin2024rates}, the factorization structure \eqref{eq:factorization}, which is closely related to the conditional independence structure of directed and undirected graphs, has not been explored in the literature on deep generative models.
While the estimators proposed in \citet{bos2024supervised}, \citet{vandermeulen2024breaking}, and \cite{vandermeulen2024dimension} are adaptive to the factorization structure, diffusion models can adapt not only to this structure but also to other structures discussed above, making them significantly more practical alternatives.

The remainder of this paper is organized as follows. In Section~\ref{sec:model}, we introduce diffusion models and define our implicit density estimator. Section~\ref{sec:wsnn} presents the class of sparse weight-sharing networks, while Section~\ref{sec:factassm} details the main assumption: the factorization assumption. Our main theoretical results are provided in Section~\ref{sec:main}.
In Section~\ref{sec:naive}, we discuss the benefits of diffusion models compared to the vanilla score matching estimator. We present small experimental results in Section~\ref{sec:exp} and conclude with discussions in Section~\ref{sec:disc}.
All proofs are provided in the Appendix.

\subsection{Notations and Definitions}
\label{sec:not}

Vectors are denoted using boldface notation.
For a multi-index $\bgamma = (\gamma_1,\ldots,\gamma_D)^{\top} 
\in (\bbZ_{\geq 0})^{D}$, denote $\D^{\bgamma}$ the mixed partial derivative operator $\partial^{\gamma.} / \partial x_1^{\gamma_1} \cdots \partial x_{D}^{\gamma_D}$, where $\gamma. = \sum_{i=1}^{D} \gamma_i$.
For any $\beta, K > 0$, let $\cH^{\beta, K}_{D}(A)$ be the class of $\beta$-\Holder\ functions, consisting of every real-valued function $g$ on $A \subseteq \bbR^{D}$ such that 
\bean
\sum_{\gamma. \leq \lfloor \beta \rfloor} \sup_{\bx \in A} \vert (\D^{\bgamma} g) (\bx) \vert + \sum_{\gamma. = \lfloor \beta \rfloor} \sup_{\substack{\bx,\by \in A \\ \bx \neq \by}} \frac{\vert (\D^{\bgamma} g)(\bx) - (\D^{\bgamma} g)(\by) \vert}{\| \bx-\by\|_{\infty}^{\beta - \lfloor \beta \rfloor}} \leq K,
\eean
where $\lfloor \beta \rfloor$ denotes the largest integer strictly smaller than $\beta$.
We often denote $\cH^{\beta,K}_{D}(A)$ as $\cH^{\beta,K}(A)$ when the dimension is obvious from the context.
For a vector $\bx$, we denote the $\ell^{p}$-norm, $1 \leq p \leq \infty$, and the number of nonzero elements as $\|\bx\|_{p}$ and $\|\bx\|_{0}$, respectively.
Let $\phi_{\sigma, D}$ be the density function of the multivariate normal distribution $\cN(\bm{0}_D, \sigma^2 \bbI_D )$, where $\bm{0}_D$ and $\bbI_D$ are $D$-dimensional zero vector and identity matrix, respectively.
 For simplicity, we often denote $\phi_{\sigma, D}$ as $\phi_\sigma$ when the dimension is obvious from the context.
For real numbers $a$ and $b$, let $a \vee b$ and $a \wedge b $ be the maximum and minimum of $a$ and $b$, respectively.
The notation $a \lesssim b$ means that $a \leq C b$, where $C > 0$ is a constant not relevant to the main argument.
Similarly, $a \asymp b$ implies that $a \lesssim b$ and $b \lesssim a$. 
Finally, the notation $C=C(A_1, \ldots, A_n)$ means that the constant $C$ depends only on $A_1, \ldots, A_n$.

\section{Diffusion Models}
\label{sec:model}

In this section, we provide a brief introduction to the diffusion model proposed in \citet{song2021scorebased} and define the estimator studied in our main results.
Let $( \bX_{t} )_{t \geq 0}$ be the process satisfying the stochastic differential equation (SDE)
\be \label{eq:OU}
\d {\bX}_t = -  \alpha_{t} {\bX}_t \d t + \sqrt{2\alpha_{t}} \d \bB_t,\quad \bX_0 \sim P_0,
\ee
where $( \bB_{t} )_{t \geq 0} $ is a standard $D$-dimensional Brownian motion and $t \mapsto \alpha_{t}: [0, \infty) \to [0, \infty)$ is a (known) Borel measurable function.
The stochastic process $( \bX_{t} ) $ is often referred to as a time-inhomogeneous OU process, and has been studied in \citet{song2021scorebased} and \citet{chen2023sampling}.
For the OU process \eqref{eq:OU}, the transition kernel is explicitly given as Gaussian.
Specifically, the conditional distribution of $\bX_t$ given $\bX_0 = \bx_0$ is $\cN(\mu_t \bx_0, \sigma_t^2 \bbI_D)$, where $\mu_t = \exp(-\int_{0}^{t} \alpha_s \d s)$ and $\sigma_t^2 = 1-\mu_t^2$.
We denote this conditional distribution and the corresponding density as $P_{t}(\cdot \mid  \bx_0)$ and $p_{t}(\cdot \mid \bx_0)$, respectively.
We also denote $P_t$ and $p_t$ as the marginal distribution and density of $\bX_t$, respectively.
Hence, we have
\bean
  p_t(\bx) = \int \phi_{\sigma_t}(\bx - \mu_t \by) \d P_0(\by)
  = \int \phi_{\sigma_t}(\bx - \mu_t \by) p_0(\by) \d \by.
\eean
Note that $p_{t}$ converges very quickly to the standard Gaussian density as $t \to \infty $; see \citet{bakry2014analysis} for a rigorous statement.

Note that the map $\bx \mapsto \bff_0(\bx, t)$ is the score function corresponding to the marginal density $p_t$.
As a convention, we also call $\bff_0$ a score function.

For a given non-random $\overline{T} > 0$, let $(\bY_{t})_{t \in [0, \overline{T}) }$ be the reverse-time process defined as $\bY_{t} = \bX_{\overline{T}-t}$.
Then, it is well-known \citep{anderson1982reverse} that $(\bY_{t})_{t \in [0, \overline{T}) }$ is also a diffusion process under mild assumptions. 
More specifically, once
\bean
\int_{s}^{\overline{T}} \bbE \left[  p_t(\bX_t) + 2 \alpha_{t} \left\| \nabla \log p_t(\bX_t) \right\|_2^2 \right] \d t < \infty
\quad \forall s >0
\eean
and the map $t \mapsto \alpha_{t}$ is bounded from above,
we have 
\be \begin{split} \label{eqsec2:backward}
\d \bY_t &= \left[\alpha_{\overline{T}-t} \bY_t + 2 \alpha_{\overline{T}-t} \nabla \log p_{\overline{T}-t}(\bY_t) \right] \d t + \sqrt{2\alpha_{\overline{T}-t}} \d \bB_t
\\
&= \left[\alpha_{\overline{T}-t} \bY_t + 2 \alpha_{\overline{T}-t} \bff_0(\bY_t, \overline T - t) \right] \d t + \sqrt{2\alpha_{\overline{T}-t}} \d \bB_t,\quad \bY_0 \sim P_{\overline{T}},
\end{split} \ee
see Theorem 2.1 of \citet{haussmann1986time}.
Note that the Brownian motions in \eqref{eq:OU} and \eqref{eqsec2:backward} are not identical. However, we use the same notation $\bB_t$ to denote a standard Brownian motion as a convention throughout the paper.

Once we have an estimator $\widehat \bff$ for the score function $\bff_0$, one can simulate the reverse process starting from a standard Gaussian to obtain samples from the estimated distribution.
The score function can be estimated via the score matching \citep{hyvarinen2005estimation} or its scalable variations \citep{vincent2011connection, song2020sliced, yu2022generalized}.

Let $\cF$ be a class of functions $(\bx, t) \mapsto \bff(\bx, t)$ used to model the score function $\bff_0$.
A detailed description of the class $\cF$ in our theory is provided in Section~\ref{sec:wsnn}.
At the population level, the best approximator to $\bff_0$ in $\cF$ can be defined as the solution to the following optimization problem
\be\begin{split} \label{eq:sm}
 &\minimize_{\bff \in \cF} \int_{0}^{\overline{T}}  \lambda_t \bbE \left[ \left\| \bff(\bX_{t}, t) - \bff_0(\bX_t, t) \right\|_2^2 \right] \d t
 \\
 \Longleftrightarrow \quad & \minimize_{\bff \in \cF} \int_{0}^{\overline{T}}  \lambda_t \bbE \left[ \left\| \bff(\bX_{t}, t) - \nabla \log p_t(\bX_{t}) \right\|_2^2 \right] \d t,
\end{split}\ee
where $\lambda_t \geq 0$ is a weight.
Based on the well-known fact \citep{vincent2011connection} that
\be
\bbE \left[ \left\| \bff(\bX_{t}, t) - \nabla \log p_t(\bX_{t}) \right\|_2^2 \right]
= \bbE \left[ \left\| \bff(\bX_{t}, t) - \nabla \log p_t(\bX_{t} \mid \bX_{0}) \right\|_2^2 \right] + C_{t}, \label{eq:smvincent}
\ee
where $C_{t}$ is a constant depending only on $t$ and $\bff_0$ and 
\bean
 \nabla \log p_t(\bx_t \mid \bx_0) = \frac{\partial \log p_t(\bx_t \mid \bx_0)}{\partial \bx_t}
 = -\frac{\bx_{t} - \mu_t \bx_{0}}{\sigma_t^2},
\eean
the minimization problem \eqref{eq:sm} can be equivalently written as
\bean
 && \minimize_{\bff \in \cF} \int_{0}^{\overline{T}}  \lambda_t \bbE \left[ \left\| \bff(\bX_{t}, t) - \nabla \log p_t(\bX_{t} \mid \bX_{0}) \right\|_2^2 \right] \d t
 \\
 \Longleftrightarrow \quad && \minimize_{\bff \in \cF} \int_{0}^{\overline{T}}  \lambda_t \bbE \Big[ \bbE \Big( \Big\| \bff(\bX_{t}, t) + \frac{\bX_t - \mu_t \bX_0}{\sigma_t^2} \Big\|_2^2 ~ \big| ~ \bX_0 \Big) \Big] \d t.
\eean
In practice, $\lambda_t$ is set to zero for sufficiently small $t$ to avoid potential singularity issues.
This leads to the following ERM (empirical risk minimization) estimator
\be
\widehat \bff \in \argmin_{\bff \in \cF}
\frac{1}{n} \sum_{i=1}^n \ell_{\bff}(\bX^{i}),
\label{eqthm2:erm}
\ee
where
\be
\ell_{\bff}(\bx) = \int_{\underline{T}}^{\overline{T}}   \lambda_t \bbE \Big[ \Big\| \bff(\bX_{t}, t) + \frac{\bX_t - \mu_t \bX_0}{\sigma_t^2} \Big\|_2^2 ~ \big| ~ \bX_0 = \bx \Big]\d t
\label{eq:loss}
\ee
is the loss function and $\underline{T} > 0$ is a sufficiently small number.

Let $(\widehat \bY_{t})_{t \in [0, \overline{T}-\underline{T}]}$ be the solution to the SDE
\be
\d \widehat \bY_t = \left[\alpha_{\overline{T}-t} \widehat \bY_t + 2 \alpha_{\overline{T}-t} \widehat \bff \left(\widehat \bY_t,\overline{T}-t \right) \right] \d t + \sqrt{2\alpha_{\overline{T}-t}} \d \bB_t,\quad \widehat \bY_0 \sim \cN(\mathbf{0}_{D}, \bbI_{D})
\label{eq:diffmodel}
\ee
and $\widehat\bX_t = \widehat\bY_{\overline{T} - t}$.
Let $\widehat P_{t}$ be the marginal distribution of $\widehat \bX_{t}$ and $\widehat p_t$ be the corresponding Lebesgue density.
Also, let $\widehat P = \widehat P_{\underline{T}}$ and $\widehat p = \widehat p_{\underline{T}}$.
The existence of $\widehat p_t$ is guaranteed under mild assumptions; see \citet{bogachev2011uniqueness} for details.
Although $\widehat p_t$ is only defined implicitly through the SDE \eqref{eq:diffmodel}, it is a function of data, hence an estimator for the unknown density $p_t$.
Since we expect that $p_t \approx p_0$ for sufficiently small $t$, $\widehat p$ can serve as an estimator for $p_0$.

\begin{remark}
Note that the loss function \eqref{eq:loss} involves integrals (with respect to $\bX_t$ and $t$) that are not directly tractable.
In practice, a slightly different loss function with augmented variables is considered for computational tractability \citep{sohl2015deep, song2019generative}. Specifically, with a slight abuse of notation, define the loss function
\bean
 \ell_{\bff}(\bx_0, \bx_t, t) = \Big\| \bff(\bx_{t}, t) + \frac{\bx_t - \mu_t \bx_0}{\sigma_t^2} \Big\|_2^2.
\eean
By regarding $T$ as a random variable independent of the stochastic process $(\bX_t)$ and supported on $[\underline{T}, \overline{T}]$ with the density proportional to $\lambda_t$, we have
\bean
C_{T}
 \E  \left[ \ell_{\bff}(\bX_0, \bX_T, T) \right]
 = \E \left[ \ell_{\bff} (\bX_0) \right],
\eean
where $ C_{T} =  \int_{\underline{T}}^{\overline{T}} \lambda_t \d t$ is a normalizing constant.
Therefore, although the loss function \eqref{eq:loss} itself is not directly tractable, one can approximate the solution to the minimization problem \eqref{eqthm2:erm} using stochastic gradient methods.
\end{remark}

\begin{remark}
The target estimator in our theoretical study in Section~\ref{sec:main} is $\widehat p$ as defined above, which is the density of $\widehat \bX_{\underline{T}} = \widehat \bY_{\overline{T}-\underline{T}}$. In practice, samples from the estimated distribution are generated by numerically solving the SDE (\ref{eq:diffmodel}) using methods such as Euler-Maruyama discretization \citep{kloeden2011numerical, song2021scorebased}, starting from an initial sample drawn from the standard normal distribution.
Hence, more delicate statistical theory should incorporate these discretization errors. As an independent line of work, there are various articles studying discretization errors in diffusion models \citep{oko2023diffusion, chen2023sampling, nakano2024convergence, debortoli2022convergence, benton2024nearly, chen2023probability, li2024towards, liang2025low}.
Combining our statistical theory given in Section~\ref{sec:main} with these works, the main results remain valid if the SDE is discretized with a sufficiently fine time partition. For additional details, we refer to Section 5.3 of \citet{oko2023diffusion}.
\end{remark}

\section{Sparse Weight-Sharing Neural Networks}
\label{sec:wsnn}

In this section, we define neural networks that are used as a function class $\cF$ to model the score function $\bff_0$ described in Section~\ref{sec:model}. Instead of vanilla feedforward neural networks, we consider sparse weight-sharing architectures, which are widely used in practical applications. By incorporating such sparsity and weight-sharing structures into the network architecture that models the score function, one can substantially reduce the model complexity (often expressed in terms of metric entropy; see Lemma~\ref{sec:covering}), ultimately leading to a reduction in estimation error.

For a positive integer $m$, let $\rho_{m} : \bbR^{m} \to \bbR^{m}$ be the coordinatewise ReLU activation function defined as $\rho_{m}(\bz) = \left(\max\{z_1,0\}, \ldots, \max\{z_m, 0\} \right)^{{\top}}$ for $\bz = (z_1,\ldots,z_m)^{\top}$.
For simplicity, we often denote $\rho_{m}$ as $\rho$.
For $L \in \bbN_{\geq 2}$, $\bd = (d_1,\ldots,d_{L+1}) \in  \bbN^{L+1}, s \in \bbN, M > 0, \bfm = (m_1,\ldots,m_{L-1}) \in \bbN^{L-1}$ and 
\bean
\cP_{\bfm} = ( ( \cQ_{i}, \cR_{i} ) )_{i \in [L-1]},
\eean
where $\cQ_i = ( Q_i^{(j)})_{j \in [m_i]} $ is a collection of $d_{i} \times d_{i}$ permutation matrices  and $\cR_i = ( R_i^{(j)} )_{j \in [m_i]} $ is a collection of $d_{i+1} \times d_{i+1}$ permutation matrices, let $\cF_{\rm WSNN} = \cF_{\rm WSNN} (L,\bd,s,M,\cP_{\bfm})$ be the class of functions $\bff: \bbR^{d_1} \to \bbR^{d_{L+1}}$ of the form
\be \begin{split}
& \bff(\bz) = W_{L} \left( \bff_{L-1} \circ \cdots \circ \bff_{1} \right) (\bz) + \bb_{L},
\\ 
& \bff_i(\cdot) = \rho \left( \sum_{j=1}^{m_i} R_{i}^{(j)} \left(W_i  Q_{i}^{(j)} \cdot + \bb_{i} \right) \right) \label{eqwsnn}
\end{split} \ee
with $W_i \in \bbR^{d_{i+1} \times d_{i}}$ and $\bb_{i} \in \bbR^{d_{i+1}}$ satisfying
\bean
\max_{1 \leq i \leq L}\left\{ \max \left(\|W_i\|_{\infty} , \| \bb_i\|_{\infty} \right) \right\} \leq M, \quad \sum_{i=1}^{L} \left( \|W_i\|_0 + \| \bb_i\|_0 \right) \leq s.
\eean
Here, $\|W_i\|_\infty$ and $\| W_i\|_0$ denote the entrywise maximum norm and the number of nonzero elements of the matrix $W_i$, respectively.

The network \eqref{eqwsnn} includes vanilla feedforward neural networks as special cases. For example, if $\bfm = (1, \ldots, 1)$ and all matrices in $\cP_{\bfm}$ are identity matrices, the class $\cF_{\rm WSNN}$ reduces to the usual class of sparse networks considered in the literature. In this case, we often denote $\cF_{\rm WSNN}(L, \bd, s, M, \cP_{\bfm})$ as $\cF_{\rm NN}(L, \bd, s, M)$.

The network \eqref{eqwsnn} is designed to incorporate sparsity and weight-sharing into the architecture.
As a simple example, consider a weight matrix $W \in \bbR^{3d \times 4d}$ with the following block structure, where each block has the same size of $d \times d$:
\bean
W = 
\begin{bmatrix}
W_0 \ & 0_{d,d} \ & 0_{d,d} \ & W_0 \
\\
0_{d,d} \ & 0_{d,d} \ & W_0 \ & 0_{d,d} \
\\
0_{d,d} \ & W_0 \ & 0_{d,d} \ & W_0 \
\end{bmatrix}.
\eean
Here, $0_{d,d}$ denotes the $d \times d$ zero-matrix.
Two important features of $W$ are that it is sparse, in the sense that many elements of $W$ are exactly zero, and that the sub-matrix $W_0$ is shared across different rows and columns.
The formula \eqref{eqwsnn} is one way to effectively represent neural networks with sparse weight-sharing matrices like $W$. For example, it is straightforward to construct $3d \times 3d$ permutation matrices $R^{(j)}$ and $4d \times 4d$ permutation matrices $Q^{(j)}$, for $j \leq 5$, that satisfy
\bean
W = \sum_{j=1}^{5} 
R^{(j)}
\begin{bmatrix}
W_0 \ & 0_{d,3d}
\\
0_{2d,d} \ & 0_{2d,3d} 
\end{bmatrix}
Q^{(j)}.
\eean
The number of nonzero elements of $W$ is $5d^2$, but we can express it with a smaller sparsity of $d^2$ by weight-sharing architecture.

Sparse weight-sharing matrices are used in many practically important architectures, such as convolutional neural networks \citep{lecun1989handwritten, krizhevsky2012imagenet} and recurrent neural networks \citep{rumelhart1986learning, sutskever2014sequence}; see also \citet{zhang2021beyond} and \citet{jagtap2022deep} for additional examples. Note that CNN architectures are frequently adopted in diffusion models \citep{sohl2015deep, ho2020denoising, ronneberger2015unet}.

As an illustrative example, consider a convolution operation with an input image $\bx$ of size $d_1 = s_1^2$, a filter of size $s = s_0^2$, and an output image $\by$ of size $d_2 = (s_1 - s_0 + 1)^2$, as illustrated in Figure \ref{fig:cnn}.
Let $\bw = (w_1, \ldots, w_s)^\top$ be the vectorized version of the filter, concatenating all columns of the filter sequentially, and let $\widetilde W \in \bbR^{d_2 \times d_1}$ be the weight matrix corresponding to the convolution operation, such that $\by = \widetilde W \bx$.
One can observe that $\widetilde W$ is a sparse weight-sharing matrix and can be represented in the form of \eqref{eqwsnn}.
To see this, note that each row of $\widetilde W$ can be obtained by permuting the vector $(\bw^{\top},{0}_{1, d_1-s}) \in \bbR^{d_1}$.
Thus, for $j \in [d_2]$, there exists a $d_1 \times d_1$ permutation matrix $Q^{(j)}$ such that $y_j =  (\bw^{\top}, {0}_{1, d_1-s}) Q^{(j)} \bx$.
Also, for $j \in [d_2]$, let $R^{(j)}$ be the $d_2 \times d_2$ permutation matrix that swaps the first and the $j$th row when it is left-multiplied by a matrix.
Then, we can express the matrix $\widetilde W$ in the form of (\ref{eqwsnn}) by
\bean
\widetilde W = \sum_{j=1}^{ d_2} R^{(j)} W Q^{(j)},
\quad 
W = \begin{pmatrix}
\bw^{\top} \ & {0}_{1, d_1-s}
\\
{0}_{d_2 - 1, s} \ & {0}_{d_2-1, d_1-s}
\end{pmatrix} \in \bbR^{d_2 \times d_1}.
\eean
One may additionally incorporate padding and stride operations \citep{paszke2019pytorch} into the convolution operation described in Figure \ref{fig:cnn}. The corresponding weight matrix, with these additional operations, can also be expressed in the form of (\ref{eqwsnn}) by carefully selecting the permutation matrices.

\begin{figure*}[!t]
    \centering
        \includegraphics[width=0.85\linewidth]{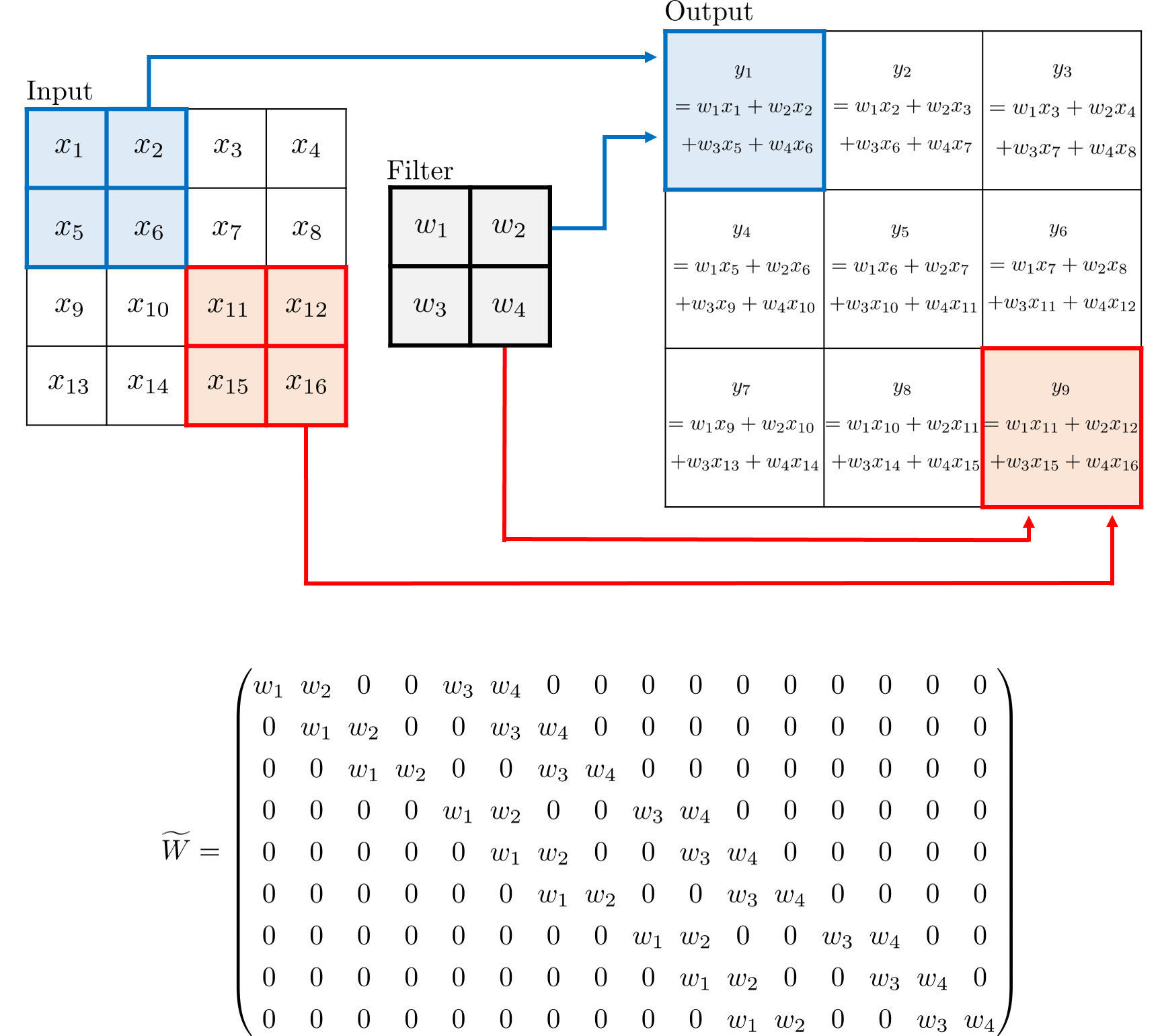}
    \caption{
    Example of a 2-dimensional convolution operation with an input $\bx \in \bbR^{16}$, a filter vector $\bw \in \bbR^{4}$ and output $\by \in \bbR^9$.
    The operation can be represented as a matrix multiplication \citep{goodfellow2016deep}, given by $\by = \widetilde W \bx$.
    } 
    \label{fig:cnn}
\end{figure*}

\section{Factorizable Densities}
\label{sec:factassm}

\subsection{Factorization Assumption}

In this section, we introduce the low-dimensional assumption considered in our main results and provide some well-known examples. Formally, we consider the following factorization assumption.

\bed
\item[] (\bF) There exists a set $\cI \subseteq 2^{[D]}$ and functions $g_{I} : \bbR^{\vert I \vert} \to \bbR$ for each $I \in \cI$ such that
\bean
p_0(\bx) = \prod_{I \in \cI} g_{I}(\bx_{I}), \quad \forall \bx \in \bbR^D.
\eean
\eed

For a density $p_0$ satisfying (\bF), let $d = \max_{I \in \cI} \vert I \vert$ denote the largest number of variables that any $g_I$ depends on.
We refer to $d$ as the effective dimension corresponding to the factorizable density $p_0$.
As a simple example, if $p_0$ is the density of a random vector $\bX = (X_1, \ldots, X_D)$ and each component of $\bX$ is mutually independent, then $d = 1$.
In the following subsections, we present examples based on conditional independence structures, which are often represented using graphical models.

\subsection{Example: Bayesian Networks}

A Bayesian network is a random vector $\bX = (X_1,\ldots,X_{D})$ whose conditional independence structure can be represented by a directed acyclic graph (DAG) with the vertex set $\{1,\ldots,D\}$.
For a Bayesian network, each variable $X_{i} $ is conditionally independent of all other variables given its parent variables $\bX_{\text{pa}(i)} =  ( X_{j})_{ j \in \text{pa}(i) }$, where $\text{pa}(i)$ denotes the set of parent indices of vertex $i$.
Accordingly, the density $p_0(\cdot)$ of a Bayesian network $\bX$ factorizes as
\bean
p_0 ( \bx) = \prod_{i=1}^{D}  p_{i } ( x_i \mid \bx_{ \text{pa} (i) } ) 
= \prod_{i=1}^{D}  g_{i } ( x_i, \bx_{ \text{pa} (i) } ),
\eean
where $ p_{i } ( \cdot \mid \bx_{ \text{pa} (i) } )$ is the conditional density of $X_i$ given $\bX_{\text{pa}(i)} = \bx_{\text{pa}(i)} $ and $g_{i } ( x_i, \bx_{ \text{pa} (i) } ) = p_{i } ( x_i \mid \bx_{ \text{pa} (i) } )$. 
Hence, $p_0$ satisfies the assumption (\bF) with $\cI = \{ \{i\} \cup \text{pa}(i) : i \in [D]  \}$ and $ d = 1 + \max_{i \in [D]} \vert \text{pa}(i) \vert $; see Figure \ref{Fig:bn} for an illustrative example.

\begin{figure*}[!t]
    \centering
    \subfigure[ A Bayesian network with $d = 4$ ]{
        \includegraphics[width=0.45\linewidth]{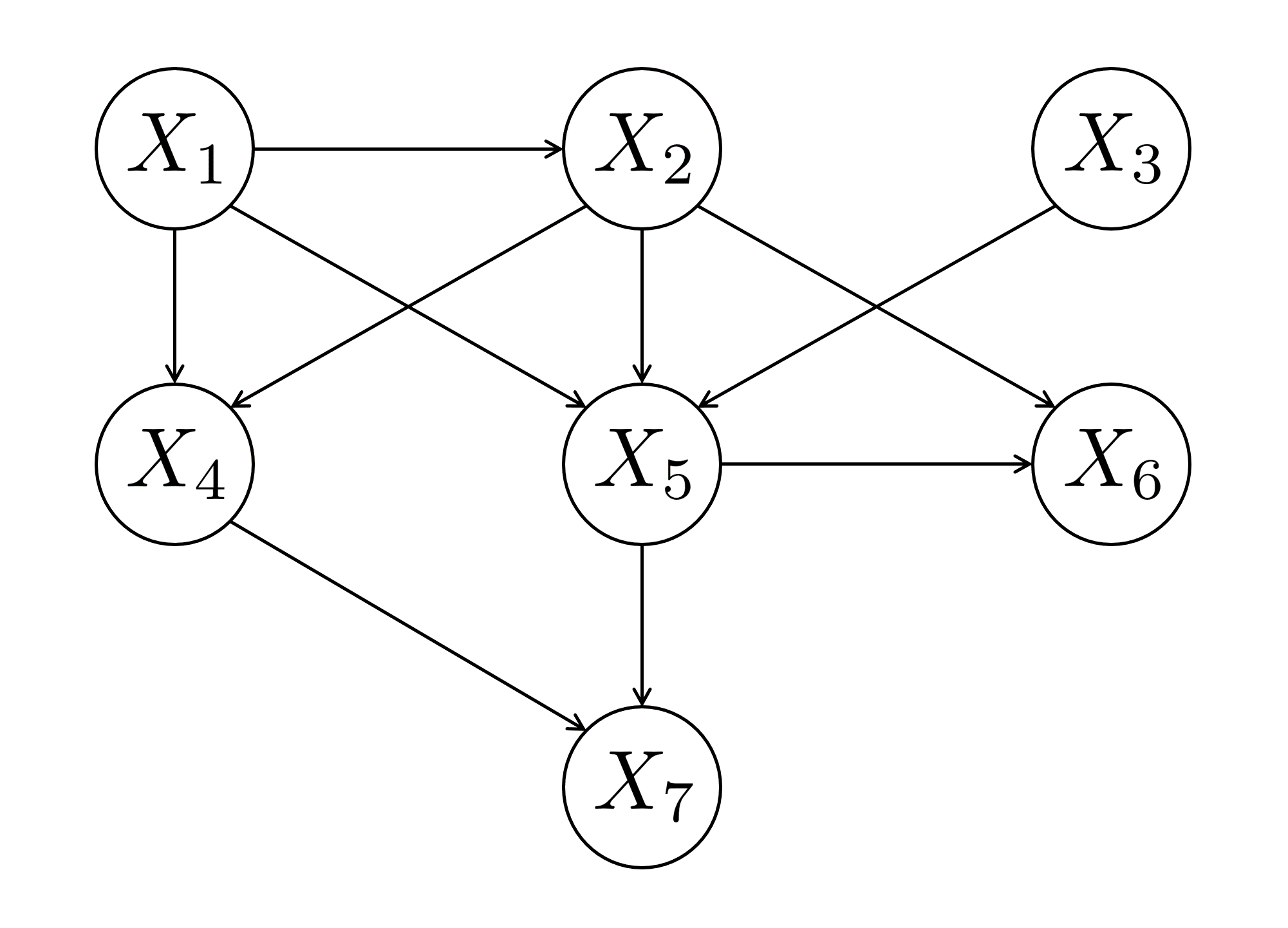}
        \label{Fig:bn}
    }
    \subfigure[ A Markov random field with $d = 3$ ]{
        \includegraphics[width=0.45\linewidth]{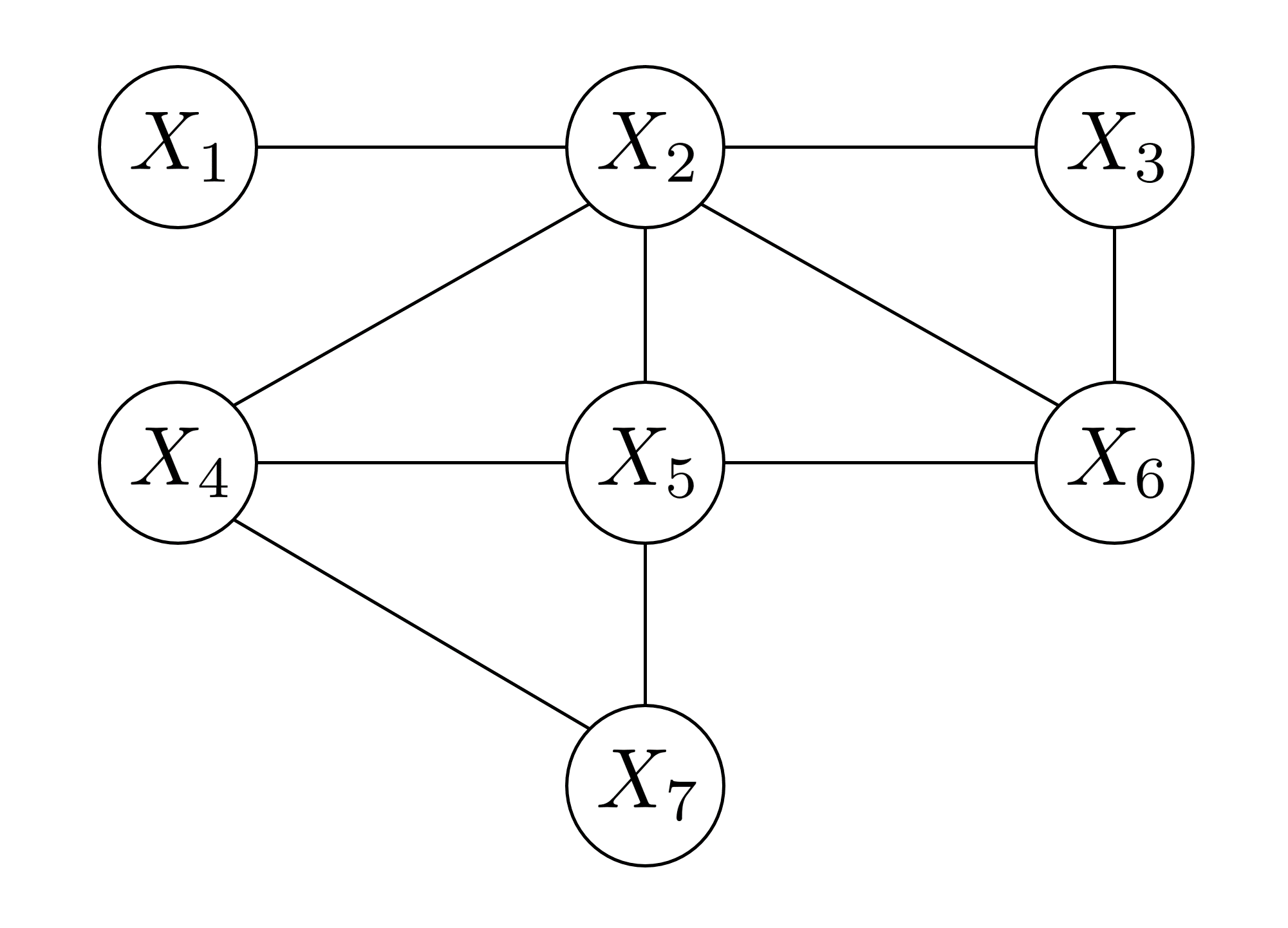}
        \label{Fig:mrf}
    }
    
    \caption{Examples of directed and undirected graphical model structures for a 7-dimensional random vector. In both cases, the effective dimension $d$ is strictly less than $D=7$.
    } \label{fig:examples}
\end{figure*}

\subsection{Example: Markov Random Fields}

A Markov random field over an undirected graph $G$ with vertex set $\{1, \ldots, D\}$ is a random vector $\bX = (X_1, \ldots, X_D)$ with the property that each variable $X_i$ is conditionally independent of all other variables given its neighbors, often referred to as the local Markov property \citep{lauritzen1996graphical}.
If the density $p_0(\cdot)$ of $\bX$ is strictly positive, the local Markov property holds if and only if
\be
p_0(\bx) = \prod_{C \in \cC} g_{C} (\bx_{C})
\label{mrf}
\ee
for some functions $g_{C}$, where $\cC$ denotes the set of all (maximal) cliques in the graph, as stated by the celebrated Hammersley-Clifford theorem \citep{hammersley1971markov, lauritzen1996graphical}.
Here, a clique is a fully connected subset of the vertex set in a graph, and the factors $g_{C}$ are referred to as potential functions in Markov random fields.
Therefore, the assumption (\bF) holds with $\cI = \cC$ and $d = \max_{C \in \cC} \vert C \vert$, where $d$ represents the maximum number of vertices in the (maximal) cliques; see Figure \ref{Fig:mrf} for an illustrative example.

Note that images consist of pixels with strong spatial correlations. It is, therefore, natural to assume that each pixel is conditionally independent of all other pixels given the pixels in its neighborhood. This makes the local Markov property particularly suitable for image data. For example, one might consider a graphical model structure, such as in Figure \ref{Fig:d2}, which has a very small $d$ (e.g., $d = 2$ in this example).

\begin{figure*}[!t]
    \centering
    \subfigure[ MNIST image of the digit `0' ]{
        \includegraphics[width=0.52 \linewidth]{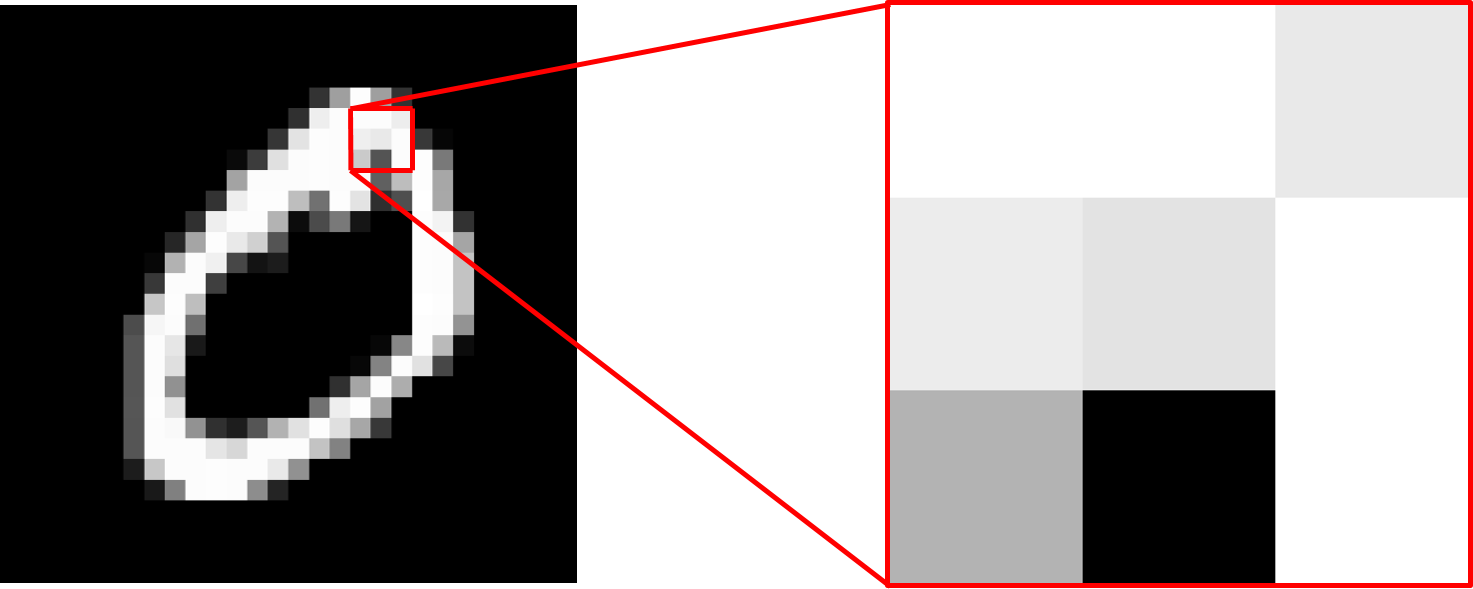}
    }
    \subfigure[ $d = 2$ ]{
        \includegraphics[width=0.2\linewidth]{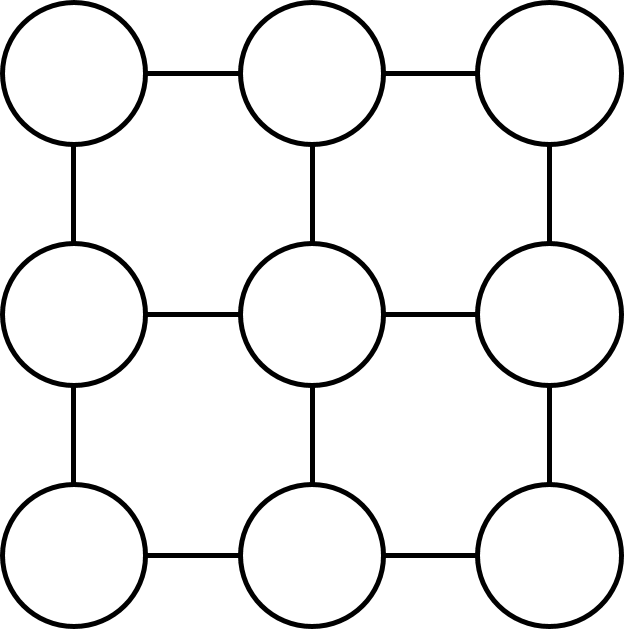}
        \label{Fig:d2}
    }
    \subfigure[ $d = 4$  ]{
        \includegraphics[width=0.2\linewidth]{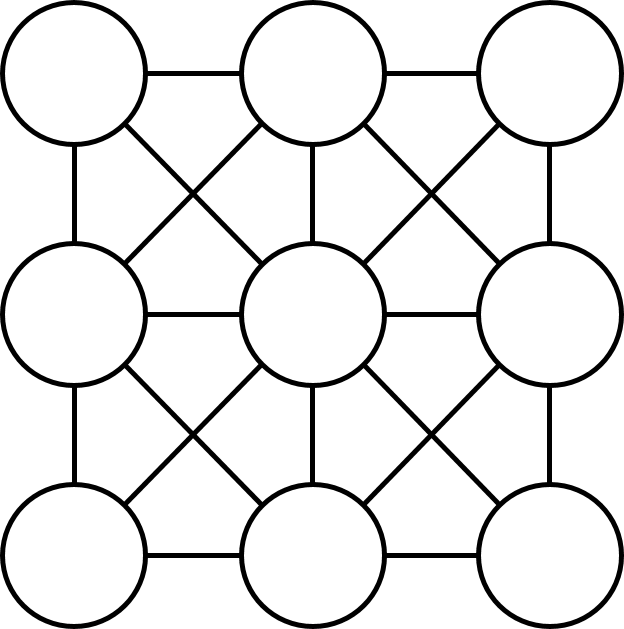}
        \label{Fig:d4}
    }
    
    \caption{An image of the digit `0' from the MNIST data set \citep{lecun1998gradient}, along with two possible undirected graph structures for MNIST. For each pixel, a larger neighborhood may be considered depending on the degree of spatial correlations.}
     \label{fig:mnist}
\end{figure*}

\section{Main Results}
\label{sec:main}


In this section, we present the main results of the paper. Section~\ref{sec:assumption} introduces the assumptions on $p_0$ required for the main results. Section~\ref{sec:conv} presents our main theorem (Theorem~\ref{secthm:2}), which establishes the convergence rate of the diffusion estimator $\widehat{p}$ introduced in Section~\ref{sec:model}. Finally, Section~\ref{sec:approx} describes the key technical components used to prove Theorem~\ref{secthm:2}, namely the approximation of the score function $\bff_0$ using sparse weight-sharing neural networks.

\subsection{Assumptions}
\label{sec:assumption}

For given data $\bX^1, \ldots, \bX^n$, let $\widehat p$ and $(\widehat p_t)$ be defined as in Section~\ref{sec:model}.
Note that the estimators $\widehat p$ and $(\widehat p_t)$ depend only on the non-random quantities $(\alpha_{t})$, $(\lambda_t)$, $\overline{T}$, $\underline{T}$ (which may depend on the sample size $n$), and the architecture $\cF = \cF_{\rm WSNN}(L, \bd, s, M, \cP_{\bfm})$.
Recall that $\alpha_{t}$ is the negative drift coefficient of the forward diffusion \eqref{eq:OU}, $\lambda_t$ is the weight for the loss function \eqref{eq:loss}, and $[\underline{T}, \overline{T}]$ is the interval defining both the loss function and the estimator $\widehat p$.
Throughout the paper, we assume the following without explicit restatement:
\bed
 \item[]1. $\bX^1, \ldots, \bX^n$ are \iid\ from $p_0$, supported on $[-1,1]^{D}$.
 \item[]2. $\sup_{\gamma \in \bbN} \vert  \partial^{\gamma } \alpha_{t} / \partial {t}^{\gamma}  \vert \leq 1 $  for all $t \geq 0$ and $\underline{\tau} \leq \alpha_{t} \leq \overline{\tau}$ for constants $\underline{\tau}, \overline{\tau} > 0$.
 \item[]3. $\lambda_t = 1$ for all $t \geq \underline{T}$.
\eed

Note that the standard OU process corresponds to $\alpha_{t} = 1$, and widely used diffusion models in practice (DDPM; \citealp{ho2020denoising}) often set $t \mapsto \alpha_t$ as a linear function; both choices satisfy the above requirements. 
For simplicity of the proofs, we set the weight function $\lambda_t$ to be constant. 
We also note that, with additional technical details, our main results can be extended to more general choices of forward diffusion processes and weight functions. 
For various designs of such choices in practice, we refer to \citet{karras2022elucidating}.
In addition to these basic assumptions, we will require the following additional assumptions:

\bed
\item[](\bS) The factorization assumption (\bF) is satisfied and there exist constants $\beta, K > 0$ such that $p_0 \in \cH^{\beta,K}([-1,1]^{D})$ and $g_I\in \cH^{\beta,K}([-1,1]^{|I|})$ for every $I \in \cI$.
\item[](\bL) There exists a constant $\tau_{1} > 0$ such that $p_0(\bx) \geq \tau_{1}$ for every $\bx \in [-1,1]^{D}$.
\item[](\bB) In addition to (\bS), there exists a constant $\tau_{2} \in (0,1)$ such that
\bean
\sup_{\bgamma \in \bbN^{D}} \sup_{\bx: 1-\tau_{2} \leq \| \bx\|_{\infty} \leq 1  }  \left\vert \D^{\bgamma} p_0(\bx)  \right\vert \leq K.
\eean
\eed

As in \citet{oko2023diffusion}, assumptions (\bB) and (\bL) are imposed purely for technical convenience. Although assuming that the density vanishes near the boundary of its support is more natural and plausible, assumption (\bL) greatly simplifies many proofs related to score-function estimation, since the score function $\nabla \log p_t(\bx)$ is defined as the ratio $\nabla p_t(\bx) / p_t(\bx)$. For this reason, following \citet{oko2023diffusion}, we adopted this assumption for tractability.

It should be noted that assumption (\bL) conflicts with the regularity or smoothness condition: once the density is bounded from below by a positive constant on its support, it automatically becomes non-smooth near the boundary when viewed as a function on $\bbR^D$.
This irregularity necessitates a careful approximation analysis near the boundary. To facilitate this analysis, we introduced the technical assumption (\bB), again following \citet{oko2023diffusion}. We also note that our main result, Theorem~\ref{secthm:2}, remains valid if the constant $\tau_2$ in assumption (\bB) is replaced by $(\log n)^{-\tau_{\rm bd}}$, where $\tau_{\rm bd}$ is an arbitrarily large constant.

In a recent preprint, \citet{fan2025optimal} obtained similar results without assumption (\bB), and their approach could, in principle, be adapted to our framework to remove (\bB) as well. We also note that several recent works \citep{zhang2024minimax, wibisono2024, fu2025approximation} have established results comparable to or weaker than those of \citet{oko2023diffusion} without relying on assumptions (\bB) and (\bL). However, their techniques do not appear to extend easily to the setting of factorizable densities considered in our paper.

The factorization assumption (\bF), combined with the smooth components assumption (\bS), forms the key structural assumption for our main results. Note that our results can be easily extended to the case where each factor function possesses a different level of smoothness.
The factorization assumption with smooth nonparametric components has been investigated in the statistical literature under the framework of nonparametric graphical models, specifically in the Markov random fields.
\citet{liu2011forest}, \citet{liu2012exponential}, \citet{gyorfi2023tree} focused on undirected acyclic graphs (forests), where $d$ is at most 2, and employed kernel methods. With this simple graph structure, \citet{liu2011forest} developed a consistent graph selection method, while \citet{liu2012exponential} constructed a minimax optimal density estimator for the special case of $\beta = 2$. Further advancements for the case $\beta = 1$ were studied in \citet{gyorfi2023tree}.

Nonparametric statistical theory for general undirected graph structures has been studied in some recent articles. In the case of $\beta = 1$, \citet{vandermeulen2024breaking} and \citet{vandermeulen2024dimension} proposed estimators whose convergence rates do not depend on the data dimension $D$. More specifically, \citet{vandermeulen2024breaking} introduced a novel quantity called the graph resilience $r$ and derived a convergence rate of $n^{-1/(r+2)}$ (up to a logarithmic factor) with respect to the total variation distance. They showed that this quantity satisfies $d \leq r \leq D$, meaning their rate is optimal only in special cases where $d = r$. Notably, $r$ can be much larger than $d$, for example, when the graph is a tree. \citet{vandermeulen2024dimension} studied a more tractable DNN-based estimator with a convergence rate of $n^{-1/(d+4)}$, which is sub-optimal.
More recently, \citet{gottwald2025localized} introduced a quantitative condition that characterizes the locality structure of the graph. They further showed that diffusion models yield consistent estimators when $D \gtrsim \log n$, with convergence rates that depend on this additional locality structure.

\citet{bos2024supervised} considered a slightly more general structure than the factorization assumption (\bF), using a different type of estimator. Specifically, they assumed that $p_0$ has a composite structure with smooth component functions. It is well known that deep neural networks can adapt to composite structures in nonparametric function estimation; see \citet{schmidt2020nonparametric}, \citet{bauer2019deep}, and \citet{kohler2021rate}. \citet{bos2024supervised} transformed the density estimation problem into a nonparametric regression problem and then constructed a density estimator. With this approach, they achieved a convergence rate of $n^{-\beta/(d+2\beta)} \vee n^{-\beta/D}$ (up to a logarithmic factor). While this rate improves upon existing results, it is optimal only when $D \leq 2\beta + d$. Note that our main results can also be extended to the composite structure considered in \citet{bos2024supervised} without significant difficulty.

\subsection{Convergence Rate}
\label{sec:conv}

The total variation distance between two Borel probability measures $P$ and $Q$ on $\bbR^{D}$ is defined as
\bean
d_{\rm TV}(P,Q) = \sup_{A} \vert P(A) - Q(A) \vert,
\eean
where the supremum is taken over every Borel subset $A$ of $\bbR^{D}$.
We often denote $d_{\rm TV}(P,Q)$ as $d_{\rm TV}(p,q)$, where $p$ and $q$ are Lebesgue densities of $P$ and $Q$, respectively.
The following theorem provides the convergence rate of $\widehat p$ with respect to the total variation, which is our main result.
Recall the definitions of the estimators $\widehat{\bff}$ and $\widehat{p}$ from Section~\ref{sec:model}, and note that the score function is denoted by $\bff_0(\bx, t) = \nabla \log p_t(\bx)$.

\begin{theorem} \label{secthm:2}
Suppose that $p_0$ satisfies (\bS), (\bL), and (\bB).
Let $\tau_{\min}$ and $\tau_{\max}$ be constants with

\bean
\tau_{\rm min} \geq \frac{d}{2\beta + d} \left( \frac{4\beta}{d (\beta \wedge 1)} \vee \frac{1}{3D} \right)
\quad \text{and}
\quad \tau_{\rm max} \geq \frac{\beta}{\underline{\tau}(2\beta+d)} \vee \frac{2dD}{(2\beta + d ) \overline{\tau}}.
\eean

Let $\underline{T} = n^{- \tau_{\rm min}}$ and $\overline{T} = \tau_{\rm max} \log n$.
Then, for every $n \geq C_{2}$, there exist a collection of permutation matrices $\cP_{\bfm} = ( ( \cQ_{i}, \cR_{i} ) )_{i \in [L-1] }$ and a class of weight-sharing neural networks $\cF_{\rm WSNN} = \cF_{\rm WSNN}(L,\bd,s,M,\cP_{\bfm})$ with
\bean
&& L \leq  C_{1} ( \log n )^{6} \log \log n , \quad \| \bd\|_{\infty} \leq  C_{1}  n^{\frac{d (D+1) }{2\beta + d}},
\\
&& s \leq  C_{1}  n^{\frac{d  }{2\beta + d}}  (\log n)^{5} \log \log n, \quad M \leq \exp(  C_{1}  \{ \log n \}^6 ),
\\ 
&& \| \bfm \|_{\infty} \leq  C_{1}  n^{\frac{dD}{2\beta + d}} 
\eean
satisfying 

\bean
&& \bbE \left[ \left( \int_{\underline{T}}^{\overline{T}} \int_{\bbR^{D}} \left\| \widehat \bff(\bx,t) - \bff_0(\bx, t)  \right\|_2^2 p_t(\bx) \d \bx \d t \right)^{1/2} \right]
 \leq \epsilon_n,
\eean
where $\widehat \bff$ is the ERM estimator over the class $\cF = \cF_{\rm WSNN} \cap \cF_{\infty}$ with
\bean
 \cF_\infty = \bigg\{ \bff: \| \bff(\bx,t) \|_{\infty} \leq \frac{C_{1} \sqrt{\log n}}{\sigma_t} \quad \forall \bx \in \bbR^{D}, t > 0 \bigg\}
\eean
and
\bean
\epsilon_n = C_{1} n^{-\frac{\beta }{2\beta + d}} \left\{ (\log n )^{ 2D + 2\beta + 1 } +  (\log n)^{10}  \right\}.
\eean
Moreover, 
\bean
 && \bbE \left[ d_{\rm TV} \left( p_{0}, \widehat p \right) \right]
 \leq \epsilon_n,
\eean
where $\widehat p = \widehat p_{\underline{T}}$ is the corresponding estimator defined through the SDE (\ref{eq:diffmodel}).
Here, $C_{1}, C_{2}$ are constants depending only on $(\beta,d,D,K, \tau_{\rm min}, \tau_{\rm max}, \tau_{1}, \tau_{2}, \overline{\tau}, \underline{\tau})$.

\end{theorem}

The proof of Theorem~\ref{secthm:2} is provided in the Appendix.
Note that $\overline{\tau}$ and $\underline{\tau}$ can be treated as known constants. For example, if $\alpha_t = 1$ for all $t$, both constants can be set to 1. The constants $\tau_{\min}$ and $\tau_{\max}$ can also be chosen to depend solely on the single quantity $\beta/d$, ignoring their dependence on the known quantities $(D, \overline{\tau}, \underline{\tau})$.

It can be observed that the class of permutation matrices $\cP_{\bfm}$ can likewise be chosen to depend only on $\beta/d$; see Section~\ref{sec:approx} for details.
Similarly, if $\tau_1$, $\tau_2$, and $K$ are treated as known constants, the hyperparameters $(L, \bd, s, M, \cP_{\bfm})$ defining the neural network $\cF_{\rm WSNN}$ can also be selected to depend solely on $\beta/d$.
Therefore, the estimators $\widehat \bff$ and $\widehat p$ ultimately depend only on $\beta/d$.

In this sense, $\widehat \bff$ and $\widehat p$ are adaptive to the factorization structure because their construction does not rely on the structural information. 
We do not aim in this paper to construct a fully adaptive estimator, in the sense of estimators that do not depend on $\beta/d$.
Although the architectures in Theorem~\ref{secthm:2}, including the class $\cP_{\bfm}$ of permutation matrices and the hyperparameters $(L, \bd, s, M)$, can be chosen to depend solely on $\beta/d$, in practice, much more complex architectures are often used, and hyperparameters are carefully tuned based on extensive experimental work.


Note that the convergence rate in Theorem~\ref{secthm:2} is minimax-optimal up to a logarithm factor over the class of factorizable densities.
Specifically, for $\beta, K > 0$ and $ D, d \in \bbN$ with $d \leq D$, let
\be
\begin{split}
 & \cG ( \beta, D, d, K )
 =
 \bigg\{ g_0 \in \cH^{\beta, K}([-1,1]^{D}):
 g_0(\bx) =  \prod_{I \in \cI} g_{I}(\bx_I),
 \\
 & \qquad\qquad\qquad\qquad\qquad\qquad
 g_{I} \in \cH^{\beta, K} ( [-1,1]^{ \vert I \vert } ),
 ~ \max_{I \in \cI} \vert I \vert = d, 
 ~ \cI \subseteq 2^{[D]}  
 \bigg\} 
 \label{eq:lowbd}
 \end{split}
\ee
be the class of factorizable densities with smooth component functions.
Then, we have
\bean
 \inf_{\widehat p} \sup_{p_0 \in \cG(\beta, D, d, K)} \E [d_{\rm TV}(p_0, \widehat p) ] \gtrsim  n^{-\beta / (2\beta + d)} ,
\eean
where the infimum is taken over all estimators.
The proof of this lower bound is straightforward, given the well-known result that the minimax rate for estimating a $d$-dimensional density in $\cH^{\beta, K}([-1,1]^d)$ is $n^{-\beta / (2\beta + d)}$ \citep{gine2016mathematical}.

Here, we present an overview of the key ideas behind the proof.
Recall that $\widehat p = \widehat p_{\underline{T}}$.
By the triangle inequality, we have
\be
\bbE \left[ d_{\rm TV} (p_{0}, \widehat p ) \right]
\leq d_{\rm TV} (p_{0}, p_{\underline{T}})  
+ \bbE \left[ d_{\rm TV} (p_{\underline{T}},  \widehat p_{\underline{T}}) \right]. \label{eqthm2:traingle}
\ee
The first term in the right-hand side of \eqref{eqthm2:traingle} scales as a polynomial order in $\underline{T} = n^{- \tau_{\rm min}}$, 
thus we can control the error by choosing a large constant $\tau_{\rm min}$; see Lemma~\ref{secsc:p0pt} for details.
The second term is the total variation distance between the distributions of $\bX_{\underline{T}} = \bY_{\overline{T} - \underline{T}}$ and $\widehat \bX_{\underline{T}} = \widehat \bY_{ \overline{T}- \underline{T}}$.
Note that the two processes $(\bY_t)_{t \in [0, \overline{T}]}$ and $(\widehat \bY_t)_{t \in [0, \overline{T}]}$ differ only in their initial distributions and drift functions.
Hence, based on well-known results, we can bound the total variation distance by controlling each difference separately as follows:

\be \begin{split} \label{eq:tv}
& \bbE\left[   d_{\rm TV} (p_{\underline{T}},  \widehat p_{\underline{T}}) \right]
\\
& \leq d_{\rm TV} \left( P_{\overline{T}}, \cN( \mathbf{0}_{D}, \bbI_{D} ) \right) + \bbE \left[
\left( \int_{\underline{T}}^{\overline{T}} \int_{\bbR^{D}} 4 \alpha_{t} \left\| \widehat \bff (\bx,t) - \bff_0(\bx, t) \right\|_2^2 p_t(\bx) \d \bx \d t \right)^{1/2} \right];
\end{split}\ee
see Remark 2.3 of \citet{bogachev2016distances}.
Both terms on the right-hand side of \eqref{eq:tv} correspond to the differences between the initial distributions and the drift functions, respectively.
The first term can be easily controlled because $P_{\overline{T}}$ converges exponentially fast to $\cN ( \mathbf{0}_{D}, \bbI_{D} )$ as $\overline{T} = \tau_{\rm max} \log n$ increases.
Thus, we can control the error by choosing a large constant $\tau_{\rm max}$.

The second term on the right-hand side of \eqref{eq:tv} represents the risk of the empirical risk minimizer $\widehat\bff$; see Proposition~\ref{sec:oracle} for the detailed statement regarding this term.
There is a substantial body of literature introducing techniques to bound the risk of empirical risk minimizers (e.g., \citealp{van1996weak, geer2000empirical, wainwright2019high}).
Technically, the risk can be decomposed into two terms: the approximation error and the estimation error, often referred to as the bias-variance decomposition.
To bound the estimation error, the key is to control the metric entropy of the weight-sharing networks $\cF_{\rm WSNN}$; see Lemma~\ref{sec:covering} for details.
The key technical contribution of our work lies in developing a sharp approximation error bound
for score functions under the factorization assumption (\bF), which is introduced in the following subsection.

\begin{remark}
In (\bS), we assume that all factors $g_{I}$ have the same smoothness level. This assumption can be relaxed, allowing each $g_I$ to have a different level of smoothness.
Specifically, suppose that for each $I \in \cI$, we have $g_{I} \in \cH^{\beta_I,K} ( [-1,1]^{\vert I \vert} ) $ for some $\beta_I > 0$.

One can directly extend the proof of Theorem~\ref{secthm:2} to show that
\bean
\bbE \left[ d_{\rm TV} \left( p_{0}, \widehat p \right) \right]
\lesssim_{\log} n^{-\frac{  \beta_* }{2  \beta_* +  d_* }}
\eean
with a carefully chosen network architecture, where 
\bean
I_* = \argmin_{ I \in \cI } \frac{\beta_I}{ \vert I \vert },
\quad \beta_{*} = \beta_{I_*},
\quad d_* = \vert I_* \vert.
\eean

The set-up of different smoothness levels includes the case of \citet{liu2007sparse}, who considered a density of the form $p_0(\bx) = g_{I} (\bx_{I}) g_{0}(\bx)$, where $g_0$ is very smooth and $I \subseteq [D]$.
Under the assumption that $g_I$ has continuous second-order derivatives, they proposed a density estimator that achieves a convergence rate of $O(n^{-2/(4+\vert I \vert) + \epsilon} )$ for any $\epsilon > 0$.
\end{remark}

\subsection{Approximation Theory}
\label{sec:approx}

Theorem~\ref{secthm:1} below presents the approximation results for the map $(\bx, t) \mapsto \bff_0(\bx, t) = \nabla \log p_t(\bx)$ using weight-sharing neural networks, which serves as the key technical component of our main results.

\begin{theorem}
\label{secthm:1}
Suppose the density function $p_0$ satisfies the assumptions (\bS), (\bL), and (\bB). Let $\tau_{\min}$ and $\tau_{\max}$ be constants with
\bean
\tau_{\rm min} \geq \frac{4\beta}{d (\beta \wedge 1)} \vee \frac{1}{3D}
\quad \text{and} \quad
\tau_{\rm max} \geq \frac{2D}{\overline \tau}.
\eean
Then, for every $m \geq  C_{4}$, there exist a collection of permutation matrices 
\bean
\cP_{\bfm} = ( ( \cQ_i, \cR_i ) )_{i \in [L-1]}
\eean
and a class of weight-sharing neural networks $\cF_{\rm WSNN} = \cF_{\rm WSNN}(L,\bd,s,M,\cP_{\bfm})$ with
\bean
&& L \leq C_{3} ( \log m )^{6} \log \log m , \quad \| \bd\|_{\infty} \leq C_{3}  m^{D+1},
\\
&& s \leq  C_{3}  m (\log m)^{5} \log \log m, \quad M \leq \exp( C_{3}  \{ \log m \}^6 ),
\\
&& \| \bfm \|_{\infty} \leq C_{3} m^{D}
\eean
such that
\bean
 \inf_{\bff \in \cF_{\rm WSNN} \cap \cF_\infty }
 \int_{\underline{T}}^{\overline{T}} \int_{\bbR^{D}} \left\| \bff_0(\bx, t) - \bff(\bx,t) \right\|_2^2 p_t(\bx) \d \bx \d t
 \leq  C_{3}  m^{-\frac{2\beta}{d}} (\log m)^{4D+ 4\beta + 1},
\eean
where $\underline{T} = m^{-\tau_{\min}}$, $\overline{T} = \tau_{\rm max} \log m$ and
\bean
 \cF_\infty = \bigg\{ \bff: \| \bff(\bx,t) \|_{\infty} \leq \frac{C_{3} \sqrt{\log m}}{\sigma_t} \quad \forall \bx \in \bbR^{D}, t > 0 \bigg\}.
\eean
Here, $C_3$ and $C_{4}$ are constants depending only on $(\beta, d, D, K, \tau_{\rm min}, \tau_{\rm max}, \tau_{1}, \tau_{2}, \overline{\tau}, \underline{\tau})$.
\end{theorem}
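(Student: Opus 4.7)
The plan is to follow the blueprint of \citet{oko2023diffusion} — polynomially approximate $p_0$, plug the approximation into the closed-form expression for the score of the marginals $p_t$, and realize the resulting rational function as a ReLU network — while exploiting the factorization $p_0=\prod_{I\in\cI}g_I$ so that the cost is $|I|$-dimensional per factor rather than $D$-dimensional, and using weight-sharing so that the $|\cI|$ structurally identical subcomputations do not multiply the sparsity $s$.

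First I would approximate each factor $g_I$ on $[-1,1]^{|I|}$ by a tensor-product B-spline (or piecewise polynomial) of order $\lfloor\beta\rfloor+1$ with $m$ breakpoints per coordinate; assumption (\bS) yields $\|g_I-\tilde g_I\|_\infty\lesssim m^{-\beta}$ with $O(m^{|I|})\leq O(m^d)$ coefficients. Setting $\tilde p_0:=\prod_{I\in\cI}\tilde g_I$ and using (\bL) for stability of the product, $\tilde p_0$ approximates $p_0$ at rate $m^{-\beta}$, and assumption (\bB) lets me smoothly extend $\tilde p_0$ outside $[-1,1]^D$ as in Oko et al.\ to prevent boundary blowup of derivatives of $p_t$. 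I would then split $[\underline T,\overline T]$ into a large-$t$ regime ($t\asymp\log m$) where $p_t$ is so close to a standard Gaussian that $\bff_0(\bx,t)\approx -\bx/\sigma_t^2$ is trivially implemented, and a small/moderate-$t$ regime where I use
\[
  \bff_0(\bx,t)\;=\;\frac{1}{\sigma_t^2}\bigl(-\bx+\mu_t\,\bbE[\bX_0\mid\bX_t=\bx]\bigr)
\]
and replace $p_t(\bx),\nabla p_t(\bx)$ by the Gaussian convolutions of $\tilde p_0$ (and its gradient), approximated by finite Gauss-Hermite-type quadrature at accuracy $m^{-\beta}$. The score approximant is then a rational function whose numerator and denominator are polynomials in the values $\{\tilde g_I(\bx_I+\sigma_t\bz_k)\}$ over a discrete grid of quadrature nodes $\bz_k$, with coefficients depending smoothly on $t$.

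The weight-sharing ReLU network realizing this approximant would consist of three kinds of subnetworks: (i) a single template subnetwork of sparsity $O(m\,\mathrm{polylog}(m))$ that evaluates a tensor-product B-spline in $d$ variables, built from Yarotsky/Schmidt-Hieber-type constructions for polynomial multiplication; this template is invoked $|\cI|\cdot N_{\mathrm{quad}}$ times on different coordinate subsets by routing $\bx_I$ (plus the quadrature perturbation) into the template via the permutation matrices $(\cQ_i,\cR_i)$ — this is the crux of the weight-sharing step, keeping $s$ essentially linear in $m$ even though the number of logical neurons $\|\bd\|_\infty$ is allowed to be $\asymp m^{D+1}$; (ii) standard subnetworks for multiplication, reciprocal, and clipping that assemble the outputs of (i) into $\bff(\bx,t)$ while enforcing $\|\bff(\cdot,t)\|_\infty\leq C\sqrt{\log m}/\sigma_t$; and (iii) a scalar subnetwork in $t$ producing the time-dependent quadrature coefficients. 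Reading off $L,\|\bd\|_\infty,s,M,\|\bfm\|_\infty$ from this construction yields the stated bounds.

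Errors are aggregated as follows: the spline/factorization error contributes $O(m^{-2\beta/d})$ in $L^2(p_t)$ once one accounts for the $\sigma_t^{-2}$ prefactor of $\bff_0$ and integrates against $p_t$; the quadrature and network-realization errors can each be driven below the target rate by tuning constants inside the logarithmic factors; and the large-$t$ contribution is exponentially small in $\overline T=\tau_{\max}\log m$. The main obstacle — and the reason the width $m^{D+1}$ is unavoidable — is that Gaussian convolution destroys the product structure of $\tilde p_0$, so $\tilde p_t$ is a genuinely $D$-dimensional function; the factorization can be re-exploited only inside the quadrature, where each node requires evaluating all $|\cI|$ factors on perturbed inputs. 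Showing that weight-sharing bundles these $|\cI|\cdot N_{\mathrm{quad}}$ evaluations into a single template — driving the sparsity cost from $|\cI|\cdot m^d$ down to $m\cdot\mathrm{polylog}(m)$ — while still retaining the $m^{-2\beta/d}$ rate in $L^2(p_t)$ is where the bulk of the technical work lies.
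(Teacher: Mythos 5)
Your proposal mirrors the paper's construction in all essentials: a cheap factorized $p_0$-approximant, tensor-product quadrature for the Gaussian convolution defining $p_t$ --- a 1-D $m$-point rule applied $D$ times, giving $m^D$ nodes --- weight-sharing to replicate the single $p_0$-template across those $m^D$ nodes so the number of distinct parameters stays $\tilde O(m)$ while the width grows to $m^{D+1}$, and an interior / boundary / large-$t$ split (with (\bB) controlling the boundary regime) glued together by switch, reciprocal, and clipping subnetworks, which is exactly the structure of Propositions \ref{secpt:1}--\ref{secpt:3} and their assembly. Two small imprecisions worth tightening: what weight-sharing actually bundles is the $m^D$ replications of the $p_0$-template across quadrature nodes, not the $|\cI|$ factor evaluations inside one copy (those are distinct functions whose cost is absorbed into, not eliminated from, the $O(m)$ template sparsity); and for the arithmetic to match $s \asymp m$ and error $m^{-2\beta/d}$ you want $m^{1/d}$ breakpoints per coordinate rather than $m$.
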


The proof of Theorem~\ref{secthm:1} is provided in Appendix.
From this proof, it can be deduced that the class of permutation matrices $\cP_{\bfm}$ can be chosen to depend only on $m$ and $\beta/d$. In the proof of Theorem~\ref{secthm:2}, we select $m$ based solely on $\beta/d$, implying that the choice of $\cP_{\bfm}$ ultimately depends only on $\beta/d$.


Here, we present an overview of the key ideas behind the proof. Note that $\nabla \log p_{t}(\bx) = \nabla p_{t}(\bx) / p_{t}(\bx)$, and the division operation can be approximated by DNNs very efficiently, provided that the denominator is not too small;
see Lemma~F.7 of \citet{oko2023diffusion}. Since the ideas behind approximating the maps $(\bx,t) \mapsto p_t(\bx)$ and $(\bx, t) \mapsto \nabla p_t(\bx)$ are similar, we only present the key idea for approximating $(\bx, t) \mapsto p_t(\bx)$. For convenience, we use the informal notation $a \lesssim_{\log} b$ to indicate that $a$ is less than $b$ up to a poly-logarithmic factor, such as $\log n$, $(\log m)^2$, or $\log ( 1 / \sigma_t )$. Similarly, we use the notation $\asymp_{\log}$ to correspond to $\asymp$.

For a given (sufficiently large) positive integer $m$, which roughly corresponds to the order of the number of nonzero network parameters, we will construct DNN approximators for the map $(\bx, t) \mapsto p_t(\bx)$ in four regions and combine them. These four regions for $(\bx, t)$ can be roughly defined as follows:
\begin{enumerate}[label = {} ]
 \item[](R1) (Outside of near-support) $\|\bx\|_\infty - \mu_t \gtrsim \sigma_t \sqrt{\log m}$
 \item[](R2) (large $t$) $\|\bx\|_\infty - \mu_t \lesssim \sigma_t \sqrt{\log m}$ and $t \gtrsim m^{-(2-\delta)/D}$ for some $\delta > 0$
 \item[](R3) (Boundary of near-support) $t \lesssim m^{-(2-\delta)/D}$ and $ -\{ \log (1/\sigma_{t})\}^{-3/2} \lesssim \|\bx\|_\infty - \mu_t \lesssim \sigma_t \sqrt{\log m}$
 \item[](R4) (Interior of near-support) $t \lesssim m^{-(2-\delta)/D}$ and $\|\bx\|_\infty - \mu_t \lesssim -\{ \log (1/\sigma_{t})\}^{-3/2}$
\end{enumerate}
Recall that $\mu_t = \exp ( - \int_{0}^{t} \alpha_s \d s )$ and $ \sigma_{t} = \sqrt{1 - \mu_{t}^2}$, so the maps $t \mapsto \mu_t$ and $t \mapsto \sigma_t$ can be approximated by DNNs very efficiently.
Likewise, the map $\bx \mapsto \|\bx\|_\infty$ can also be efficiently approximated.
Therefore, once we can approximate the map $(\bx, t) \mapsto p_t(\bx)$ in each of the four regions, it is not difficult to combine them into a single function over the entire region.

In region (R1), $p_t$ is nearly zero due to the sub-Gaussianity of $p_t$, making it easy to approximate. In region (R2), $t$ is sufficiently large, and thus the map $\bx \mapsto p_t(\bx)$ is much smoother than $\bx \mapsto p_0(\bx)$. This smoothness property enables the construction of a DNN with a moderate number of nonzero parameters, as in Lemma B.7 of \citet{oko2023diffusion}; see Proposition~\ref{secpt:3}  for details. Similarly, in region (R3), the map $\bx \mapsto p_t(\bx)$ is very smooth due to the assumption (\bB), allowing us to construct a DNN with the desired approximation properties, similar to Lemmas B.2--B.5 of \citet{oko2023diffusion}; see Proposition~\ref{secpt:2} for details.

The main challenge in the proof of Theorem~\ref{secthm:1} lies in the approximation in region (R4). Note that $p_{t}(\bx) \gtrsim 1$ in region (R4), and that
\be \begin{split}\label{eqapprox:ptint}
  p_t(\bx) &= \int_{\|\bz\|_{\infty} \leq 1} p_0(\bz) \phi_{\sigma_{t}}(\bx-\mu_t \bz) \d \bz
  \\
  &= \int_{\left\| \frac{ \bx + \sigma_{t} \by }{\mu_{t}}\right\|_{\infty} \leq 1} \mu_{t}^{-D} p_0 \left( \frac{\bx + \sigma_{t} \by}{\mu_{t}} \right) \phi_{1}(\by) \d \by,
\end{split}\ee
where $\phi_{\sigma}$ denotes the density of $\cN(\mathbf{0}_{D}, \sigma^2 \bbI_{D})$ for $\sigma > 0$.
To approximate the right-hand side of \eqref{eqapprox:ptint}, we first approximate it by a finite sum via a quadrature method, and then approximate the sum using a weight-sharing neural network.

To grasp the idea of approximation, it suffices to consider the approximation of a general function
\be \label{eqapprox:gint}
  \bx \mapsto \int_{[-1,1]^D} g(\bx,\by) \d \by,
\ee
defined through a $D$-dimensional integral. Here, $g$ is a function such that for each $\bx$, the map $\by \mapsto g(\bx, \by)$ belongs to $\cH^{\beta, K}([-1,1]^D)$, where $K > 0$ is a constant independent of $\bx$. We provide an idea for constructing a weight-sharing neural network to approximate the map \eqref{eqapprox:gint} with an error of $\epsilon \asymp_{\log} m^{-\beta/d}$. It is well-known from numerical analysis \citep{novak1988deterministic} that, to achieve an approximation error of $\epsilon$ for every function in $\cH^{\beta, K}([-1,1]^D)$ using the Gauss–Legendre quadrature method, at least $O(\epsilon^{-D/\beta})$ quadrature points are necessary. Hence, \eqref{eqapprox:gint} can be approximated by a finite sum with $O(\epsilon^{-D/\beta})$ summands. However, to approximate this $O(\epsilon^{-D/\beta})$-term summation using DNNs, we would need at least $O(\epsilon^{-D/\beta})$ network parameters (up to a poly-logarithmic factor), which results in a very large estimation error. To overcome this difficulty, instead of applying a single $D$-dimensional quadrature method, we apply a 1-dimensional $m$-point quadrature method $D$ times to approximate the $D$-dimensional integral \eqref{eqapprox:gint}. Specifically, let $(w_j)_{j \in [m]}$ and $(y_j)_{j \in [m]}$ be the $m$-point quadrature weights and nodes for 1-dimensional integrals over the interval $[-1, 1]$, that is, for any $h \in \cH^{\beta,K}([-1,1])$,
\bean
  \left\vert \int_{-1}^1 h(y) \d y
  - \sum_{j =1}^m w_j h \left( y_j \right) \right\vert
  \lesssim m^{-\beta}; 
\eean
see Lemma~\ref{sec:qde} for details. Then, we can easily see that 
\bean
  \left\vert \int_{[-1,1]^D} g (\bx, \by) \d \by
  - \sum_{\bj \in [m]^D} w_\bj g \left( \bx, \by_\bj \right) \right\vert
  \lesssim m^{-\beta}
\eean
where $w_\bj = \prod_{k=1}^D w_{j_k}$ and $\by_\bj = (y_{j_1}, \ldots, y_{j_D})$. Here, we slightly abuse the notation for weights.

We next approximate the map
\be \label{eq:sum}
 \bx \mapsto \sum_{\bj \in [m]^D} w_\bj g \left( \bx, \by_\bj \right)
\ee
using weight-sharing neural networks. Although the summation in \eqref{eq:sum} consists of $m^D$ terms and resembles a $m^D$-point, $D$-dimensional quadrature, it can be approximated by weight-sharing DNNs much more efficiently than a standard $m^D$-point, $D$-dimensional quadrature approximation. The key ingredients are the approximations of the following two maps:
\be\begin{split} \label{eq:maps}
 (w_1, \ldots, w_{m}) &\mapsto (w_\bj)_{\bj \in [m]^D}: \bbR^{m} \to \bbR^{m^D}
 \\
 (\bx, y_1, \ldots, y_{m}) &\mapsto (g(\bx, \by_\bj))_{\bj \in [m]^D}: \bbR^{D + m} \to \bbR^{m^D}.
\end{split}\ee

Although the $w_\bj$'s are distinct, each $w_{\bj}$ is represented as a product of $D$ terms from the $m$ distinct values $w_1, \ldots, w_{m}$. Therefore, to approximate the first map of \eqref{eq:maps}, we only need to approximate the multiplication operation $(x, y) \mapsto xy$ and apply it multiple times. Note that multiplication can be approximated by DNNs very efficiently \citep{schmidt2020nonparametric}. With an additional trick, the repeated application of multiplication can be represented as a DNN of the form \eqref{eqwsnn}, with a suitable choice of permutation matrices. Roughly speaking, to achieve an approximation error of $\epsilon \asymp_{\log} m^{-\beta/d}$ for this map, we only need $O(\log m)$ distinct network parameters, which is the same as for approximating a single multiplication operation between two real numbers up to a constant multiple.

Similarly, weight sharing is crucial for approximating the second map in \eqref{eq:maps}. Since the function $g$ are approximated for $m^D$ instances, weight-sharing networks help reduce the number of distinct network parameters, see Figure \ref{fig:wsnn} for an illustration. The number of parameters required for a single evaluation of $g$ with an approximation error of $m^{-\beta/d}$ depends on the structure of $g$. In our case, $O(m)$ parameters (up to a poly-logarithmic factor) are sufficient, due to the factorization property of $p_0$.

By combining the results above, we can construct weight-sharing neural networks with $O(m)$ distinct parameters (up to a poly-logarithmic factor) to approximate \eqref{eqapprox:gint} with an error of $m^{-\beta/d}$.

Returning to the problem of approximating \eqref{eqapprox:ptint}, a key difference between \eqref{eqapprox:ptint} and \eqref{eqapprox:gint} lies in the range of the integral, which depends on $(\bx, t)$. In particular, the diameter of the range also varies with $t$. As a result, we must use different quadrature weights and nodes for each pair $(\bx, t)$. However, these quadrature weights and nodes can be expressed as (very) smooth functions of $(\bx, t)$, making them easily approximated by deep neural networks. Full proofs, including additional technical details, are provided in the Appendix.

\begin{figure*}[!t]
    \centering
    \includegraphics[width=0.4\linewidth]{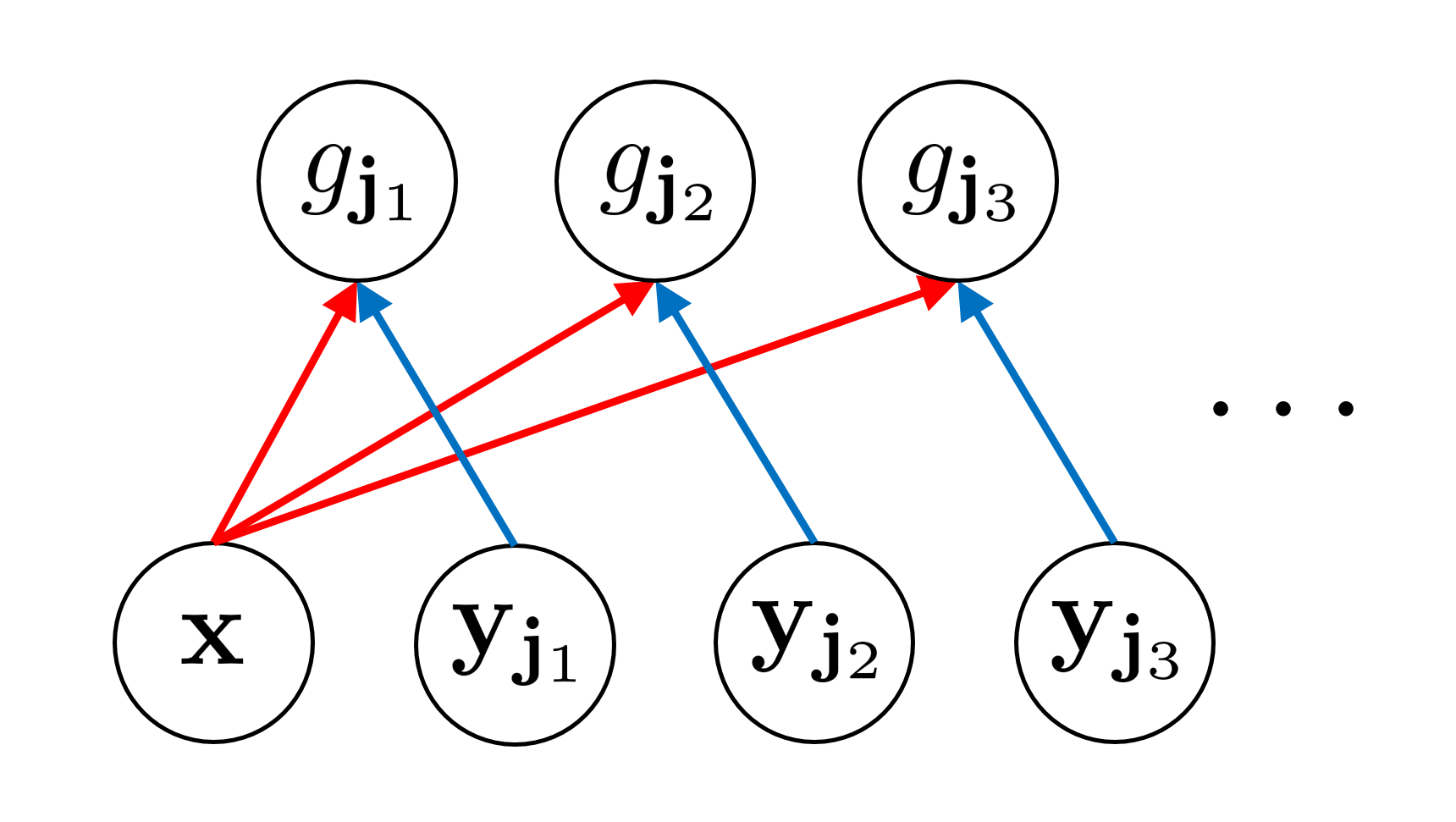}
        \caption{An illustration of why weight-sharing helps reduce model complexity: At some middle layers of the network, we need to approximate a map with inputs $\bx$ and $\by_{\bj}, \bj \in [m]^D$, and outputs $g_{\bj} = g(\bx, \by_{\bj}), \bj \in [m]^D$. Since a single function $g$ is approximated for $m^D$ instances, leaving all network parameters as free parameters is inefficient. Weight-sharing can significantly reduce the number of distinct parameters. In this illustration, all edges with the same color share the same weight parameters.}
    \label{fig:wsnn}
\end{figure*}

\section{Sub-Optimality of a Vanilla Score Matching Estimator}
\label{sec:naive}

One of the main technical difficulties in our results in Section~\ref{sec:main} arises from the fact that $p_t$ is no longer factorizable for $t > 0$.
In practice, a key component of the success of score-based generative models and diffusion models lies in jointly modeling infinitely many score functions using deep neural networks via the map $(\bx, t) \mapsto \bff(\bx, t)$ \citep{song2019generative, song2021scorebased}.
Note that early works on the score estimation have focused on estimating the single score function $\bx \mapsto \nabla \log p_0(\bx)$ via the score matching loss
\bean
 \widetilde \ell_\bff(\bx) = \text{tr} \left( \nabla \bff(\bx) \right) + \frac{1}{2} \left\| \bff(\bx) \right\|_2^2,
\eean
which is based on the fact that
\bean
 \frac{1}{2} \bbE \left[ \left\| \bff(\bX_0) - \nabla \log p_0 (\bX_0) \right\|_2^2 \right] 
 = \bbE \left[ \text{tr} \left( \nabla \bff(\bX_0) \right) + \frac{1}{2} \left\| \bff(\bX_0) \right\|_2^2 \right] + C
\eean
under mild assumptions, where $C$ is a constant depending only on $p_0$ \citep{hyvarinen2005estimation}.

For a given class $\cF$ of score functions, let $\widehat \bff_{\rm VS}$ be the corresponding empirical risk minimizer, that is,
\bean
 \widehat \bff_{\rm VS} \in \argmin_{\bff \in \cF}\frac{1}{n} \sum_{i=1}^n \left[ \text{tr} \left( \nabla \bff(\bX^i) \right) + \frac{1}{2} \left\| \bff(\bX^i) \right\|_2^2 \right].
\eean
The corresponding density estimator can be defined via the Langevin diffusion.
Specifically, let $( \bZ_{t} )_{t > 0} $ be the solution to the following Langevin equation:
\bean
\d \bZ_t = - \nabla \log p_{0} ( \bZ_{t} ) \d t + \sqrt{2} \d \bB_t, \quad \bZ_0 \sim \cN ( \mathbf{0}_{D}, \bbI_{D} ).
\eean
Then, under mild assumptions, the distribution of $\bZ_t$ converges to $p_0$ as $t \to \infty$.
The convergence speed can be exponentially fast under certain conditions on $p_0$, such as when $p_0$ satisfies a Poincar{\'e}  inequality or a log-Sobolev inequality \citep{bakry2014analysis}.
Hence, one can define a density estimator $\widehat p_{\rm VS}$ as the limit distribution of the Langevin equation, with the true score function replaced by $\widehat \bff_{\rm VS}$.
We refer to $\widehat \bff_{\rm VS}$ and $\widehat p_{\rm VS}$ as vanilla score matching estimators for the score and density functions.

Note that vanilla score matching estimators are rarely used in modern large-scale generative problems.
One reason is that the trace map $\text{tr} \left( \nabla \bff(\bx) \right)$ is computationally challenging to handle in high-dimensional (large $D$) problems, such as image generation tasks.
Therefore, one may raise an important question: if the computation of $\widehat \bff_{\rm VS}$ is tractable, would it perform well?
More theoretically, one may ask whether $\widehat \bff_{\rm VS}$, and consequently $\widehat p_{\rm VS}$, can achieve the optimal convergence rate.

Our conjecture is that, regardless of the potential minimax optimality of $\widehat{\bff}_{\rm VS}$, the estimator $\widehat{p}_{\rm VS}$ cannot achieve the optimal convergence rate.
The rationale behind this conjecture lies in the nature of estimating the score function.
From the well-established results on the minimax optimal convergence rate for estimating the density derivative \citep{stone1982optimal, singh1977improvement, shen2017posterior, yoo2016supremum}, under suitable assumptions, one can derive the following lower bound for estimating the score function:
\be
\inf_{\widehat \bff} \sup_{p_0 \in \cG(\beta,D,d,K)}  \bbE \left[ \left\| \widehat \bff(\bX_{0}) - \nabla \log p_0(\bX_0) \right\|_2 \right]
\gtrsim n^{-\frac{\beta-1}{2\beta+d}}, \label{eq:rate-ns} 
\ee
where the infimum is taken over all estimators and $\cG(\beta,D,d,K)$ denotes the class of $\beta$-smooth factorizable densities, defined in (\ref{eq:lowbd}).
Although we do not provide a specific proof in this paper, such a result can be obtained by applying the standard Fano’s method. We refer readers to \citet{wibisono2024} for the case $\beta = 2$ without factorization structures, and to \citet{tsybakov2008introduction} and \citet{wainwright2019high} for general lower-bound techniques.

The rate in (\ref{eq:rate-ns}) is a lower bound, which implies that the convergence rate of $\widehat \bff_{\rm VS}$ cannot be faster. The slower rate, compared to Theorem~\ref{secthm:2}, arises from the fact that the score function is only $(\beta-1)$-smooth, which is strictly less smooth than the density.
From the convergence rate of the score function estimator $\widehat\bff_{\rm VS}$, one can derive the same convergence rate with respect to the total variation for the corresponding density estimator $\widehat p_{\rm VS}$ via Girsanov's theorem \citep{girsanov1960transforming, le2016brownian}; see also Remark 2.3 of \citet{bogachev2016distances}.
While the rate $n^{-\frac{ \beta-1}{ 2 \beta + d }}$ is optimal for estimating the score function, the optimal rate for density estimation with respect to the total variation is strictly faster.
Hence, the optimality of $\widehat{p}_{\rm VS}$ is not guaranteed.
Although we do not have a formal proof that $d_{\rm TV}(\widehat{p}_{\rm VS}, p_0) \gtrsim n^{-(\beta-1)/(2\beta + d)}$, we conjecture that this is indeed the case and do not pursue a formal proof in the current paper.

\section{Numerical Experiments}
\label{sec:exp}

In this section, we conduct a small-scale simulation study to empirically examine our theoretical findings. The numerical experiments are designed to explore three main questions. First, we aim to assess whether diffusion models can perform reasonably well at small to moderate data scales. Although this question is not directly related to our theoretical analysis, it remains a natural and practically relevant question because diffusion models are typically applied to very high-dimensional settings with a large number of parameters. Second, we investigate whether, when a low-dimensional structure such as factorization is present, diffusion models tend to outperform classical methods such as kernel density estimation (KDE). Conversely, in the absence of such structural assumptions apart from smoothness, we conjecture that diffusion models perform comparably to classical estimators, at least in low-to-moderate dimensions (e.g., $D \le 10$). Finally, we explore the potential benefits of sparse weight-sharing neural networks compared to fully connected architectures through these experiments.

Before proceeding further, we note that although our theoretical results provide valuable insights into when diffusion models perform well compared to other methods, there remains a substantial gap between the theoretical estimator and its practical implementations. In practice, the number of nonzero network parameters in diffusion models is typically much larger than the available sample size, and various forms of algorithmic regularization (either explicit or implicit) are employed. Moreover, the U-Net architecture widely used in applications differs significantly from the theoretical weight-sharing architecture considered in this work. Given these discrepancies, it is inherently challenging to design simulation studies whose outcomes align precisely with the theoretical predictions.

\subsection{Data Set Descriptions}

Hereafter, we denote $\bX_0 = (X_1,\ldots,X_{D})$ as the random vector following the true distribution.
Throughout our experiments, we fix the data dimension at $D = 5$. We analyze three types of true data distributions characterized by different effective dimensions $d$: (1) $d = 1$, (2) $d = 2$, and (3) $d = D$. Each true distribution is specified by its marginal distributions and a dependency structure imposed through a copula.
Although not reported here, in addition to the marginal distributions and dependency structures described below, we also experimented with various other marginal and copula densities. We found that the results were qualitatively similar and consistent across these settings.

For the marginal distributions, we consider those used in the experiments of \citet{bos2024supervised}, rescaled to lie in the support $[-1,1]^{D}$.
Specifically, each variable $X_i $ follows a density $\widetilde p$, which belongs to a \Holder \ class with smoothness $\beta = 1/2$.
The explicit form of $\widetilde p$ is described in Figure~\ref{fig:bos_marginal}.


For the dependency structure, we employ three types of copulas: the independence copula ($d = 1$), the Gaussian copula ($d = 2$), and the Clayton copula ($d = D$). For the Gaussian copula, we adopt an AR(1)-type correlation structure with a correlation factor of $0.8$. Recall that the Clayton copula is defined as
\begin{equation*}
C_{\theta}(u_1, \ldots, u_D)
= \left( \sum_{i=1}^D \bigl(u_i^{-\theta} - 1 \bigr) + 1 \right)^{-1/\theta},    
\end{equation*}
where $\theta$ is a parameter controlling the strength of dependence.
Its corresponding copula density can be derived in closed form as
\bean
c_{\theta} (u_1,\ldots,u_{D}) = \left( \prod_{k=0}^{D-1} \left(1 + k \theta \right) \right) \left( \prod_{i=1}^{D} u_{i}^{-\theta-1} \right)  \left( \sum_{i=1}^D \bigl(u_i^{-\theta} - 1 \bigr) + 1 \right)^{-\frac{1+D\theta}{\theta}}.
\eean
In our experiments, we set $\theta = 5$.

For the training data, we consider sample sizes ranging from $n = 200$ to $n = 100{,}000$, while the test data consists of $5{,}000$ samples. For each setting, we repeat the experiments five times independently and report the average performance below.

\begin{figure}[!t]
    \centering
    \begin{minipage}[t]{0.4\linewidth}
        \centering
        \vspace{0pt}
        \small
        \[
        \widetilde p(x) =
        \begin{cases}
        \frac{1}{2} + \frac{1}{4D} - \frac{1}{2D}\sqrt{-\frac{1}{4} - \frac{x}{2}}, 
        & -1 \le x < -\frac{1}{2}, \\[6pt]
        \frac{1}{2} + \frac{1}{4D} - \frac{1}{2D}\sqrt{\frac{1}{4} + \frac{x}{2}}, 
        & -\frac{1}{2} \le x < 0, \\[6pt]
        \frac{1}{2} - \frac{1}{4D} + \frac{1}{2D}\sqrt{\frac{1}{4} - \frac{x}{2} }, 
        & 0 \le x < \frac{1}{2}, \\[6pt]
        \frac{1}{2} - \frac{1}{4D} + \frac{1}{2D}\sqrt{- \frac{1}{4} + \frac{x}{2} }, 
        & \frac{1}{2} \le x \le 1.
        \end{cases}
        \]
    \end{minipage}
    \hfill
    \begin{minipage}[t]{0.4\linewidth}
        \centering
        \vspace{0pt}
        \includegraphics[width=\linewidth]{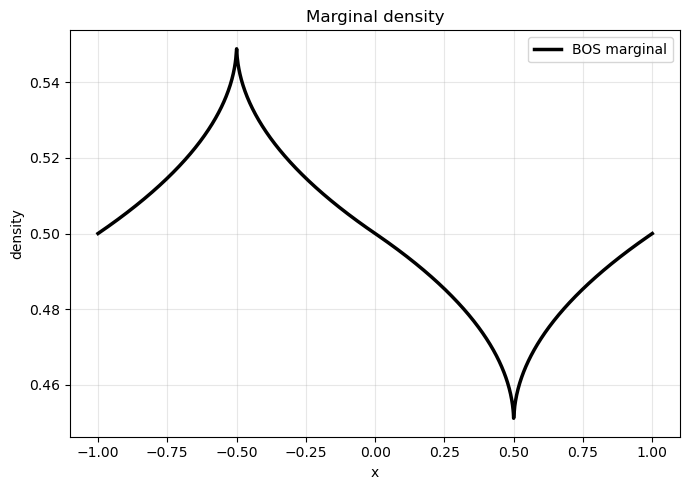}
    \end{minipage}

    \caption{Marginal density used in the experiments.}
    \label{fig:bos_marginal}
\end{figure}

\subsection{Learning Algorithms and Implementation Details} 

\subsubsection{Diffusion models}

We employ the denoising diffusion probabilistic model (DDPM; \citealp{ho2020denoising}), which corresponds to the standard OU process and is one of the most widely used diffusion frameworks. The drift coefficient is set as $\alpha_t = \alpha_{\min} + t(\alpha_{\max} - \alpha_{\min}) / (\overline{T}-1)$ and we fix $(\alpha_{\min}, \alpha_{\max}, \overline{T}) = (0.0005, 0.01, 500)$ throughout all experiments.

The original DDPM architecture, which contains approximately 37 million parameters, is excessively large for the simulated datasets considered here. To make the comparison computationally feasible, we reduce its size to about 260,000 parameters. In addition, we modify the architecture, which is originally designed for two-dimensional image inputs, to handle one-dimensional vectors of dimension five.
Based on this adjustment, we evaluate two different architectures in our experiments, both constructed to maintain comparable parameter counts (around 260K) to ensure a fair comparison.

\begin{itemize}
\item \textbf{DDPM with weight-sharing neural networks (DDPM with WSNN):} 
This architecture transforms the 5-dimensional input into a 160-dimensional spatial feature via a fully connected (FC) layer ($5 \times 160$) and embeds the scalar time input into a 160-dimensional time feature, which is further transformed by a FC layer ($160 \times 160$).
The spatial feature is then processed by stacking nine weight-sharing blocks while preserving the dimensionality throughout the network. Each block consists of two one-dimensional convolution layers (Conv1D; kernel size 11, padding 5) operating on the spatial feature, with layer normalization. The time feature is transformed by a FC layer ($160 \times 160$) and added to the spatial feature after the first convolution in each block.
A residual connection is applied to the spatial feature after the second convolution.
The final output is linearly projected back to the 5-dimensional space.
With 265,483 parameters, the model follows the pipeline:

\bc
\begin{tikzpicture}[
        >=Stealth,
        node distance = 1.5cm,
        every node/.style = {font=\large},
    ]
    \node (A0) {$5$};
    \node (B0) [right=of A0] {$160$};
    \node (C0) [right=of B0] {$160$};
    \node (D0) [right=of C0] {$160$};
    \node (H0) [right=of D0] {$5.$};

    \node (At0) [below=1cm of A0] {$1$};
    \node (At1) [below=1cm of B0] {$160$};
    \node (At) [below=1cm of C0] {$160$};

    \draw[->] (A0) -- node[midway, above, font=\scriptsize, align=center] {FC} (B0);
    \draw[->] (B0) -- node[midway, above, font=\scriptsize] {Conv1D} (C0);
    \draw[->] (C0) -- node[midway, above, font=\scriptsize] {Conv1D} (D0);
    \draw[->] (D0) -- node[midway, above, font=\scriptsize] {Linear} (H0);
    \draw[->] (At0) -- node[midway, above, font=\scriptsize, align=center] {Time\\embed} (At1);
    \draw[->] (At1) -- node[midway, above, font=\scriptsize, align=center] {FC} (At);

    \draw[->] (At) -- node[pos = 0.3, above, font=\scriptsize] {FC} (C0);

    \draw[dashed, rounded corners]
      ([xshift=-6pt,yshift=10pt]B0.north west) --
      ([xshift= 6pt,yshift=10pt]D0.north east)
     node[midway, above=2pt, font=\footnotesize] {Weight-sharing block $\times\ 9$}
      --
      ([xshift= 6pt,yshift=-10pt]D0.south east) --
      ([xshift= 6pt,yshift=-10pt]C0.south east) --
      ([xshift= 6pt,yshift=-10pt]At.south east) --
      ([xshift=-6pt,yshift=-10pt]At.south west) --
      ([xshift=-6pt,yshift=-10pt]C0.south west) --
      ([xshift=-6pt,yshift=-10pt]B0.south west) -- cycle;

\end{tikzpicture}
\ec

\item \textbf{DDPM with fully connected neural networks (DDPM with FCNN):} 
Unlike the WSNN version, this architecture replaces the Conv1D layers with $160 \times 160$ FC layers throughout the network.
To maintain comparable parameter counts, this architecture repeats the block only three times (instead of nine), resulting in 260{,}485 parameters.

\end{itemize}


Regardless of the sample size, both DDPM models are trained using the Adam optimization algorithm \citep{kingma2014adam} for 1,000 epochs, with a mini-batch size of 100 and learning rates of $5 \times 10^{-3}$ for WSNN and $10^{-3}$ for FCNN. All experiments are implemented in the \texttt{PyTorch} framework and executed on four \texttt{NVIDIA RTX 3090} GPUs.

\subsubsection{Other Baselines}

As baseline approaches, we consider one classical nonparametric method and one deep learning based method. For the classical method, we employ the KDE with a Gaussian kernel. The optimal bandwidth is selected according to Silverman’s rule of thumb \citep{silverman2018density}. The KDE implementation, including the sampling procedure, is carried out using the \texttt{Scikit-learn} module in \texttt{Python}. 

For the deep learning based method, we adopt the approach proposed by \citet{bos2024supervised}, hereafter referred to as BOS. This method follows a two-stage procedure: in the first stage, a KDE is constructed using half of the dataset, and in the second stage, a deep neural network is trained to approximate the resulting kernel density function using the remaining data. It is known that the resulting deep learning model achieves a fast convergence rate under a reasonably well-specified true structural density function.
In the KDE stage, the Epanechnikov kernel is employed, and the bandwidth is chosen of the form $C \times (\log n / n)^{1/D}$, where the constant $C$ is selected via 5-fold cross-validation.
In the supervised learning stage, a FCNN with $\lceil \log_2 (2n) \rceil$ hidden layers is employed, where each layer consists of $\lceil (2n)^{1/2} \rceil$ nodes and uses ReLU activation functions.
For further experimental details, we refer the reader to the official \texttt{GitHub} repository of \citet{bos2024supervised}.
Using the trained neural network as an estimated density function, we then generate samples via the Metropolis-Hastings algorithm (MH; \citealp{metropolis1953equation, hastings1970monte}).

\subsection{Performance Measure}

Since computing the total variation distance requires an explicit density estimator, which is not straightforward for diffusion models, we instead assess performance using the Wasserstein-1 distance based on test samples. Specifically, for each method, we independently generate $m = 5{,}000$ samples. For KDE, generating samples is straightforward because its density estimator corresponds to a Gaussian mixture. As mentioned earlier, we use the MH algorithm for BOS.

During MH sampling, we reflect proposed samples at the boundaries to ensure they remain in $[-1,1]^{D}$, since the neural network is only trained on this support.
It is well known that computing the Wasserstein-1 distance between two empirical distributions can be formulated as a linear programming problem. To stabilize the computation, we adopt the Sinkhorn algorithm proposed by \citet{cuturi2013sinkhorn}.

\begin{figure*}[!t]
    \centering
    \includegraphics[width=0.48\linewidth]{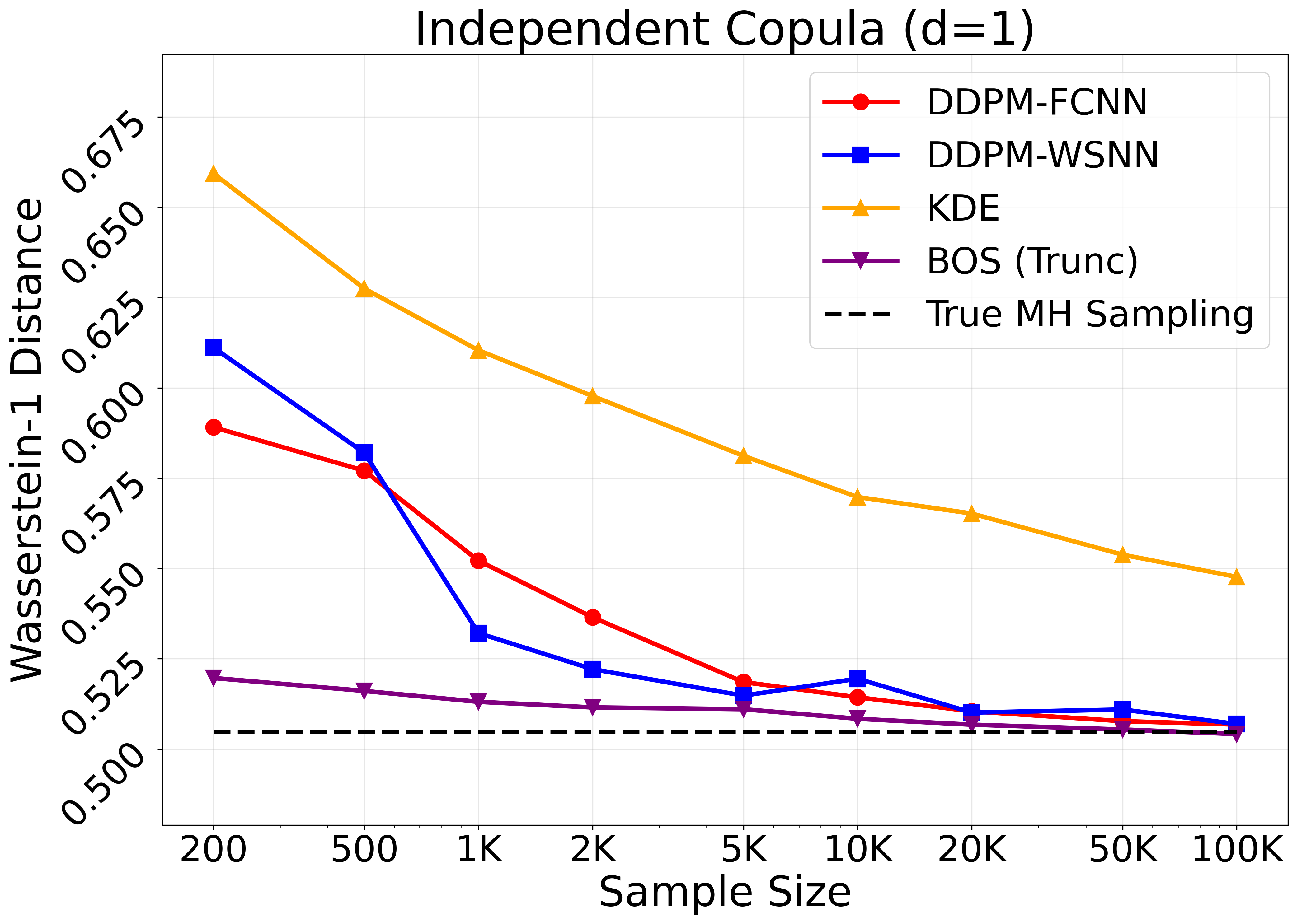}
    \hfill
    \includegraphics[width=0.48\linewidth]{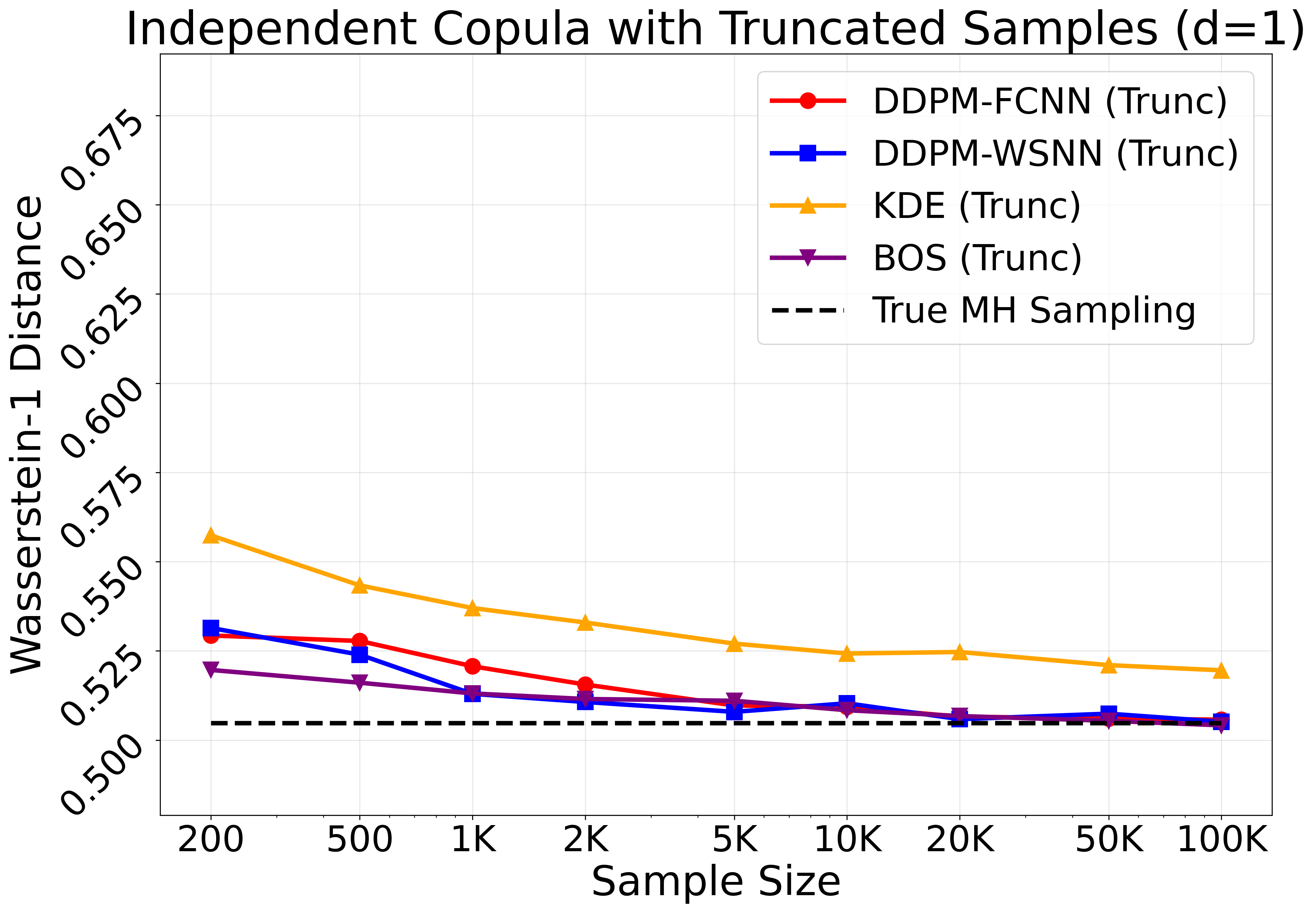}

    \vspace{0.5em}

    \includegraphics[width=0.48\linewidth]{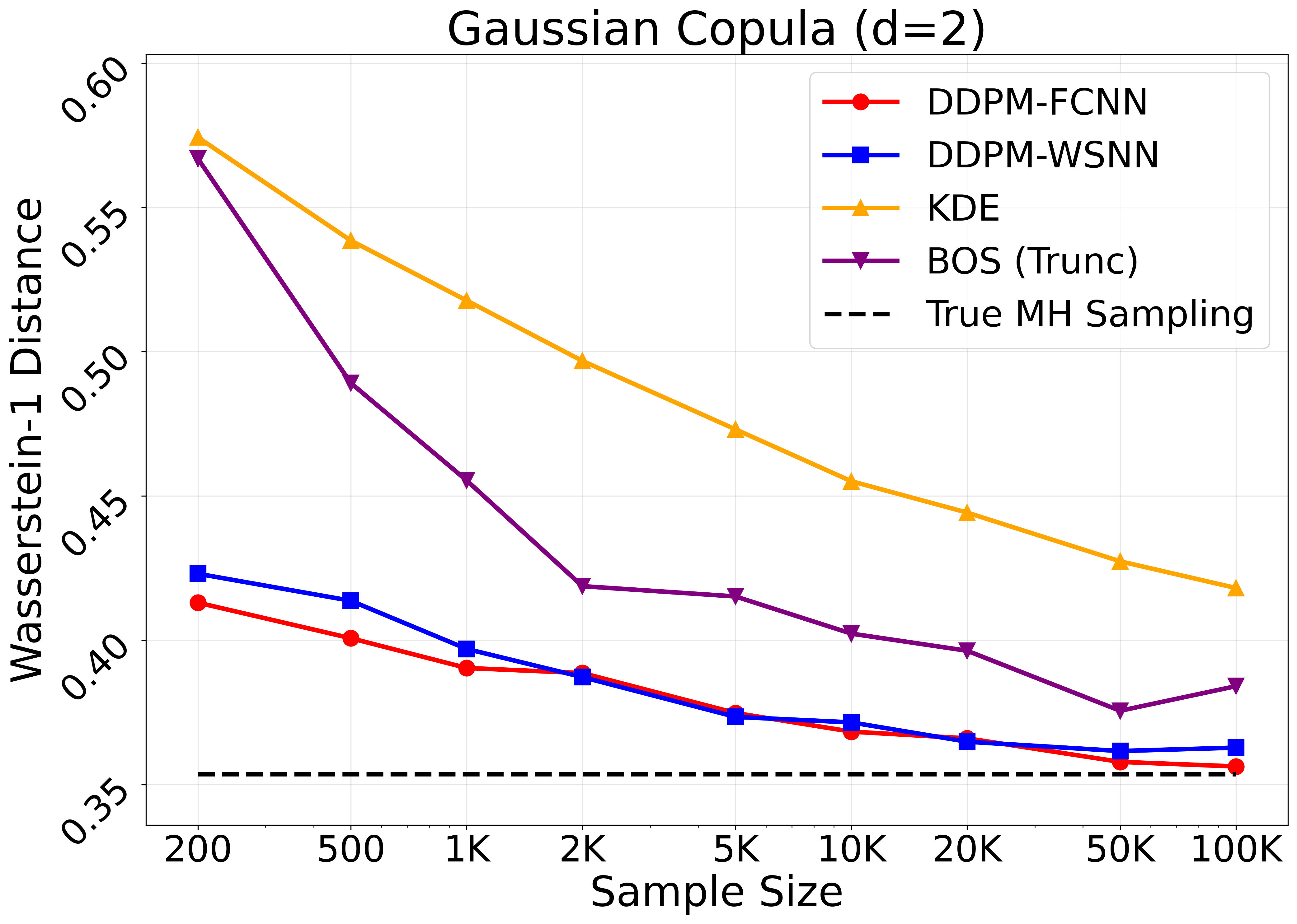}
    \hfill
    \includegraphics[width=0.48\linewidth]{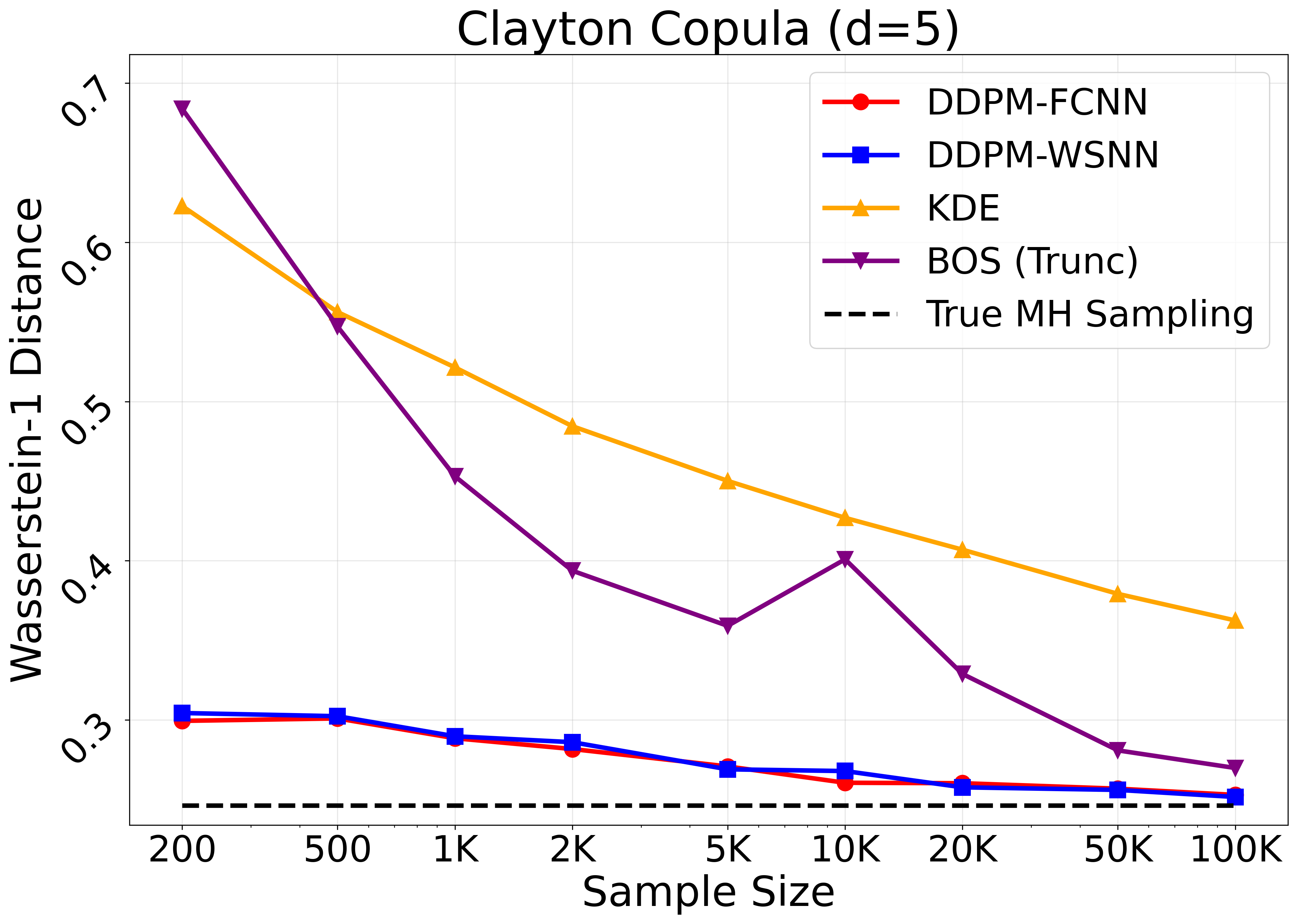}

    \caption{Wasserstein-1 distance values across different copula settings.
    Top row (left to right): independence copula results without truncation and with truncation for the generated samples.
    Bottom row (left to right): Gaussian copula and Clayton copula.}
    \label{fig:exp_n01}
\end{figure*}

\subsection{Performance Results}

For each setting, we vary the training sample size $n$ from $200$ to $100{,}000$ and compare the Wasserstein-1 distances of the two DDPM variants, one using WSNN and the other using FCNN, with those of KDE and BOS. The results are summarized in Figure~\ref{fig:exp_n01}.

Overall, DDPMs outperform KDE for all cases $d \in \{1,2,5\}$.
DDPMs also outperform BOS for $d = 2$ and $d = 5$, while their performance is comparable to that of BOS for $d = 1$, and slightly inferior when $n \leq 2{,}000$.
This behavior arises because sampling from the BOS model explicitly exploits knowledge of the support of the true density, which leads to a substantial performance advantage over DDPMs and KDE at small sample sizes.
The top-right panel of Figure~\ref{fig:exp_n01} shows that, when support information is used at the sampling stage, the performance gap between BOS and the other methods narrows considerably for small sample sizes.

The remaining panels demonstrate that DDPMs, even without using support information, still exhibit strong performance across all distributions. Remarkably, for all cases, DDPMs with large sample sizes $(n \geq 20{,}000)$ achieve performance comparable to the oracle benchmark,  where the MH algorithm is applied using the true density.

Moreover, DDPMs outperform KDE even with a small number of samples, highlighting the empirical effectiveness of diffusion models in limited-data regimes.
For $d = 1$, DDPMs with $n = 1{,}000$ samples already outperform KDE with $n = 100{,}000$ samples.
Even more strikingly, for $d = 2$ and $d = D$, DDPMs trained with only $n =  500$ samples outperform KDE trained with $n = 100{,}000$ samples.

These findings appear to deviate from our theoretical results, especially when $d = D$. After further reflection, our conjecture is as follows: diffusion models may not only adapt to the factorization structure but also to other forms of low-dimensional structure that have not yet been theoretically studied. Even when a density does not exhibit an explicit factorization structure, it may still possess hidden low-dimensional dependencies that diffusion models can effectively capture. Moreover, constructing a distribution completely devoid of any low-dimensional structure is inherently difficult, even in relatively low dimensions (e.g., $D = 5$).

We emphasize that these observations do not contradict classical nonparametric theory. In practice, estimating a five-dimensional function with $100{,}000$ samples can be more difficult than estimating a two- or three-dimensional function with only $500$ samples, due to the curse of dimensionality.

Furthermore, we conjecture that diffusion models also adapt to non-regular densities, for example, when the \Holder \ smoothness assumption is violated due to boundary singularities.
In our experiments, for $d = 2$ and $d = 5$, the copula density diverges near the boundary, where the variables $X_1,\ldots,X_{D}$ are highly correlated.
Such violations of regularity assumptions may lead to performance degradation for both BOS and KDE, since these methods rely on explicit pointwise estimation of the target density.

Although not shown in the figures, we also observe that DDPMs substantially outperform BOS and KDE when using alternative marginal distributions, including two-component Beta mixture marginals that vanish at the boundary. In this setting, the training of BOS becomes highly unstable and fails to yield performance improvements even for large sample sizes. In contrast, DDPMs consistently outperform BOS and KDE across these settings, providing empirical evidence of their robustness to a wider class of target distributions.

We also note that BOS becomes highly unstable even in moderately high dimensions (e.g., $D = 30$). This instability arises because BOS directly evaluates the density rather than its logarithm during training, and in high-dimensional settings, the absolute density values become extremely small, leading to numerical underflow. As a result, BOS is not applicable to higher-dimensional cases ($D > 30$).

Finally, across all experiments, the performance of DDPMs with fully connected networks is comparable to that of weight-sharing networks. This observation is consistent with the theoretical results reported in \citet{fan2025optimal}. Nevertheless, it is well known that sparse weight-sharing architectures such as U-Net play a crucial role in achieving state-of-the-art performance in practical applications. We therefore regard a deeper investigation into the theoretical advantages of sparse weight-sharing networks as an important direction for future work.

\section{Discussion}
\label{sec:disc}

We have demonstrated that an estimator constructed from the diffusion model is adaptive to the factorization structure and achieves the minimax optimal convergence rates.
In this section, we discuss some future directions related to our work.

Firstly, we believe that our analysis can be extended to other performance measures, such as the Wasserstein distance, following the approach of \citet{oko2023diffusion}. Also, our analysis can be extended to high-dimensional settings where the data dimension $D$ diverges as the sample size tends to infinity.
In this case, the convergence rate would depend on additional quantities such as $D$ and $|\cI|$.
An important future task would be to characterize upper bounds for $D$, which might depend on the structure of $p_0$, to guarantee statistical consistency.
Although this generalization is a natural extension for statisticians, the techniques required, such as sharp approximation theory, present significant challenges.

Secondly, while we assumed that $P_0$ possesses a Lebesgue density, this assumption might be eliminated.
For general probability distributions supported on the cube $[0,1]^D$, the minimax optimal rate with respect to the Wasserstein distance is $n^{-1/D}$ for $D > 2$.
If we restrict the class to distributions supported on a $d$-dimensional space (not necessarily a smooth manifold), the optimal rate improves to $n^{-1/d}$ \citep{weed2019sharp}.
It would be interesting to investigate whether an estimator constructed from the diffusion model achieves this optimal rate.
Further structural assumptions could be considered through conditional independence in directed and undirected graphs, which might replace the factorization assumption (\bF) for densities.

Finally, recall that a key component in constructing an optimal estimator is the use of weight-sharing neural networks to approximate functions defined through high-dimensional integrals of the form \eqref{eqapprox:gint}.
Recently, physics-informed neural networks (PINNs) have demonstrated remarkable success in modeling solutions to partial differential equations (PDEs) \citep{karniadakis2021physics, raissi2019physics}.
Notably, many solutions to PDEs, such as the heat equation and the Poisson equation, can be expressed as integrals of the form above \citep{courant2008methods}.
This suggests that weight-sharing networks could serve as a promising architecture for theoretical analysis of PINNs.


\acks{The authors are grateful to the Editor-in-Chief, Action Editor and three anonymous reviewers for their valuable comments which have led to substantial improvement in the paper. HK and MC were supported by Samsung Science and Technology Foundation under Project Number SSTF-BA2101-03.
DK was supported by the National Research Foundation of Korea (NRF) funded by the Korea government (MSIT) (RS-2023-00218231 and RS-2025-24683613). IO was supported by the National Research Foundation of Korea (NRF) funded by the Korea government (MSIT) (NRF-2022R1F1A1069695 and RS-2024-00411853).}



\bibliography{bib-short}       
\addtocontents{toc}{\protect\setcounter{tocdepth}{3}}

\newpage
\pagebreak

\appendix

\addtocontents{toc}{\protect\setcounter{tocdepth}{3}}
\renewcommand{\contentsname}{Appendix}
\tableofcontents

\section{Auxililary Lemmas}
This section provides auxiliary lemmas for proving the main theorems.

\subsection{Several Bounds Regarding $p_t(\bx)$}

In this subsection, we present several lemmas that bound $p_t(\bx), \nabla \log p_t(\bx),$ and the derivatives of $p_t(\bx)$.

\begin{lemma}[Upper and lower bounds for $p_t(\bx)$]
\label{secsc:ptbound}
Let $K, \tau_1 > 0$ be given and suppose the true density $p_0$ satisfies that $\tau_1 \leq p_0(\bx) \leq K$ for any $\bx \in [-1,1]^{D}$.  
Then, there exists a constant $C_{S,1} = C_{S,1}(D,K,\tau_1)$ such that
\bean
C_{S,1}^{-1 } \exp\left(-\frac{D   \{ (\|\bx \|_{\infty}-\mu_t) \vee 0 \}^2 }{\sigma_t^2}\right) \leq p_{t}(\bx) \leq C_{S,1} \exp\left(-\frac{   \{ (\|\bx \|_{\infty}-\mu_t) \vee 0 \}^2 }{2\sigma_t^2}\right)
\eean
for every $\bx \in \bbR^{D}$ and $t \geq 0$.
\end{lemma}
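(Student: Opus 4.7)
The plan is to start from the representation
\[
p_t(\bx) \;=\; \int_{[-1,1]^D} p_0(\bz)\,\phi_{\sigma_t}(\bx - \mu_t\bz)\,\d\bz,
\]
combined with the OU identity $\mu_t^2 + \sigma_t^2 = 1$, which yields $\max(\mu_t,\sigma_t) \geq 1/\sqrt 2$ for every $t\geq 0$. Set $r \defeq (\|\bx\|_\infty - \mu_t) \vee 0$. The two cases $\mu_t \geq 1/\sqrt 2$ and $\sigma_t \geq 1/\sqrt 2$ will be handled separately so that the prefactors remain uniformly bounded in~$t$.

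For the upper bound I would use $p_0 \leq K$ and derive two complementary estimates. First, since $\|\bz\|_\infty \leq 1$, $\|\bx - \mu_t\bz\|_\infty \geq r$, so $\|\bx - \mu_t\bz\|_2^2 \geq r^2$, giving the direct bound $p_t(\bx) \leq K\cdot 2^D (2\pi\sigma_t^2)^{-D/2} e^{-r^2/(2\sigma_t^2)}$, which is useful when $\sigma_t \geq 1/\sqrt 2$. Second, writing the integral as a product of $D$ one-dimensional integrals and applying the substitution $u = (x_j - \mu_t z_j)/\sigma_t$, each factor reduces to $\mu_t^{-1}\int_{(x_j-\mu_t)/\sigma_t}^{(x_j+\mu_t)/\sigma_t} \phi(u)\,\d u$; combining the trivial bound of $\mu_t^{-1}$ when $|x_j| \leq \mu_t$ with the one-sided Gaussian tail bound $\int_a^\infty \phi(u)\,\d u \leq \tfrac12 e^{-a^2/2}$ (which follows from $u^2 \geq a^2 + (u-a)^2$ for $u\geq a\geq 0$) yields the coordinate-wise estimate
\[
\int_{-1}^{1}\phi^{(1)}_{\sigma_t}(x_j - \mu_t z_j)\,\d z_j \;\leq\; \mu_t^{-1}\,\exp\!\bigl(-[(|x_j|-\mu_t)\vee 0]^2/(2\sigma_t^2)\bigr).
\]
Taking products and noting that $\sum_j [(|x_j|-\mu_t)\vee 0]^2 \geq r^2$ gives $p_t(\bx) \leq K\mu_t^{-D} e^{-r^2/(2\sigma_t^2)}$, which is useful when $\mu_t \geq 1/\sqrt 2$. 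One of the two regimes always applies, so the prefactor is bounded by a constant depending only on $D$ and $K$.

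For the lower bound I would use $p_0 \geq \tau_1$ and restrict the integral to a small neighborhood of the projection of $\bx/\mu_t$ onto the cube, namely $\bz^* \in [-1,1]^D$ with $z^*_j = \mathrm{sign}(x_j)(|x_j|/\mu_t \wedge 1)$, so that $|x_j - \mu_t z^*_j| = (|x_j| - \mu_t)\vee 0$ and hence $\|\bx - \mu_t\bz^*\|_2^2 \leq Dr^2$. Choosing $\delta \in (0,1]$ and $B = [-1,1]^D \cap (\bz^* + [-\delta,\delta]^D)$, which has volume at least $\delta^D$, the bound $\|\bx - \mu_t\bz\|_2^2 \leq 2Dr^2 + 2D\mu_t^2\delta^2$ for $\bz \in B$ yields
\[
p_t(\bx) \;\geq\; \tau_1\,\delta^D (2\pi\sigma_t^2)^{-D/2}\,\exp\!\bigl(-Dr^2/\sigma_t^2\bigr)\,\exp\!\bigl(-D\mu_t^2\delta^2/\sigma_t^2\bigr).
\]
Picking $\delta = 1$ when $\sigma_t \geq 1/\sqrt 2$ and $\delta = \sigma_t/\mu_t$ when $\mu_t \geq 1/\sqrt 2$ makes both the volumetric prefactor $\delta^D\sigma_t^{-D}$ and the auxiliary exponential $e^{-D\mu_t^2\delta^2/\sigma_t^2}$ uniformly bounded below, producing a constant depending only on $D$ and $\tau_1$. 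The main obstacle is precisely this uniformity in $t$: each individual estimate blows up at one end of the time interval (the direct bound at $t \to 0$, the coordinate-wise bound at $t \to \infty$, and the volumetric factor $\delta^D \sigma_t^{-D}$ at both ends), and the whole point of the argument is to patch them together via $\mu_t^2 + \sigma_t^2 = 1$.
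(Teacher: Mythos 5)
Your proof is correct. The paper does not actually prove this lemma --- it only cites Lemma A.2 of \citet{oko2023diffusion} --- so there is no in-text argument to compare against; your write-up is a genuine self-contained derivation. Every step checks out: the two complementary upper bounds (the crude $L^\infty$ estimate times $2^D$, effective when $\sigma_t\ge 1/\sqrt 2$, and the coordinate-wise Gaussian tail estimate, effective when $\mu_t\ge 1/\sqrt 2$); the one-sided tail inequality $\int_a^\infty\phi\le\tfrac12 e^{-a^2/2}$ from the elementary pointwise bound $u^2\ge a^2+(u-a)^2$; the cube projection $\bz^*$ giving $\|\bx-\mu_t\bz^*\|_2^2\le D r^2$; and the choice $\delta=1$ or $\delta=\sigma_t/\mu_t$, which keeps the volume factor $\delta^D\sigma_t^{-D}$ and the auxiliary exponential $e^{-D\mu_t^2\delta^2/\sigma_t^2}$ simultaneously bounded below. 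The organizing observation --- that $\mu_t^2+\sigma_t^2=1$ forces $\max(\mu_t,\sigma_t)\ge 1/\sqrt 2$, so one of the two regimes is always available with $t$-uniform constants --- is precisely what keeps the Gaussian-convolution prefactors from blowing up as $t\to 0$ or $t\to\infty$.
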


\begin{proof}
This is a re-statement of Lemma A.2 in \citet{oko2023diffusion}.
\end{proof}

\begin{lemma}[Boundedness of score function]
\label{secsc:scorebound}
Let $K, \tau_1 > 0$ be given and suppose the true density $p_0$ satisfies that $\tau_1 \leq p_0(\bx) \leq K$ for any $\bx \in [-1,1]^{D}$.  
Then, there exists a positive constant $C_{S,2} = C_{S,2}(D,K,\tau_1, \overline{\tau}, \underline{\tau} )$ such that
\bean
\left\| \nabla \log p_{t}(\bx) \right\|_{2} \leq \frac{C_{S,2}}{\sigma_t}  \left( \frac{\|\bx\|_{\infty}- \mu_t  }{\sigma_t} \vee 1 \right)
\eean
for every $\bx \in \bbR^{D}$ and $t \geq 0$.
\end{lemma}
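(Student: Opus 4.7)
The plan is to combine a Tweedie-type representation of the score with the boundedness of $p_0$ to reduce the problem to a one-dimensional Gaussian conditional expectation, which I can then control using Mills-ratio estimates.

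First, I will differentiate $p_t(\bx) = \int \phi_{\sigma_t}(\bx - \mu_t\by) p_0(\by)\,d\by$ under the integral to obtain
\bean
 \nabla \log p_t(\bx) = \frac{\mu_t \hat\by(\bx) - \bx}{\sigma_t^2},
 \qquad \hat\by(\bx) := \bbE[\bY_0 \mid \bX_t = \bx],\ \bY_0 \sim p_0.
\eean
Bounding $p_0$ above by $K$ in the numerator and below by $\tau_1$ in the denominator (both on $[-1,1]^D$), then applying the substitution $\bz = \bx - \mu_t \by$, the integrals reduce to a product over coordinates of $[x_j - \mu_t, x_j + \mu_t]$. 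Cancelling the factors for $j \neq i$ gives the one-dimensional reduction
\bean
 |x_i - \mu_t \hat y_i(\bx)| \leq \frac{K}{\tau_1}\, \bbE\big[|Z_i|\,\big|\,Z_i \in [x_i - \mu_t,\, x_i + \mu_t]\big], \qquad Z_i \sim \cN(0,\sigma_t^2).
\eean

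Second, I will show the right-hand side is $\leq C\max(\sigma_t,\,(|x_i|-\mu_t)^+)$ for a constant $C$. Assuming $x_i \geq 0$ by symmetry, there are two cases. In the \emph{interior} case $x_i \leq \mu_t$, the conditioning interval contains $0$ and I split using $\mu_t^2+\sigma_t^2 = 1$: if $\sigma_t \geq \mu_t$, the trivial bound $|Z_i| \leq 2\mu_t \leq 2\sqrt{2}\,\sigma_t$ suffices, while if $\sigma_t < \mu_t$ the interval contains $[0,\sigma_t]$, yielding $\bbP(Z_i \in A_i) \geq \Phi(1) - 1/2$, which, combined with $\bbE|Z_i| = \sigma_t\sqrt{2/\pi}$, gives a conditional expectation of order $\sigma_t$. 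In the \emph{exterior} case $x_i > \mu_t$, the interval lies in $(0,\infty)$, and I will use the monotonicity
\bean
 \bbE[Z_i \mid Z_i \in [a,b]] \leq \bbE[Z_i \mid Z_i \geq a] \qquad (0 \leq a < b),
\eean
which follows by writing the right side via the law of total expectation as a convex combination with $\bbE[Z_i \mid Z_i > b] \geq b \geq \bbE[Z_i \mid Z_i \in [a,b]]$. This reduces matters to bounding $\bbE[W \mid W \geq c]$ for $W \sim \cN(0,1)$ with $c = (x_i-\mu_t)/\sigma_t$, and the standard Mills-ratio inequalities $\bbE[W \mid W \geq c] \leq \max(2,\, c + 1/c)$ deliver the bound $\sigma_t \cdot \bbE[W|W\geq c] \leq 2\max(\sigma_t, x_i - \mu_t)$.

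Combining across coordinates and taking the $\ell^2$-norm then gives
\bean
 \|\nabla \log p_t(\bx)\|_2 \leq \frac{\sqrt{D}}{\sigma_t^2}\cdot \frac{CK}{\tau_1}\max\!\big(\sigma_t,\,(\|\bx\|_\infty - \mu_t)^+\big) = \frac{C_{S,2}}{\sigma_t}\Big(\frac{\|\bx\|_\infty - \mu_t}{\sigma_t} \vee 1\Big),
\eean
the stated bound. The main obstacle is the 1D estimate in the \emph{transition regime} $\mu_t < |x_i| \lesssim \mu_t + \sigma_t$: the naive inequality $|x_i - \mu_t \hat y_i| \leq |x_i| + \mu_t$ only yields an order $\sigma_t^{-2}$ bound on the score, which is far too weak as $\sigma_t \to 0$; the Mills-ratio monotonicity argument is precisely what restores the correct order $\sigma_t^{-1}$ inside the effective support.
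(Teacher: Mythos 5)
Your proof is substantively correct and gives a genuine self-contained derivation, whereas the paper's own ``proof'' is only a citation to Lemma A.3 of \citet{oko2023diffusion}. The Tweedie representation $\nabla \log p_t(\bx) = \sigma_t^{-2}\bigl(\mu_t\,\bbE[\bY_0\mid\bX_t=\bx] - \bx\bigr)$, the density-ratio bound $\tau_1 \leq p_0 \leq K$ on $[-1,1]^D$, the coordinatewise factorization of the Gaussian kernel that reduces everything to a one-dimensional truncated-Gaussian conditional expectation, and the subsequent case split (interior vs.\ exterior, with the inverse Mills ratio controlling the exterior/transition regime) are all sound and, in particular, the monotonicity $\bbE[Z\mid Z\in[a,b]] \leq \bbE[Z\mid Z\geq a]$ and the interior-regime argument using $\mu_t^2+\sigma_t^2=1$ are verified correctly.

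The one slip is in the Mills-ratio inequality you invoke. Writing $\lambda(c) = \phi(c)/(1-\Phi(c))$ for $W\sim\cN(0,1)$, the claim $\bbE[W\mid W\geq c] = \lambda(c) \leq \max(2,\,c+1/c)$ is technically true but vacuous near $c=0$: since $c+1/c\geq 2$ by AM--GM, the right-hand side always equals $c+1/c$, which diverges as $c\to 0^+$, so multiplying by $\sigma_t$ does not yield $2\max(\sigma_t,\,x_i-\mu_t)$ when $x_i-\mu_t < \sigma_t$ -- exactly the transition regime you correctly flag as the crux. What you actually need is a bound that stays finite at $c=0$, e.g.
\[
\lambda(c) \;\leq\; 1 + c \quad\text{for all } c\geq 0,
\]
which holds because $\lambda$ is increasing with $\lambda(0)=\sqrt{2/\pi}<1$ and $\lambda(c)-c$ is decreasing to $0$ (equivalently, $\lambda(c)\leq 2\max(1,c)$ by combining $\lambda(1)<2$ with $\lambda(c)\leq c+1/c\leq 2c$ for $c\geq 1$). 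Either version gives $\sigma_t\lambda\bigl((x_i-\mu_t)/\sigma_t\bigr) \leq \sigma_t + (x_i-\mu_t) \leq 2\max\bigl(\sigma_t,\,x_i-\mu_t\bigr)$ uniformly, closing the gap. With that one-line correction the argument is complete; the resulting constant $C_{S,2} = C\,\sqrt{D}\,K/\tau_1$ does not even need $\overline{\tau},\underline{\tau}$, which is consistent with (in fact slightly stronger than) the lemma statement.
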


\begin{proof}
This is a re-statement of Lemma A.3 in \citet{oko2023diffusion}.
\end{proof}

\begin{lemma}[Boundedness of derivatives]
\label{secsc:ptderivativebound}
Let $K > 0$ be given and suppose the true density $p_0$ satisfies that $ p_0(\bx) \leq K$ for any $\bx \in [-1,1]^{D}$.
For any $\bk \in  \bbZ_{\geq 0}^{D}$, there exists a positive constant $C_{S,3} = C_{S,3}(D,K,\bk, \overline{\tau}, \underline{\tau} )$ such that
\bean
\left\vert (\D^{\bk} p_t)(\bx) \right\vert \leq \frac{C_{S,3}}{\sigma_t^{k.}}
\eean
for every $\bx \in \bbR^{D}$ and $t \geq 0$.
\end{lemma}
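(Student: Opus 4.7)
My plan is to differentiate the convolution representation $p_t(\bx)=\int p_0(\by)\phi_{\sigma_t}(\bx-\mu_t\by)\,\d\by$ under the integral sign — which is justified for $t>0$ because $p_0$ is bounded with compact support — transfer all partial derivatives onto the Gaussian kernel, and then estimate the resulting integral uniformly in $(\bx,t)$. The case $t=0$ is vacuous since the claimed upper bound equals $+\infty$, so I assume $t>0$ throughout.

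The main analytic input is the coordinatewise Hermite identity $\D^{\bk}\phi_\sigma(\bz) = \sigma^{-k.}\,P_{\bk}(\bz/\sigma)\,\phi_\sigma(\bz)$, where $P_{\bk}$ is a polynomial of total degree $k.$ depending only on $\bk$ and $D$, obtained by applying the one-dimensional Rodrigues formula in each coordinate. The key observation is that polynomial factors can be absorbed into a slightly wider Gaussian: since $|P_{\bk}(\bu)|\,e^{-\|\bu\|^2/4}$ is uniformly bounded in $\bu$, an elementary estimate gives $|P_{\bk}(\bu)|\,\phi_1(\bu)\leq M_{\bk,D}\,\phi_{\sqrt 2}(\bu)$, which after rescaling by $\sigma_t$ becomes $|\D^{\bk}_{\bx}\phi_{\sigma_t}(\bx-\mu_t\by)|\leq M_{\bk,D}\,\sigma_t^{-k.}\,\phi_{\sqrt 2\sigma_t}(\bx-\mu_t\by)$. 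Integrating against $p_0$ yields
\bean
|\D^{\bk}p_t(\bx)|\ \leq\ M_{\bk,D}\,\sigma_t^{-k.}\,\tilde p_t(\bx), \qquad \tilde p_t(\bx)\ :=\ \int p_0(\by)\,\phi_{\sqrt 2\sigma_t}(\bx-\mu_t\by)\,\d\by,
\eean
where $\tilde p_t$ is simply the density of $\mu_t \bX_0+\sqrt 2\,\sigma_t \bZ$ for $\bZ\sim\cN(\bzero_D,\bbI_D)$ independent of $\bX_0$.

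The remaining task is to bound $\|\tilde p_t\|_\infty$ uniformly in $t\geq 0$ by a constant depending only on $D$ and $K$. Writing $\tilde p_t = q_t*\phi_{\sqrt 2\sigma_t}$ with $q_t(\bz)=\mu_t^{-D}p_0(\bz/\mu_t)$ the density of $\mu_t\bX_0$, we have $\|q_t\|_1=1$ and $\|q_t\|_\infty\leq K\mu_t^{-D}$, so two complementary applications of Young's convolution inequality give
\bean
\|\tilde p_t\|_\infty\ \leq\ \min\bigl\{K\mu_t^{-D},\ (4\pi\sigma_t^2)^{-D/2}\bigr\}.
\eean
Individually each of these factors diverges at one extreme of the time axis, but the constraint $\mu_t^2+\sigma_t^2=1$ forces at least one of $\mu_t,\sigma_t$ to exceed $1/\sqrt 2$ at every $t$; splitting cases according to which holds, the corresponding bound is $\leq K\,2^{D/2}$ or $\leq (2\pi)^{-D/2}$, and therefore $\|\tilde p_t\|_\infty\leq\max\{K\cdot 2^{D/2},\,(2\pi)^{-D/2}\}$ uniformly in $t$. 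Combining this with the previous display completes the proof with $C_{S,3}=C_{S,3}(D,K,\bk)$, which is in fact slightly stronger than the stated dependence on $(\overline{\tau},\underline{\tau})$.

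I expect the only genuinely delicate step to be this final uniformization: neither Young estimate is individually uniform in $t$ (one diverges as $t\to\infty$, the other as $t\to 0^+$), and exploiting $\mu_t^2+\sigma_t^2=1$ to interpolate between them is what upgrades the two pointwise-in-$t$ bounds to a single uniform one. The remaining ingredients — differentiation under the integral, the Hermite identity, and the polynomial-times-Gaussian domination — are entirely standard.
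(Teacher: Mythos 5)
Your proof is correct and self-contained; the paper itself does not prove this lemma but simply cites Lemma~A.3 of Oko et al.\ (2023), so you have supplied an argument where the paper supplies only a pointer. Every step checks out: differentiation under the integral sign for $t>0$ is justified by $p_0$ being bounded with compact support and $\phi_{\sigma_t}$ being smooth; the coordinatewise Rodrigues/Hermite identity $\D^{\bk}\phi_\sigma(\bz)=\sigma^{-k.}P_{\bk}(\bz/\sigma)\phi_\sigma(\bz)$ is standard; absorbing the polynomial into a wider Gaussian via $\phi_1(\bu)/\phi_{\sqrt 2}(\bu)=2^{D/2}e^{-\|\bu\|_2^2/4}$ is a clean device; the rescaled domination $|\D^{\bk}_{\bx}\phi_{\sigma_t}(\bx-\mu_t\by)|\le M_{\bk,D}\,\sigma_t^{-k.}\phi_{\sqrt 2\sigma_t}(\bx-\mu_t\by)$ follows exactly as you say; and the two Young estimates $\|\tilde p_t\|_\infty\le K\mu_t^{-D}$ and $\|\tilde p_t\|_\infty\le(4\pi\sigma_t^2)^{-D/2}$, patched together via $\mu_t^2+\sigma_t^2=1$ (so one of $\mu_t,\sigma_t$ is at least $1/\sqrt 2$), give the uniform-in-$t$ bound. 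The only imprecision is your remark that ``the case $t=0$ is vacuous'': this is true only when $k.\ge 1$; when $\bk=\bzero$ the claimed bound at $t=0$ is the finite number $C_{S,3}$, but the inequality then reads $|p_0(\bx)|\le C_{S,3}$, which your final constant $C_{S,3}=M_{\bzero,D}\max\{K2^{D/2},(2\pi)^{-D/2}\}\ge K$ also covers, so nothing is lost. Your observation that the resulting constant does not actually need to depend on $(\overline{\tau},\underline{\tau})$ is correct and is a genuine (if minor) sharpening over the statement; this is because your interpolation uses only the identity $\mu_t^2+\sigma_t^2=1$, which holds for any admissible drift $\alpha_t$.
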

\begin{proof}
This is a re-statement of Lemma A.3 in \citet{oko2023diffusion}, where $k. = \| \bk \|_{1}$.
\end{proof}

\subsection{Basic Approximation Results for Neural Networks}

In this subsection, we present fundamental approximation results for using ReLU networks to approximate elementary functions.
We also define the concatenation and parallelization in weight-sharing networks; see Lemma~\ref{secnn:sharing}.

\begin{lemma}[Concatenation]
\label{secnn:comp}
Let $K \in \bbN, \{d_1,\ldots,d_{K+1}\} \subseteq \bbN$ be given. Consider $L^{(k)} \in \bbN_{\geq 2}$, $ s^{(k)}, M^{(k)} > 0$ and $\bd^{(k)} = (d_1^{(k)},\ldots,d_{L^{(k)}}^{(k)})^{\top} \in \bbN^{L^{(k)}}$ with $d_1^{(k)} = d_{k}$ and $ d_{L^{(k)}}^{(k)} = d_{k+1}$ for $k \in [K]$. For any neural networks $f_1,\ldots,f_{K}$ with $f_k \in \cF_{\rm NN}(L^{(k)},\bd^{(k)},s^{(k)}, M^{(k)}), k \in [K]$, there exists a neural network $f \in \cF_{\rm NN}(L,\bd,s,M)$ with 
\bean
L = \sum_{k=1}^{K} L^{(k)}, \quad \| \bd \|_{\infty} \leq 2 \max_{k \in [K]} \|\bd^{(k)}\|_{\infty}, \quad s \leq 2 \sum_{k=1}^{K} s^{(k)}, \quad M = \max_{k \in [K]} M^{(k)}
\eean
such that $f(\bx) = (f_K \circ \cdots \circ f_{1})(\bx) $ for any $\bx \in \bbR^{d_1}$.
\end{lemma}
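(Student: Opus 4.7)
The plan is to construct $f = f_K \circ \cdots \circ f_1$ iteratively, where the only real subtlety is that each $f_k$ has no ReLU on its final affine layer, so naively concatenating the networks would not respect the alternating affine/ReLU architecture at the interfaces. I would resolve this by means of the classical sign-splitting identity $x = \rho(x) - \rho(-x)$, which holds for every $x \in \bbR$ and lets us transmit a real-valued output through one ReLU layer by representing it as the difference of two non-negative quantities. The doubling in width and sparsity in the statement exactly matches the cost of this device.

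By induction on $K$ it suffices to treat the case $K = 2$. Writing $f_1(\bx) = W_{L^{(1)}}^{(1)} \rho(V(\bx)) + \bb_{L^{(1)}}^{(1)}$ with $V(\bx)$ the output of the first $L^{(1)}-1$ layers of $f_1$, I would replace the last (affine, no ReLU) layer of $f_1$ by the layer
\begin{equation*}
 \rho\!\left(\begin{bmatrix} W_{L^{(1)}}^{(1)} \\ -W_{L^{(1)}}^{(1)} \end{bmatrix} \rho(V(\bx)) + \begin{bmatrix} \bb_{L^{(1)}}^{(1)} \\ -\bb_{L^{(1)}}^{(1)} \end{bmatrix}\right) = \begin{bmatrix} \rho(\bh) \\ \rho(-\bh) \end{bmatrix}, \qquad \bh := f_1(\bx),
\end{equation*}
so that the output of (the modified) $f_1$ is a $2d_2$-dimensional non-negative vector. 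Then I would modify the first layer of $f_2$, replacing its weight matrix $W_1^{(2)}$ by $[W_1^{(2)},\ -W_1^{(2)}]$ (bias unchanged): applied to the vector $(\rho(\bh), \rho(-\bh))^{\top}$ it produces exactly $W_1^{(2)}(\rho(\bh) - \rho(-\bh)) + \bb_1^{(2)} = W_1^{(2)} \bh + \bb_1^{(2)}$, which is the correct pre-activation of the first ReLU of $f_2$. All subsequent layers of $f_2$ are copied verbatim.

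A routine bookkeeping then confirms the parameter bounds: the combined depth is $L^{(1)} + L^{(2)}$, iteratively giving $L = \sum_{k} L^{(k)}$; internal layer widths of each $f_k$ are unchanged, while each interface width is at most $2 d_{k+1} \leq 2 \max_k \|\bd^{(k)}\|_{\infty}$; the nonzero counts in the modified last-layer weight/bias of $f_k$ and first-layer weight of $f_{k+1}$ at most double, yielding total sparsity $\leq 2 \sum_k s^{(k)}$; and since sign flipping and stacking preserve the entrywise $\ell^{\infty}$-norm, the weight bound stays $\max_k M^{(k)}$. I do not expect a genuine mathematical obstacle here — the only step requiring mild care is the accounting of how the sign-splitting construction at each of the $K-1$ interfaces contributes to the width and sparsity budgets; the $x = \rho(x) - \rho(-x)$ trick itself makes the construction essentially automatic.
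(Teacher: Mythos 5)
Your construction is correct, and it is the standard sign-splitting argument underlying such concatenation lemmas. The paper itself does not supply a proof here — it simply defers to Remark 13 of Nakada and Imaizumi (2020) — so there is no competing argument in the paper to compare against; your self-contained proof via $x = \rho(x) - \rho(-x)$ at each of the $K-1$ interfaces, with the accompanying width-doubling and sparsity accounting, is the accepted way to obtain exactly the stated bounds.
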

\begin{proof}
This is a re-statement of Remark 13 in \citet{nakada2020adaptive}.
\end{proof}

\begin{lemma}[Parallelization]
\label{secnn:par}
Let $K \in \bbN$ be given. Consider $L^{(k)} \in \bbN_{\geq 2}$, $ s^{(k)}, M^{(k)} > 0$, $\bd^{(k)} = (d_1^{(k)},\ldots,d_{L^{(k)}}^{(k)})^{\top} \in \bbN^{L^{(k)}}$ for $k \in [K]$.
For any neural networks $f_1,\ldots,f_{K}$ with 
\bean
f_k \in \cF_{\rm NN} (L^{(k)}, \bd^{(k)}, s^{(k)}, M^{(k)} ), \quad k \in [K],
\eean
there exists a neural network $f \in \cF_{\rm NN}(L,\bd,s,M)$ with 
\bean
&& L = \max_{k \in [K]} L^{(k)}, \quad \|\bd\|_{\infty} \leq 2 \sum_{k=1}^{K} \|\bd^{(k)}\|_{\infty},
\\
&& s \leq 2\sum_{k=1}^{K} \left(s^{(k)} + L d_{L^{(k)}}^{(k)} \right), \quad M \leq \left(\max_{k \in [K]} M^{(k)} \right) \vee 1
\eean
such that
\bean
f(\bx) = \left( f_1(\bx^{(1)}),\ldots, f_K(\bx^{(K)}) \right) \in \bbR^{d_{L^{(1)}}^{(1)} + \cdots + d_{L^{(K)}}^{(K)}}
\eean
for $\bx = (\bx^{(1)},\ldots,\bx^{(K)}) \in \bbR^{d_1^{(1)}+\cdots+d_{1}^{(K)} }$. If $L^{(1)} = \cdots = L^{(K)} = \widetilde L$ with $\widetilde L \in \bbN_{\geq 2}$, $(L,\bd,s,M)$ also satisfies
\bean
L = \widetilde L, \quad \|\bd\|_{\infty} \leq \sum_{k=1}^{K} \|\bd^{(k)}\|_{\infty}, \quad s \leq \sum_{k=1}^{(K)} s^{(k)}, \quad M \leq  \max_{k \in [K]} M^{(k)}.
\eean
\end{lemma}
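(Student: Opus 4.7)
The plan is to reduce the general case to the special equal-depth case and handle the general case by padding shorter networks up to depth $L$ using the identity trick for ReLU.

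\textbf{Equal-depth case first.} When $L^{(1)} = \cdots = L^{(K)} = \widetilde L$, I would write each $f_k$ as an alternation of affine maps $\bx \mapsto W_i^{(k)} \bx + \bb_i^{(k)}$ and ReLU activations, and then form the single network whose $i$th affine map has weight matrix $W_i = \mathrm{diag}(W_i^{(1)}, \ldots, W_i^{(K)})$ and bias $\bb_i = (\bb_i^{(1)\top}, \ldots, \bb_i^{(K)\top})^\top$. Since $\rho$ acts coordinatewise, this block-diagonal network evaluates to $(f_1(\bx^{(1)}), \ldots, f_K(\bx^{(K)}))$ on inputs $\bx = (\bx^{(1)}, \ldots, \bx^{(K)})$. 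The depth is $\widetilde L$, each layer width is $\sum_k d_i^{(k)} \leq \sum_k \|\bd^{(k)}\|_\infty$, the sparsity is $\sum_k s^{(k)}$, and the weight magnitude bound is $\max_k M^{(k)}$, which matches the second claim of the lemma.

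\textbf{General case via identity padding.} For the general claim I would pad each $f_k$ to exact depth $L = \max_{k'} L^{(k')}$ by appending $L - L^{(k)}$ ReLU layers that implement the identity on the $d_{L^{(k)}}^{(k)}$-dimensional output. Using the decomposition $y = \rho(y) - \rho(-y)$, each padding layer stores the pair $(\rho(y), \rho(-y))$ coordinatewise, which requires $2 d_{L^{(k)}}^{(k)}$ neurons and $O(d_{L^{(k)}}^{(k)})$ nonzero weights per layer (the block $\bigl(\begin{smallmatrix} 1 & -1 \\ -1 & 1 \end{smallmatrix}\bigr)$ coordinatewise suffices to propagate this pair across one ReLU), with a final subtraction layer reproducing $y$. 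Writing $\widetilde f_k$ for the padded network, I then apply the equal-depth construction to $\widetilde f_1, \ldots, \widetilde f_K$. Summing contributions gives the stated bounds: depth $L$, width bounded by $2 \sum_k \|\bd^{(k)}\|_\infty$ (the factor $2$ absorbs the doubling from $(\rho(y), \rho(-y))$), sparsity bounded by $2 \sum_k (s^{(k)} + L\, d_{L^{(k)}}^{(k)})$, and weight magnitude bounded by $\max_k M^{(k)} \vee 1$ since the padding uses only weights of absolute value $1$.

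\textbf{Where I expect the friction.} The mathematical content is routine; the whole difficulty is bookkeeping. In particular one must be careful that (i) the identity padding is realized with ReLU and not with a linear layer (hence the doubled width and the coordinatewise $\bigl(\begin{smallmatrix} 1 & -1 \\ -1 & 1 \end{smallmatrix}\bigr)$ trick), (ii) the final subtraction layer that folds $(\rho(y), \rho(-y))$ back to $y$ is counted correctly so that the output dimension of $\widetilde f_k$ equals $d_{L^{(k)}}^{(k)}$ rather than $2 d_{L^{(k)}}^{(k)}$, and (iii) the per-layer parameter count for each padding block, summed over the $L - L^{(k)}$ extra layers, is absorbed into the $L\, d_{L^{(k)}}^{(k)}$ term with a universal constant at most $2$. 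Once these three points are checked layer by layer, the lemma follows by direct addition of the per-$k$ contributions, and the equal-depth refinement is immediate since no padding is needed and the identity trick is unused.
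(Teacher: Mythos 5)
Your overall strategy is the standard one and matches what the cited source must do: block-diagonal stacking for equal depths, identity padding to level unequal depths. The paper itself offers no proof here (it simply invokes Lemma F.3 of Oko et al.\ 2023), so your sketch is a genuine attempt to fill the gap, and the equal-depth case is handled exactly right.

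The one place your construction does not quite close is the padding layer you propose. You suggest propagating the pair $(\rho(y),\rho(-y))$ with the coordinatewise block $\bigl(\begin{smallmatrix} 1 & -1 \\ -1 & 1 \end{smallmatrix}\bigr)$. That does work functionally, but it costs four nonzeros per coordinate per padding layer. Over $L-L^{(k)}$ padding layers this contributes roughly $4(L-L^{(k)})\,d_{L^{(k)}}^{(k)}$ to the sparsity, which exceeds the allotted budget $2L\,d_{L^{(k)}}^{(k)}$ whenever $L^{(k)} < L/2$; so your own point (iii), which demands a per-layer constant of at most $2$, is not met by the matrix you chose. The fix is the observation already encoded in Lemma~\ref{secnn:id}: once $(\rho(y),\rho(-y))$ is in hand, both coordinates are nonnegative, so ReLU acts as the identity on them and the propagation matrix can simply be $\bigl(\begin{smallmatrix} 1 & 0 \\ 0 & 1 \end{smallmatrix}\bigr)$, costing only two nonzeros per coordinate. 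Concretely, the padded network should: double the final affine layer of $f_k$ to emit $(W_{L^{(k)}}\cdot+\bb_{L^{(k)}},\, -W_{L^{(k)}}\cdot-\bb_{L^{(k)}})$, which at most doubles the sparsity contributed by that layer and is absorbed by the overall factor $2$; then apply $L-L^{(k)}-1$ identity hidden layers at $2d_{L^{(k)}}^{(k)}$ nonzeros each; then one final affine layer $(\mathbb I,-\mathbb I)$ at $2d_{L^{(k)}}^{(k)}$ nonzeros. This yields $s(\widetilde f_k)\le 2\bigl(s^{(k)}+L\,d_{L^{(k)}}^{(k)}\bigr)$ and, after the block-diagonal parallelization, the claimed bound. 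With that replacement your argument is complete; the rest of your bookkeeping on width, depth, and weight magnitude is correct.
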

\begin{proof}
This is a re-statement of Lemma F.3 in \citet{oko2023diffusion}.
\end{proof}

\begin{lemma}[Linear function]
\label{secnn:lin}
Let $W \in \bbR^{d_2 \times d_1}, \bb \in \bbR^{d_2}$ be given with $d_1,d_2 \in \bbN$. There exists a neural network $f_{\rm lin} \in \cF_{\rm NN}(L,\bd,s,M)$ with
\bean
L = 2, \quad \bd = (d_1,2d_2,d_2)^{\top}, \quad s = 2\|W\|_{0} + 2\|\bb\|_{0} + 2d_2, \quad M = \max\{ \|W\|_{\infty}, \|\bb\|_{\infty}, 1\}
\eean
such that $f_{\rm lin}(\bx) = W\bx + \bb$ for any $\bx \in \bbR^{d_1}$.
\end{lemma}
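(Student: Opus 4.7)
The plan is to exploit the standard identity $x = \rho(x) - \rho(-x)$ for $x \in \bbR$, which lets a two-layer ReLU network implement any affine map. I first duplicate the affine transformation in the hidden layer: set the first weight matrix and bias to
\bean
W_{1} = \begin{pmatrix} W \\ -W \end{pmatrix} \in \bbR^{2d_{2} \times d_{1}}, \qquad \bb_{1} = \begin{pmatrix} \bb \\ -\bb \end{pmatrix} \in \bbR^{2d_{2}},
\eean
so that $W_{1} \bx + \bb_{1}$ stacks $W\bx+\bb$ on top of $-(W\bx+\bb)$. Applying the ReLU coordinatewise then produces $(\rho(W\bx+\bb), \rho(-W\bx-\bb))^{\top}$. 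In the output layer I simply subtract the two halves: take
\bean
W_{2} = \begin{pmatrix} \bbI_{d_{2}} & -\bbI_{d_{2}} \end{pmatrix} \in \bbR^{d_{2} \times 2d_{2}}, \qquad \bb_{2} = \bm{0}_{d_{2}},
\eean
giving $f_{\rm lin}(\bx) = \rho(W\bx+\bb) - \rho(-W\bx-\bb) = W\bx + \bb$ by the identity above.

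It then remains to verify the architectural bounds, which is bookkeeping. The depth is $L=2$ and the width vector is $\bd = (d_{1}, 2d_{2}, d_{2})^{\top}$ by construction. For sparsity, $\|W_{1}\|_{0} = 2\|W\|_{0}$ and $\|\bb_{1}\|_{0} = 2\|\bb\|_{0}$ because stacking with signs preserves the support; the second layer contributes $\|W_{2}\|_{0} = 2d_{2}$ from the two copies of the identity and $\|\bb_{2}\|_{0} = 0$. Summing gives exactly $s = 2\|W\|_{0} + 2\|\bb\|_{0} + 2d_{2}$. For the magnitude bound, the entries of $W_{1}$ and $\bb_{1}$ have absolute values bounded by $\|W\|_{\infty}$ and $\|\bb\|_{\infty}$ respectively, while $W_{2}$ has entries in $\{-1,0,1\}$, so all weights and biases lie in $[-M, M]$ with $M = \max\{\|W\|_{\infty}, \|\bb\|_{\infty}, 1\}$.

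There is no real obstacle here beyond checking that the support count for $W_{1}$ really doubles the nonzeros of $W$ (i.e.\ the block construction does not allow any cancellation or sharing) and that the ReLU identity is valid pointwise. Both are immediate, so the proof will be a short explicit construction followed by direct counting.
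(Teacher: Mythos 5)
Your construction is exactly the one in the paper: stack $W,-W$ and $\bb,-\bb$ in the hidden layer, then recombine with $(\bbI_{d_2},-\bbI_{d_2})$, using $x=\rho(x)-\rho(-x)$. The bookkeeping for $\bd$, $s$, and $M$ also matches, so this is correct and essentially the same proof.
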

\begin{proof}
Note that $W\bx+ \bb = \rho(W\bx + \bb )-\rho(-W\bx - \bb)$ for any $\bx \in \bbR^{d_1}$. Let $f_{\rm lin}(\cdot) =  W_2 \rho ( W_1 \cdot + \bb_{1})  + \bb_2$ with
\bean
&&W_1 = \left(W^{\top}, -W^{\top} \right)^{\top} \in \bbR^{2d_2 \times d_1}, \quad \bb_{1} = \left(\bb^{\top} ,-\bb^{\top} \right)^{\top} \in \bbR^{2d_2},
\\
&& W_2 = \left(\bbI_{d_2}, -\bbI_{d_2} \right) \in \bbR^{d_2 \times 2d_2}, \quad \bb_{2} = \mathbf{0}_{d_2} \in \bbR^{d_2},
\eean
where $\bbI_{d_2}$ denotes the $d_2 \times d_2$ identity matrix. Then, the assertion is followed by a simple calculation.
\end{proof}

\begin{lemma}[Identity function]
\label{secnn:id}
For any $L \geq \bbN_{\geq 2}$ and $m \in \bbN$, there exists a neural network $f_{\rm id}^{(m,L)} \in \cF_{\rm NN}(L,\bd,s,M)$ with
\bean
\bd = (m,2m,\ldots,2m,m)^{\top}, \quad s = 2mL, \quad M = 1
\eean
such that $f_{\rm id}^{(m,L)}(\bx) = \bx$ for any $\bx \in \bbR^{m}$.
\end{lemma}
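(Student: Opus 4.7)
The plan is to exploit the identity $x = \rho(x) - \rho(-x)$ valid for any $x \in \bbR$, and to propagate the decomposed nonnegative signal through the middle layers by noting that $\rho$ acts as the identity on $[0,\infty)$. Concretely, I would define weight matrices $W_i$ ($1 \leq i \leq L$) and zero biases $\bb_i$ as follows. For the first layer, take
\bean
W_1 = \begin{pmatrix} \bbI_m \\ -\bbI_m \end{pmatrix} \in \bbR^{2m \times m}, \qquad \bb_1 = \bzero_{2m},
\eean
so that after the first ReLU, the hidden vector is $(\rho(x_1), \ldots, \rho(x_m), \rho(-x_1), \ldots, \rho(-x_m))^\top \in [0,\infty)^{2m}$. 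For the intermediate layers $i = 2, \ldots, L-1$, set $W_i = \bbI_{2m}$ and $\bb_i = \bzero_{2m}$; since $\rho$ is the identity on nonnegative inputs, the hidden vector is preserved unchanged through these $L-2$ middle layers. Finally take
\bean
W_L = (\bbI_m,\; -\bbI_m) \in \bbR^{m \times 2m}, \qquad \bb_L = \bzero_m,
\eean
so that the output equals $(\rho(x_j) - \rho(-x_j))_{j \in [m]} = \bx$.

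The remaining step is a bookkeeping check of the architecture parameters. The width sequence is exactly $\bd = (m, 2m, \ldots, 2m, m)^\top$ by construction. The number of nonzero entries of $W_1$ and $W_L$ is $2m$ each, and each of the $L-2$ intermediate matrices $W_i = \bbI_{2m}$ has exactly $2m$ nonzeros, so in total
\bean
\sum_{i=1}^{L} \|W_i\|_0 + \|\bb_i\|_0 = 2m + (L-2) \cdot 2m + 2m + 0 = 2mL,
\eean
matching $s = 2mL$. All nonzero weight entries are $\pm 1$ and all biases vanish, hence $M = 1$.

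There is essentially no obstacle here: the construction is explicit, and the only subtlety is verifying that the ReLU applied after the first affine transformation and between consecutive identity matrices does not disturb the decomposition. This follows from the nonnegativity of the hidden activations after the first layer, which is automatic from the definition $\rho(\cdot) = \max(\cdot, 0)$.
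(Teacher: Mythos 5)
Your construction is correct, and it is the standard one: decompose each coordinate as $x = \rho(x) - \rho(-x)$, push the nonnegative halves through $L-2$ identity layers (on which ReLU acts trivially), and recombine at the end. The paper itself does not give a proof but simply cites Lemma F.2 of \citet{oko2023diffusion}, which establishes exactly this construction; your bookkeeping of $\bd$, $s = 2mL$, and $M = 1$ is accurate and your argument is self-contained.
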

\begin{proof}
This is a re-statement of Lemma F.2 in \citet{oko2023diffusion}.
\end{proof}

\medskip

The following lemma provides the concatenation and parallelization of two neural networks, where only one network has shared weight.

\begin{lemma}[Concatenation and parallelization of weight-sharing networks]
\label{secnn:sharing}
Consider the class of weight-sharing networks $\cF_{\rm WSNN}(L,\bd,s,M,\cP_{\bfm})$ and vanilla feedforward neural networks $\cF_{\rm NN}( \widetilde L, \widetilde \bd, \widetilde s, \widetilde M)$.
For any neural networks $f \in \cF_{\rm WSNN}(L,\bd,s,M,\cP_{\bfm})$ and $\widetilde f \in \cF_{\rm NN}( \widetilde L, \widetilde \bd, \widetilde s, \widetilde M)$, there exists a neural network $f_{\rm cc} \in \cF_{\rm WSNN}( L_{\rm cc} , \bd_{\rm cc} , s_{\rm cc} , M_{\rm cc}, \bfm_{\rm cc}, \cP_{\rm cc})$ with
\bean
&& L_{\rm cc} = L + \widetilde L, \quad \|\bd_{\rm cc}\|_{\infty} \leq 2 (\|\bd\|_{\infty} \vee \| \widetilde \bd \|_{\infty}), \quad s_{\rm cc} \leq 2s + 2\widetilde s, \quad M_{\rm cc} = M \vee \widetilde M,
\eean
$\| \bfm_{\rm cc} \|_{\infty} = \| \bfm \|_{\infty}$ and the set of permutation matrices $\cP_{\rm cc}$ such that $f_{\rm cc}(\bx) = (\widetilde f \circ f)(\bx)$ for any $\bx \in \bbR^{d_1}$.
Also, there exists a neural network $f_{\rm pr} \in \cF_{\rm NN}( L_{\rm pr} , \bd_{\rm pr} , s_{\rm pr} , M_{\rm pr}, \bfm_{\rm pr}, \cP_{\rm pr})$ with
\bean
&& L_{\rm pr} = L \vee \widetilde L, \quad \|\bd_{\rm pr}\|_{\infty} \leq 2 \|\bd\|_{\infty} + 2 \| \widetilde \bd \|_{\infty},
\\
&& s_{\rm pr} \leq 2s + 2\widetilde s + 2(L \vee \widetilde L)(d_{L} + \widetilde d_{\widetilde L}), \quad M_{\rm pr} = \max(M, \widetilde M, 1),
\eean
$\| \bfm_{\rm pr} \|_{\infty} = \| \bfm \|_{\infty}$ and the set of permutation matrices $\cP_{\rm pr}$ such that
\bean
f_{\rm pr} (\bx) = \left( f(\bx_1), \widetilde f(\bx_2) \right) \in \bbR^{d_{L} + \widetilde d_{\widetilde L}}
\eean
for any $\bx =  (\bx_1,\bx_2) \in \bbR^{d_1 + \widetilde d_{1}}$.

\end{lemma}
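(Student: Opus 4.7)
The plan is to regard any $\widetilde f \in \cF_{\rm NN}(\widetilde L, \widetilde \bd, \widetilde s, \widetilde M)$ as a trivial weight-sharing network with $\widetilde m_i = 1$ and identity permutation matrices, so that both $f$ and $\widetilde f$ live in the same framework. The arguments then mirror those of Lemmas \ref{secnn:comp} and \ref{secnn:par}, using block-diagonal weight matrices and block-diagonal permutation matrices (which remain permutations) to encode the combined structure.

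For the concatenation $f_{\rm cc} = \widetilde f \circ f$, I would stack the layers of $f$ followed by those of $\widetilde f$. The only subtlety is at the junction between $f$'s terminal linear layer (producing $W_L(\cdots) + \bb_L$) and $\widetilde f$'s first ReLU layer; as in the proof of Lemma \ref{secnn:comp}, the identity $x = \rho(x) - \rho(-x)$ lets me absorb this into one extra ReLU layer at the cost of doubling a width, which accounts for the factors of $2$ in the stated bounds. The permutation matrices $(\cQ_i, \cR_i)$ inherited from $\cP_{\bfm}$ act in layers $1,\dots,L-1$, while the remaining $\widetilde L$ layers use trivial permutations, yielding $L_{\rm cc} = L + \widetilde L$, $s_{\rm cc} \leq 2s + 2\widetilde s$, and $\|\bfm_{\rm cc}\|_\infty = \|\bfm\|_\infty$.

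For the parallelization $f_{\rm pr}(\bx_1, \bx_2) = (f(\bx_1), \widetilde f(\bx_2))$, I first equalize the depths by padding the shorter network with identity layers (Lemma \ref{secnn:id}) embedded as a trivial WSNN. Each combined hidden layer is then built from
\[
W_i^{\rm new} = \begin{pmatrix} W_i & 0 \\ 0 & \widetilde W_i / m_i \end{pmatrix},\ \bb_i^{\rm new} = \begin{pmatrix} \bb_i \\ \widetilde \bb_i / m_i \end{pmatrix},\ Q_i^{\rm new,(j)} = \begin{pmatrix} Q_i^{(j)} & 0 \\ 0 & \bbI \end{pmatrix},\ R_i^{\rm new,(j)} = \begin{pmatrix} R_i^{(j)} & 0 \\ 0 & \bbI \end{pmatrix},
\]
so that a direct expansion gives $\sum_{j=1}^{m_i} R_i^{\rm new,(j)}(W_i^{\rm new} Q_i^{\rm new,(j)} \bx + \bb_i^{\rm new}) = \bigl(\sum_j R_i^{(j)}(W_i Q_i^{(j)} \bx_1 + \bb_i),\ \widetilde W_i \bx_2 + \widetilde \bb_i\bigr)^\top$, after which the coordinatewise ReLU yields $(\bff_i(\bx_1),\widetilde \bff_i(\bx_2))^\top$. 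Block-diagonal permutations are still permutations, the sparsity of each block-diagonal matrix is additive, and the identity-padding layers contribute the additional $2(L\vee \widetilde L)(d_L + \widetilde d_{\widetilde L})$ term to $s_{\rm pr}$ by Lemma \ref{secnn:id}.

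The main technical point is that the shared weight $W_i$ is reused across all $m_i$ summands, so naively embedding $\widetilde W_i$ into the lower-right block causes its contribution to accumulate to $m_i \widetilde W_i$ rather than $\widetilde W_i$. The $1/m_i$ rescaling in $W_i^{\rm new}$ and $\bb_i^{\rm new}$ is the essential trick, and since $m_i \geq 1$ it can only decrease the max-norm, so $M_{\rm pr} \leq \max(M,\widetilde M,1)$ is preserved. The remaining bookkeeping—handling the initial and terminal linear layers with trivial permutations and verifying that the identity-padding layers slot in without breaking the weight-sharing structure—is routine.
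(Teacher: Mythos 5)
Your proposal is correct and takes essentially the same approach as the paper: the concatenation follows Lemma \ref{secnn:comp}'s use of the $x = \rho(x) - \rho(-x)$ junction trick with trivial permutations in the appended layers, and the parallelization uses block-diagonal weight matrices and block-diagonal permutations with the crucial $1/m_i$ rescaling to compensate for the shared weight being replicated across the $m_i$ summands. The only thing you elide is that the depth-equalization requires a separate treatment when $|L - \widetilde L| = 1$, since the identity network $\bff_{\rm id}^{(m,L)}$ from Lemma \ref{secnn:id} needs $L \geq 2$; the paper handles this by an ad hoc one-layer expansion using the same $\rho(x) - \rho(-x)$ trick, but this is a minor bookkeeping point and does not affect the stated bounds.
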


\begin{proof}
The first assertion can be easily derived from Remark 13 of \citet{nakada2020adaptive} with 
\bean
\bfm_{\rm cc} = (\bfm, 1, \ldots, 1) \in \bbN^{L_{\rm cc} - 1} \text{\quad and \quad}
\cP_{\rm cc} = \cP \cup \left\{ \cQ_{l}, \cR_{l} \right\}_{ l \in \{L,\ldots, L_{\rm cc} - 1 \} },
\eean
where 
$\cQ_{l}$ and $\cR_{l}$ are the set of $d_{l} \times d_{l}$ and $d_{l+1} \times d_{l+1}$ identity matrix, respectively.

For the second part, let $\{ W_{l}, \bb_{l} \}_{l \in [L]}$ and
 $\{\widetilde W_{l}, \widetilde \bb_{l} \}_{l \in [\widetilde L ]}$ be the parameter matrices of $f$ and $\widetilde f$, respectively.
If $L = \widetilde L$,
let $\widetilde \bd_{\rm pr} = (d_{1}+\widetilde d_{1}, \ldots, d_{L+1}+\widetilde d_{L+1})$ and $ \bfm_{\rm pr} = \bfm$.
Also, for each $l \in [L]$, let $\cQ_{{\rm pr}, l}$ and $\cR_{{\rm pr}, l}$ be the set of permutation matrices of the form 
\bean
\begin{pmatrix}
Q_{l}^{(j)} & \  \mathbf{0}
\\
\mathbf{0} & \ \bbI_{\widetilde d_{l}}
\end{pmatrix} \text{\quad and \quad }
\begin{pmatrix}
R_{l}^{(j)} & \  \mathbf{0}
\\
\mathbf{0} & \ \bbI_{\widetilde d_{l+1}}
\end{pmatrix}
\eean
with $j \in [m_{l}]$, respectively.
Then, the assertion follows with
\bean
W_{{\rm pr}, l} = \begin{pmatrix}
W_{l} & \  \mathbf{0}
\\
\mathbf{0} & \ m_{l}^{-1} \widetilde W_{l}
\end{pmatrix}
\text{\quad and \quad}
\bb_{{\rm pr}, l} = \begin{pmatrix}
\bb_{l}
\\
m_{l}^{-1} \widetilde \bb_{l} 
\end{pmatrix}, \quad  l \in [L-1],
\eean
and $\cP_{\rm pr} = \left\{ \cQ_{{\rm pr},l}, \cR_{{\rm pr},l} \right\}_{ l \in [L-1] }$, where $\{W_{{\rm pr},l} , \bb_{{\rm pr},l} \}_{l \in [L]}$ are the parameter matrices of $f_{\rm pr}$.
If $L = \widetilde L + 1$, consider a neural network $\overline{\bff}$ with $L$-layer and parameter matrices $\{ \overline W_{l}, 
\overline \bb_{l} \}_{l \in [L]}$, where $\overline W_{l} = \widetilde W_{l}, \overline \bb_{l} = \widetilde \bb_{l}$ for $l \in [\widetilde L - 1]$, and
\bean
\overline{W}_{\widetilde L} = 
\begin{pmatrix}
\widetilde W_{\widetilde L} 
\\
-\widetilde W_{\widetilde L}
\end{pmatrix},
\quad
\overline{\bb}_{\widetilde L} = 
\begin{pmatrix}
\widetilde \bb_{\widetilde L} 
\\
-\widetilde \bb_{\widetilde L} 
\end{pmatrix},
\quad
\overline{W}_{\widetilde L + 1} = 
\begin{pmatrix}
\bbI_{\widetilde d_{\widetilde L + 1}} 
\\
-\bbI_{\widetilde d_{\widetilde L + 1}} 
\end{pmatrix},
\quad
\overline{\bb}_{\widetilde L + 1} = \mathbf{0}_{2 \widetilde d_{\widetilde L + 1}}.
\eean
We then apply the results for the case of parallelization between same layer network.
If $L > \widetilde L + 1$, consider a weight-sharing neural network $\overline \bff = \bff_{\rm id}^{(\widetilde d_{\widetilde L+1}, L - \widetilde L)} \circ \widetilde \bff$ with $L$-layer, where $\bff_{\rm id}^{(\widetilde d_{\widetilde L+1}, L - \widetilde L)}$ is the neural network in Lemma~\ref{secnn:id}.
We then apply the results for the case of parallelization between same layer network.

If $L = \widetilde L - 1$, consider a weight-sharing neural network $\overline{\bff}$ with $\widetilde L$-layer, parameter matrices $\{ \overline W_{l}, 
\overline \bb_{l} \}_{l \in [\widetilde L]}$ with $\overline W_{l} =  W_{l}, \overline \bb_{l} =  \bb_{l}$ for $l \in [L - 1]$, and
\bean
\overline{W}_{ L} = 
\begin{pmatrix}
W_{ L} 
\\
-W_{ L}
\end{pmatrix},
\quad
\overline{\bb}_{ L} = 
\begin{pmatrix}
\bb_{ L} 
\\
-\bb_{ L} 
\end{pmatrix},
\quad
\overline{W}_{L + 1} = 
\begin{pmatrix}
\bbI_{ d_{L + 1}} 
\\
-\bbI_{ d_{ L + 1}} 
\end{pmatrix},
\quad
\overline{\bb}_{L + 1} = \mathbf{0}_{2 d_{ L + 1}},
\eean
$m_{L}=1$ and the set of permutation matrices $\cP \cup \{ \cQ_{L}, \cR_{L} \}$, where $\cQ_{L}$ and $ \cR_{L} $ are the set of $d_{L} \times d_{L}$ and $2d_{L+1} \times 2d_{L+1}$  identity matrix, respectively.
We then apply the results for the case of parallelization between same layer network.
If $L < \widetilde L - 1$, consider a weight-sharing neural network $\overline \bff = \bff_{\rm id}^{( d_{ L+1}, \widetilde L - L)} \circ \bff$ with $\widetilde L$-layer and the set of permutation matrices $\cP \cup \{ \cQ_{l}, \cR_{l} \}_{L \leq l \leq \widetilde L - 1}$, where $\bff_{\rm id}^{( d_{L+1}, \widetilde L - L)}$ is the neural network in Lemma~\ref{secnn:id} and for each $l \in \{L,\ldots,\widetilde L - 1\}$, $\cQ_{l}$ and $ \cR_{l}$ are the set of $d_{l} \times d_{l}$ and $d_{l+1} \times d_{l+1}$ identity matrix, respectively.
We then apply the results for the case of parallelization between same layer network.

\end{proof}

\begin{lemma}[Multiplication]
\label{secnn:mult}
Let $m \geq 2, C \geq 1, 0 < \widetilde \epsilon \leq 1$ be given. For any $\epsilon > 0$, there exists a positive constant $C_{N,1}$ and a neural network $f_{\rm mult} \in \cF_{\rm NN}(L,\bd,s,M)$ with 
\bean
&& L \leq C_{N,1} \log m \{ \log (1/\epsilon) + m \log C \}, \quad \bd = (m,48m,\ldots,48m,1)^{\top},
\\
&& s \leq C_{N,1} m \{ \log(1/\epsilon) +  \log C \}, \quad M = C^{m}
\eean
such that
\bean
\left\vert f_{\rm mult}(\widetilde \bx) - \prod_{i=1}^{m} x_{i} \right\vert \leq \epsilon + mC^{m-1}\widetilde \epsilon, \quad \forall \bx \in [-C,C]^{m}, \widetilde \bx \in \bbR^{m} \ {\rm with} \ \|\bx-\widetilde \bx\|_{\infty} \leq \widetilde \epsilon,
\eean
$ \| f_{\rm mult} \|_{\infty} \leq C^{m}$ and $f_{\rm mult}(\widetilde \bx) = 0$ if $0 \in \{\widetilde x_1,\ldots,\widetilde x_{m}\}$.
\end{lemma}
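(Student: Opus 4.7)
The plan is to approximate $\prod_{i=1}^m x_i$ by iterating a two-input multiplication gadget in a balanced binary tree of depth $\lceil \log_2 m \rceil$. The basic building block is a subnetwork $f_{(2)}: \bbR^2 \to \bbR$ that approximates $(a, b) \mapsto ab$ on $[-B, B]^2$, which I would build via the polarization identity $ab = \tfrac14[(a+b)^2 - (a-b)^2]$ combined with the classical Yarotsky sawtooth approximation of $x \mapsto x^2$ (after rescaling to $[-1,1]$). For inputs of magnitude up to $B$, this achieves absolute error $\eta$ with depth and sparsity $O(\log B + \log(1/\eta))$ and width $O(1)$; by taking the squaring piece to be even and to vanish identically at $0$, the resulting $f_{(2)}$ satisfies $f_{(2)}(a, 0) = f_{(2)}(0, b) = 0$ exactly, a property that will propagate through the tree to give the required zero-preservation.

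I would then stack $\lceil \log_2 m \rceil$ parallel layers of such gadgets: at level $k$ counted from the leaves, there are $\lceil m/2^k \rceil$ multiplications of values bounded by $B_k := C^{2^{k-1}}$, placed side by side using the parallelization of Lemma \ref{secnn:par} so that every hidden layer has width $O(m)$ (the explicit constant $48$ in $\bd = (m, 48m, \ldots, 48m, 1)$ absorbing the widths of the polarization and sawtooth subnetworks). A coordinate-wise ReLU clipping layer prepended to the tree maps the perturbed input $\widetilde \bx$ into $[-C, C]^m$, so that the subsequent tree receives arguments on which the approximation guarantees apply, while preserving the zero property. Writing $E_k$ for the output error after level $k$ and $\eta_k$ for the per-gadget error, the elementary identity $\hat a \hat b - ab = a(\hat b - b) + b(\hat a - a) + (\hat a - a)(\hat b - b)$ yields the recursion $E_k \leq \eta_k + 3 B_k E_{k-1}$. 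A geometric allocation $\eta_k \asymp \epsilon \cdot 2^{k - \lceil \log_2 m \rceil} \prod_{j > k}(3 B_j)^{-1}$ then forces $E_{\lceil \log_2 m \rceil} \leq \epsilon/2$ while keeping $\log(1/\eta_k) = O(\log(1/\epsilon) + m \log C)$.

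Summing the per-level depth $O(\log B_k + \log(1/\eta_k)) = O(\log(1/\epsilon) + m\log C)$ over $\lceil \log_2 m \rceil$ levels yields the claimed bound on $L$, and the telescoping sum $\sum_k (m/2^k)\cdot O(\log(1/\eta_k) + 2^{k-1}\log C)$ controls the sparsity; the weight bound $M = C^m$ is dictated by the scaling factors required to represent magnitudes up to $C^m$ at the output layer. Combining the $\epsilon/2$ approximation error with the Lipschitz estimate $|\prod_i x_i - \prod_i \widetilde x_i^{\,\prime}| \leq m C^{m-1}\widetilde\epsilon$ on $[-C, C]^m$, where $\widetilde \bx^{\,\prime}$ is the clipped version of $\widetilde\bx$, delivers the claimed total error $\epsilon + m C^{m-1} \widetilde\epsilon$, and the zero property is inherited directly from $f_{(2)}$. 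The main obstacle will be the careful bookkeeping needed to balance the per-level errors so that the depth, sparsity, and weight-magnitude budgets all come out matching the claim without incurring extra $\log m$ factors; naive allocations inflate one of these budgets, and the cure is to exploit simultaneously the geometric decay of $\eta_k$ along levels and the geometric decay $m/2^k$ of the per-level gadget count.
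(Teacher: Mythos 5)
Your construction is the standard one underlying this kind of lemma (and, as far as one can tell, the one in Oko et al.'s Lemma F.6, which the paper cites without proof): a balanced binary tree of two-input multiplication gadgets built from the polarization identity plus Yarotsky's sawtooth approximation to $x \mapsto x^2$, prefaced by a coordinatewise clipping layer, with the perturbation term $m C^{m-1}\widetilde\epsilon$ coming from the Lipschitz constant of the monomial on $[-C,C]^m$. Two smaller points: the zero-preservation does not actually require the squaring block $g$ to be even or vanish at zero — polarization gives $f_{(2)}(a,0)=\tfrac14[g(a)-g(a)]=0$ for \emph{any} $g$, so that property is automatic and propagates up the tree for free; and the widths $48m$ and weight cap $M=C^m$ are easily absorbed as you say.

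The part that does not go through as written is the sparsity count, and this is more than ``careful bookkeeping.'' Whatever allocation you choose, each of the $\Theta(m)$ gadgets must be accurate to a \emph{relative} error of order $\epsilon/(C^m\,\mathrm{poly}(m))$ on its rescaled $[-1,1]$ domain: after scaling a level-$k$ gadget's inputs by $1/B_{k-1}$ with $B_{k-1}=C^{2^{k-1}}$ and rescaling the output by $B_k=B_{k-1}^2$, the $\log B_k$ contribution in $\log(1/\eta_k)$ exactly cancels against the $\log B_k$ in the domain bound, leaving a per-gadget sawtooth cost of $\Theta(\log(1/\epsilon)+m\log C)$ independent of $k$. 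Summing over the $m-1$ gadgets therefore yields $s=O\!\left(m\log(1/\epsilon)+m^2\log C\right)$, not the claimed $O\!\left(m\{\log(1/\epsilon)+\log C\}\right)$; the geometric decay of the gadget count $m/2^k$ does not rescue this because the $(m-2^k)\log C$ factor inside $\log(1/\eta_k)$ grows to compensate. In other words, your ``telescoping sum'' does not telescope to the stated bound. This is consistent with the paper's own downstream use: in the proof of Lemma B.10 (the logarithm network) the invocation of this multiplication lemma produces $s_{\rm mult}^{(k)}\le N_1 k(k+D_2)\{\log(1/\epsilon)+\log D_2\}$, which carries the extra factor of $k$ corresponding to $m\log C$, so the sparsity bound in the statement is almost certainly a typo for $s\le C_{N,1}m\{\log(1/\epsilon)+m\log C\}$. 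Your construction would deliver that corrected bound, but you should either flag the discrepancy or restrict to the regime $\epsilon\lesssim C^{-m}$ (which holds in all of the paper's applications) where the two bounds coincide.
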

\begin{proof}
This is a re-statement of Lemma F.6 in \citet{oko2023diffusion}.
\end{proof}

\begin{lemma}[Clipping function]
\label{secnn:clip}
Let $\underline{\bb} = (\underline{b}_1,\ldots,\underline{b}_m), \overline{\bb} = (\overline{b}_1,\ldots,\overline{b}_m) \in \bbR^{m}$ be given with $m \in \bbN$ and $\underline{b}_i \leq \overline{b}_i$ for all $i \in [m]$. Then, there exists a neural network 
\bean
f_{\rm clip}^{(\underline{\bb},\overline{\bb})} \in \cF_{\rm NN}(2,(m,2m,m)^{\top}, 7m, \| \underline{\bb} \|_{\infty} \vee \| \overline{\bb} \|_{\infty} )
\eean
such that 
\bean
f_{\rm clip}^{(\underline{\bb},\overline{\bb})} (\bx) = \left( \overline{b}_1 \wedge \{x_1 \vee \underline{b}_1 \}, \ldots, \overline{b}_m \wedge \{ x_m \vee \underline{b}_m \}  \right) \in \bbR^{m}
\eean
for $\bx = (x_1,\ldots,x_m) \in \bbR^{m}$.
\end{lemma}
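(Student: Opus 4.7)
\textbf{Proof proposal for Lemma \ref{secnn:clip}.}
The plan is to give an explicit two-layer ReLU construction based on the elementary identity
\[
  \overline{b} \wedge (x \vee \underline{b}) \;=\; \rho(x - \underline{b}) - \rho(x - \overline{b}) + \underline{b},
  \qquad x \in \bbR,\; \underline{b} \leq \overline{b},
\]
which I would first verify by checking the three cases $x \leq \underline{b}$, $\underline{b} < x \leq \overline{b}$, and $x > \overline{b}$; in each case both sides reduce to $\underline{b}$, $x$, and $\overline{b}$, respectively. Applying this coordinatewise yields
\[
  f_{\rm clip}^{(\underline{\bb}, \overline{\bb})}(\bx) \;=\; \sum_{i=1}^{m} \bfe_i\bigl[ \rho(x_i - \underline{b}_i) - \rho(x_i - \overline{b}_i) + \underline{b}_i \bigr],
\]
where $\bfe_i$ is the $i$th standard basis vector in $\bbR^m$. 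This is evidently a two-layer network in the form $W_2 \rho(W_1 \bx + \bb_1) + \bb_2$.

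Next I would write down the weight matrices explicitly. Let $W_1 \in \bbR^{2m \times m}$ be the matrix whose $(2i{-}1)$th and $(2i)$th rows are both $\bfe_i^\top$, let $\bb_1 = (-\underline{b}_1,-\overline{b}_1,\ldots,-\underline{b}_m,-\overline{b}_m)^\top \in \bbR^{2m}$, let $W_2 \in \bbR^{m \times 2m}$ have its $i$th row equal to $\bfe_{2i-1}^\top - \bfe_{2i}^\top$, and let $\bb_2 = (\underline{b}_1, \ldots, \underline{b}_m)^\top$. A direct computation shows $W_2 \rho(W_1 \bx + \bb_1) + \bb_2$ equals the desired clipped vector.

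To check the hyperparameters, I would count: $\|W_1\|_0 = 2m$ (one unit entry per row), $\|\bb_1\|_0 \leq 2m$, $\|W_2\|_0 = 2m$ (two unit entries per row), and $\|\bb_2\|_0 \leq m$, giving a total sparsity of at most $7m$, matching $s = 7m$. The widths are $(m, 2m, m)^\top$ by construction, $L = 2$, and the largest entry magnitude across all weight matrices and biases is $\max\{1, \|\underline{\bb}\|_\infty, \|\overline{\bb}\|_\infty\}$, which is dominated by $\|\underline{\bb}\|_\infty \vee \|\overline{\bb}\|_\infty$ in the regime of interest (otherwise one may trivially inflate $M$ to $1$ without affecting the claim).

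There is no serious obstacle here; this is a bookkeeping argument. The only mild subtlety is making sure the sparsity count lands exactly on $7m$ rather than $8m$ or $6m$; this forces the specific choice of writing the output as $\rho(x_i - \underline{b}_i) - \rho(x_i - \overline{b}_i) + \underline{b}_i$ (which contributes $m$ bias terms) rather than the symmetric alternative that would produce different counts. Once the identity and the explicit parameter matrices are in place, the verification is immediate.
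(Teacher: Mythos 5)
Your construction is correct, and it is essentially self-contained where the paper is not: the paper's "proof" of this lemma is a one-line citation to Lemma F.4 of \citet{oko2023diffusion}, so your explicit two-layer realization is exactly the kind of argument the citation is standing in for. The identity $\overline{b} \wedge (x \vee \underline{b}) = \rho(x-\underline{b}) - \rho(x-\overline{b}) + \underline{b}$ checks out in all three cases, your weight matrices implement it coordinatewise, the widths are $(m,2m,m)^{\top}$ with $L=2$, and the sparsity count $2m + 2m + 2m + m = 7m$ matches the stated bound. The one point worth being honest about, which you already flag, is the magnitude bound: your $W_1$ and $W_2$ have entries equal to $1$, so your network lies in the class with $M = \max\{1, \|\underline{\bb}\|_\infty, \|\overline{\bb}\|_\infty\}$ rather than the stated $\|\underline{\bb}\|_\infty \vee \|\overline{\bb}\|_\infty$; when the latter is below $1$ the claim as literally written is not certified by your construction (and indeed cannot be fixed by rescaling, since positive homogeneity of $\rho$ forces a compensating factor $\geq 1$ somewhere). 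This is really a nit about the lemma statement rather than about your proof — note that the paper's own Lemma \ref{secnn:lin} for linear maps includes the extra "$\vee\, 1$" in its $M$ for precisely this reason, and every invocation of the clipping lemma in the paper uses thresholds with $M \geq 1$ (e.g.\ $8\epsilon^{-1}$, $D_1$, $\tau_{\rm tail}\sqrt{\log(1/\delta)}$), so nothing downstream is affected. Your proof is fine as is; if you want it airtight against the edge case, state the conclusion with $M = \|\underline{\bb}\|_\infty \vee \|\overline{\bb}\|_\infty \vee 1$.
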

\begin{proof}
This is a re-statement of Lemma F.4 in \citet{oko2023diffusion}.
\end{proof}


\begin{lemma}[Logarithm function]
\label{secnn:log}
For any $0 < \epsilon < 1/4$, there exists a positive constant $C_{N,2}$ and a neural network $f_{\log} \in \cF_{\rm NN}(L,\bd,s,M)$ with
\bean
&& L \leq C_{N,2} \{ \log(1/\epsilon)\}^{2} \log \log (1/\epsilon), \quad \|\bd\|_{\infty}\leq C_{N,2}  \{ \log(1/\epsilon)\}^{3}
\\
&& s  \leq C_{N,2}  \{ \log(1/\epsilon)\}^{5} \log \log (1/\epsilon), \quad M \leq \exp \left( 8 \{\log (1/\epsilon) \}^{2} \right)
\eean
such that
\bean
\left\vert \log x - f_{\log}(\widetilde x) \right\vert \leq \epsilon + \frac{\vert x - \widetilde x\vert}{\epsilon}
\eean
for $x \in [\epsilon,1/\epsilon]$ and $\widetilde x \in \bbR$.
\end{lemma}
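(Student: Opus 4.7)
My plan is to construct $f_{\log}$ as a composition of three sub-networks: (i) a clipping layer that maps the input into $[\epsilon,1/\epsilon]$, (ii) a piecewise polynomial approximation of $\log$ obtained via a dyadic decomposition of $[\epsilon,1/\epsilon]$, and (iii) a stitching mechanism based on a ReLU partition of unity. The argument naturally splits the target error $\epsilon + |x-\widetilde x|/\epsilon$ into a Lipschitz part (handled by clipping) and an approximation part (handled by the polynomial).

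First, I would prepend $f_{\rm clip}^{(\epsilon,1/\epsilon)}$ from Lemma \ref{secnn:clip} to produce $\widetilde x' = \epsilon \vee (\widetilde x \wedge (1/\epsilon))$; this contributes only $O(1)$ to every architectural parameter. Because $\log$ is $(1/\epsilon)$-Lipschitz on $[\epsilon,1/\epsilon]$ and clipping contracts toward the target $x \in [\epsilon,1/\epsilon]$, we immediately get $|\log x - \log \widetilde x'| \leq |x-\widetilde x|/\epsilon$. It then suffices to approximate $\log$ uniformly on $[\epsilon,1/\epsilon]$ within error $\epsilon$. I would decompose $[\epsilon,1/\epsilon]$ into $N = 2\lceil \log_2(1/\epsilon)\rceil$ dyadic sub-intervals $I_j = [2^{j-1}\epsilon, 2^j \epsilon]$; on each $I_j$, the rescaling $y = x/(2^{j-1}\epsilon) \in [1,2]$ reduces the task to approximating $\log y$ on $[1,2]$, after which $\log x = (j-1)\log 2 + \log\epsilon + \log y$. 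Since $\log$ is analytic on $[1,2]$, a shifted Chebyshev (or centered Taylor) polynomial $p$ of degree $d \asymp \log(1/\epsilon)$ achieves $\sup_{y\in[1,2]}|p(y)-\log y| \leq \epsilon/4$.

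Next I would realize $p(y)$ inside the network by computing the powers $y, y^2,\ldots, y^d$ iteratively through Lemma \ref{secnn:mult} with $C = 2$, and then taking the linear combination with the fixed Chebyshev coefficients. Running the $N$ copies in parallel via Lemma \ref{secnn:par}, each preceded by its affine rescaling $x \mapsto x/(2^{j-1}\epsilon)$ and followed by the additive shift $(j-1)\log 2 + \log \epsilon$, yields candidate outputs $q_j(\widetilde x')$. To stitch them together I would construct ReLU ramps $\psi_j$ that equal $1$ on the central portion of $I_j$, vanish outside a slight enlargement, and interpolate linearly in the transition zones; each $\psi_j$ uses $O(1)$ ReLU units. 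The final output is $f_{\log}^{(0)}(\widetilde x') = \sum_{j=1}^N \psi_j(\widetilde x')\, q_j(\widetilde x')$, where each product is formed by the two-factor multiplication network and the sum is the last linear layer. Because, on any transition region, both $q_j$ and $q_{j+1}$ approximate $\log$ to within $\epsilon/4$, the convex combination still satisfies $|f_{\log}^{(0)}(\widetilde x') - \log \widetilde x'| \leq \epsilon/2$, which combined with the Lipschitz estimate above closes the bound.

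The main obstacle will be the error bookkeeping for the multiplication networks. Lemma \ref{secnn:mult} incurs output error $\epsilon' + m C^{m-1}\widetilde\epsilon$, so the internal tolerance $\epsilon'$ and the input tolerance $\widetilde\epsilon$ must be chosen small enough that, after propagating through up to $m = d \asymp \log(1/\epsilon)$ factors with magnitudes up to $1/\epsilon$ and summing across $N \asymp \log(1/\epsilon)$ pieces, the total error remains $O(\epsilon)$. This forces $\log(1/\epsilon') = \Theta(\{\log(1/\epsilon)\}^2)$, which is precisely what pins down the weight bound $M \leq \exp(8\{\log(1/\epsilon)\}^2)$ through $M \leq C^m$. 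The depth of each multiplication inherits the factor $\log m \cdot \log(1/\epsilon') \asymp \{\log(1/\epsilon)\}^2 \log\log(1/\epsilon)$, matching the stated $L$; the sparsity $m \log(1/\epsilon')$ per product, multiplied by the $N$ parallel pieces and the $d$ monomials comprising each polynomial, yields the stated $\{\log(1/\epsilon)\}^5 \log\log(1/\epsilon)$ bound on $s$, while the widths are bounded by the parallel width $N\cdot\mathrm{poly}(\log(1/\epsilon))$ in line with $\|\bd\|_\infty$.
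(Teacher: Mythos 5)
Your construction mirrors the paper's proof: both partition $[\epsilon,1/\epsilon]$ into $O(\log(1/\epsilon))$ geometrically-spaced subintervals, approximate $\log$ on each by a degree-$O(\log(1/\epsilon))$ polynomial realized through the multiplication network of Lemma~\ref{secnn:mult}, stitch the pieces with a ReLU partition of unity, and compose with the clipping network of Lemma~\ref{secnn:clip} to convert $|x-\widetilde x|$ into the $|x-\widetilde x|/\epsilon$ term. The only differences are cosmetic (dyadic versus ratio-$(1+\delta)^2$ intervals, rescaling each piece to $[1,2]$ rather than Taylor-expanding about the left endpoint), and if you carry the $[1,2]$ rescaling through consistently the multiplication factors are $O(1)$ rather than $O(1/\epsilon)$, so the internal tolerance only needs to be $\mathrm{poly}(\epsilon)$ and the resulting $M$ and $L$ come out \emph{smaller} than the stated bounds — still valid, though it means the $\exp(8\{\log(1/\epsilon)\}^2)$ bound is not actually ``pinned down'' by $M\leq C^m$ in your version as you assert.
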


\begin{proof}
Let $0 < \epsilon < 1/4, \delta = 1/8$ and $D_1 = \lfloor \frac{2 \log (1/\epsilon)}{\log (1+\delta)} -1 \rfloor +1$. Then, $[\epsilon, 1/\epsilon] \subseteq \bigcup_{i=1}^{D_1} [\underline{T}_i,\overline{T}_i] $, where $\underline{T}_i = (1+\delta)^{i-1}\epsilon$ and $\overline{T}_i = (1+\delta)^{i+1}\epsilon$ for $i \in [D_1]$. Let $D_2 = \lfloor \frac{ \log (1/\epsilon)}{\log 2} \rfloor + 3 $. For any $i \in [D_1]$ and $ x \in [\underline{T}_i,\overline{T}_i]$, Taylor's theorem yields that
\bean
\log x = P_{i}(x) + \frac{(-1)^{D_2-1} (x-\underline{T}_i)^{D_2} }{D_2\{\underline{T}_i + \xi (\overline{T}_i - \underline{T}_i)\}^{D_2}}
\eean
for a suitable $\xi \in [0,1]$, where 
\bean
P_{i}(x) = \log \underline{T}_i + \frac{x-\underline{T}_i}{\underline{T}_i} +\sum_{k=2}^{D_2-1} \frac{(-1)^{k-1}   (x -\underline{T}_i)^{k}  }{k\underline{T}_i^{k}}.
\eean
Since $x-\underline{T}_i \leq \overline{T}_i - \underline{T}_i =  \underline{T}_i (\delta^2 + 2\delta)$ and $\underline{T}_i \leq \underline{T}_i + \xi (\overline{T}_i - \underline{T}_i)$, it follows that
\be
\left\vert \log x - P_{i}(x) \right\vert \leq \frac{1}{D_2}\left( \frac{\overline{T}_i-\underline{T}_i}{ \underline{T}_i } \right)^{D_2} =  \frac{\left(\delta^2+ 2\delta \right)^{D_2}}{D_2} \leq \frac{2^{-D_2}}{D_2} \leq \frac{\epsilon}{D_2}, \quad i \in [D_1] \label{eq:nnlog}
\ee
for $x \in [\underline{T}_i,\overline{T}_i]$.
Let $N_1$ be a constant in Lemma~\ref{secnn:mult}. For $k \geq 2$, there exists a neural network $f_{\rm mult}^{(k)} \in \cF_{\rm NN}(L_{\rm mult}^{(k)}, \bd_{\rm mult}^{(k)}, s_{\rm mult}^{(k)}, M_{\rm mult}^{(k)})$ with 
\be \begin{split}
& L_{\rm mult}^{(k)} \leq N_1  (k+D_2) \log k \{ \log (1/\epsilon) + \log D_2 \}, \quad \bd_{\rm mult}^{(k)} = (k,48k,\ldots,48k,1)^{\top}, \quad
\\
& s_{\rm mult}^{(k)} \leq N_1 k(k+D_2) \{ \log(1/\epsilon) + \log D_2 \}, \quad M_{\rm mult}^{(k)} = \epsilon^{-k} \label{eq:nnmultcomp}
\end{split} \ee
such that $\vert f_{\rm mult}^{(k)}(x_1,\ldots,x_k) - \prod_{i=1}^{k} x_i \vert \leq \epsilon^{D_2}/D_2$ for any $x_1,\ldots,x_k \in [-\epsilon^{-1},\epsilon^{-1}]$.
For any $k \geq 1$ and $i \in [D_1]$, Lemma~\ref{secnn:lin} implies that there exists a neural network $f_{\rm lin}^{(i,k)} \in \cF_{\rm NN}(2,(1,2k,k)^{\top}, 6k, \underline{T}_i)$ such that $f_{\rm lin}^{(i,k)}(x) = (x-\underline{T}_i,\ldots,x-\underline{T}_i)^{\top} \in \bbR^{k}$ for any $x \in \bbR$. 
Combining Lemma~\ref{secnn:comp} with the last display, it follows that $f_{\rm pow}^{(i,k)} = f_{\rm mult}^{(k)} \circ f_{\rm lin}^{(i,k)} \in \cF_{\rm NN}(L_{\rm pow}^{(i,k)},\bd_{\rm pow}^{(i,k)},s_{\rm pow}^{(i,k)}, M_{\rm pow}^{(i,k)})$ for $i \in [D_{1}], k \geq 2$ with
\bean
L_{\rm pow}^{(i,k)} =  L_{\rm mult}^{(k)} +2, \quad \| \bd_{\rm pow}^{(i,k)}\|_{\infty} \leq 96k, \quad s_{\rm pow}^{(i,k)} = 2s_{\rm mult}^{(k)} + 12k, \quad M_{\rm pow}^{(i,k)} = \underline{T}_i \vee M_{\rm mult}^{(k)}
\eean
and
\bean
\left\vert f_{\rm pow}^{(i,k)}(x) - (x-\underline{T}_i)^{k} \right\vert \leq \frac{ \epsilon^{D_2}}{D_2},  
\eean
for $x \in [\epsilon,\epsilon^{-1}]$.
Consider functions $f_1,\ldots,f_{D_1} : \bbR \to \bbR$ such that
\bean
f_i(\cdot) = \log \underline{T}_i + \frac{f_{\rm lin}^{(i,1)}(\cdot)}{\underline{T}_i} +\sum_{k=2}^{D_2-1} \frac{(-1)^{k-1}   f_{\rm pow}^{(i,k)}(\cdot)  }{k\underline{T}_i^{k}}, \quad i \in [D_{1}].
\eean
Since $f_i - \log \underline{T}_i$ is a linear combination of $ f_{\rm lin}^{(i,1)}, f_{\rm pow}^{(i,2)}, \ldots,f_{\rm pow}^{(i,D_2-1)}$ for $ i \in [D_{1}]$, Lemma~\ref{secnn:comp}, Lemma~\ref{secnn:par} and Lemma~\ref{secnn:lin} implies that $f_i \in \cF_{\rm NN}(L^{(i)},\bd^{(i)},s^{(i)},M^{(i)})$ with
\be \begin{split}
& L^{(i)}  \leq L_{\rm pow}^{(i,D_2-1)} + 2 \leq D_3 D_2 \log D_2  \{ \log (1/\epsilon) + \log D_2 \}, 
\\
& \| \bd^{(i)} \|_{\infty}  \leq 2\max \left(2\sum_{k=2}^{D_2-1} \| \bd_{\rm pow}^{(i,k)}\|_{\infty} + 4 , D_2-1 \right) \leq D_3 D_2^2,
\\
& s^{(i)}  \leq 2\left\{ \sum_{k=2}^{D_2-1} \left( s_{\rm pow}^{(i,k)} +L_{\rm pow}^{(i,D_2-1)} + 2 \right) + L_{\rm pow}^{(i,D_2-1)} + 8 \right\} + 2D_2 + 2
\\
&  \leq D_3 D_2^{3}\log D_2  \{ \log (1/\epsilon) + \log D_2 \},
\\
& M^{(i)}  \leq \left( \max_{k \in [D_2-1]}  M_{\rm pow}^{(i,k)} \right) \vee \underline{T}_{i}^{-D_2+1} \leq \epsilon^{-D_2} \label{eq:nnlogcomp}
\end{split} \ee
for a large enough constant $D_3 = D_3(N_1)$.
Then,
\bean
&& \left\vert P_i(x) - f_i(x) \right\vert \leq \sum_{k=2}^{D_2-1} \frac{ \vert f_{\rm pow}^{(i,k)}(x) - (x-\underline{T}_i)^{k} \vert  }{k\underline{T}_i^{k}} 
\\
&& \leq \frac{D_2-2}{2 \epsilon^{D_2-1}} \max_{2 \leq k \leq D_2-1}\left\vert f_{\rm pow}^{(i,k)}(x) - (x-\underline{T}_i)^{k} \right\vert \leq \frac{\epsilon}{2}, \quad i \in [D_{1}]
\eean
for $x \in [\epsilon,\epsilon^{-1}]$, where the first inequality holds because $\underline{T}_i \geq \epsilon$.
Combining (\ref{eq:nnlog}) with the last display, we have
\be
\left\vert \log x -f_i(x) \right\vert \leq \left(\frac{1}{2} + \frac{1}{D_2} \right)\epsilon, \quad i \in [D_1] \label{eq:nnlog2}
\ee
fo $x \in [\underline{T}_i,\overline{T}_i]$.
Consider functions $f_{\rm swit}^{(1)},\ldots,f_{\rm swit}^{(D_1)} : \bbR \to [0,1]$ such that
\bean
&& f_{\rm swit}^{(1)}(\cdot) = \frac{1}{\overline{T}_{1}-\underline{T}_{2}} \rho \left(-f_{\rm clip}^{(\underline{T}_{2},\overline{T}_{1})} (\cdot) + \overline{T}_{1} \right),
\\
&& f_{\rm swit}^{(i)}(\cdot) = \frac{1}{\overline{T}_{i-1}-\underline{T}_{i}} \rho \left(f_{\rm clip}^{(\underline{T}_
{i},\overline{T}_{i-1})} (\cdot) - \underline{T}_{i} \right) - \frac{1}{\overline{T}_{i}-\underline{T}_{i+1}} \rho \left(f_{\rm clip}^{(\underline{T}_{i+1},\overline{T}_{i})} (\cdot) - \underline{T}_{i+1} \right),
\\
&& f_{\rm swit}^{(D_1)}(\cdot) = \frac{1}{\overline{T}_{D_1-1}-\underline{T}_{D_1}} \rho \left(f_{\rm clip}^{(\underline{T}_
{D_1},\overline{T}_{D_1-1})} (\cdot) - \underline{T}_{D_1} \right),
\quad 2 \leq i \leq D_1-1,
\eean
where $f_{\rm clip}^{(\underline{T}_i,\overline{T}_{i-1})} \in \cF_{\rm NN}(2, (1,2,1)^{\top}, 7, 8 \epsilon^{-1})$ denotes the neural network in Lemma~\ref{secnn:clip}.
Note that $\sum_{i=1}^{D_1} f_{\rm swit}^{(i)}(x) = 1$ for $x \in \bbR$ and $f_{\rm swit}^{(i)}(x) = 0$ for $x \in \bigcup_{j =1}^{D_1} [\underline{T}_j,\overline{T}_j] \backslash [\underline{T}_{i},\overline{T}_i], i \in [D_1]$.
Consider a function $f: \bbR \to \bbR$ such that $f(\cdot) = \sum_{i=1}^{D_1} f_{\rm mult}^{(2)}(f_{\rm swit}^{(i)}(\cdot) , f_{i}(\cdot))$. Since $\vert f_{\rm mult}^{(2)} (x_1,x_2) - x_1x_2\vert \leq \epsilon^{D_2} / D_2$ for any $x_1,x_2 \in [-\epsilon^{-1},\epsilon^{-1}]$, we have
\bean
&& \left\vert \log x - f(x) \right\vert \leq \left\vert \log x - \sum_{i=1}^{D_1} f_{\rm swit}^{(i)}(x) f_i(x) \right\vert + \frac{D_1 \epsilon^{D_2}}{D_2}   
\\
&& = \left\vert  \sum_{i=1}^{D_1} f_{\rm swit}^{(i)}(x) \left\{ \log x - f_{i}(x) \right\} \right\vert + \frac{D_1 \epsilon^{D_2}}{D_2}
\\
&& \leq \sum_{i=1}^{D_1} f_{\rm swit}^{(i)}(x) \left\vert \log x - f_{i}(x) \right\vert + \frac{D_1 \epsilon^{D_2}}{D_2} \leq \left(\frac{1}{2} + \frac{1}{D_2} \right)\epsilon\sum_{i=1}^{D_1} f_{\rm swit}^{(i)}(x) + \frac{D_1 \epsilon^{D_2}}{D_2}
\\
&& =\left(\frac{1}{2} + \frac{1+D_1 \epsilon^{D_2-1}}{D_2} \right) \epsilon  \leq \epsilon
\eean
for $x \in \bigcup_{i =1}^{D_1} [\underline{T}_i,\overline{T}_i]$, where the second inequality holds by (\ref{eq:nnlog2}).
Combining (\ref{eq:nnmultcomp}) and (\ref{eq:nnlogcomp}) with Lemma~\ref{secnn:par} and Lemma~\ref{secnn:comp}, we have $ f_{\rm mult}^{(2)}(f_{\rm swit}^{(i)}(\cdot) , f_{i}(\cdot)) \in \cF_{\rm NN}(\widetilde L^{(i)},\widetilde \bd^{(i)},\widetilde s^{(i)}, \widetilde M^{(i)})$ for $i \in [D_{1}]$ with
\bean
&& \widetilde L^{(i)} \leq (L^{(i)} \vee 2) + L_{\rm mult}^{(2)} \leq D_4 D_2 \log D_2  \{ \log (1/\epsilon) + \log D_2 \} 
\\
&& \| \widetilde \bd^{(i)} \|_{\infty} \leq 2\max \left( 2\|\bd^{(i)}\|_{\infty} + 4 ,  \|\bd_{\rm mult}^{(2)}\|_{\infty}   \right) \leq D_4 D_2^2
\\
&& \widetilde s^{(i)} \leq 4 s^{(i)} + 4 (L^{(i)} \vee 2) + 2 s_{\rm mult}^{(2)} + 28  \leq D_4 D_2^{3}\log D_2  \{ \log (1/\epsilon) + \log D_2 \} 
\\
&& \widetilde M^{(i)} \leq \max\left(8\epsilon^{-1}, M^{(i)}, M_{\rm mult}^{(2)}, 1 \right) \leq \epsilon^{-D_2},
\eean
where $D_4 = D_4(D_3)$ is a large enough constant.
Let $f_{\rm clip}^{(\epsilon,\epsilon^{-1})} \in \cF_{\rm NN}(2,(1,2,1)^{\top}, 7,\epsilon^{-1})$ be the nueral network in Lemma~\ref{secnn:clip}.
Since $f$ is a linear combination of $f_{\rm mult}^{(2)}(f_{\rm swit}^{(i)}(\cdot) , f_{i}(\cdot))$ for each $i \in [D_1]$, Lemma~\ref{secnn:comp}, Lemma~\ref{secnn:par} and Lemma~\ref{secnn:lin} implies that $f \circ f_{\rm clip}^{(\epsilon,\epsilon^{-1})} \in \cF_{\rm NN}(L,\bd,s,M)$ with
\bean
&& L \leq \max_{i \in [D_1]} \widetilde L^{(i)} + 4 \leq D_5 \{ \log(1/\epsilon)\}^{2} \log \log (1/\epsilon)
\\
&& \|\bd\|_{\infty} \leq 2\max\left( 2\sum_{i=1}^{D_1} \| \widetilde \bd^{(i)}\|_{\infty}, 4 D_1 \right) \leq D_5  \{ \log(1/\epsilon)\}^{3}
\\
&& s \leq 2\sum_{i=1}^{D_1}\left(\widetilde s^{(i)} +  \max_{j \in [D_1]} \widetilde L^{(j)} \right) + 4D_1 + 18 \leq D_5  \{ \log(1/\epsilon)\}^{5} \log \log (1/\epsilon)
\\
&& M \leq \max_{i \in [D_1]} \widetilde M^{(i)} \vee \epsilon^{-1} \leq \exp \left( 8 \{\log (1/\epsilon) \}^{2} \right)
\eean
for large enough constant $D_5 = D_5(D_4)$.
Note that $\vert (f \circ f_{\rm clip}^{(\epsilon,\epsilon^{-1})})(\widetilde x) - \log x \vert \leq \vert (f \circ f_{\rm clip}^{(\epsilon,\epsilon^{-1})})(\widetilde x) - \log ( \epsilon^{-1} \wedge \{\widetilde x \vee \epsilon\}) \vert + \vert \log ( \epsilon^{-1} \wedge \{\widetilde x \vee \epsilon\}) - \log x \vert \leq \epsilon + \epsilon^{-1} \vert x-\widetilde x\vert$ for any $x \in [\epsilon,\epsilon^{-1}]$ and $\widetilde x \in \bbR$.
Then, the assertion follows by re-defining the constant.
\end{proof}

\begin{lemma}[Negative exponential function]
\label{secnn:exp}
For any $0 < \epsilon < 2^{-4e+2}$, there exists a positive constant $C_{N,3}$ and a neural network $f_{\exp} \in \cF_{\rm NN}(L,\bd,s,M)$ with
\bean
&& L \leq C_{N,3} \log(1/\epsilon) \log \log (1/\epsilon), \quad \|\bd\|_{\infty} \leq C_{N,3}  \{ \log(1/\epsilon)\}^{3}
\\
&& s \leq C_{N,3}  \{ \log(1/\epsilon)\}^{4}, \quad M \leq C_{N,3} \epsilon^{-1}
\eean
such that
\bean
\left\vert e^{-x} - f_{\exp}(\widetilde x) \right\vert \leq \epsilon + \vert x-\widetilde x\vert
\eean
for any $x \geq 0$ and $\widetilde x \in \bbR$.
\end{lemma}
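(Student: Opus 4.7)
The plan is to use the identity $e^{-x} = (e^{-x/2^J})^{2^J}$ for a suitably chosen $J$, which keeps all intermediate quantities bounded by $1$ and avoids the large parameter values that would arise from a direct Taylor approximation on $[0, \log(1/\epsilon)]$. Concretely, I first apply $f_{\rm clip}^{(0, T)}$ (Lemma \ref{secnn:clip}) with $T = \log(2/\epsilon)$ to form $\widetilde y \in [0, T]$, then use one linear layer (Lemma \ref{secnn:lin}) to form $y = \widetilde y / 2^J$ with $J = \lceil \log_2 T \rceil$, so that $y \in [0, 1]$. On $[0, 1]$, the truncated Taylor polynomial $P_K(y) = \sum_{k=0}^{K}(-y)^k/k!$ of $e^{-y}$ satisfies $|e^{-y} - P_K(y)| \leq 1/(K+1)!$, which by Stirling is at most $\epsilon/(8T)$ for some $K = O(\log(1/\epsilon))$.

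Next I realize $P_K$ by a network: for each $k \in \{1,\ldots,K\}$, invoke $f_{\rm mult}$ (Lemma \ref{secnn:mult}) with $m = k$, $C = 1$, and accuracy $\epsilon/(8TK)$ to obtain $\widehat{y^k}$, then form $\widehat P_K(y) = \sum_{k=0}^K (-1)^k \widehat{y^k}/k!$ by a final linear layer. The resulting approximation satisfies $|\widehat P_K(y) - e^{-y}| \leq \eta_0$ with $\eta_0 = O(\epsilon/T)$. To recover $e^{-\widetilde y} = (e^{-y})^{2^J}$, I compose the $2$-argument multiplication network $J$ times in sequence, each squaring with accuracy $\epsilon' = O(\epsilon/T)$. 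Writing $\eta_j = |\tilde z_j - z_j|$ where $z_j$ is the ideal squared value and $\tilde z_j$ the output of the $j$-th squaring, the recursion $\eta_j \leq 2\eta_{j-1} + \epsilon'$ (valid since all values lie in $[-1, 1]$) unrolls to $\eta_J \leq 2^J(\eta_0 + \epsilon') \leq \epsilon/2$.

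The last step is the case analysis linking $e^{-\widetilde y}$ to $e^{-x}$. Since $y \mapsto e^{-y}$ is $1$-Lipschitz on $[0, \infty)$, $|e^{-x} - e^{-\widetilde y}| \leq |x - \widetilde y|$, and separately examining whether $\widetilde x$ lies in $(-\infty, 0)$, $[0, T]$, or $(T, \infty)$ yields $|x - \widetilde y| \leq |x - \widetilde x|$ except when both $x > T$ and $\widetilde x > T$. In this exceptional subcase, $e^{-x} < \epsilon/2$ and $e^{-\widetilde y} = \epsilon/2$, so $|e^{-x} - e^{-\widetilde y}| \leq \epsilon/2$ directly. In all cases, adding the $\epsilon/2$ network error yields total error at most $\epsilon + |x - \widetilde x|$. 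The parameter bounds then follow by assembling the stages via Lemmas \ref{secnn:par} and \ref{secnn:comp}: the Taylor stage contributes depth $O(\log K \cdot \log(TK/\epsilon)) = O(\log(1/\epsilon) \log\log(1/\epsilon))$ and $\sum_k O(k \log(TK/\epsilon)) = O((\log(1/\epsilon))^3)$ parameters, while the $J = O(\log\log(1/\epsilon))$ squarings add depth and parameters of order $O(\log(1/\epsilon) \log\log(1/\epsilon))$, all within the stated bounds for $(L, \bd, s, M)$.

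The main obstacle is the $2^J$ amplification of errors through the iterated squaring, which forces both the Taylor truncation error and the per-multiplication error to be of order $\epsilon/\log(1/\epsilon)$ rather than $\epsilon$. This inflates the target accuracy inside each call to Lemma \ref{secnn:mult} by a $\log(1/\epsilon)$ factor; the careful part of the bookkeeping is verifying that this inflation only affects the overall complexity through polylog factors, so that the final $L = O(\log(1/\epsilon) \log\log(1/\epsilon))$, $\|\bd\|_\infty = O((\log(1/\epsilon))^3)$, and $s = O((\log(1/\epsilon))^4)$ bounds still hold, with $M \leq C_{N,3}\epsilon^{-1}$ comfortably dominated by the clipping parameter $T = \log(2/\epsilon)$ and the unit-bound mult networks.
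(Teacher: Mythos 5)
Your proposal is correct, and it takes a genuinely different route from the paper's proof. The paper tiles $[0, D_1]$ with $D_1 = O(\log(1/\epsilon))$ overlapping unit-length intervals $[i-1, i+1]$, builds a local Taylor polynomial on each, and glues them with a ReLU partition of unity $\{f_{\rm swit}^{(i)}\}$. You instead exploit the group homomorphism $e^{-x} = (e^{-x/2^J})^{2^J}$: after clipping to $[0, T]$ with $T = \log(2/\epsilon)$ and rescaling by $2^{-J}$ you need only a single Taylor polynomial on $[0,1]$, followed by $J = O(\log\log(1/\epsilon))$ iterated squarings via the $m=2$ multiplication network. Your approach trades the paper's $D_1$ parallel branches (and the switching machinery) for a depth-$J$ squaring chain with a $2^J \approx 2T$ error amplification factor; you correctly recognize this forces the Taylor truncation and per-squaring accuracy to be of order $\epsilon/T$, and you verify the recursion $\eta_j \leq 2\eta_{j-1} + \epsilon'$ holds because the true values $z_j = e^{-2^j y}$ stay in $[0,1]$ and Lemma \ref{secnn:mult} with $C=1$ clamps the network outputs to $[-1,1]$. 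The resulting complexity bounds land inside the lemma's targets (you actually obtain $\|\bd\|_\infty = O(\{\log(1/\epsilon)\}^2)$ and $s = O(\{\log(1/\epsilon)\}^3)$, somewhat sharper than stated, and $M = O(\log(1/\epsilon))$ from the clipping threshold rather than the stated $\epsilon^{-1}$). Two minor bookkeeping points you gloss over but should write out: a fan-out (copy) linear layer is needed to feed $y$ into the $k$ slots of each $f_{\rm mult}^{(k)}$ and into the $K$ parallel branches, and the constant in the budget split (your $\eta_0 + \epsilon' \leq \epsilon/(4T)$ versus $2^J < 2T$) needs a slightly tighter choice like $\epsilon/(16T)$ per contribution so that $\eta_J \leq \epsilon/2$ closes cleanly; neither affects the asymptotics.
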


\begin{proof}
Let $0 < \epsilon < 2^{-4e+2}, D_1 = \lfloor \log (4/\epsilon) \rfloor +1, D_2 = \lfloor \log (4/\epsilon) / \log 2  \rfloor +1$ and $\underline{T}_i = i-1, \overline{T}_i = i+1$ for $i \in [D_1]$. Then, Taylor's theorem yields that for any $i \in [D_1]$ and $x \in [\underline{T}_i, \overline{T}_i]$,
\bean
e^{-x} = e^{-\underline{T}_i} e^{-(x-\underline{T}_i)} = e^{-\underline{T}_i} \left\{ P_i(x)
 + \frac{(-1)^{D_2} e^{-\xi(x-\underline{T}_i)} (x-\underline{T}_i)^{D_2} }{D_2 !}
\right\}
\eean
for a suitable $\xi \in [0,1]$, where
\bean
P_i(x) = 1 - (x-\underline{T}_i) + \sum_{k=2}^{D_2-1} \frac{(-1)^{k} (x-\underline{T}_i)^{k}} {k!}.
\eean
Since $0 \leq x-\underline{T}_i \leq 2$ and $\underline{T}_i \geq 0$, it follows that
\be
\left\vert e^{-x} - e^{-\underline{T}_i} P_i(x) \right\vert  \leq \frac{2^{D_2}}{D_2 !} \leq \left(\frac{2e}{D_2} \right)^{D_2} \leq \left(\frac{1}{2} \right)^{D_2} \leq \frac{\epsilon}{4}, \label{eq:expremainder}
\ee
where the second inequality holds because $k! \geq k^{k} e^{-k}$ for any $k \in \bbN$.
Let $N_1$ be a constant in Lemma~\ref{secnn:mult}. For $k \geq 2$, there exists a neural network $f_{\rm mult}^{(k)} \in \cF_{\rm NN}(L_{\rm mult}^{(k)}, \bd_{\rm mult}^{(k)}, s_{\rm mult}^{(k)}, M_{\rm mult}^{(k)})$ with 
\be \begin{split}
& L_{\rm mult}^{(k)} \leq N_1  \log k \{ \log (4D_2/\epsilon^2) +  k \log 2 \}, \quad \bd_{\rm mult}^{(k)} = (k,48k,\ldots,48k,1)^{\top}, \quad
\\
& s_{\rm mult}^{(k)} \leq N_1 k \{ \log(4D_2/\epsilon^2) + \log 2 \}, \quad M_{\rm mult}^{(k)} = 2^{k} \label{eq:nnmultexpcomp}
\end{split} \ee
such that $\vert f_{\rm mult}^{(k)}(x_1,\ldots,x_k) - \prod_{i=1}^{k} x_i \vert \leq \epsilon^2/(4D_2)$ for any $x_1,\ldots,x_k \in [-2,2]$.
For any $k \geq 1$ and $i \in [D_1]$, Lemma~\ref{secnn:lin} implies that there exists a neural network 
\bean
f_{\rm lin}^{(i,k)} \in \cF_{\rm NN}(2,(1,2k,k)^{\top}, 6k, \underline{T}_i)
\eean
such that $f_{\rm lin}^{(i,k)}(x) = (x-\underline{T}_i,\ldots,x-\underline{T}_i)^{\top} \in \bbR^{k}$ for any $x \in \bbR$. 
Combining Lemma~\ref{secnn:comp} with the last display, it follows that $f_{\rm pow}^{(i,k)} = f_{\rm mult}^{(k)} \circ f_{\rm lin}^{(i,k)} \in \cF_{\rm NN}(L_{\rm pow}^{(i,k)},\bd_{\rm pow}^{(i,k)},s_{\rm pow}^{(i,k)}, M_{\rm pow}^{(i,k)})$ for $i \in [D_{1}], k \geq 2$ with
\bean
L_{\rm pow}^{(i,k)} =  L_{\rm mult}^{(k)} +2, \quad \| \bd_{\rm pow}^{(i,k)}\|_{\infty} \leq 96k, \quad s_{\rm pow}^{(i,k)} = 2s_{\rm mult}^{(k)} + 12k, \quad M_{\rm pow}^{(i,k)} = \underline{T}_i \vee M_{\rm mult}^{(k)}
\eean
and
\bean
\left\vert f_{\rm pow}^{(i,k)}(x) - (x-\underline{T}_i)^{k} \right\vert \leq \frac{ \epsilon^2}{4D_2}
\eean
for $x \in [\underline{T}_i,\overline{T}_i]$.
Consider functions $f_1,\ldots,f_{D_1} : \bbR \to \bbR$ such that
\bean
f_i(\cdot) = 1 - f_{\rm lin}^{(i,1)}(\cdot) +\sum_{k=2}^{D_2-1} \frac{(-1)^{k}   f_{\rm pow}^{(i,k)}(\cdot)  }{k!}, \quad i \in [D_1].
\eean
Since $f_i - 1$ is a linear combination of $ f_{\rm lin}^{(i,1)}, f_{\rm pow}^{(i,2)}, \ldots,f_{\rm pow}^{(i,D_2-1)}$, Lemma~\ref{secnn:comp}, Lemma~\ref{secnn:par} and Lemma~\ref{secnn:lin} implies that $f_i \in \cF_{\rm NN}(L^{(i)},\bd^{(i)},s^{(i)},M^{(i)})$ with
\be \begin{split}
& L^{(i)} \leq L_{\rm pow}^{(i,D_2-1)} + 2 \leq D_3 \log D_2  \{ \log (1/\epsilon) + D_2 \} 
\\
& \| \bd^{(i)} \|_{\infty} \leq 2\max \left(2\sum_{k=2}^{D_2-1} \| \bd_{\rm pow}^{(i,k)}\|_{\infty} + 4 , D_2-1 \right) \leq D_3 D_2^2
\\
& s^{(i)} \leq 2\left\{ \sum_{k=2}^{D_2-1} \left( s_{\rm pow}^{(i,k)} +L_{\rm pow}^{(i,D_2-1)} + 2 \right) + L_{\rm pow}^{(i,D_2-1)} + 8 \right\} + 2D_2 + 2
\\
& \leq D_3 D_2^{2} \{ \log (1/\epsilon) + D_2 \} 
\\
& M^{(i)} \leq \left( \max_{k \in [D_2-1]}  M_{\rm pow}^{(i,k)} \right) \vee 1 \leq (D_1+1) \vee 2^{D_2-1} \label{eq:nnexpcomp}
\end{split} \ee
for a large enough constant $D_3 = D_3(N_1)$.
Then, 
\bean
&& \left\vert P_i(x) - f_{i}(x) \right\vert \leq \sum_{k=2}^{D_2-1} \frac{\vert f_{\rm pow}^{(i,k)}(x) - (x-\underline{T}_i)^{k}  \vert}{k!}
\\
&& \leq D_2 \max_{2 \leq k \leq D_2-1}\left\vert f_{\rm pow}^{(i,k)}(x) - (x-\underline{T}_i)^{k} \right\vert \leq \frac{\epsilon^2}{4}, \quad i \in [D_{1}]
\eean
for $x \in [\underline{T}_i,\overline{T}_i]$.
Combining (\ref{eq:expremainder}) with the last display, we have
\be
\left\vert e^{-x} - e^{-\underline{T}_i} f_i(x) \right\vert \leq \left\vert e^{-x} - e^{-\underline{T}_i} P_i(x) \right\vert + \left\vert P_i(x) - f_{i}(x) \right\vert \leq \frac{\epsilon}{4}+ \frac{\epsilon^2}{4} \leq \frac{\epsilon}{2}, \quad i \in [D_1] \label{eq:nnexp2}
\ee
for $x \in [\underline{T}_i,\overline{T}_i]$, where the first inequality holds because $e^{-\underline{T}_i} \leq 1$.
Consider functions $f_{\rm swit}^{(1)},\ldots,f_{\rm swit}^{(D_1+1)} : \bbR \to [0,1]$ such that
\bean
&& f_{\rm swit}^{(1)}(\cdot) = \frac{1}{\overline{T}_{1}-\underline{T}_{2}} \rho \left(-f_{\rm clip}^{(\underline{T}_{2},\overline{T}_{1})} (\cdot) + \overline{T}_{1} \right),
\\
&& f_{\rm swit}^{(i)}(\cdot) = \frac{1}{\overline{T}_{i-1}-\underline{T}_{i}} \rho \left(f_{\rm clip}^{(\underline{T}_
{i},\overline{T}_{i-1})} (\cdot) - \underline{T}_{i} \right) - \frac{1}{\overline{T}_{i}-\underline{T}_{i+1}} \rho \left(f_{\rm clip}^{(\underline{T}_{i+1},\overline{T}_{i})} (\cdot) - \underline{T}_{i+1} \right),
\\
&& 2 \leq i \leq D_1,
\\
&& f_{\rm swit}^{(D_1+1)}(\cdot) = \frac{1}{\overline{T}_{D_1}-\underline{T}_{D_1+1}} \rho \left(f_{\rm clip}^{(\underline{T}_
{D_1+1},\overline{T}_{D_1})} (\cdot) - \underline{T}_{D_1+1} \right),
\eean
where $\underline{T}_{D_{1}+1} = D_1$ and $f_{\rm clip}^{(\underline{T}_i,\overline{T}_{i-1})} \in \cF_{\rm NN}(2, (1,2,1)^{\top}, 7, D_1)$ denotes the neural network in Lemma~\ref{secnn:clip}.
Note that $\sum_{i=1}^{D_1+1} f_{\rm swit}^{(i)}(x) = 1$ for $x \in \bbR$, and $f_{\rm swit}^{(i)}(x) = 0$ for $x \in [0,\underline{T}_i] \cup [\overline{T}_i, \infty), i \in [D_1]$ and $f_{\rm swit}^{(D_1+1)}(x) = 0$ for $x \leq \underline{T}_{D_1+1}$.
Consider a function $f: \bbR \to \bbR$ such that $f(\cdot) = \sum_{i=1}^{D_1}  e^{-\underline{T}_i}  f_{\rm mult}^{(2)}(f_{\rm swit}^{(i)}(\cdot) , f_{i}(\cdot))$.
Since $\vert f_{\rm mult}^{(2)} (x_1,x_2) - x_1x_2\vert \leq \epsilon^2/(4D_2)$ for any $x_1,x_2 \in [-2,2]$, we have
\bean
&& \left\vert e^{-x} - f(x) \right\vert \leq \left\vert e^{-x} - \sum_{i=1}^{D_1} e^{-\underline{T}_i} f_{\rm swit}^{(i)}(x) f_i(x) \right\vert + \frac{D_1\epsilon^{2}}{4D_2}  
\\
&& = \left\vert  \sum_{i=1}^{D_1} f_{\rm swit}^{(i)}(x) \left\{ e^{-x} -  e^{-\underline{T}_i}f_{i}(x) \right\} + f_{\rm swit}^{(D_1+1)}(x)e^{-x} \right\vert + \frac{D_1\epsilon^{2}}{4D_2},
\\
&& \leq \sum_{i=1}^{D_1} f_{\rm swit}^{(i)}(x) \left\vert e^{-x} - e^{-\underline{T}_i}f_{i}(x) \right\vert + \frac{\epsilon}{4} + \frac{D_1\epsilon^{2}}{4D_2} 
\\
&& \leq \frac{\epsilon}{2} \sum_{i=1}^{D_1} f_{\rm swit}^{(i)}(x) + \frac{\epsilon}{4} + \frac{D_1\epsilon^{2}}{4D_2}
\leq \left(\frac{3}{4} + \frac{(D_1+1)\epsilon}{4D_2} \right) \epsilon  \leq \epsilon
\eean
for $x \in [0,\infty)$, where the second inequality holds because  $\vert f_{\rm swit}^{(D_1+1)}(x)e^{-x} \vert \leq e^{-x} \leq \epsilon/4$ for $x \geq \underline{T}_{D_1+1}$ and the third inequality holds by  (\ref{eq:nnexp2}).
Combining (\ref{eq:nnmultexpcomp}) and (\ref{eq:nnexpcomp}) with Lemma~\ref{secnn:par} and Lemma~\ref{secnn:comp}, we have $  f_{\rm mult}^{(2)}(f_{\rm swit}^{(i)}(\cdot) , f_{i}(\cdot)) \in \cF_{\rm NN}(\widetilde L^{(i)},\widetilde \bd^{(i)},\widetilde s^{(i)}, \widetilde M^{(i)})$ for $i \in [D_{1}]$ with
\bean
&& \widetilde L^{(i)} \leq (L^{(i)} \vee 2) + L_{\rm mult}^{(2)} \leq D_4 \log D_2  \{ \log (1/\epsilon) + \log D_2 \} 
\\
&& \| \widetilde \bd^{(i)} \|_{\infty} \leq 2\max \left( 2\|\bd^{(i)}\|_{\infty} + 4 ,  \|\bd_{\rm mult}^{(2)}\|_{\infty}   \right) \leq D_4 D_2^2
\\
&& \widetilde s^{(i)} \leq 4 s^{(i)} + 4 (L^{(i)} \vee 2) + 2 s_{\rm mult}^{(2)} + 28  \leq D_4 D_2^{2}  \{ \log (1/\epsilon) +  D_2 \} 
\\
&& \widetilde M^{(i)} \leq \max\left(D_1, M^{(i)}, M_{\rm mult}^{(2)}, 1 \right),
\eean
where $D_4 = D_4(D_3)$ is a large enough constant.
Since $f$ is a linear combination of $f_{\rm mult}^{(2)}(f_{\rm swit}^{(i)}(\cdot) , f_{i}(\cdot))$ for each $i \in [D_1]$, Lemma~\ref{secnn:comp}, Lemma~\ref{secnn:par} and Lemma~\ref{secnn:lin} implies that $f \circ \rho_{1} \in \cF_{\rm NN}(L,\bd,s,M)$ with
\bean
&& L \leq \max_{i \in [D_1]} \widetilde L^{(i)} + 3 \leq D_5 \log(1/\epsilon) \log \log (1/\epsilon)
\\
&& \|\bd\|_{\infty} \leq 2\max\left( 2\sum_{i=1}^{D_1} \| \widetilde \bd^{(i)}\|_{\infty}, 4 D_1 \right) \leq D_5  \{ \log(1/\epsilon)\}^{3}
\\
&& s \leq 2\sum_{i=1}^{D_1}\left(\widetilde s^{(i)} +  \max_{j \in [D_1]} \widetilde L^{(j)} \right) + 4D_1 + 4 \leq D_5  \{ \log(1/\epsilon)\}^{4}
\\
&& M \leq \max_{i \in [D_1]} \left(\widetilde M^{(i)} \vee e^{-\underline{T}_i} \right) \leq D_5 \epsilon^{-1}
\eean
for large enough constant $D_5 = D_5(D_4)$.
Note that $\vert(f \circ \rho_{1})(\widetilde x) - e^{-x} \vert \leq \vert(f \circ \rho_{1})(\widetilde x) - e^{-(\widetilde x \vee 0)} \vert \ + \vert e^{-(\widetilde x \vee 0)} - e^{-x} \vert \leq \epsilon + \vert x-\widetilde x\vert$ for any $x \geq 0$ and $\widetilde x \in \bbR$.
Then, the assertion follows by re-defining the constant.
\end{proof}

\begin{lemma}[$\mu_t$ and $\sigma_t$]
\label{secnn:mtst}
For any $0 < \epsilon < 1/2$, there exists a positive constant $C_{N,4} = C_{N,4}(\underline{\tau},\overline{\tau})$ and neural networks $f_{\mu} \in \cF_{\rm NN}(L_{\mu},\bd_{\mu},s_{\mu},M_{\mu}), f_{\sigma} \in \cF_{\rm NN}(L_{\sigma},\bd_{\sigma},s_{\sigma},M_{\sigma})$ with
\bean
&& L_{\mu}, L_{\sigma} \leq C_{N,4} \{ \log(1/\epsilon)\}^{2}, \quad \|\bd_{\mu} \|_{\infty},  \| \bd_{\sigma}\|_{\infty} \leq C_{N,4}  \{ \log(1/\epsilon)\}^{2}
\\
&& s_{\mu}, s_{\sigma} \leq C_{N,4}  \{ \log(1/\epsilon)\}^{3}, \quad M_{\mu}, M_{\sigma} \leq C_{N,4} \log ( 1/\epsilon)
\eean
such that
\bean
\left\vert \mu_{t_1} - f_{\mu}(t_1) \right\vert \leq \epsilon \quad \quad \text{and} \quad \quad \left\vert \sigma_{t_2} - f_{\sigma}(t_2) \right\vert \leq \epsilon 
\eean
for any $t_1 \geq 0$ and $t_2 \geq \epsilon$.
\end{lemma}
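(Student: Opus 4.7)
The plan is to write $\mu_t = \exp(-A(t))$ with $A(t) := \int_0^t \alpha_s\,\d s$ and build $f_\mu$ in two stages: first a network $f_A$ that approximates $A$, then a negative-exponential stage. For $\sigma_t = \sqrt{1-\mu_t^2}$ I would reuse $f_\mu$, square it via Lemma~\ref{secnn:mult}, subtract from one, and implement the square root via the identity $\sqrt{y} = \exp\bigl(-\tfrac12(-\log y)\bigr)$ by chaining $f_{\log}$ from Lemma~\ref{secnn:log} and $f_{\exp}$ from Lemma~\ref{secnn:exp}.

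For $f_A$, note that assumption~2 gives $\lVert \alpha^{(k)}\rVert_\infty \leq 1$ for every $k$ and $\alpha \in [\underline\tau, \overline\tau]$, so $A$ satisfies $\lVert A^{(k)}\rVert_\infty \leq 1$ for $k\geq 2$ and $\lVert A'\rVert_\infty \leq \overline\tau$. Because $\mu_t \leq e^{-\underline\tau t}$, I would clip $t$ to $[0, T^*]$ with $T^* := \underline\tau^{-1}\log(2/\epsilon)$, incurring only $\epsilon/2$ error in $\mu_t$. On $[0, T^*]$ I partition into unit subintervals $[\underline T_i, \overline T_i]$ and replace $A$ on each piece by its Taylor polynomial of order $D_2 = O(\log(1/\epsilon)/\log\log(1/\epsilon))$ about $\underline T_i$; the Lagrange remainder is bounded by $1/D_2! \leq \epsilon/2$. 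The pieces are glued together by exactly the switching-function trick already used in the proofs of Lemmas~\ref{secnn:log} and~\ref{secnn:exp}, and each polynomial is evaluated through $f_{\rm mult}$ on inputs lying in $[0,1]$. The higher-order Taylor coefficients are bounded by $1$ (the linear one by $\overline\tau$), while the only potentially large coefficient, $A(\underline T_i) \leq \overline\tau T^* = O(\log(1/\epsilon))$, is still compatible with the stated weight bound.

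For $f_\sigma$, I would first construct $f_\mu$ with a sharpened precision $\epsilon^{3/2}$, since for $t \geq \epsilon$ one has $1-\mu_t^2 \geq 1 - e^{-2\underline\tau\epsilon} \gtrsim \underline\tau\epsilon$, so the square root is only $1/\sqrt{\underline\tau\epsilon}$-Lipschitz on the relevant range. After one application of $f_{\rm mult}$ to form $f_\mu(t)^2$ and a linear layer to produce $\widetilde y = 1 - f_\mu(t)^2 \in [c\epsilon, 1]$, I apply $f_{\log}$ with domain parameter of order $\epsilon$, scale by $-\tfrac12$ via a linear layer, and finally apply $f_{\exp}$. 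Tracking the error bounds from Lemmas~\ref{secnn:mult}, \ref{secnn:log} and~\ref{secnn:exp} gives $|f_\sigma(t) - \sigma_t| \leq \epsilon$ after adjusting the input precisions by poly-logarithmic constants.

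The main obstacle will be the tight weight bound $M \leq C_{N,4}\log(1/\epsilon)$: a direct composition $f_{\exp}\circ f_A$ inherits the $M \sim \epsilon^{-1}$ bound from Lemma~\ref{secnn:exp}, which is much larger than allowed. To avoid this, my plan is not to use $f_{\exp}$ as a black box on $A(t)$ itself but rather to exploit the identity $e^{-A(t)} = \prod_{i=1}^N e^{-A_i(t)}$ with $N = \lceil A(t)\rceil \leq \overline\tau T^* = O(\log(1/\epsilon))$ and each $A_i(t) \in [0,1]$. Each factor $e^{-A_i(t)} \in [e^{-1}, 1]$ admits a Taylor polynomial of logarithmic degree with unit-sized coefficients, and an $N$-fold product of values in $[0,1]$ can be handled by Lemma~\ref{secnn:mult} with weights of size $O(1)$. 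Allocating a per-factor precision $\epsilon/N$ then produces total error $\epsilon$ with all weight magnitudes bounded by $O(\log(1/\epsilon))$; an analogous refactoring of the square-root stage, combined with the fact that $\widetilde y$ is bounded away from $0$, yields the same weight bound for $f_\sigma$.
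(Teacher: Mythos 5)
The paper's ``proof'' of this lemma is a one-line citation to Lemma B.1 of \citet{oko2023diffusion}, so there is no in-paper argument to compare against; you are attempting a self-contained construction. The $f_\mu$ part of your plan is sound in outline: you correctly identify that composing $f_{\exp}$ with an approximation of $A(t)=\int_0^t\alpha_s\,\d s$ inherits $M\sim\epsilon^{-1}$ from Lemma~\ref{secnn:exp}, and the product factorization $e^{-A(t)}=\prod_{i=1}^N e^{-A_i(t)}$ with $A_i\in[0,1]$, $N=O(\log(1/\epsilon))$ is a plausible way to keep the multiplication weights $O(1)$ while the remaining biases $A(\underline T_i)\le\overline\tau T^*=O(\log(1/\epsilon))$ stay within budget.

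The $f_\sigma$ construction has a genuine gap. In your second paragraph you correctly record that for $t\ge\epsilon$, $\widetilde y=1-\mu_t^2=\sigma_t^2\gtrsim\underline\tau\epsilon$, but in your final paragraph you assert that ``$\widetilde y$ is bounded away from $0$'', which is false: the lower bound scales linearly in $\epsilon$ and tends to $0$. Because of this the chain $f_{\rm mult}\to(\text{affine})\to f_{\log}\to(\text{affine})\to f_{\exp}$ cannot meet the required weight bound. To cover $\widetilde y\in[c\epsilon,1]$, the logarithm of Lemma~\ref{secnn:log} must be invoked with domain parameter of order $\epsilon$, and that network carries $M_{\log}\le\exp\bigl(8\{\log(1/\epsilon)\}^2\bigr)$, vastly exceeding the allowed $C_{N,4}\log(1/\epsilon)$. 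Crucially, the ``analogous refactoring'' you invoke would only act on the outer exponential, whereas the large weights arise \emph{inside} $f_{\log}$ (the Taylor coefficients there scale like $\underline T_i^{-k}\sim\epsilon^{-k}$ with $k$ up to $O(\log(1/\epsilon))$). The difficulty is intrinsic: $\sigma_t$ has a square-root singularity at $t=0$, so at $t\sim\epsilon$ its derivatives scale like $\epsilon^{1/2-k}$, and any local polynomial approximation in the raw variable $t$ (or in $\widetilde y$) forces either coefficients or scaling factors of size at least $\epsilon^{-1}$. A workable route would have to rescale the argument into a fixed window before expanding — e.g.\ iterated doubling across $O(\log(1/\epsilon))$ layers with per-layer weight $2$ to bring $\widetilde y$ (or $t$) into $[1/2,1]$, then a bounded-coefficient Taylor polynomial, then iterated halving by $2^{-1/2}$ to undo the scale — so that no single weight exceeds $O(1)$ while the number of such layers is $O(\log(1/\epsilon))$. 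That mechanism is absent from your proposal, and without it the stated $M_\sigma\le C_{N,4}\log(1/\epsilon)$ bound is not obtained.
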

\begin{proof}
This is a re-statement of Lemma B.1 in \citet{oko2023diffusion}.
\end{proof}

\begin{lemma}[Reciprocal function]
\label{secnn:rec}
For any $0 < \epsilon < 1$, there exists a positive constant $C_{N,5}$ and a neural network $f_{\rm rec} \in \cF_{\rm NN}(L,\bd,s,M)$ with
\bean
&& L \leq C_{N,5} \{ \log(1/\epsilon)\}^{2}, \quad \|\bd\|_{\infty}\leq C_{N,5}  \{ \log(1/\epsilon)\}^{3}
\\
&& s  \leq C_{N,5}  \{ \log(1/\epsilon)\}^{4}, \quad M \leq C_{N,5} \epsilon^{-2}
\eean
such that
\bean
\left\vert \frac{1}{x} - f_{\rm rec}(\widetilde x) \right\vert \leq \epsilon + \frac{\vert x - \widetilde x \vert}{\epsilon^2}
\eean
for any $x \in [\epsilon,1/\epsilon]$ and $\widetilde x \in \bbR$.
\end{lemma}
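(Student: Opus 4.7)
The plan is to mirror closely the proofs of Lemmas \ref{secnn:log} and \ref{secnn:exp}, which give us a general template: partition the domain $[\epsilon,1/\epsilon]$ into $O(\log(1/\epsilon))$ overlapping sub-intervals, build a truncated Taylor polynomial of the target function on each sub-interval, realize those polynomials via the multiplication network of Lemma \ref{secnn:mult}, and stitch them together with a ReLU partition of unity. Concretely, I would pick $\delta = 1/8$, set $\underline{T}_i = (1+\delta)^{i-1}\epsilon$ and $\overline{T}_i = (1+\delta)^{i+1}\epsilon$ for $i \in [D_1]$ with $D_1 \asymp \log(1/\epsilon)$ so that $[\epsilon,1/\epsilon] \subseteq \bigcup_i [\underline{T}_i, \overline{T}_i]$, and take the truncation order $D_2 \asymp \log(1/\epsilon)$.

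On each interval, write
\bean
 \frac{1}{x} = \frac{1}{\underline{T}_i}\sum_{k=0}^{D_2-1}(-1)^k \Big(\frac{x-\underline{T}_i}{\underline{T}_i}\Big)^k + R_i(x),
\eean
where the geometric-series remainder satisfies $|R_i(x)| \leq (1/\underline{T}_i) (\delta^2+2\delta)^{D_2} \leq \epsilon/4$ once $D_2$ is large enough, using $\underline{T}_i \geq \epsilon$ and $(1+\delta)^2 - 1 < 1/2$. The trick to keeping $M$ small is to factor out $1/\underline{T}_i$: the powers $y^k = ((x-\underline{T}_i)/\underline{T}_i)^k$ all lie in $[-1,1]$, so $f_{\rm pow}^{(i,k)}$ built from $f_{\rm mult}^{(k)}$ of Lemma \ref{secnn:mult} composed with a scalar affine map $x \mapsto (x-\underline{T}_i)/\underline{T}_i$ (a single weight $1/\underline{T}_i \leq 1/\epsilon$) approximates $y^k$ to accuracy $O(\epsilon^2/D_2)$ with only $O(\log(1/\epsilon))$ depth and $O(D_2)$ nonzero parameters, and internal magnitudes $O(1)$. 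Summing with alternating signs gives $f_i$ such that $|f_i(x) - 1/x| \leq \epsilon/2$ on $[\underline{T}_i, \overline{T}_i]$, using a final outer scalar multiplication by $1/\underline{T}_i$ (weight $\leq 1/\epsilon$), which explains the claimed $M \leq C_{N,5}\epsilon^{-2}$.

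To stitch the $f_i$ together I would reuse verbatim the switch functions of Lemma \ref{secnn:log}, built from $\rho$ and the clipping network $f_{\rm clip}^{(\underline{T}_{i+1},\overline{T}_i)}$ of Lemma \ref{secnn:clip}, giving nonnegative functions $f_{\rm swit}^{(i)}$ with $\sum_i f_{\rm swit}^{(i)}(x) = 1$ on $\bigcup_i [\underline{T}_i, \overline{T}_i]$ and $f_{\rm swit}^{(i)}$ supported near $[\underline{T}_i, \overline{T}_i]$. Then $f(x) = \sum_i f_{\rm mult}^{(2)}(f_{\rm swit}^{(i)}(x), f_i(x))$ realizes the full approximation on $[\epsilon,1/\epsilon]$ with error bounded by $\epsilon/2 + D_1 \cdot O(\epsilon^2/D_2) \leq \epsilon$. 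For arbitrary $\widetilde x \in \bbR$, I would define $f_{\rm rec} = f \circ f_{\rm clip}^{(\epsilon,1/\epsilon)}$ and invoke Lemma \ref{secnn:clip}. The Lipschitz bound then follows from the triangle inequality
\bean
 \Big|\tfrac{1}{x} - f_{\rm rec}(\widetilde x)\Big|
 \leq \Big|\tfrac{1}{x} - \tfrac{1}{\widetilde x^\ast}\Big| + \big|\tfrac{1}{\widetilde x^\ast} - f(\widetilde x^\ast)\big|,
\eean
where $\widetilde x^\ast = \max(\epsilon, \min(\widetilde x, 1/\epsilon))$; the first term is $\leq |x-\widetilde x|/\epsilon^2$ (the 1-Lipschitzness of clipping plus $x\widetilde x^\ast \geq \epsilon^2$) and the second is $\leq \epsilon$.

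The architecture bounds follow by the standard accounting as in Lemma \ref{secnn:log}: parallel/concatenated application of Lemmas \ref{secnn:comp}, \ref{secnn:par}, \ref{secnn:lin}, \ref{secnn:mult}, \ref{secnn:clip} yields depth $O((\log(1/\epsilon))^2)$, width $O((\log(1/\epsilon))^3)$, and $O((\log(1/\epsilon))^4)$ nonzero parameters. The main obstacle I anticipate is the bookkeeping needed to certify $M \leq C_{N,5}\epsilon^{-2}$ (rather than the much larger $\exp(\log^2(1/\epsilon))$ that appears for $\log$): this forces the factored form $y = (x-\underline{T}_i)/\underline{T}_i$ above, together with the observation that all intermediate quantities in $f_{\rm mult}^{(k)}$ then live in $[-1,1]$, so the amplitude constant $C$ in Lemma \ref{secnn:mult} can be taken to be a universal constant and only two explicit rescalings by $1/\underline{T}_i \leq 1/\epsilon$ enter the network weights, the second of them squared in the error-propagation estimate.
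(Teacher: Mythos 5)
Your construction is correct, and it sensibly mirrors the paper's own proofs of Lemmas \ref{secnn:log} and \ref{secnn:exp} (the paper does not prove Lemma \ref{secnn:rec} itself; it cites Lemma F.7 of \citet{oko2023diffusion}). The crucial adaptation you identify — expanding in the normalized variable $y=(x-\underline T_i)/\underline T_i\in[0,17/64]$ so that $C$ in Lemma \ref{secnn:mult} can be taken to be a universal constant for the power networks — is exactly what keeps $M$ polynomial in $1/\epsilon$ rather than $\exp\{C\log^2(1/\epsilon)\}$ as in the $\log$ construction, and your Lipschitz extension via $f_{\rm rec}=f\circ f_{\rm clip}^{(\epsilon,1/\epsilon)}$ with $|1/x-1/\tilde x^\ast|\le |x-\tilde x|/\epsilon^2$ is correct.

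Two small clarifications on the bookkeeping. First, you attribute the $\epsilon^{-2}$ in the $M$-bound to ``the second rescaling squared in the error-propagation estimate,'' but all the rescalings by $1/\underline T_i$ (the affine input map, the outer linear combination, the switch coefficients $1/(\overline T_{i-1}-\underline T_i)$, the clipping) only contribute weights of magnitude $O(1/\epsilon)$. The genuine source of $\epsilon^{-2}$ is the stitching multiplication $f_{\rm mult}^{(2)}(f_{\rm swit}^{(i)},f_i)$: since $f_i$ takes values up to about $1/\epsilon$ on its target interval, you must take amplitude $C\asymp 1/\epsilon$ in Lemma \ref{secnn:mult}, giving $M_{\rm mult}^{(2)}=C^2\asymp\epsilon^{-2}$; alternatively, feeding $\epsilon f_i$ and rescaling by $\epsilon^{-1}$ afterwards would give $M\lesssim\epsilon^{-1}$, either way within the stated bound. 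Second, for $x\in[\epsilon,1/\epsilon]$ with $x\notin(\underline T_i,\overline T_i)$, the truncated sum $f_i(x)$ can exceed $C$; your construction still works because $f_{\rm swit}^{(i)}(x)=0$ exactly there and Lemma \ref{secnn:mult} guarantees $f_{\rm mult}^{(2)}(0,\cdot)=0$, but this is worth making explicit, since the accuracy guarantee of Lemma \ref{secnn:mult} does not itself cover arguments outside $[-C,C]^m$.
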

\begin{proof}
This is a re-statement of Lemma F.7 in \citet{oko2023diffusion}.
\end{proof}

\section{Proofs for the Approximation Theory}
\label{secapp:approx}

In this section, we provide the proof of Theorem~\ref{secthm:1}.
We begin by outlining the crucial lemmas and propositions.

For $n \in \bbN$, let $P_{n}$ be the Legendre polynomial of degree $n$ defined as
\bean
  P_{n}(x) = \left( \frac{1}{2^{n}{n!}} \right) \frac{\d^{n}}{\d x^{n}} (x^2-1)^{n}
\eean
for $x \in \bbR$.
It is well-known (page 114 of \citet{arnold2004lectures}) that equation $P_{n}=0$ has $n$ distinct roots $\widetilde x_1^{(n)} ,\ldots,\widetilde x_{n}^{(n)}$ satisfying $-1< \widetilde x_1^{(n)} < \cdots < \widetilde x_{n}^{(n)} <1$.
Let $\{ \widetilde w_{1}^{(n)},\ldots, \widetilde w_{n}^{(n)} \}$ be the Gauss-Legendre quadrature weights, that is,
\bean
\widetilde w_j^{(n)} = \begin{dcases}
  \int_{-1}^{1} \prod_{\substack{k = 1 \\ k \neq j}}^{n} \left( \frac{x-\widetilde x_k^{(n)}}{\widetilde x_j^{(n)} - \widetilde x_k^{(n)}} \right) \d x, & \text{if }
       n \geq 2
  \\
  2, & \text{if } n=1.
  \end{dcases}
\eean
Let $n_{\beta}$ be the largest integer strictly smaller than $\beta \vee 2$.
For simplicity, we denote the vectors $(\widetilde x_1^{(n_\beta)} ,\ldots,\widetilde x_{n_\beta}^{(n_\beta)} )$ and $( \widetilde w_{1}^{(n_\beta)},\ldots, \widetilde w_{n_\beta}^{(n_\beta)} )$ as  $( \widetilde x_1 ,\ldots,\widetilde x_{n_\beta} )$ and $(\widetilde w_{1},\ldots, \widetilde w_{n_\beta} )$, respectively.
The following lemma provides an error bound for the $m$-points quadrature rule to approximate a one-dimensional integral.

\begin{lemma}[$1$-dimensional $m$-point quadrature rule]
\label{sec:qde}
Let $A < B$ and $\beta, K > 0$ be given.
For every $m \in {n_{\beta}} \bbN$, there exists $(w_i, x_i)_{i \in [m]}$ with $w_i > 0 $ and $x_i \in (A, B)$ such that
\bean
&& \left\vert \int_{A}^{B} g(x) \d x -  \sum_{i=1}^{m} w_i g(x_i) \right\vert \leq \left\{ \frac{{{n_{\beta}}}^{\beta}}{2^{\beta-\lfloor \beta \rfloor} \lfloor \beta \rfloor !} \right\} K (B-A)^{\beta + 1 } m^{-\beta},
\eean
for every $g \in \cH^{\beta, K}_{1}([A,B])$.
More specifically, one can choose
\bean
&& w_i = \frac{(B-A){n_{\beta}}}{2m}\widetilde w_{i- n_{\beta} \lfloor i / {n_{\beta}} \rfloor},
\\
&& x_i = A + \frac{(B-A){n_{\beta}}}{2m} \left\{\widetilde x_{i- n_{\beta} \lfloor i / {n_{\beta}}  \rfloor} + 2 \left\lfloor i / {n_{\beta}} \right\rfloor +1 \right\}.
\eean
\end{lemma}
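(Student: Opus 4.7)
The plan is to recognize the rule $(w_i, x_i)_{i \in [m]}$ as a composite Gauss--Legendre quadrature. I would partition $[A, B]$ into $M := m/n_\beta$ equal subintervals $I_k = [A_k, A_k + h]$ of length $h := n_\beta (B-A)/m$, and on each $I_k$ place the $n_\beta$-point Gauss--Legendre rule transplanted from $[-1,1]$ via the obvious affine map. The prescribed formula matches this construction: the scaling $(B-A)n_\beta/(2m) = h/2$ is precisely the Jacobian of the transplantation, while the index bookkeeping involving $\lfloor i/n_\beta \rfloor$ just picks out which subinterval the $i$-th node belongs to and which of the $n_\beta$ reference nodes to use within it. Thus the proof reduces to bounding the error of this composite quadrature against $\cH^{\beta, K}([A,B])$.

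I would then invoke two classical ingredients. First, $n$-point Gauss--Legendre quadrature on $[-1,1]$ is exact on every polynomial of degree at most $2n-1$. Checking the definition of $n_\beta$ (the largest integer strictly less than $\beta \vee 2$) against the exponent $\lfloor \beta \rfloor$ case by case shows $2n_\beta - 1 \geq \lfloor \beta \rfloor$, so that after transplantation the quadrature on each $I_k$ integrates every polynomial of degree up to $\lfloor\beta\rfloor$ exactly. Second, since $g \in \cH^{\beta, K}([A,B])$, Taylor expanding $g$ around the midpoint $c_k$ of $I_k$ to order $\lfloor\beta\rfloor$ yields the decomposition $g = T_k + R_k$, where $T_k$ is a degree-$\lfloor\beta\rfloor$ polynomial and, by the integral form of the remainder together with the \Holder\ continuity of $g^{(\lfloor\beta\rfloor)}$ with constant $K$,
$$\sup_{x \in I_k}|R_k(x)| \;\leq\; \frac{K}{2^{\beta - \lfloor\beta\rfloor}\,\lfloor\beta\rfloor!}\,h^{\beta}.$$
The $2^{-(\beta - \lfloor\beta\rfloor)}$ factor comes from tracking that $|x - c_k| \leq h/2$ on $I_k$ and folding this into the Beta-function integral produced by the remainder formula.

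Combining the two ingredients, the quadrature on $I_k$ annihilates $T_k$, so the subinterval error equals $\bigl|\int_{I_k} R_k(x)\,dx - \sum_{j} w_{k,j} R_k(x_{k,j})\bigr|$. Since $\int_{I_k} dx = h$ and the transplanted weights sum to $h$, each term is bounded by $h\sup_{I_k}|R_k|$, giving subinterval error $\leq 2h^{1+\beta}K/(2^{\beta-\lfloor\beta\rfloor}\lfloor\beta\rfloor!)$. Summing over $k=1,\ldots,M$ and substituting $h = n_\beta(B-A)/m$ produces a total error of the claimed form $n_\beta^\beta K (B-A)^{\beta}\,m^{-\beta}/(2^{\beta-\lfloor\beta\rfloor}\lfloor\beta\rfloor!)$ up to a universal multiplicative factor. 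The main obstacle, to the extent there is one, is purely bookkeeping of the constant in the \Holder--Taylor remainder so that the factor $n_\beta^\beta/(2^{\beta-\lfloor\beta\rfloor}\lfloor\beta\rfloor!)$ emerges cleanly after summation; the structural argument is entirely standard composite-quadrature analysis and no new technique is required.
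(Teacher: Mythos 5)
Your proposal takes a genuinely different route from the paper's, and both are valid. The paper builds the Lagrange interpolant $L_i$ of degree $n_\beta - 1$ at the (scaled) Gauss--Legendre nodes on each subinterval, invokes a generalized Rolle's theorem to write $g_i - L_i$ as a derivative term times the nodal polynomial $\prod_j (x - \widetilde x_j / m_0)$, and then uses the orthogonality of the degree-$n_\beta$ Legendre polynomial to kill the leading term after integration; it never explicitly invokes the degree-$(2n_\beta-1)$ exactness of Gauss--Legendre quadrature. You instead Taylor-expand $g$ around the midpoint of each subinterval to order $\lfloor\beta\rfloor$ and use the exactness of $n_\beta$-point Gauss--Legendre on polynomials of degree up to $2n_\beta - 1 \geq \lfloor\beta\rfloor$ (which you correctly verify case by case from the definition of $n_\beta$). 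Your approach is cleaner: it bypasses the Rolle/Legendre-orthogonality machinery and reduces the whole argument to ``the rule annihilates the Taylor polynomial, so only the \Holder\ remainder contributes.'' The two decompositions are structurally different (midpoint Taylor polynomial vs.\ Lagrange interpolant at the nodes), but both rest on the same underlying fact about Gauss--Legendre quadrature.

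One caveat worth naming: you say your sum ``produces a total error of the claimed form ... up to a universal multiplicative factor,'' but if you carry the arithmetic through you get a factor $(B-A)^{1+\beta}$, not the $(B-A)^\beta$ of the lemma statement. That extra $(B-A)$ is not universal. However, this is not a defect in your argument relative to the paper: the paper's own proof lands on exactly the same $K(B-A)^{1+\beta}$ expression and then silently drops $K(B-A)^{1+\beta}$ in its final ``$=$'' line, so the lemma statement and the paper's proof already disagree by the same factor. Your calculation faithfully reproduces what the paper actually proves; only the phrase ``universal factor'' is misleading.
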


Let $\phi$ be the $one$-dimensional standard normal density.
The following lemma provides a bound for the \Holder-norm of a function multiplied by $\phi$ and its derivative $\phi'$.

\begin{lemma}[Preservation of \Holder \ continuity]
\label{sec:normalholder}
Let $\beta, K > 0,  a,b \in \bbR $ be given and $g \in \cH_{1}^{\beta,K}([a,b])$. Then, there exists a positive constant $C_{G,1} = C_{G,1}(\beta)$ such that $g \phi \in \cH_{1}^{\beta,KC_{G,1}}([a,b])$ and $g \phi' \in  \cH_{1}^{\beta,KC_{G,1}}([a,b])$.
\end{lemma}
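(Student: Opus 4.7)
The plan is to apply the Leibniz product rule to $g\phi$ and $g\phi'$ and control each resulting term via the global boundedness and smoothness of the Gaussian factor. The key input is that $\phi(x) = (2\pi)^{-1/2} e^{-x^2/2}$ is $C^\infty(\bbR)$ and every derivative $\phi^{(k)}$ has the form $H_k(x)\phi(x)$ for a Hermite polynomial $H_k$; consequently there are constants $M_k = M_k(\beta)$ with $\sup_{x \in \bbR} (|\phi^{(k)}(x)| \vee |(\phi')^{(k)}(x)|) \leq M_k$ for every $k \leq \lfloor\beta\rfloor + 1$. In particular, each $\phi^{(k)}$ and $(\phi')^{(k)}$ is globally Lipschitz on $\bbR$ with constant $M_{k+1}$.

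For the sup-norm part of the \Holder\ seminorm, the Leibniz rule gives, for every $\gamma \leq \lfloor\beta\rfloor$,
\[ (g\phi)^{(\gamma)} = \sum_{j=0}^{\gamma} \binom{\gamma}{j} g^{(j)} \phi^{(\gamma-j)}, \]
and since $\sup_{x \in A} |g^{(j)}(x)| \leq K$ for every such $j$ by the definition of $\cH_1^{\beta,K}(A)$, each $\|(g\phi)^{(\gamma)}\|_\infty$ is bounded by $2^\gamma K \max_{i \leq \lfloor\beta\rfloor} M_i$, giving the desired $K$ times a $\beta$-dependent constant.

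The main work is the top-order \Holder\ estimate, with $\alpha := \beta - \lfloor\beta\rfloor \in [0,1)$. Expanding $(g\phi)^{(\lfloor\beta\rfloor)}(x) - (g\phi)^{(\lfloor\beta\rfloor)}(y)$ by Leibniz and splitting each summand as
\[ g^{(j)}(x)\bigl[\phi^{(k)}(x) - \phi^{(k)}(y)\bigr] + \bigl[g^{(j)}(x) - g^{(j)}(y)\bigr]\phi^{(k)}(y), \qquad k = \lfloor\beta\rfloor - j, \]
I would handle the $j = \lfloor\beta\rfloor$ summand via the assumed \Holder\ bound on $g^{(\lfloor\beta\rfloor)}$ (giving $K|x-y|^\alpha$) combined with $|\phi(y)| \leq M_0$, plus the Lipschitz bound on $\phi$ composed with $|g^{(\lfloor\beta\rfloor)}(x)| \leq K$. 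For $j < \lfloor\beta\rfloor$, the mean value theorem yields $|g^{(j)}(x) - g^{(j)}(y)| \leq K|x-y|$ and likewise $|\phi^{(k)}(x) - \phi^{(k)}(y)| \leq M_{k+1}|x-y|$.

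The only mildly delicate step, and the one I expect to be the main obstacle, is converting these Lipschitz estimates into genuine $\alpha$-\Holder\ estimates on the possibly unbounded set $A \subseteq \bbR$, since $|x-y|$ need not be dominated by $|x-y|^\alpha$ when $|x-y|$ is large. I would resolve this by a standard case split: on the region $\{|x-y| \leq 1\}$ one has $|x-y| \leq |x-y|^\alpha$ and the Lipschitz bound already suffices; on $\{|x-y| > 1\}$ one instead uses the uniform boundedness of $g^{(j)}$ (by $K$) and of $\phi^{(k)}$ (by $M_k$) together with $1 \leq |x-y|^\alpha$ to bound the differences by $2K M_k |x-y|^\alpha$. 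Summing over $j$ and combining with the sup-norm bounds produces a single constant $C_{G,1} = C_{G,1}(\beta)$ as claimed, and the argument for $g\phi'$ is verbatim with $\phi'$ in place of $\phi$ throughout, since $\phi'$ enjoys exactly the same boundedness and smoothness properties.
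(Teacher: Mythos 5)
Your approach is essentially the same as the paper's: Leibniz rule for the sup-norm terms, bounds on the derivatives of $\phi$ and $\phi'$ depending only on $\beta$, and for the top-order seminorm a split of each Leibniz summand into a piece hit by the H\"older/Lipschitz bound on $g$ times a sup bound on $\phi$, and vice versa. Your ``mildly delicate step'' --- converting the Lipschitz control into an $\alpha$-H\"older bound via a case split on $|x-y|\le 1$ versus $|x-y|>1$ --- is precisely the paper's auxiliary inequality (that for differentiable $h$ and $0<\gamma\le 1$, $\sup_{x\ne y}|h(x)-h(y)|/|x-y|^\gamma \le \|h'\|_\infty + 2\|h\|_\infty$), so the proof is correct and matches the paper's route. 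One cosmetic slip: with this paper's convention that $\lfloor\beta\rfloor$ is the largest integer \emph{strictly} smaller than $\beta$, the residual exponent $\alpha=\beta-\lfloor\beta\rfloor$ lies in $(0,1]$, not $[0,1)$; this changes nothing in the argument but is worth stating consistently.
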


For $\mu, \sigma > 0$, define $p_{\mu,\sigma}(\cdot)$ as
\bean
p_{\mu,\sigma}(\bx) = \int_{\| \by \|_{\infty} \leq 1} p_0(\by)  \phi_{\sigma}(\bx - \mu \by) \d \by = \mu^{-D} \int_{\| \bx + \sigma \by \|_{\infty} \leq \mu}  p_0 \left( \frac{\bx + \sigma \by}{\mu} \right) \prod_{i=1}^{D} \phi(y_i) \d \by.
\eean
Since $p_0$ is $\beta$-smooth under the (\bS) assumption, we can approximate the integral using the quadrature method with Lemma~\ref{sec:qde} and Lemma~\ref{sec:normalholder}.
Note however that $\by$ in RHS ranges over a large set for small $\sigma$, and the error bound given in Lemma~\ref{sec:qde} depends polynomially on the size of the interval.
Since the tail of $\phi$ decays very quickly, one can control the numerical error as in the following lemma.

\begin{lemma}[Quadrature rule for $p_{\mu,\sigma}(\bx)$ and $\nabla p_{\mu,\sigma}(\bx)$]
\label{sec:qdeapprox}
Let $\beta, K > 0$ be given and suppose that true density $p_0$ belongs to $\cH^{\beta,K}([-1,1]^{D})$.
For $\tau_{\rm bd},\tau_{\rm tail}, \mu, \sigma > 0$, $m \in n_{\beta} \bbN$, $i \in [D]$, $j \in [m]$ and $\bx = (x_1, \ldots, x_D)^\top \in \bbR^D$, let
\bean
&& y_j^{(i)} =  2\sqrt{2 \tau_{\rm tail}} \{\log (1/\sigma) \}^{\tau_{\rm bd} + \frac{1}{2}} 
 \left\{  - x_i-\mu + \frac{{n_{\beta}} \mu }{m}
 \left(  \widetilde x_{j- n_{\beta} \lfloor  j / n_{\beta}  \rfloor} + 2 \left\lfloor   j / n_{\beta}  \right\rfloor +1\right)  \right\} , 
\\
&& w_j =  2\sqrt{2 \tau_{\rm tail}} {n_{\beta}} \widetilde w_{j- n_{\beta} \lfloor   j / n_{\beta}  \rfloor} \{\log (1/\sigma) \}^{\tau_{\rm bd} + \frac{1}{2}}.
\eean
For $\bj = (j_1,\ldots,j_{D})^\top\in [m]^{D}$, let ${\widetilde \by}_{\bj} = (y_{j_1}^{(1)}, \ldots, y_{j_{D}}^{(D)} )^{\top} \in \bbR^{D}$.
Then,
\bean
&& \left\| \frac{\bx + \sigma \widetilde \by_{\bj} }{\mu} \right\|_{\infty} \leq 1- \frac{\{\log(1/\sigma) \}^{-\tau_{\rm bd}}}{2} , 
\quad \quad \bj \in [m]^{D},
\\
&& \mu^{D} \left\vert  p_{\mu,\sigma}(\bx) - \frac{1}{m^{D}}  \sum_{\bj \in [m]^{D}} \left\{ \prod_{i=1}^{D}  w_{j_i} \phi(y_{j_{i}}^{(i)}) \right\} p_0\left( \frac{\bx + \sigma \widetilde \by_{\bj} } {\mu}   \right) \right\vert \leq \epsilon  \quad \quad \text{and}
\\
&& \mu^{D} \left\| \sigma\nabla p_{\mu,\sigma}(\bx) -   \frac{1}{m^{D}} 
\sum_{\bj \in [m]^{D}} \widetilde \by_{\bj} \left\{ \prod_{i=1}^{D}  w_{j_i} \phi(y_{j_{i}}^{(i)}) \right\} p_0\left( \frac{\bx + \sigma \widetilde \by_{\bj} } {\mu} \right) \right\|_{\infty} \leq \epsilon
\eean
for every $\| \bx \|_{\infty} \leq \mu- \mu\{\log(1/\sigma)\}^{-\tau_{\rm bd}}, \mu \in [1/2,1], \sigma \in (0,\widetilde C_{2}]$, where $ \widetilde C_{1} = \widetilde C_{1}(\beta,D,\tau_{\rm tail}), $ $ \widetilde C_{2} = \widetilde C_{2}(\beta,D,\tau_{\rm bd}, \tau_{\rm tail})$ and
\bean
\epsilon = \widetilde C_{1} K  \left(\sigma^{\tau_{\rm tail}}  + m^{-\beta}  \{\log(1/\sigma) \}^{{ (\tau_{\rm bd} +\frac{1}{2})(\beta + 1 ) }}  \right)\{\log(1/\sigma) \}^{(\tau_{\rm bd}+\frac{1}{2})(D-1) }.
\eean
\end{lemma}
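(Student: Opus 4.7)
The plan is to recast both targets into a unified integral form and then quadrature them coordinate by coordinate. By the change of variables $\by_{\mathrm{new}} = (\mu\by_{\mathrm{old}}-\bx)/\sigma$,
\[
\mu^D p_{\mu,\sigma}(\bx) = \int_{\|\bx+\sigma\by\|_\infty\leq\mu} p_0\!\left(\frac{\bx+\sigma\by}{\mu}\right)\prod_{i=1}^{D}\phi(y_i)\,d\by,
\]
which is the form given in the excerpt. For the derivative component $\sigma\partial_i p_{\mu,\sigma}$, I would differentiate under the integral sign (boundary contributions vanish because $\bx$ is interior and $\phi$ decays exponentially) and then integrate by parts in $y_i$ using $\phi'(y)=-y\phi(y)$, obtaining
\[
\mu^D\sigma\,\partial_i p_{\mu,\sigma}(\bx) = \int y_i\phi(y_i)\!\prod_{k\neq i}\!\phi(y_k)\,p_0\!\left(\frac{\bx+\sigma\by}{\mu}\right)d\by.
\]
Both integrands then share the form ``Gaussian weight in each coordinate times $p_0$'', the only difference being an extra factor $y_i$ in one coordinate; since $y\phi(y)=-\phi'(y)$, this too is covered by Lemma \ref{sec:normalholder}.

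Next I would verify that the proposed nodes $\widetilde\by_\bj$ form the tensor product of the one-dimensional Gauss--Legendre nodes on a box $\prod_i[A_i,B_i]$ of polylogarithmic side length $B_i-A_i\asymp\{\log(1/\sigma)\}^{\tau_{\rm bd}+1/2}$, designed so that $(\bx+\sigma\widetilde\by_\bj)/\mu\in[-1+\delta/2,1-\delta/2]^D$ with $\delta=\{\log(1/\sigma)\}^{-\tau_{\rm bd}}$. The boundary check -- which is the first assertion of the lemma -- follows directly from $\|\bx\|_\infty\leq\mu(1-\delta)$, $\mu\in[1/2,1]$, and the smallness condition $\sigma\leq\widetilde C_2$, which forces $\sigma\cdot\max_{\bj,i}|y_{j_i}^{(i)}|/\mu\leq\delta/2$. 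Outside the box, the tails decay exponentially: with $T\asymp\{\log(1/\sigma)\}^{\tau_{\rm bd}+1/2}$ one has $\int_{|y|>T}\phi(y)\,dy\lesssim e^{-T^2/2}\lesssim\sigma^{\tau_{\rm tail}}$ (and an analogous bound for $\int_{|y|>T}|y|\phi(y)\,dy$, covering the gradient case), and bounding the remaining $D-1$ Gaussian integrals trivially gives a truncation error at most $\widetilde C K\sigma^{\tau_{\rm tail}}\{\log(1/\sigma)\}^{(\tau_{\rm bd}+1/2)(D-1)}$.

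On the truncated box I would then apply Lemma \ref{sec:qde} via a telescoping decomposition of the tensor-product error into $D$ single-coordinate errors, each obtained by replacing the $i$th exact integral by its Gauss--Legendre sum while holding the already-quadrated coordinates and the remaining exact integrals fixed. For fixed values of the other coordinates, the 1D integrand $y\mapsto\phi(y)p_0((\bx+\sigma y\bfe_i+\sigma\by_{-i})/\mu)$ (respectively $y\phi(y)p_0(\cdot)$) is $\beta$-\Holder: the $p_0$ factor, seen as a function of $y$, has derivatives scaling by $(\sigma/\mu)^k\leq 2^k$, so its $\beta$-\Holder\ norm is at most a constant multiple of $K$ uniformly in the frozen coordinates, and Lemma \ref{sec:normalholder} then transfers this to the product with $\phi$ or $\phi'$. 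The per-coordinate quadrature error from Lemma \ref{sec:qde} is at most a constant times $K(B_i-A_i)^\beta m^{-\beta}\lesssim K\{\log(1/\sigma)\}^{(\tau_{\rm bd}+1/2)\beta}m^{-\beta}$, while the remaining $D-1$ Gaussian-weighted integrals contribute at most $\{\log(1/\sigma)\}^{(\tau_{\rm bd}+1/2)(D-1)}$. Summing the $D$ per-coordinate errors and adding the truncation error produces the claimed bound on $\epsilon$ with $\widetilde C_1$ depending only on $(\beta,D,\tau_{\rm tail})$.

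The hard part will be the precise balancing in the choice of $[A_i,B_i]$: the side length must be at least $\Omega(\sqrt{\log(1/\sigma)})$ so the Gaussian tail is smaller than $\sigma^{\tau_{\rm tail}}$, yet only polylogarithmic in $1/\sigma$ so that the factor $(B_i-A_i)^\beta$ does not spoil the rate $m^{-\beta}$, and simultaneously the image points $(\bx+\sigma\widetilde\by_\bj)/\mu$ must stay inside $[-1,1]^D$ with margin $\delta/2$. The specific parametrization of $y_{j_i}^{(i)}$ in the lemma encodes the unique scaling reconciling all three requirements, and the threshold $\widetilde C_2$ on $\sigma$ is chosen to make the feasible window non-empty. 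A secondary bookkeeping difficulty is ensuring that the \Holder\ norm used in each invocation of Lemma \ref{sec:qde} is uniform in the frozen coordinates, so that constants do not accumulate powers of $D$ or $\log(1/\sigma)$ beyond those exhibited in the stated $\epsilon$.
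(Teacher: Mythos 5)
Your proposal follows essentially the same route as the paper's proof: verify the boundary inclusion from $\sigma D_\sigma\ll\{\log(1/\sigma)\}^{-\tau_{\rm bd}}$ with $D_\sigma\asymp\{\log(1/\sigma)\}^{\tau_{\rm bd}+1/2}$, truncate each coordinate's integral so the Gaussian tail is $\lesssim\sigma^{\tau_{\rm tail}}$, apply Lemma~\ref{sec:qde} coordinate by coordinate through a telescoping decomposition, invoke Lemma~\ref{sec:normalholder} for \Holder\ control, and sum the tail term plus $D$ per-coordinate quadrature errors weighted by the $\{\log(1/\sigma)\}^{(\tau_{\rm bd}+1/2)(D-1)}$ factor from the frozen coordinates.

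One step you should fix: you propose to obtain the gradient formula by differentiating under the integral in the $\by$-representation $\mu^{-D}\int_{\|\bx+\sigma\by\|_\infty\leq\mu}p_0((\bx+\sigma\by)/\mu)\prod_k\phi(y_k)\,d\by$ and then integrating by parts, asserting that "boundary contributions vanish because $\bx$ is interior and $\phi$ decays exponentially." In this representation the integration domain depends on $\bx$, and the boundary in each $y_i$ is at $|y_i|=|{-}x_i\pm\mu|/\sigma$, a finite value where $(\bx+\sigma\by)/\mu$ sits on $\partial[-1,1]^D$ and $p_0$ need not vanish. The boundary terms are therefore only exponentially small, not zero, and would have to be tracked and absorbed into the $\sigma^{\tau_{\rm tail}}$ error. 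The paper sidesteps this entirely by differentiating the original representation $\int_{\|\by\|_\infty\leq1}p_0(\by)\phi_\sigma(\bx-\mu\by)\,d\by$, whose integration domain is independent of $\bx$, so the derivative lands only on the Gaussian kernel; changing variables afterward gives
\bean
\mu^{D}\sigma\,\nabla p_{\mu,\sigma}(\bx)=\int_{\|\bx+\sigma\by\|_\infty\leq\mu}\by\,p_0\!\left(\frac{\bx+\sigma\by}{\mu}\right)\prod_{k=1}^{D}\phi(y_k)\,d\by
\eean
with no boundary terms at all. A second, smaller inaccuracy: the factor $\{\log(1/\sigma)\}^{(\tau_{\rm bd}+1/2)(D-1)}$ in the telescoping bound does not come from the remaining Gaussian integrals (each $\leq1$) but from the $\ell^1$ mass of the already-quadrated weights, $\sum_j|w_j|\,\|\phi\|_\infty\asymp\{\log(1/\sigma)\}^{\tau_{\rm bd}+1/2}$ per coordinate, which is what the paper bounds explicitly. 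Neither issue changes the rate, but both must appear in a careful write-up.
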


We can approximate the maps $(\bx,t) \mapsto p_{t}(\bx)$ and $(\bx,t) \mapsto \nabla p_{t} (\bx)$ using deep ReLU networks by replacing $(\mu, \sigma)$  in Lemma~\ref{sec:qdeapprox} with $(\mu_{t}, \sigma_{t})$.
As discussed in Section~\ref{sec:approx}, a weight-sharing network is used to reduce the number of distinct network parameters.
The approximation result is provided in the following proposition.

\begin{proposition}[Approximation at the interior of near-support]
\label{secpt:1}
Suppose the true density $p_0$ satisfies the assumption (\bS) and 
\bean
\tau_{\rm bd} \in 1/2 + \bbN,
\quad \tau_{\rm tail} > 0, 
\quad \tau_{\rm min} \geq \frac{4\beta}{d(\beta \wedge 1)}.
\eean
Then, for every $m \geq \widetilde C_{5}$, there exists a class of permutation matrices $\cP = \{ \cQ_{i}, \cR_{i} \}_{ i \in [L-1]}$ and weight-sharing  network $\bff \in \cF_{\rm WSNN}(L,\bd,s,M,\cP_{\bfm})$
with 
\bean
&& L \leq \widetilde C_{3} (\log m )^{2} \log \log m, \quad \|\bd\|_{\infty} \leq  \widetilde C_{3} m^{D+1},
\\
&& s \leq  \widetilde C_{3} m (\log m)^{5} \log \log m, \quad M \leq \exp \left(  \widetilde C_{3} \{ \log m \}^{2} \right), 
\\
&& \| \bfm \|_{\infty} \leq \widetilde C_{3} m^{D}
\eean
satisfying
\bean
&& \left\| \begin{pmatrix}
\sigma_t \nabla p_t(\bx) 
\\
 p_t(\bx)
\end{pmatrix}
-  \bff(\bx,t) \right\|_{\infty} \leq \widetilde C_{4} \left( \log m \right)^{(\tau_{\rm bd}+\frac{1}{2}) (D-1)  } \left\{
t^{\frac{\tau_{\rm tail}}{2}} + m^{-\frac{\beta}{d}} \left( \log m \right)^{(\tau_{\rm bd}+\frac{1}{2})(\beta + 1 )} \right\} 
\eean
for every $\bx \in \bbR^{D}$ with $\|\bx\|_{\infty} \leq \mu_t - \mu_t \{\log (1/\sigma_t)\}^{-\tau_{\rm bd}}$ and $m^{-\tau_{\rm min}} \leq t \leq \overline{\tau}^{-1} (\widetilde C_{2}^2 \wedge 1/2) $.

Here, $\widetilde C_{3} = \widetilde C_{3} ( \beta, d, D, K, \overline{\tau}, \underline{\tau}, \tau_{\rm bd}, \tau_{\rm tail}, \tau_{\rm min} )$, 
$\widetilde C_{4} = \widetilde C_{4} ( \beta, d, D, K, \underline{\tau}, \tau_{\rm bd}, \tau_{\rm tail}, \tau_{\rm min} )$,
$ \widetilde C_{5} = \widetilde C_{5} ( \beta, d, \underline{\tau}, \tau_{\rm bd}, \tau_{\rm tail}, \tau_{\min} )$
and
$\widetilde C_{2} = \widetilde C_{2}( \beta,D,\tau_{\rm bd},\tau_{\rm tail} )$ be the constant in Lemma~\ref{sec:qdeapprox}, 
\end{proposition}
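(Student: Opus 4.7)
The plan is to build the weight-sharing network by implementing, step by step, the quadrature approximation of Lemma~\ref{sec:qdeapprox} for $p_t(\bx)=p_{\mu_t,\sigma_t}(\bx)$ and $\sigma_t\nabla p_t(\bx)$, so that in the target region (R4) all intermediate quantities are smooth functions of $(\bx,t)$ that we can approximate by ReLU subnetworks. First, I would use Lemma~\ref{secnn:mtst} to obtain subnetworks $f_\mu(t)\approx\mu_t$ and $f_\sigma(t)\approx\sigma_t$, then combine them with Lemma~\ref{secnn:log} and Lemma~\ref{secnn:rec} to produce $\widehat L(t)\approx\log(1/\sigma_t)$ and $\widehat\mu^{-D}(t)\approx\mu_t^{-D}$ (via iterated multiplication from Lemma~\ref{secnn:mult}), all with poly-logarithmic depth and poly-logarithmic numbers of parameters. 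Given these building blocks, I would build, for each $j\in[m]$ and each coordinate $i\in[D]$, subnetworks that produce the quadrature-node approximations $y_j^{(i)}(\bx,t)$ and the weighted values $w_j\phi(y_j^{(i)}(\bx,t))$, using Lemma~\ref{secnn:exp} for the $\phi$-factor together with Lemma~\ref{secnn:mult} for the inner multiplications; because the same one-dimensional quadrature rule is used across all $D$ coordinates, all of these subnetworks share a common set of weights and are parallelized via Lemma~\ref{secnn:sharing}.

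Second, I would construct the key map
\[
 (\bx,t)\;\longmapsto\;\bigl(\,(\bx+\sigma_t\widetilde\by_\bj)/\mu_t\,\bigr)_{\bj\in[m]^D}\in\bbR^{D\,m^D},
\]
which is the crucial place where weight-sharing is essential: the $m^D$ outputs are all affine rearrangements of the same $D+m$ inputs $(\bx,y_1^{(i)},\ldots,y_m^{(i)})$, so by choosing permutation matrices $\cQ_\cdot,\cR_\cdot$ appropriately (as in the example illustrated before (\ref{eqwsnn})), this map is realized with only $O(m)$ distinct parameters (up to log factors) rather than $O(m^D)$. On top of it I would stack an evaluation network for $p_0$. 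Here the factorization assumption (\bF) is used decisively: writing $p_0=\prod_{I\in\cI}g_I$ with $|I|\le d$ and $g_I\in\cH^{\beta,K}$, one can approximate each $g_I$ up to error $m^{-\beta/d}$ by a standard sparse ReLU network with $O(m)$ parameters (via the classical construction of Yarotsky/Schmidt-Hieber adapted in \citet{oko2023diffusion}), and assemble the product with Lemma~\ref{secnn:mult}. Replicating this single $p_0$-network $m^D$ times under a weight-sharing scheme (shared weights, distinct permutations routing different inputs, cf.\ Figure~\ref{fig:wsnn}) keeps the distinct-parameter count at $O(m)$ while the architectural width grows like $m^D$; this is exactly the trade-off that gives $\|\bd\|_\infty\lesssim m(\log m)^3$, $s\lesssim m(\log m)^5\log\log m$, and $\|\bfm\|_\infty\lesssim m^D$.

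Third, I would combine these pieces. The outputs $p_0((\bx+\sigma_t\widetilde\by_\bj)/\mu_t)$ are multiplied by the corresponding $w_{j_1}\phi(y^{(1)}_{j_1})\cdots w_{j_D}\phi(y^{(D)}_{j_D})/\mu_t^D/m^D$, again via a weight-shared multiplication block, and summed over $\bj$ (a shared linear read-out). A parallel copy, with an extra coordinate-dependent $\widetilde\by_\bj$ factor inserted, implements the quadrature approximation to $\sigma_t\nabla p_{\mu_t,\sigma_t}(\bx)$, so a single weight-sharing network outputs the concatenation $(\sigma_t\nabla p_t(\bx),\,p_t(\bx))$ claimed in the statement. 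The total error is controlled by splitting into (i) the quadrature error from Lemma~\ref{sec:qdeapprox}, evaluated with $\sigma=\sigma_t$ in the region where $\|\bx\|_\infty\le\mu_t-\mu_t\{\log(1/\sigma_t)\}^{-\tau_{\rm bd}}$, giving $\sigma_t^{\tau_{\rm tail}}+m^{-\beta}(\log)^{\cdot}\lesssim t^{\tau_{\rm tail}/2}+m^{-\beta}(\log)^{\cdot}$ (using $\sigma_t^2\asymp t$ for small $t$), and (ii) the accumulated sub-network approximation errors, for which we set the individual tolerances polynomially small in $m$ and use the Lipschitz propagation built into Lemmas~\ref{secnn:mult}, \ref{secnn:exp}, \ref{secnn:log}, \ref{secnn:rec}. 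The condition $\tau_{\rm min}\ge 4\beta/(d(\beta\wedge 1))$ ensures $\sigma_t$ stays above the threshold required for all rational/logarithmic subnetworks to have the claimed complexity, and it is the balancing condition that converts the $m^{-\beta}$ part of (i) into the stated $m^{-\beta/d}(\log m)^{(\tau_{\rm bd}+1/2)(\beta\vee 1)}$ bound after a relabeling of $m$.

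The main obstacle is the weight-sharing bookkeeping in the two massive blocks: (a) laying out the $m^D$ copies of the $p_0$-network so that exactly the right $d$ coordinates of each $(\bx+\sigma_t\widetilde\by_\bj)/\mu_t$ are fed into the corresponding $g_I$-subnetwork, all under shared weights with only permutation matrices changing across copies; and (b) handling the nodes $\widetilde\by_\bj$ being data-dependent (unlike the fixed quadrature nodes in standard numerical analysis), which forces the multiplication $w_j\phi(y^{(i)}_j(\bx,t))$ to be carried out inside the network rather than absorbed into biases. Careful use of Lemma~\ref{secnn:sharing} for concatenation/parallelization of weight-sharing and plain subnetworks, together with the identity and clipping utilities (Lemmas~\ref{secnn:id}, \ref{secnn:clip}) to pad depths and enforce the domain restrictions needed by Lemmas~\ref{secnn:log}, \ref{secnn:exp}, \ref{secnn:rec}, makes this manageable but accounts for most of the technical length.
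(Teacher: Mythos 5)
Your proposal reconstructs essentially the paper's proof of Proposition~\ref{secpt:1}: quadrature approximation via Lemma~\ref{sec:qdeapprox} with $(\mu,\sigma)=(\mu_t,\sigma_t)$, ReLU sub-blocks for $\mu_t$, $\sigma_t$, $\log(1/\sigma_t)$, reciprocals, powers, and multiplications (Lemmas~\ref{secnn:mtst}, \ref{secnn:log}, \ref{secnn:rec}, \ref{secnn:mult}, \ref{secnn:exp}), a factorization-based $p_0$-approximation network of $O(m)$ parameters, and the permutation-matrix weight-sharing scheme that routes the $m^D$ copies of that network with only $O(m)$ distinct parameters, so all the key ingredients are in place. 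Two small corrections to keep the bookkeeping honest: the paper does not build a separate $\mu_t^{-D}$ sub-network (it simply uses $\mu_t\ge 1/2$ in the relevant $t$-range to absorb $\mu_t^{-D}\le 2^{D}$ into the constant), and the condition $\tau_{\min}\ge 4\beta/(d(\beta\wedge 1))$ is used to make the accumulated sub-network errors, which scale like $\delta^{(\beta\wedge 1)/2}$ with $\delta\asymp m^{-\tau_{\min}}$, dominated by $m^{-\beta/d}$ --- it does not ``convert'' the quadrature's $m^{-\beta}$ term, which is already smaller than $m^{-\beta/d}$; the $m^{-\beta/d}$ in the final bound comes from the $p_0$-network approximation, not the quadrature.
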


\medskip

As $t \to 0$, $p_t$ is not lower bounded near the boundary of the support of $p_0$ due to the lower bound condition, making the approximation of $\nabla \log p_t$ challenging.
With the assumption (\bB), $p_0$ is infinitely smooth so one can approximate $p_0$ efficiently with local polynomials by applying Taylor's theorem in the low-density region.
Since a Gaussian density can also be efficiently approximated with local polynomials, one can calculate the integral in $p_t$ closed form, and approximate the output with vanilla feedforward neural networks.
The following proposition provides the approximation result, and our main proof strategy follows the proofs of Lemma B.2--Lemma B.5 from \citet{oko2023diffusion}, with modifications for simplification.

\begin{proposition}[Approximation at the boundary of near-support]
\label{secpt:2}
Let $K, \tau_{\rm bd}, \tau_{\rm x} > 0$, $ 0 <  \tau_{\rm t} < 1 $, $0 < \widetilde \tau_{\rm bd} < \tau_{\rm bd}$ be given and suppose the true density $p_0$ satisfies that $ \| p_0 \|_{\infty} \leq K $.
Then, for $0 < \delta \leq \widetilde C_{8}$ and $p_0$ satisfying
\bean
\sup_{\balpha \in \bbN^{D}} \sup_{ 1- \{ \log (1 / \delta ) \}^{- \widetilde \tau_{\rm bd}} \leq \| \bx \|_{\infty} \leq 1 } \left\vert ( \D^{\balpha} p_0 )(\bx) \right\vert \leq K,
\eean
there exists a network $\bff \in \cF_{\rm NN} ( L, \bd, s, M )$ with
\bean
&& L \leq \widetilde C_{6} \{ \log(1/\delta)\}^{4}, \quad \| \bd\|_{\infty} \leq \widetilde C_{6} \{ \log(1/\delta)\}^{ 7 + D \widetilde \tau_{\rm bd} + D },
\\
&& s \leq \widetilde C_{6}  \{ \log(1/\delta)\}^{11+ D \widetilde \tau_{\rm bd} + D }, \quad M \leq \exp \left( \widetilde C_{6} \{\log(1/\delta) \}^{2} \right),
\eean
satisfying
\bean
&& \left\| \begin{pmatrix}
\sigma_t \nabla p_t(\bx) 
\\
 p_t(\bx)
\end{pmatrix}
-  \bff(\bx,t) \right\|_{\infty} \leq \widetilde C_{7}  \delta \{\log(1/\delta) \}^{D}
\eean
for every $\bx \in \bbR^{D}$ with $\mu_{t} -  \tau_{\rm x} \{ \log (1/\sigma_{t}) \}^{-\tau_{\rm bd}}  \leq \|\bx \|_{\infty} \leq \mu_{t} + \tau_{\rm x} \sigma_{t} \sqrt{\log (1/\delta)}$ and $\delta \leq t \leq \delta^{\tau_{\rm t}}$.

Here, $\widetilde C_{6} = \widetilde C_{6}( D, K, \overline{\tau}, \underline{\tau}, \tau_{\rm x} )$, $\widetilde C_{7} = \widetilde C_{7} ( D , K , \underline{\tau} )$,  $ \widetilde C_{8} = \widetilde C_{8}( D,  \overline{\tau}, \tau_{\rm bd}, \tau_{\rm x}, \tau_{\rm t}, \widetilde \tau_{\rm bd} )$ are positive constants.

\end{proposition}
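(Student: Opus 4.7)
The approach is to follow the boundary-region strategy of Lemmas B.2--B.5 in \citet{oko2023diffusion}, adapted so that a single ReLU network jointly outputs $p_t(\bx)$ and $\sigma_t\nabla p_t(\bx)$. The enabling fact is assumption (\bB): on the strip $\cS_\delta := \{\by \in [-1,1]^D : 1 - \{\log(1/\delta)\}^{-\widetilde\tau_{\rm bd}} \leq \|\by\|_\infty \leq 1\}$, every mixed partial derivative of $p_0$ is bounded by $K$, so $p_0$ admits Taylor expansions of arbitrarily high order there, which is the key new ingredient unavailable in the interior-region proof of Proposition \ref{secpt:1}.

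First I change variables $\bz = (\mu_t \by - \bx)/\sigma_t$ in the convolution to obtain
\begin{align*}
 p_t(\bx) &= \mu_t^{-D} \int \phi(\bz)\, p_0\!\left(\tfrac{\bx + \sigma_t \bz}{\mu_t}\right) \bone_{A(\bx,t)}(\bz)\, d\bz, \\
 \sigma_t \nabla p_t(\bx) &= \mu_t^{-D} \int \bz\, \phi(\bz)\, p_0\!\left(\tfrac{\bx + \sigma_t \bz}{\mu_t}\right) \bone_{A(\bx,t)}(\bz)\, d\bz,
\end{align*}
where $A(\bx,t) = \{\bz : (\bx + \sigma_t \bz)/\mu_t \in [-1,1]^D\}$ is an axis-aligned box whose endpoints are affine in $\bx/\sigma_t$ and $\mu_t/\sigma_t$. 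Truncating to $\|\bz\|_\infty \leq R := C_R \sqrt{\log(1/\delta)}$ costs only $O(\delta)$ by Gaussian concentration (the extra $\bz$ factor in the gradient version is absorbed by a larger $C_R$), and since $\sigma_t R / \mu_t \lesssim \sqrt{\delta^{\tau_{\rm t}} \log(1/\delta)}$ is much smaller than $\{\log(1/\delta)\}^{-\widetilde\tau_{\rm bd}}$ for $\delta$ small enough, on the truncated region the argument $(\bx + \sigma_t \bz)/\mu_t$ lies in $\cS_\delta$ whenever the indicator is nonzero.

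I then cover the reachable portion of $\cS_\delta$ by a regular grid $\{\by_\ell\}_{\ell \in \Lambda}$ equipped with smooth partition-of-unity bumps $\chi_\ell$ built from clipping units (Lemma \ref{secnn:clip}), and on each patch I Taylor-expand
\begin{align*}
 p_0\!\left(\tfrac{\bx + \sigma_t \bz}{\mu_t}\right) = \sum_{|\balpha| \leq N} \frac{(\D^{\balpha} p_0)(\by_\ell)}{\balpha !} \left(\tfrac{\bx + \sigma_t \bz}{\mu_t} - \by_\ell\right)^{\balpha} + R_{N, \ell}(\bx, \bz)
\end{align*}
to order $N \asymp \log(1/\delta)$, the remainder being bounded by $\delta$ via Stirling together with the uniform derivative bound $K$. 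Crucially, the values $(\D^{\balpha} p_0)(\by_\ell)$ are \emph{fixed scalars} (unknown but bounded by $K$) and are hardwired into network weights rather than computed from $\bx$. After expansion and tensorization of the Gaussian kernel, each surviving integral factors over coordinates into $1$-dimensional truncated moments $\int_{a_i(\bx,t)}^{b_i(\bx,t)} z^{k} \phi(z)\, dz$ with $k \leq N+1$; each such moment admits a closed form, via repeated integration by parts, as a linear combination of $\Phi(a_i), \Phi(b_i)$, and boundary products $z^j \phi(z) \vert_{z = a_i, b_i}$.

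The network is then assembled piecewise: Lemma \ref{secnn:mtst} supplies $\mu_t, \sigma_t$; Lemma \ref{secnn:rec} supplies $\mu_t^{-1}$; Lemma \ref{secnn:mult} builds all polynomial factors and endpoint products; Lemma \ref{secnn:exp} yields $\phi$; $\Phi$ is implemented on $[-R, R]$ by a $\log^2$-depth subnetwork in the same spirit as Lemma \ref{secnn:exp}; and Lemmas \ref{secnn:comp}, \ref{secnn:par} glue everything into a single $\bbR^{D+1}$-valued network in which the $\sigma_t \nabla p_t$ and $p_t$ outputs share essentially all internal computations. The main obstacle will be the complexity bookkeeping: the depth $L \lesssim \log^4(1/\delta)$ arises from composing a constant number of $\log^2(1/\delta)$-depth blocks (reciprocal, polynomial multiplications, and the $\Phi$ subnetwork); the width and sparsity exponents $7 + D\widetilde\tau_{\rm bd} + D$ and $11 + D\widetilde\tau_{\rm bd} + D$ emerge from combining the $\binom{N + D}{D} \lesssim \log^D(1/\delta)$ Taylor multi-indices with the $\log^{D\widetilde\tau_{\rm bd}}(1/\delta)$ complexity tied to the partition of unity on the strip of width $\{\log\}^{-\widetilde\tau_{\rm bd}}$, plus poly-logarithmic overhead per elementary lemma; the weight bound $M \leq \exp(\widetilde C_6 \log^2(1/\delta))$ tracks the internal weights of the $\Phi$ and polynomial subnetworks, and the $O(\delta \log^D(1/\delta))$ error collects the Gaussian-tail, Taylor, and quadrature contributions.
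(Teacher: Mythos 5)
Your high-level decomposition matches the paper's: change variables, truncate the Gaussian tail at radius $\asymp \sqrt{\log(1/\delta)}$, observe that on the truncated set the argument lands in the strip where (\bB) gives uniform bounds on all derivatives, tile that strip by a grid of $\asymp \{\log(1/\delta)\}^{D\widetilde\tau_{\rm bd}}$ cells, and Taylor-expand $p_0$ around each cell center to order $\asymp \log(1/\delta)$. Where you diverge is in how you evaluate the resulting integrals. You keep the Gaussian kernel $\phi$ exact, which makes each coordinatewise factor a truncated Gaussian moment, and you then need closed forms in terms of $\Phi$ and endpoint evaluations of $z^j\phi(z)$; to realize these as network outputs you posit a $\Phi$-approximator subnetwork ``in the same spirit as Lemma~\ref{secnn:exp}''. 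The paper instead \emph{also} Taylor-expands $\exp\bigl(-(x-\mu_t y)^2/(2\sigma_t^2)\bigr)$ to order $\asymp \log(1/\delta)$, after which the integrand on each cell is a genuine polynomial in $z$; the resulting closed forms $P_{i,j,k,l}$, $\widetilde P_{i,j,k,l}$ involve only powers of the clipped endpoints $\overline z_{i,j},\underline z_{i,j}$ together with $\sigma_t$, $\mu_t^{-1}$ and $x_j-\mu_t y_j^{(i)}$, all reachable through the multiplication, reciprocal, and clipping lemmas already in the toolbox. The payoff of the paper's double expansion is that no $\Phi$-network is needed; the cost is an extra balance condition ($\tau_{\rm tail}$ must be chosen so that $e\tau_{\rm tail}^2 - D/2 \geq 1$ to beat the $\sigma_t^{-D}$ normalization). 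Your route avoids that balance but needs one more primitive lemma that the paper does not supply and that you do not construct; building it is routine (smooth function, bounded domain, power-series patching as in Lemmas~\ref{secnn:log} and~\ref{secnn:exp}) but it is a nontrivial extra step, and as written this is the one place your proof is not self-contained. A secondary, more cosmetic deviation: you speak of smooth partition-of-unity bumps $\chi_\ell$, but the paper uses a hard tiling by almost-disjoint cells, so that the clipping operation enters only through the integration endpoints, which keeps the tensorization exact; a genuine smooth partition multiplied into the integrand would break the coordinatewise factorization you rely on, so you should interpret your $\chi_\ell$ as indicators as well.
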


For $t_{*} \geq 0$ and $t > 0$,  we have
\bean
p_{t_* + t}(\bx) = \int_{\bbR^{D}} p_{t_*}(\by) \phi_{\sigma_{t}}(\bx-\mu_{t} \by) \d \by, \quad \bx \in \bbR^{D}
\eean
due to the Markov property of the process $(\bX_{t})_{t \geq 0}$. 
Note that the map $\bx \mapsto p_{t_*}(\bx)$ is infinitely differentiable and its norm is bounded as  $\| \D^{\bk} p_{t_*} (\cdot) \|_{\infty} \lesssim t_{*}^{-k./2}$ for any $\bk \in \bbN^{D}$; see Lemma~\ref{secsc:ptderivativebound}).
Then, one can approximate $p_{t_*}$ with a local Taylor expansion, yielding an error $O ( m^{-k. / D}  t_{*}^{-k./2} )$ using grid points bounded by $ O ( m )$, both up to a poly-logarithmic factor.
Similar to the proof of Proposition~\ref{secpt:2}, one can approximate the map $ (\bx,t) \mapsto p_{t_*+t}(\bx) $ with vanilla feedforward neural networks.
The following proposition provides the approximation result, and our main proof strategy follows the proof of Lemma B.7 from \citet{oko2023diffusion}, with modifications for simplification.

\begin{proposition}[Approximation for large $t$]
\label{secpt:3}
Let $K, \tau_1, \tau_{\rm x} > 0, \tau_{\rm sm} \in \bbN, \tau_{\rm low} \in (0,1)$ be given and suppose the true density $p_0$ satisfies that $\tau_1 \leq p_0(\bx) \leq K$ for any $\bx \in [-1,1]^{D}$.   
Then, for $m \geq \widetilde C_{11}$, there exists a neural network $\bff \in \cF_{\rm NN}(L,\bd,s,M)$ with
\bean
&& L \leq \widetilde C_{9} ( \log m )^{4} , \quad \| \bd\|_{\infty} \leq \widetilde C_{9} m (\log m)^{9},
\\
&& s \leq \widetilde C_{9}  m (\log m)^{9}, \quad M \leq \exp(\widetilde C_{9} (\log m)^2 )
\eean
such that
\bean
&& \left\| \begin{pmatrix}
\sigma_t \nabla p_{t_*+t}(\bx) 
\\
 p_{t_*+t}(\bx)
\end{pmatrix}
-  \bff(\bx,t) \right\|_{\infty} \leq \widetilde C_{10} m^{ - \frac{ \tau_{\rm low} \tau_{\rm sm} - ( D + 1 - \tau_{\rm low} ) D  }{D ( 1 + D  )}}   
 (\log m)^{D(\frac{\tau_{\rm sm}}{2} + 1)}
\eean
for every $\bx \in \bbR^{D}$ with $\|\bx\|_{\infty} \leq \mu_t + \tau_{\rm x} \sigma_t \sqrt{\log (1/\delta)}$, $\delta \leq t \leq \overline{\tau}^{-1} \log (1/\delta)$, where
\bean
t_* = m^{-\frac{ 2-2\tau_{\rm low} }{D} } 
\quad \text{and} \quad \delta = m^{- \frac{\tau_{\rm low} \tau_{\rm sm} + D + 1 - \tau_{\rm low}  }{D (1 + D  ) }}.
\eean
Here, $\widetilde C_{9}, \widetilde C_{10}, \widetilde C_{11}$ are positive constants depending on $( D, K, \overline{\tau},  \underline{\tau}, \tau_{\rm x},  \tau_{\rm sm}, \tau_{\rm low} )$.
\end{proposition}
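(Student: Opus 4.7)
The plan is to exploit the Markov property: for $t_* \geq 0, t>0$,
\[
p_{t_*+t}(\bx) = \int_{\bbR^D} p_{t_*}(\by)\,\phi_{\sigma_t}(\bx-\mu_t\by)\,\d\by.
\]
With the choice $t_* = m^{-(2-2\tau_{\rm low})/D}$, the map $\by \mapsto p_{t_*}(\by)$ is $C^\infty$ and, by Lemma \ref{secsc:ptderivativebound}, its derivatives of order $k.$ are bounded by a constant times $t_*^{-k./2}$. This regularity of $p_{t_*}$ makes it a natural target for local polynomial approximation, and the resulting error propagates cleanly through Gaussian smoothing to $p_{t_*+t}$.

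Concretely, first use the sub-Gaussian upper bound from Lemma \ref{secsc:ptbound} on $p_{t_*}$ together with the tail of $\phi_{\sigma_t}$ to restrict the integration to a cube $B_m = [-R_m, R_m]^D$ with $R_m \asymp 1 + \sqrt{\log m}$, at a cost that is polynomially small in $m$. Partition $B_m$ into $N^D \asymp m$ congruent cells (so $N \asymp m^{1/D}$), and on each cell $B_{\bj}$ centered at $\by_{\bj}$ replace $p_{t_*}$ by its Taylor polynomial of degree less than $\tau_{\rm sm}$. Taylor's theorem combined with Lemma \ref{secsc:ptderivativebound} gives a pointwise error of order $m^{-\tau_{\rm sm}/D}\, t_*^{-\tau_{\rm sm}/2}$, up to a poly-logarithmic factor. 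Inserting the piecewise polynomial into the convolution reduces each contribution to a product of univariate Gaussian moments
\[
\int_{I_{\bj,i}} (y - y_{\bj,i})^{k}\,\phi_{\sigma_t}(x_i - \mu_t y)\,\d y,
\]
each of which has a closed form in terms of polynomials in $x_i, \mu_t, \sigma_t$, the Gaussian density itself, and (after truncation of $I_{\bj,i}$ to the Gaussian bulk) polynomially approximable antiderivatives. The gradient $\nabla p_{t_*+t}$ is handled by differentiating under the integral, which only changes the closed form by a polynomial factor.

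To realize this by a ReLU network, I would approximate $\mu_t, \sigma_t$ via Lemma \ref{secnn:mtst}, $e^{-(\cdot)}$ via Lemma \ref{secnn:exp}, $1/\sigma_t$ (needed for $\sigma_t^{-1}(x-\mu_t y)$ and to normalize the Gaussian) via Lemma \ref{secnn:rec}, and all multiplications via Lemma \ref{secnn:mult}, then assemble with Lemma \ref{secnn:par} and Lemma \ref{secnn:comp}. A ReLU-based smooth partition of unity on the grid selects the relevant cell $B_{\bj}$. Because each cell contributes only $\binom{D+\tau_{\rm sm}}{D} = O(1)$ polynomial coefficients and a bounded-size subnetwork for the closed-form moments, the sparsity and width both scale as $m$ up to poly-logarithmic factors, while depth is dominated by the $\log$-type sub-networks for $\exp$ and the reciprocal, giving $O((\log m)^4)$.

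The main obstacle will be error accounting. The Taylor error $m^{-\tau_{\rm sm}/D}\,t_*^{-\tau_{\rm sm}/2}$, the truncation errors at $\partial B_m$ and within each cell, the network-level errors at accuracy $\delta$ for $\mu_t, \sigma_t, \exp, 1/\sigma_t$, and the amplification by $1/\sigma_t$ in the $\sigma_t\nabla p_{t_*+t}$ component all have to be balanced simultaneously. The exponents stated in the proposition, in particular the choice $\delta = m^{-(\tau_{\rm low}\tau_{\rm sm} + D + 1 - \tau_{\rm low})/(D(D+1))}$ and the resulting rate $m^{-(\tau_{\rm low}\tau_{\rm sm} - (D+1-\tau_{\rm low})D)/(D(D+1))}$, emerge precisely from optimizing these trade-offs; once $\delta$ and $t_*$ are fixed as above, the claimed bound follows by the triangle inequality along the chain of approximations, and the depth/width/sparsity bounds are obtained by routine bookkeeping from Lemmas \ref{secnn:lin}, \ref{secnn:mult}, \ref{secnn:exp}, \ref{secnn:mtst}, and \ref{secnn:rec} together with the parallelization and concatenation rules of Lemmas \ref{secnn:comp} and \ref{secnn:par}.
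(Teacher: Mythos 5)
Your proposal follows essentially the same route as the paper's proof: exploit the Markov decomposition $p_{t_*+t}=p_{t_*}\ast(\mu_t,\sigma_t)$-Gaussian, control the derivatives of $p_{t_*}$ via Lemma~\ref{secsc:ptderivativebound} with $t_*=m^{-(2-2\tau_{\rm low})/D}$, truncate to a box of side $\asymp\sqrt{\log(1/\delta)}$, partition into $\asymp m$ cells with a degree-$(\tau_{\rm sm}-1)$ Taylor polynomial on each, reduce the convolution to univariate truncated Gaussian moments, and assemble with the network primitives. Two implementation details differ from the paper but do not affect the rate: (i) you propose to evaluate the Gaussian density with $f_{\exp}$ (Lemma~\ref{secnn:exp}) and to polynomially approximate the CDF-type antiderivatives separately, whereas the paper avoids both by Taylor-expanding $e^{-z^2/2}$ to degree $D_2\asymp\log(1/\delta)$ \emph{before} integrating, so that every truncated moment $P_{i,j,k_j,l}$ and $\widetilde P_{i,j,k_j,l}$ becomes an exact polynomial in the clipped endpoints $\overline z_{i,j},\underline z_{i,j}$, $x_j$, $\mu_t^{-1}$, $\sigma_t$; and (ii) you invoke a ReLU partition of unity to select the relevant cell, whereas the paper needs no such device — the cells $\cY_i$ form a true partition and the clipping $\min/\max$ in $\overline z_{i,j},\underline z_{i,j}$ automatically zeroes out contributions from cells disjoint from the Gaussian bulk. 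Both routes give sparsity $O(m\,\mathrm{polylog}\,m)$ and depth $O((\log m)^4)$, so the plan is sound and the claimed exponents follow from the same error balancing you describe.
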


\subsection{Proofs of Lemma~\ref{sec:qde} to \ref{sec:qdeapprox}}

In this subsection, we provide the proof of Lemma~\ref{sec:qde}, Lemma~\ref{sec:normalholder}, and Lemma~\ref{sec:qdeapprox}.

By the definition, we have $\exp(-\overline{\tau} t) \leq \mu_t \leq \exp(-\underline{\tau} t)$ and $1-\exp(-2\underline{\tau} t) \leq \sigma_t^2 \leq 1- \exp(-2\overline{\tau} t)$ for $t \geq 0$. Since  $x/2 \leq 1- e^{-x}$ for $0 \leq x \leq 1$ and $1 - e^{-x} \leq x$ for $x \geq 0$, we have
\be
\begin{split}
& \mu_t \leq 1- \frac{\underline{\tau}t}{2} \leq 1,
\quad
\sigma_t \leq \sqrt{ 1 - \exp (-2 \overline{\tau} t ) } \leq  \sqrt{2\overline{\tau}t},
\quad \forall t \geq 0,  
\quad \text{and}
\\
&
\mu_t  \geq 1- \overline{\tau}t \geq \frac{1}{2},
\quad
\sigma_t \geq \sqrt{1 - \exp(-2 \underline{\tau} t) } \geq  \sqrt{\underline{\tau}t },
\quad \forall 0 \leq t \leq (2\overline{\tau})^{-1}.
\label{eqthm:mtstbd}
\end{split}
\ee
The last display is widely used in the following proofs.

\subsubsection{Proof of Lemma~\ref{sec:qde}}

\begin{proof}
Let $m_0 = \frac{2m}{(B-A){n_{\beta}}}$ and consider a function $g \in \cH_{1}^{\beta,K}([A,B])$.
Simple calculation yields that
\be
\int_{A}^B g(x) \d x = \sum_{i=1}^{\frac{m_0(B-A)}{2}} \int_{A+\frac{2(i-1)}{m_0}}^{A+\frac{2i}{m_0}} g (x) \d x = \sum_{i=1}^{\frac{m_0(B-A)}{2}} \int_{-\frac{1}{m_0}}^{\frac{1}{m_0}} g_i(x) \d x, \label{eqn:partition}
\ee
where $g_i(x) = g(x +A+\frac{2i-1}{m_0} )$.
For each $i \in \{1,\ldots, \frac{m_0(B-A)}{2}\}$, let $L_i$ be the Lagrange interpolating polynomial of degree ${n_{\beta}}-1$ that agrees with the function $g_i$ at knots $\{\widetilde x_1 / m_0, \ldots, \widetilde x_{{n_{\beta}}} / m_0 \}$, defined as 
\bean
L_i(x) = \sum_{j=1}^{{n_{\beta}}} g_i(\widetilde x_j/m_0) l_j(x),
\eean
where \bean
l_j(x) = \begin{dcases}
  \prod_{\substack{k = 1 \\ k \neq j}}^{\lfloor \beta \rfloor} \frac{m_0 x-\widetilde x_k}{\widetilde x_j - \widetilde x_k}, & \text{if }
       \beta > 2,
\\
  1, & \text{if } 0<\beta \leq 2,
\end{dcases}
\eean
for $j \in [{n_{\beta}}]$.
Note that $L_i(\widetilde x_j /m_0) = g_i(\widetilde x_j / m_0)$ for any $j \in [{n_{\beta}}]$.

If $0 < \beta \leq 1$, $\widetilde x_1 = 0$ and $L_i(x) = g_i(0)$. Since $g_i \in \cH^{\beta,K}_1([-1/m_0,1/m_0])$, we have
\be \begin{split}
& \left\vert \int_{-\frac{1}{m_0}}^{\frac{1}{m_0}} \left\{  g_i(x) - L_i(x) \right\} \d x \right\vert
 \leq
\int_{-\frac{1}{m_0}}^{\frac{1}{m_0}} \vert g_i(x) - L_i(x) \vert \d x
\\
& \leq \int_{-\frac{1}{m_0}}^{\frac{1}{m_0}} K \vert x \vert^{\beta} \d x \leq \frac{2}{m_0} K m_0^{-\beta}  = 2K m_0^{-(\beta+1)}. \label{eqn:qde1}
\end{split} \ee
If $\beta >1$, fix $\widetilde x_0 \in [-1,1]$ satisfying $\widetilde x_0 \neq \widetilde x_j$ for $j \in [\lfloor \beta \rfloor]$. Consider a function $h : [-1/m_0,1/m_0] \to \bbR$ such that 
\bean
h(x) = g_i(x) - L_i(x) - \left\{ g_i\left(\frac{\widetilde x_0}{ m_0}\right) - L_i\left(\frac{\widetilde x_0}{ m_0}\right) \right\}  \prod_{j=1}^{\lfloor \beta \rfloor } \left( \frac{m_0 x - \widetilde x_j}{\widetilde x_0 - \widetilde x_j} \right).
\eean
Then, $h(\widetilde x_j/m_0) = 0$ for $j \in \{0,\ldots, \lfloor \beta \rfloor\}$ and $g$ is $\lfloor \beta \rfloor$-times differentiable on $(-1/m_0,1/m_0)$.  Generalized Rolle's Theorem (see Theorem 1.10 of \citet{burden2010numerical}) implies that there exists a constant $\xi_{\widetilde x_0} \in (-1/m_0,1/m_0)$ such that $(\D^{\lfloor \beta \rfloor}h)(\xi_{\widetilde x_0}) = 0$. Since $L_i$ is the polynomial of degree less than $\lfloor \beta \rfloor$ and $(\D^{\lfloor \beta \rfloor} L_i)(\xi_{\widetilde x_0}) = 0$, a simple calculation yields that
\bean
g_i\left(\frac{\widetilde x_0}{ m_0}\right) = L_i\left(\frac{\widetilde x_0}{ m_0}\right) + \frac{(\D^{\lfloor \beta \rfloor} g_i )(\xi_{\widetilde x_0})}{\lfloor \beta \rfloor !} \prod_{j=1}^{\lfloor \beta \rfloor} \left(\frac{\widetilde x_0 - \widetilde x_j}{ m_0} \right).
\eean
Note that $g_i(\widetilde x_j / m_0) = L_i(\widetilde x_j /m_0)$ for $j \in [\lfloor \beta \rfloor]$. Combining with the last display, there exists a function $\xi : [-1/m_0,1/m_0] \to (-1/m_0,1/m_0)$ such that
\bean
g_i(x) = L_i(x) + \frac{(\D^{\lfloor \beta \rfloor} g_i )(\xi(x))}{\lfloor \beta \rfloor !} \prod_{j=1}^{\lfloor \beta \rfloor} \left(x - \frac{ \widetilde x_j}{ m_0} \right), \quad x \in \left[-\frac{1}{m_0},\frac{1}{m_0} \right],
\eean
where $\xi(x) = 0$ for $x \in \{\frac{\widetilde x_1}{m_0},\ldots,\frac{\widetilde x_{\lfloor \beta \rfloor}}{m_0} \}$. For $x \in [-1/m_0,1/m_0]$, we have
\bean
&& \left\vert g_i(x) - L_i(x) - \frac{(\D^{\lfloor \beta \rfloor} g_i )(0)}{\lfloor \beta \rfloor !} \prod_{j=1}^{\lfloor \beta \rfloor} \left(x - \frac{ \widetilde x_j}{ m_0} \right) \right\vert 
\\
&& =
\left\vert \left\{  \frac{(\D^{\lfloor \beta \rfloor}g_i )(\xi(x)) - (\D^{\lfloor \beta \rfloor}g_i )(0)}{\lfloor \beta \rfloor !} \right\} \prod_{j=1}^{\lfloor \beta \rfloor} \left(x - \frac{ \widetilde x_j}{ m_0} \right) \right\vert
\\
&& \leq \frac{K \vert \xi(x) \vert^{\beta - \lfloor \beta \rfloor}}{\lfloor \beta \rfloor !} \prod_{j=1}^{\lfloor \beta \rfloor} \left(\frac{2}{ m_0} \right) \leq \frac{K 2^{\lfloor \beta \rfloor}}{\lfloor \beta \rfloor !} m_0^{-\beta},
\eean
where the first inequality holds because $g_i \in \cH^{\beta,K}_{1}([-1/m_0,1/m_0])$. Since $\{\widetilde x_1,\ldots,\widetilde x_{\lfloor \beta \rfloor } \}$ are the roots of the Legendre polynomial, its orthogonality implies that $\int_{-1/m_0}^{1/m_0} \prod_{j=1}^{\lfloor \beta \rfloor} (x - \frac{\widetilde x_j}{m_0}) \d x = 0$. Combining with the last display, it follows that
\be \begin{split} 
& \left\vert \int_{-\frac{1}{m_0}}^{\frac{1}{m_0}} \left\{  g_i(x) - L_i(x) \right\} \d x \right\vert
\\
& = \left\vert \int_{-\frac{1}{m_0}}^{\frac{1}{m_0}} \left\{ g_i(x) - L_i(x) - \frac{(\D^{\lfloor \beta \rfloor}g_i )(0)}{\lfloor \beta \rfloor !} \prod_{j=1}^{\lfloor \beta \rfloor} \left(x - \frac{ \widetilde x_j}{ m_0} \right) \right\} \d x \right\vert 
\\
&\leq  \int_{-\frac{1}{m_0}}^{\frac{1}{m_0}} \left\vert g_i(x) - L_i(x) - \frac{(\D^{\lfloor \beta \rfloor}g_i )(0)}{\lfloor \beta \rfloor !} \prod_{j=1}^{\lfloor \beta \rfloor} \left(x - \frac{ \widetilde x_j}{ m_0} \right) \right\vert \d x \leq \frac{K 2^{\lfloor \beta \rfloor+1}}{\lfloor \beta \rfloor !} m_0^{-(\beta+1)} \label{eqn:qde2}.
\end{split} \ee

A simple calculation yields that
\bean
\int_{-\frac{1}{m_0}}^{\frac{1}{m_0}} L_i(x)
= \sum_{j=1}^{{n_{\beta}}}  g_i \left(\frac{\widetilde x_j}{m_0} \right) \left\{ \int_{-\frac{1}{m_0}}^{\frac{1}{m_0}} l_j(x) \d x \right\}
= \sum_{j=1}^{{n_{\beta}}} \frac{\widetilde w_j}{m_0} g_i \left(\frac{\widetilde x_j}{m_0} \right).
\eean
Combining (\ref{eqn:partition}), (\ref{eqn:qde1}) and (\ref{eqn:qde2}) with the last display, we have
\bean
&& \left\vert \int_{A}^B g(x) \d x - \sum_{i=1}^{\frac{m_0(B-A)}{2}} \sum_{j=1}^{{n_{\beta}}} \frac{\widetilde w_j}{m_0} g_i \left(\frac{\widetilde x_j}{m_0} \right) \right\vert \leq \sum_{i=1}^{\frac{m_0(B-A)}{2}} \left\vert \int_{-\frac{1}{m_0}}^{\frac{1} {m_0}} \left\{ g_i(x) - L_i(x) \right\}  \d x \right\vert
\\
&& \leq \frac{K(B-A) 2^{\lfloor \beta \rfloor}}{\lfloor \beta \rfloor !} m_0^{-\beta} =  \frac{{( \lfloor \beta \rfloor  \vee 1)}^{\beta}}{2^{\beta-\lfloor \beta \rfloor} \lfloor \beta \rfloor !} (B-A)^{\beta + 1} m^{-\beta}.
\eean
 Then, the assertion follows because $\sum_{i=1}^m w_i g(x_i) = \sum_{i=1}^{m_0(B-A)/2} \sum_{j=1}^{{n_{\beta}}} \frac{\widetilde w_j}{m_0} g_i \left(\frac{\widetilde x_j}{m_0} \right) $.
\end{proof}


\subsubsection{Proof of Lemma~\ref{sec:normalholder}}

\begin{proof}
For any $n \in \bbZ_{\geq 0}$, it is well-known (see \citep{indritz1961inequality}) that 
\bean
\left\vert \frac{\d^n}{\d x^n}  e^{-x^2} \right\vert \leq \sqrt{2^{n} n!} e^{-\frac{x^2}{2}}, \quad x \in \bbR
\eean
and moreover,
\be
\| \D^{n}\phi \|_{\infty} = \frac{1}{\sqrt{2\pi}}  \left\| \frac{\d^n}{\d x^n}  e^{-\frac{x^2}{2}} \right\|_{\infty} \leq \frac{\sqrt{n!}}{\sqrt{2\pi}} \left\| e^{-\frac{x^2}{4}} \right\|_{\infty} \leq \frac{\sqrt{n!}}{\sqrt{2\pi}}, \label{eqn:gdbound}
\ee
where the first inequality holds by the chain rule.
Then, 
\be \begin{split}
& \sum_{\alpha =0}^{\lfloor \beta \rfloor} \left\| \D^{\alpha}(g\phi) \right\|_{\infty} = \sum_{\alpha = 0}^{\lfloor \beta \rfloor} \left\| \sum_{r=0}^{\alpha} \binom{\alpha}{r} (\D^{r}g)(\D^{\alpha-r}\phi) \right\|_{\infty}
\\
& \leq \sum_{\alpha = 0}^{\lfloor \beta \rfloor} \sum_{r=0}^{\alpha} \binom{\alpha}{r} \left\| \D^{r} g \right\|_{\infty}  \left\| \D^{\alpha-r} \phi \right\|_{\infty} \leq \sum_{\alpha = 0}^{\lfloor \beta \rfloor} \sum_{r=0}^{\alpha} \alpha^{\alpha}   \left\| \D^{r} g \right\|_{\infty} \frac{\sqrt{(\alpha-r)!}}{\sqrt{2\pi}}
\\
& \leq \sum_{\alpha = 0}^{\lfloor \beta \rfloor}  K \alpha^{\alpha} \frac{\sqrt{\alpha!}}{\sqrt{2\pi}}
\leq K \left(\lfloor \beta \rfloor +1 \right)^{\lfloor \beta \rfloor +1} \frac{\sqrt{\lfloor \beta \rfloor!}}{\sqrt{2\pi}}.
\label{eqn:prodbd1}
\end{split} \ee
Similarly, we have
\be 
\sum_{\alpha =0}^{\lfloor \beta \rfloor} \left\| \D^{\alpha}(g\phi') \right\|_{\infty}
\leq \sum_{\alpha = 0}^{\lfloor \beta \rfloor} \sum_{r=0}^{\alpha} \binom{\alpha}{r} \left\| \D^{r} g \right\|_{\infty}  \left\| \D^{\alpha-r+1} \phi \right\|_{\infty} \leq K \left(\lfloor \beta \rfloor +1 \right)^{ \lfloor \beta \rfloor +1} \frac{\sqrt{({\lfloor \beta \rfloor}+1)!}}{\sqrt{2\pi}}. \label{eqn:prodbd4}
\ee
Let $A = [a,b]$.
For any differentiable function $h : A \subseteq \bbR \to \bbR$ and $0 < \gamma \leq 1$, we have
\be \begin{split}
& \sup_{\substack{x,y \in A \\ x \neq y }} \frac{\left\vert h(x)-h(y) \right\vert}{\vert x-y \vert^{\gamma}} 
 \leq
\sup_{\substack{x,y \in A \\ x \neq y \\ \vert x-y \vert \leq 1}} \frac{\left\vert h(x)-h(y) \right\vert}{\vert x-y \vert^{\gamma}} 
+
\sup_{\substack{x,y \in A \\ \vert x-y \vert \geq 1 }} \frac{\left\vert h(x)-h(y) \right\vert}{\vert x-y \vert^{\gamma}} 
\\
& \leq
\sup_{\substack{x,y \in A \\ x \neq y \\ \vert x-y \vert \leq 1}} \frac{\left\vert h(x)-h(y) \right\vert}{\vert x-y \vert}  +\sup_{\substack{x,y \in A \\ \vert x-y \vert \geq 1 }} {\left\vert h(x)-h(y) \right\vert}
 \leq  \left\| h' \right\|_{\infty} + 2 \left\|  h \right\|_{\infty}. \label{eqn:holderbound}
\end{split} \ee
If $\beta > 1$, it follows that
\bean
&& \frac{\left\vert(\D^{\lfloor \beta \rfloor}(g \phi))(x) - (\D^{\lfloor \beta \rfloor}(g \phi))(y) \right\vert }{\vert x-y\vert^{\beta-\lfloor \beta \rfloor}}
\\
&& =
\frac{\left\vert \sum_{\alpha=0}^{\lfloor \beta \rfloor} \binom{\lfloor \beta \rfloor}{\alpha} \left\{ (\D^{\alpha} g)(x) (\D^{\lfloor \beta \rfloor-\alpha} \phi)(x) - (\D^{\alpha} g)(y) (\D^{\lfloor \beta \rfloor-\alpha} \phi)(y) \right\} \right\vert }{\vert x-y\vert^{\beta-\lfloor \beta \rfloor}}
\\
&& \leq \sum_{\alpha=0}^{\lfloor \beta \rfloor} \binom{\lfloor \beta \rfloor}{\alpha}  \left\vert (\D^{\alpha}g)(x) \right\vert \left(  \frac{ \left\vert (\D^{\lfloor \beta \rfloor-\alpha}\phi)(x) - (\D^{\lfloor \beta \rfloor-\alpha}\phi)(y) \right\vert }{\vert x-y\vert^{\beta-\lfloor \beta \rfloor}} \right) 
\\
&& \quad +  \sum_{\alpha=0}^{\lfloor \beta \rfloor} \binom{\lfloor \beta \rfloor}{\alpha}
\left\vert(\D^{\lfloor \beta \rfloor-\alpha}\phi)(y) \right\vert \left( \frac { \left\vert (\D^{\alpha}g)(x) - (D^{\alpha}g)(y) \right\vert }{\vert x-y\vert^{\beta-\lfloor \beta \rfloor}} \right)
\\
&& \leq \sum_{\alpha=0}^{\lfloor \beta \rfloor} \binom{\lfloor \beta \rfloor}{\alpha}  \left\| D^{\alpha} g \right\|_{\infty} \left(  \left\| \D^{ \lfloor \beta \rfloor -\alpha +1} \phi \right\|_{\infty} + 2 \left\| \D^{ \lfloor \beta \rfloor -\alpha } \phi
\right\|_{\infty} \right) 
\\
&& \quad + \sum_{\alpha=0}^{\lfloor \beta \rfloor-1} \binom{\lfloor \beta \rfloor}{\alpha}
 \left\| \D^{\lfloor \beta \rfloor-\alpha}\phi \right\|_{\infty} \left(  \left\| \D^{\alpha + 1} g \right\|_{\infty} + 2 \left\| \D^{\alpha }g
\right\|_{\infty} \right) 
\\
&& \quad + \|\phi\|_{\infty} \left( \frac { \left\vert (\D^{\lfloor \beta \rfloor}g)(x) - (\D^{\lfloor \beta \rfloor}g)(y) \right\vert }{\vert x-y\vert^{\beta-\lfloor \beta \rfloor}} \right)
\eean
for any $x,y \in A$ with $x \neq y$, where the last inequality holds by (\ref{eqn:holderbound}).
Combining (\ref{eqn:gdbound}) with the last display, we have
\bean
&& \sup_{\substack{x,y \in A \\ x \neq y }} \frac{\left\vert(\D^{\lfloor \beta \rfloor}(g \phi))(x) - (\D^{\lfloor \beta \rfloor}(g \phi))(y) \right\vert }{\vert x-y\vert^{\beta-\lfloor \beta \rfloor}}
\\
&& \leq  \sum_{\alpha=0}^{\lfloor \beta \rfloor} \binom{\lfloor \beta \rfloor}{\alpha}  \left\| D^{\alpha} g \right\|_{\infty} \left(  \left\| \D^{ \lfloor \beta \rfloor -\alpha +1} \phi \right\|_{\infty} + 2 \left\| \D^{ \lfloor \beta \rfloor -\alpha } \phi
\right\|_{\infty} \right) 
\\
&& \quad + \sum_{\alpha=0}^{\lfloor \beta \rfloor-1} \binom{\lfloor \beta \rfloor}{\alpha}
 \left\| \D^{\lfloor \beta \rfloor-\alpha}\phi \right\|_{\infty} \left(  \left\| \D^{\alpha + 1} g \right\|_{\infty} + 2 \left\| \D^{\alpha }g
\right\|_{\infty} \right)
\\
&& \quad + \|\phi\|_{\infty} \left( \sup_{\substack{x,y \in A \\ x \neq y }} \frac { \left\vert (\D^{\lfloor \beta \rfloor}g)(x) - (D^{\lfloor \beta \rfloor}g)(y) \right\vert }{\vert x-y\vert^{\beta-\lfloor \beta \rfloor}} \right)
\\
&& \leq \sum_{\alpha=0}^{\lfloor \beta \rfloor} {\lfloor \beta \rfloor}^{\alpha}  \left\| D^{\alpha} g \right\|_{\infty}   \left(  \frac{\sqrt{(\lfloor \beta \rfloor - \alpha +1)!}}{\sqrt{2\pi}} + \frac{2\sqrt{(\lfloor \beta \rfloor - \alpha)!}}{\sqrt{2\pi}}  \right)
\\
&& \quad + \sum_{\alpha = 0}^{\lfloor \beta \rfloor -1 } {\lfloor \beta \rfloor}^{\alpha} \left(  \left\| \D^{\alpha + 1} g \right\|_{\infty} + 2 \left\| \D^{\alpha }g
\right\|_{\infty} \right) \left(  \frac{\sqrt{(\lfloor \beta \rfloor - \alpha)!}}{\sqrt{2\pi}}  \right)
\\
&& \quad +  \frac{1}{\sqrt{2\pi}}\left( \sup_{\substack{x,y \in A \\ x \neq y }} \frac { \left\vert (\D^{\lfloor \beta \rfloor}g)(x) - (\D^{\lfloor \beta \rfloor}g)(y) \right\vert }{\vert x-y\vert^{\beta-\lfloor \beta \rfloor}} \right).
\eean
Moreover, the last display is bounded by
\be \begin{split}
& \leq 3 \lfloor \beta \rfloor^{\lfloor \beta \rfloor} \left( \frac{\sqrt{(\lfloor \beta \rfloor +1)!}}{\sqrt{2\pi}} \right) \left( \sum_{\alpha = 0}^{\lfloor \beta \rfloor}  \left\| \D^{\alpha} g \right\|_{\infty} \right)
\\
& \quad + 3 \lfloor \beta \rfloor^{\lfloor \beta \rfloor-1} \left( \frac{\sqrt{\lfloor \beta \rfloor !}}{\sqrt{2\pi}} \right) \left( \sum_{\alpha = 0}^{\lfloor \beta \rfloor}  \left\| \D^{\alpha} g \right\|_{\infty} + \sup_{\substack{x,y \in A \\ x \neq y }} \frac { \left\vert (\D^{\lfloor \beta \rfloor}g)(x) - (\D^{\lfloor \beta \rfloor}g)(y) \right\vert }{\vert x-y\vert^{\beta-\lfloor \beta \rfloor}} \right)
\\
& \leq 6K \lfloor \beta \rfloor^{\lfloor \beta \rfloor} \left( \frac{\sqrt{(\lfloor \beta \rfloor +1)!}}{\sqrt{2\pi}} \right). \label{eqn:prodbd2}
\end{split} \ee
Similarly, we have
\be \begin{split}
& \sup_{\substack{x,y \in A \\ x \neq y }} \frac{\left\vert(\D^{\lfloor \beta \rfloor}(g \phi'))(x) - (\D^{\lfloor \beta \rfloor}(g \phi'))(y) \right\vert }{\vert x-y\vert^{\beta-\lfloor \beta \rfloor}}
\\
& \leq  \sum_{\alpha=0}^{\lfloor \beta \rfloor} \binom{\lfloor \beta \rfloor}{\alpha}  \left\| \D^{\alpha} g \right\|_{\infty} \left(  \left\| \D^{ \lfloor \beta \rfloor -\alpha +2} \phi \right\|_{\infty} + 2 \left\| \D^{ \lfloor \beta \rfloor -\alpha +1 } \phi
\right\|_{\infty} \right) 
\\
& \quad + \sum_{\alpha=0}^{\lfloor \beta \rfloor-1} \binom{\lfloor \beta \rfloor}{\alpha}
 \left\| \D^{\lfloor \beta \rfloor-\alpha+1}\phi \right\|_{\infty} \left(  \left\| \D^{\alpha + 1} g \right\|_{\infty} + 2 \left\| \D^{\alpha }g
\right\|_{\infty} \right)
\\
& \quad + \|\phi'\|_{\infty} \left( \sup_{\substack{x,y \in A \\ x \neq y }} \frac { \left\vert (\D^{\lfloor \beta \rfloor}g)(x) - (\D^{\lfloor \beta \rfloor}g)(y) \right\vert }{\vert x-y\vert^{\beta-\lfloor \beta \rfloor}} \right)
\\
& \leq \sum_{\alpha=0}^{\lfloor \beta \rfloor} { \lfloor \beta \rfloor }^{\alpha}  \left\| \D^{\alpha} g \right\|_{\infty}   \left(  \frac{\sqrt{(\lfloor \beta \rfloor - \alpha +2)!}}{\sqrt{2\pi}} + \frac{2\sqrt{(\lfloor \beta \rfloor - \alpha +1)!}}{\sqrt{2\pi}}  \right)
\\
& \quad + \sum_{\alpha = 0}^{\lfloor \beta \rfloor -1 } {\lfloor \beta \rfloor}^{\alpha} \left(  \left\| \D^{\alpha + 1} g \right\|_{\infty} + 2 \left\| \D^{\alpha }g
\right\|_{\infty} \right) \left(  \frac{\sqrt{(\lfloor \beta \rfloor - \alpha +1)!}}{\sqrt{2\pi}}  \right) 
\\
& \quad +  \frac{1}{\sqrt{2\pi}}\left( \sup_{\substack{x,y \in A \\ x \neq y }} \frac { \left\vert (\D^{\lfloor \beta \rfloor}g)(x) - (\D^{\lfloor \beta \rfloor}g)(y) \right\vert }{\vert x-y\vert^{\beta-\lfloor \beta \rfloor}} \right)
\\
& \leq 6K \lfloor \beta \rfloor^{\lfloor \beta \rfloor} \left( \frac{\sqrt{(\lfloor \beta \rfloor +2)!}}{\sqrt{2\pi}} \right). \label{eqn:prodbd5}
\end{split} \ee
If $0 < \beta \leq 1$, we have
\bean
&& \sup_{\substack{x,y \in A \\ x \neq y }} \frac{\left\vert (g\phi)(x) - (g\phi)(y) \right\vert }{\vert x-y\vert^{\beta}} \leq \sup_{\substack{x,y \in A \\ x \neq y }} \frac{\left\vert g(x)\phi(x) - g(x)\phi(y) + g(x)\phi(y) - g(y)\phi(y) \right\vert }{\vert x-y\vert^{\beta}} 
\\
&& \leq \|g\|_{\infty} \left( \sup_{\substack{x,y \in A \\ x \neq y }} \frac{\left\vert \phi(x) - \phi(y) \right\vert }{\vert x-y\vert^{\beta}} \right) 
+
\|\phi\|_{\infty} \left( \sup_{\substack{x,y \in A \\ x \neq y }} \frac{\left\vert g(x) - g(y) \right\vert }{\vert x-y\vert^{\beta}} \right)
\\
&& \leq \left( \|g\|_{\infty} + \sup_{\substack{x,y \in A \\ x \neq y }} \frac{\left\vert g(x) - g(y) \right\vert }{\vert x-y\vert^{\beta}} \right) \left( \left\| \phi' \right\|_{\infty} + 2 \left\|  \phi \right\|_{\infty} \right) \leq \frac{3K}{\sqrt{2\pi}} 
\eean
and
\bean
&& \sup_{\substack{x,y \in A \\ x \neq y }} \frac{\left\vert (g\phi')(x) - (g\phi')(y) \right\vert }{\vert x-y\vert^{\beta}}
\leq \left( \|g\|_{\infty} + \sup_{\substack{x,y \in A \\ x \neq y }} \frac{\left\vert g(x) - g(y) \right\vert }{\vert x-y\vert^{\beta}} \right) \left( \left\| \phi'' \right\|_{\infty} + 2 \left\|  \phi' \right\|_{\infty} \right)
\\
&& \leq \frac{3\sqrt{2}K}{\sqrt{2\pi}},
\eean
where the second and third inequality holds by (\ref{eqn:holderbound}) and (\ref{eqn:gdbound}), respectively.
Combining (\ref{eqn:prodbd1}), (\ref{eqn:prodbd4}), (\ref{eqn:prodbd2}), (\ref{eqn:prodbd5}) with the last display, we have $g\phi \in \cH_{1}^{\beta, KD_1}(A)$ and $g \phi' \in \cH_{1}^{\beta,KD_1}$, where 
\bean
D_1 =\left(\lfloor \beta \rfloor +1 \right)^{\lfloor \beta \rfloor +1} \frac{\sqrt{(\lfloor \beta \rfloor +1)!}}{\sqrt{2\pi}} + 6\lfloor \beta \rfloor^{\lfloor \beta \rfloor} \left( \frac{\sqrt{(\lfloor \beta \rfloor +2)!}}{\sqrt{2\pi}} \right).
\eean
The assertion follows by re-defining the constant.
\end{proof}

\subsubsection{Proof of Lemma~\ref{sec:qdeapprox}}

\begin{proof}
Consider $\tau_{\rm bd},\tau_{\rm tail} > 0, m \in n_{\beta} \bbN$ and
real-valued $D$-dimensional vectors $\bx = (x_1,\ldots,x_{D})^{\top}, \by = (y_1,\ldots,y_{D})^{\top}$ such that $\| \bx \|_{\infty} \leq \mu - \mu\{\log(1/\sigma)\}^{-\tau_{\rm bd}} $ and  $\|\bx +\sigma \by \|_{\infty} \leq \mu$.
Let $\by_{<i+1} =  (y_1,\ldots,y_{i})^{\top} \in \bbR^{i}$ and $\by_{>D-i} =(y_{D-i+1},\ldots,y_{D})^{\top} \in \bbR^{i}$ for $i \in [D]$.
For $y \in \bbR$ and each $i$, denote $(\by_{<i},y,\by_{>i})$ as a $D$-dimensional vector that is identical to $\by$ except for the $i$-th component, which is replaced by $y$.

Let $D_{\sigma} = 2\sqrt{2 \tau_{\rm tail}} \{\log (1/\sigma) \}^{(\tau_{\rm bd} + \frac{1}{2})}$ and fix $i \in [D]$. Then, 
\bean
-x_i-\mu \leq -\mu \{\log(1/\sigma)\}^{-\tau_{\rm bd}} < 0
\text{\quad and \quad} -x_i+\mu \geq \mu \{\log(1/\sigma)\}^{-\tau_{\rm bd}} > 0.
\eean
Moreover, $(-x_i-\mu)/\sigma < (-x_i-\mu)D_{\sigma} < 0$ and $(-x_i+\mu)/\sigma > (-x_i+\mu)D_{\sigma} > 0$ for small enough $\sigma$ so that $0 < D_{\sigma} \leq (2\sigma)^{-1}$.
Consider a one-dimensional real-valued function $g_i$ such that
\bean
g_i \left(y; \by_{<i},\by_{>i} \right) =  p_0\left( \frac{\bx + \sigma (\by_{<i},y,\by_{>i}) }{\mu}   \right) \phi(y), \quad y \in \left[(-x_i-\mu)/\sigma,(-x_i+\mu)/\sigma \right].
\eean
Then,
\be \begin{split}
& \left\vert \int_{ \frac{-x_i-\mu}{\sigma}}^{\frac{-x_i+\mu}{\sigma} } g_i \left(y; \by_{<i},\by_{>i} \right) \d y - \int_{ (-x_i-\mu)D_{\sigma}}^{ (-x_i+\mu)D_{\sigma}} g_i \left(y; \by_{<i},\by_{>i} \right) \d y \right\vert
\\
& = \int_{ \frac{-x_i-\mu}{\sigma}}^{ (-x_i-\mu)D_{\sigma}} g_i \left(y; \by_{<i},\by_{>i} \right) \d y + \int_{ (-x_i+\mu)D_{\sigma}}^{ \frac{-x_i+\mu}{\sigma} } g_i \left(y; \by_{<i},\by_{>i} \right) \d y
\\
& \leq K \int_{-\infty}^{(-x_i-\mu)D_{\sigma}  } \phi(y) \d y + K \int_{(-x_i+\mu)D_{\sigma} }^{\infty} \phi(y) \d y
\\
& \leq K \exp \left( -\frac{ (-x_i-\mu)^2 D_{\sigma}^2}{2} \right) + K \exp \left( -\frac{ (-x_i+\mu)^2 D_{\sigma}^2 }{2} \right)
\\
& \leq 2K \exp\left(-\tau_{\rm tail} \log(1/\sigma)\right) = 2K \sigma^{\tau_{\rm tail}} ,  \label{eqn:tailerror}
\end{split} \ee
where the second inequality holds by the tail probability of the standard normal distribution.
Let $C_{G,1} = C_{G,1}(\beta)$ be the constant in Lemma~\ref{sec:normalholder}. Since $\sigma/\mu < 1$ for $\sigma < 2$, Lemma~\ref{sec:normalholder} implies that $g_i \in \cH_1^{\beta,KC_{G,1}}([(-x_i-\mu)D_{\sigma},(-x_i+\mu)D_{\sigma} ])$.
Moreover, Lemma~\ref{sec:qde} implies that
\be \begin{split}
& \left\vert \int_{(-x_i-\mu)D_{\sigma}}^{(-x_i+\mu)D_{\sigma}} g_i \left(y ; \by_{<i},\by_{>i} \right) \d y -  \sum_{j=1}^{m} \widetilde v_j g_i \left(\widetilde y_j^{(i)} ; \by_{<i},\by_{>i} \right) \right\vert
\\
& \leq \left\{ \frac{2^{\lfloor \beta \rfloor + 1 }{{ n_{\beta} } }^{\beta} }{ \lfloor \beta \rfloor !} \right\} C_{G,1} K \mu^{\beta + 1}  D_{\sigma}^{\beta + 1}  m^{-\beta}, \label{eqn:qde} \end{split}
\ee
where
\bean
&& \widetilde y_j^{(i)} = (-x_i-\mu)D_{\sigma} + \frac{\mu D_{\sigma}{n_{\beta}}}{m} \left\{\widetilde x_{j- n_{\beta}  \lfloor \frac{j}{{n_{\beta}}}  \rfloor} + 2 \left\lfloor \frac{j}{{n_{\beta}}} \right\rfloor +1 \right\}, 
\\
&& \widetilde v_j = \frac{\mu D_{\sigma}{n_{\beta}}}{m}\widetilde w_{j- n_{\beta} \lfloor \frac{j}{{n_{\beta}}} \rfloor},
\eean
and $(-x_i-\mu) D_{\sigma} < \widetilde y_j^{(i)} < (-x_i+\mu) D_{\sigma}$ for $j \in [m]$.
Combining (\ref{eqn:tailerror}) and (\ref{eqn:qde}), we have
\be
\left\vert \int_{ \frac{-x_i-\mu}{\sigma}}^{\frac{-x_i+\mu}{\sigma} } p_0\left( \frac{\bx + \sigma (\by_{<i},y,\by_{>i}) }{\mu}  \right) \phi(y) \d y - \sum_{j=1}^{m} \widetilde v_j g_i \left(\widetilde y_j^{(i)} ; \by_{<i},\by_{>i} \right) \right\vert \leq \epsilon, \label{eqn:qdetot}
\ee
where $\epsilon = 2K \sigma^{\tau_{\rm tail}} +  2^{\lfloor \beta \rfloor + 1}{{n_{\beta}}}^{\beta}  C_{G,1} K \mu^{\beta+1}  D_{\sigma}^{\beta+1} m^{-\beta} / \lfloor \beta \rfloor !$.
Since $\vert x_i \vert \leq \mu - \mu \{\log(1/\sigma)\}^{-\tau_{\rm bd}}$ and $ \sigma  D_{\sigma}\leq 1/2$, we have
\bean
\frac{x_i}{2\mu} - \frac{1}{2}
\leq 
\frac{x_i+\sigma D_{\sigma} (-x_i-\mu) }{\mu} \leq \frac{x_i+\sigma \widetilde y_{j}^{(i)} }{\mu}
\leq \frac{x_i+\sigma D_{\sigma} (-x_i+\mu) }{\mu} \leq \frac{x_i}{2\mu} + \frac{1}{2}
\eean
and 
\bean
\left\vert \frac{x_i+\sigma \widetilde y_{j}^{(i)} }{\mu} \right\vert \leq 1 - \frac{\{\log(1/\sigma)\}^{-\tau_{\rm bd}}}{2} < 1.
\eean
Consider $F_{(j_1)},\ldots,F_{(j_1,\ldots,j_{D})}$ for $j_1,\ldots,j_{D}\in [m] $, defined as
\bean
&& F_{(j_1,\ldots,j_{k-1})}
\\
&& = \int_{\|\bx_{>k} + \sigma \by_{>k} \|_{\infty} \leq \mu} \left\{ \int_{ \frac{-x_k-\mu}{\sigma}}^{ \frac{-x_k+\mu}{\sigma} }   p_0\left( \frac{\bx + \sigma \left(\widetilde y_{j_1}^{(1)},\ldots,\widetilde y_{j_{k-1}}^{(k-1)},y,\by_{>k}^{\top}\right)^{\top} }{\mu}   \right) \phi(y) \d y \right\}  \prod_{i=k+1}^{D} \phi(y_i) \d \by_{>k},
\eean
for $k \in \{2,\ldots,D-1\}$,
\bean
&& F_{(j_1,\ldots,j_{D-1})} = \int_{ \frac{-x_D-\mu}{\sigma}}^{ \frac{-x_D+\mu}{\sigma} }    p_0\left( \frac{\bx + \sigma \left(\widetilde y_{j_1}^{(1)},\ldots,\widetilde y_{j_{D-1}}^{(D-1)},y\right)^{\top} }{\mu}   \right) \phi(y) \d y, \quad \text{and}
\\
&& F_{(j_1,\ldots,j_{D})} = p_0\left( \frac{\bx + \sigma \left(\widetilde y_{j_1}^{(1)},\ldots,\widetilde y_{j_{D}}^{(D)}\right)^{\top} }{\mu}   \right).
\eean
For any $k \in \{2,\ldots,D-1\}$ and $j_1,\ldots,j_{D} \in [m]$, we have
\be \begin{split}
& \left\vert F_{(j_1,\ldots,j_{k-1})}- \sum_{j_k = 1}^{m} \widetilde v_{j_k} \phi\left(\widetilde y_{j_k}^{(k)}\right) F_{(j_1,\ldots,j_k)} \right\vert 
\\
& \leq \epsilon \int_{\|\bx_{>k} + \sigma \by_{>k} \|_{\infty} \leq \mu}  \left\{ \prod_{i=k+1}^{D} \phi(y_i)  \right\} \d \by_{>k} \leq \epsilon \label{eqn:coordwise}
\end{split} \ee
and
\bean
\left\vert F_{(j_1,\ldots,j_{D-1})}- \sum_{j_D = 1}^{m} \widetilde v_{j_D} \phi\left(\widetilde y_{j_D}^{(D)}\right) F_{(j_1,\ldots,j_D)} \right\vert \leq \epsilon,
\eean
where the first and last inequality holds by (\ref{eqn:qdetot}).
Note that 
\bean
\mu^{D} p_{\mu,\sigma}(\bx) = \int_{\|\bx_{>1} + \sigma \by_{>1} \|_{\infty} \leq \mu} \left[ \int_{ \frac{-x_1-\mu}{\sigma}}^{ \frac{-x_1+\mu}{\sigma} } \left\{   p_0\left( \frac{\bx + \sigma (y,\by_{>1}^{\top})^{\top} }{\mu}   \right) \phi(y)\right\} \d y \right]  \prod_{i=2}^{D} \phi(y_i) \d \by_{>1}.
\eean
Then, we also have
\bean
\left\vert \mu^{D} p_{\mu,\sigma}(\bx) - \sum_{j_1 = 1}^{m} \widetilde v_{j_1} \phi\left(\widetilde y_{j_1}^{(1)}\right) F_{(j_1)} \right\vert \leq \epsilon \int_{\|\bx_{>1} + \sigma \by_{>1} \|_{\infty} \leq \mu}  \left\{ \prod_{i=2}^{D} \phi(y_i)  \right\} \d \by_{>1} \leq \epsilon,
\eean
where the first inequality holds by (\ref{eqn:qdetot}).
Combining (\ref{eqn:coordwise}) with the last display, we have
\be \begin{split}
& \left\vert \mu^{D} p_{\mu,\sigma}(\bx) - \sum_{j_1,\ldots,j_{D}=1}^{m}
\prod_{k=1}^{D}  \left\{  \widetilde v_{j_k} \phi \left( \widetilde y_{j_k}^{(k)} \right) \right\}  F_{(j_1,\ldots,j_{D})}  \right\vert
\\
& \leq \left\vert \mu^{D} p_{\mu,\sigma}(\bx) - \sum_{j_1 = 1}^{m} \widetilde v_{j_1} \phi\left(\widetilde y_{j_1}^{(1)}\right) F_{(j_1)} \right\vert
\\
& \quad + \sum_{i=2}^{D} \left\vert \sum_{j_1,\ldots,j_{i-1} = 1}^{m}  
\prod_{k=1}^{i-1}  \left\{ \widetilde v_{j_k} \phi \left( \widetilde y_{j_k}^{(k)} \right) \right\}
\left\{ F_{(j_1,\ldots,j_{i-1})} -  \sum_{j_i = 1}^{m} \widetilde v_{j_i} \phi\left(\widetilde y_{j_i}^{(i)}\right) F_{(j_1,\ldots,j_{i})}  \right\} \right\vert
\\
& \leq \epsilon  \left(1+ \sum_{i=2}^{D}  \sum_{j_1,\ldots  j_{i-1} = 1}^{m} \left\vert 
\prod_{k=1}^{i-1} \left\{  \widetilde v_{j_k} \phi \left( \widetilde y_{j_k}^{(k)} \right) \right\} \right\vert \right)
\\
& \leq
\epsilon  \left(1+ \sum_{i=2}^{D}  \sum_{j_1,\ldots  j_{i-1} = 1}^{m} \frac{\left\vert \prod_{k=1}^{i-1} \widetilde v_{j_k} \right\vert}{(2\pi)^{\frac{i-1}{2}}} \right), \label{eqn:qdecoord}
\end{split} \ee
where the last inequality holds because $\vert \phi \vert \leq 1/\sqrt{2\pi}$.
For each $j \in [m]$, we have 
\bean
\vert \widetilde v_{j} \vert \leq \left\{ \frac{ \mu D_{\sigma}{n_{\beta}}}{m} \right\}  \max \left(\vert \widetilde w_1 \vert,\ldots, \vert \widetilde w_{{n_{\beta}}} \vert \right) 
= \frac{D_{1} \{\log(1/\sigma) \}^{(\tau_{\rm bd}+\frac{1}{2} )} }{ m},
\eean
where the last inequality holds because $\mu \leq 1$ and $D_{1} =  2\sqrt{2 \tau_{\rm tail}}{n_{\beta}} \max (\vert \widetilde w_1 \vert,\ldots,\vert \widetilde w_{{n_{\beta}}} \vert)$.
Then,
\bean
&& 1+ \sum_{i=2}^{D}  \sum_{j_1,\ldots,j_{i-1} = 1}^{m} \frac{\vert \prod_{k=1}^{i-1} \widetilde v_{j_k} \vert}{(2\pi)^{\frac{i-1}{2}}} \leq 1 + \sum_{i=2}^{D} \sum_{j_1,\ldots,j_{i-1} = 1}^{m} \left( \frac{ D_{1} \{\log(1/\sigma) \}^{(\tau_{\rm bd}+\frac{1}{2} )} }{m\sqrt{2\pi}} \right)^{i-1}
\\ 
&& = 1 + \sum_{i=2}^{D} \left( \frac{ D_{1} \{\log(1/\sigma) \}^{(\tau_{\rm bd}+\frac{1}{2} )} }{\sqrt{2\pi}} \right)^{i-1}
\leq D\left( \frac{ D_{1} \{\log(1/\sigma) \}^{(\tau_{\rm bd}+\frac{1}{2} )} }{\sqrt{2\pi}} \right)^{D-1},
\eean
where the last inequality holds for small enough $\sigma$ so that $D_{1} \{\log(1/\sigma) \}^{(\tau_{\rm bd}+\frac{1}{2} )} \geq \sqrt{2\pi}$.
Also, there exists a constant $D_{2} = D_{2}(\beta,\tau_{\rm tail},C_{G,1})$ such that 
\bean
\epsilon \leq D_{2} K \left( \sigma^{\tau_{\rm tail}} + m^{-\beta} \{\log(1/\sigma) \}^{(\tau_{\rm bd}+\frac{1}{2} )(\beta + 1 ) } \right).
\eean
Hence,
\be \begin{split}
& \left\vert  \mu^{D} p_{\mu,\sigma}(\bx) - \sum_{j_1,\ldots,j_{D}=1}^{m}  \prod_{k=1}^{D} \left\{ \widetilde v_{j_k} \phi \left( \widetilde y_{j_k}^{(k)} \right) \right\}  F_{(j_1,\ldots,j_{D})}   \right\vert
\\
& \leq \epsilon \left(1+ \sum_{i=2}^{D}  \sum_{j_1,\ldots,j_{i-1} = 1}^{m} \frac{\vert \prod_{k=1}^{i-1} \widetilde v_{j_k} \vert}{(2\pi)^{\frac{i-1}{2}}} \right)
\\
& \leq D_{3} K \left( \sigma^{\tau_{\rm tail}} + m^{-\beta} \{\log(1/\sigma) \}^{(\tau_{\rm bd}+\frac{1}{2} )(\beta + 1 )} \right)\{\log(1/\sigma) \}^{(\tau_{\rm bd}+\frac{1}{2} )(D-1)}, \label{eqn:assert1} \end{split} \ee
where $D_{3} =   D D_{2} (D_{1}/\sqrt{2\pi})^{D-1}$.

Note that
\bean
&& \nabla p_{\mu, \sigma} (\bx) = \int_{\| \by \|_{\infty} \leq 1} \left( \frac{\mu \by - \bx }{\sigma^2} \right) p_0(\by)  \phi_{\sigma}(\bx - \mu \by) \d \by 
\\
&& =  \sigma^{-1} \mu^{-D} \int_{\| \bx + \sigma \by \|_{\infty} \leq \mu} (y_1,\ldots,y_{D})^{\top}  p_0 \left( \frac{\bx + \sigma \by}{\mu} \right) \prod_{i=1}^{D} \phi(y_i) \d \by.
\eean
For $i \in [D]$, consider a one-dimensional real-valued function $\widetilde g_i$ such that
\bean
\widetilde g_i \left(y; \by_{<i},\by_{>i} \right) =  y g_i \left(y; \by_{<i},\by_{>i} \right), \quad y \in \left[(-x_i-\mu)/\sigma,(-x_i+\mu)/\sigma \right].
\eean
Then,
\be \begin{split}
& \left\vert \int_{ \frac{-x_i-\mu}{\sigma}}^{\frac{-x_i+\mu}{\sigma} } \widetilde g_i \left(y; \by_{<i},\by_{>i} \right) \d y -  \int_{ (-x_i-\mu)D_{\sigma}}^{ (-x_i+\mu)D_{\sigma}} \widetilde g_i \left(y; \by_{<i},\by_{>i} \right) \d y \right\vert
\\
& \leq \left\vert \int_{ \frac{-x_i-\mu}{\sigma}}^{(-x_i-\mu)D_{\sigma}} \widetilde g_i \left(y; \by_{<i},\by_{>i} \right) \d y  \right\vert
+ \left\vert \int^{ \frac{-x_i+\mu}{\sigma}}_{(-x_i+\mu)D_{\sigma}} \widetilde g_i \left(y; \by_{<i},\by_{>i} \right) \d y \right\vert
\\
& \leq K \int^{\infty}_{(x_i+\mu)D_{\sigma}  } y\phi(y) \d y + K \int_{(-x_i+\mu)D_{\sigma} }^{\infty} y\phi(y) \d y
\\
& = K \exp \left( -\frac{ (x_i+\mu)^2 D_{\sigma}^2}{2} \right) + K \exp \left( -\frac{ (-x_i+\mu)^2 D_{\sigma}^2 }{2} \right)
\\
& \leq 2K \exp\left(-\tau_{\rm tail} \log(1/\sigma)\right) = 2K \sigma^{\tau_{\rm tail}},   \label{eqn:tailerrorgrad}
\end{split} \ee
where the first equality holds because 
\bean
-x_i - \mu \leq -\{\log(1/\sigma)\}^{-\tau_{\rm bd}}/2 < 0
\text{\quad and \quad}
-x_i + \mu \geq 2^{-1} \{\log(1/\sigma)\}^{-\tau_{\rm bd}} > 0.
\eean
Since $\phi'(y) = -y\phi(y)$ for $y \in \bbR$ and $\sigma/\mu < 1$, Lemma~\ref{sec:normalholder} implies that $\widetilde g_i \in \cH_{1}^{\beta,KC_{G,1}}((-x_i-\mu)D_{\sigma},(-x_i+\mu)D_{\sigma}])$.
Moreover, Lemma~\ref{sec:qde} implies that
\bean
&& \left\vert \int_{(-x_i-\mu)D_{\sigma}}^{(-x_i+\mu)D_{\sigma}} \widetilde g_i \left(y ; \by_{<i},\by_{>i} \right) \d y -  \sum_{j=1}^{m} \widetilde v_j \widetilde y_j^{(i)}  g_i \left(\widetilde y_j^{(i)} ; \by_{<i},\by_{>i} \right) \right\vert
\\
&& \leq \left\{ \frac{2^{\lfloor \beta \rfloor +1 }{{n_{\beta}}}^{\beta} }{ \lfloor \beta \rfloor !} \right\} C_{G,1} K \mu^{\beta + 1}  D_{\sigma}^{\beta + 1}  m^{-\beta}
\eean
because $\widetilde g_i(y ; \by_{<i},\by_{>i} )= y g_i(y ; \by_{<i},\by_{>i} )$.
Combining (\ref{eqn:tailerrorgrad}) with the last display, we have
\be
\left\vert \int_{ \frac{-x_i-\mu}{\sigma}}^{\frac{-x_i+\mu}{\sigma} } p_0\left( \frac{\bx + \sigma (\by_{<i},y,\by_{>i}) }{\mu}  \right) y\phi(y) \d y  -  \sum_{j=1}^{m} \widetilde v_j \widetilde y_j^{(i)}  g_i \left(\widetilde y_j^{(i)} ; \by_{<i},\by_{>i} \right) \right\vert \leq \epsilon \label{eqn:qdetotgrad}.
\ee
Combining (\ref{eqn:qdetot}) with a simple calculation, we have
\bean 
&& \left\vert \int_{\| \bx_{-i} + \sigma \by_{-i} \|_{\infty} \leq \mu}   p_0 \left( \frac{\bx + \sigma (\by_{<i},y,\by_{>i})}{\mu} \right) \prod_{\substack{k=1 \\ k \neq i}}^{D} \phi(y_k) \d \by_{<i}  \right.
\\
&& \quad \quad - \left. \sum_{ \substack{ j_1,\ldots,j_{D} = 1  \\ {\rm w/o} \ j_i} }^{m} \prod_{\substack{k=1 \\ k \neq i}}^{D} \left\{\widetilde v_{j_k} \phi \left(\widetilde y_{j_k}^{(k)} \right) \right\}  p_0\left( \frac{ \bx + \sigma \left(\widetilde y_{j_1}^{(1)},\ldots,\widetilde y_{j_{i-1}}^{(i-1)},y,\widetilde y_{j_{i+1}}^{(i+1)},\ldots,\widetilde y_{j_{D}}^{(D)}\right)^{\top} }{\mu}   \right)  \right\vert
\\
&& \leq \epsilon  \left(1+  \sum_{h=1}^{D-1}  \sum_{ \substack{ j_1,\ldots,j_{h} = 1  \\ {\rm w/o} \ j_i} }^{m} \prod_{\substack{k=1 \\ k \neq i}}^{h} \left\{\vert \widetilde v_{j_k} \vert \phi \left(\widetilde y_{j_k}^{(k)} \right) \right\} \right)
\eean
for any $y \in \bbR$ satisfying $\vert x_i + \sigma y \vert \leq \mu$,
where $\sum_{ j_1,\ldots,j_{D} = 1 \ {\rm w/o} \ j_i }^{m}$ denotes the summation over 
\bean
1 \leq j_1,\ldots,j_{i-1},j_{i+1},\ldots,j_{D} \leq m.
\eean
Note that
\bean
&& \mu^{D}\sigma (\nabla p_{\mu, \sigma} (\bx))_{i} = \int_{\| \bx + \sigma \by \|_{\infty} \leq \mu} y_i  p_0 \left( \frac{\bx + \sigma \by}{\mu} \right) \prod_{k=1}^{D} \phi(y_k) \d \by
\\
&& = \int_{\frac{-x_i-\mu}{\sigma}}^{\frac{-x_i+\mu}{\sigma}} \left\{ \int_{\| \bx_{-i} + \sigma \by_{-i} \|_{\infty} \leq \mu}   p_0 \left( \frac{\bx + \sigma \by}{\mu} \right) \prod_{\substack{k=1 \\ k \neq i}}^{D} \phi(y_k) \d \by_{<i} 
\right\}  y_i \phi(y_i)  \d y_i.
\eean
Combining (\ref{eqn:qdetotgrad}) with the last three displays, we have
\bean
&&  \left\vert  \int_{\| \bx + \sigma \by \|_{\infty} \leq \mu} y_i  p_0 \left( \frac{\bx + \sigma \by}{\mu} \right) \prod_{k=1}^{D} \phi(y_k) \d \by  -  \sum_{j_1,\ldots,j_{D}=1}^{m} \widetilde y_{j_i}^{(i)} \prod_{k=1}^{D} \left\{ \widetilde v_{j_k} \phi \left( \widetilde y_{j_k}^{(k)} \right)  \right\}  F_{(j_1,\ldots,j_{D})} \right\vert
 \\
 && \leq  \epsilon  \left(1+  \sum_{h=1}^{D-1}  \sum_{ \substack{ j_1,\ldots,j_{h} = 1  \\ {\rm w/o} \ j_i} }^{m} \prod_{\substack{k=1 \\ k \neq i}}^{h} \left\{ \vert \widetilde v_{j_k} \vert \phi \left(\widetilde y_{j_k}^{(k)} \right) \right\} + \sum_{ \substack{ j_1,\ldots,j_{D} = 1  \\ {\rm w/o} \ j_i} }^{m} \prod_{\substack{k=1 \\ k \neq i}}^{D} \left\{\vert \widetilde v_{j_k} \vert \phi \left(\widetilde y_{j_k}^{(k)} \right) \right\} \right)   \int_{\frac{-x_i-\mu}{\sigma}}^{\frac{-x_i+\mu}{\sigma}} \vert y  \vert \phi(y) \d y 
 \\
 && \leq \frac{2\epsilon}{\sqrt{2\pi}} \left(1+  \sum_{h=1}^{D}  \sum_{ \substack{ j_1,\ldots,j_{h} = 1  \\ {\rm w/o} \ j_i} }^{m} \prod_{\substack{k=1 \\ k \neq i}}^{D} \frac{ \vert \widetilde v_{j_k} \vert}{\sqrt{2\pi}}  \right),
\eean
where the last inequality holds because $\int_{-\infty}^{\infty} \vert y \vert \phi(y) = 2 \int_{0}^{\infty} y \phi(y) \d y = \frac{2}{\sqrt{2\pi}}$ and $\vert \phi \vert \leq 1/\sqrt{2\pi}$.
Combining with (\ref{eqn:assert1}), the last display is bounded by 
\bean
\frac{2D_3K}{\sqrt{2\pi}} \left( \sigma^{\tau_{\rm tail}} + m^{-\beta} \{\log(1/\sigma) \}^{(\tau_{\rm bd}+\frac{1}{2} )(\beta + 1 )} \right)\{\log(1/\sigma) \}^{(\tau_{\rm bd}+\frac{1}{2} )(D-1)},
\eean
and the assertion follows by re-defining constants.
\end{proof}


\subsection{Proof of Proposition~\ref{secpt:1}}

\begin{proof}
Let $\delta > 0$ be a small enough value as described below.
There exists neural networks  $f_{\mu} \in \cF_{\rm NN}(L_{\mu},\bd_{\mu},s_{\mu},M_{\mu}), f_{\sigma} \in \cF_{\rm NN}(L_{\sigma},\bd_{\sigma},s_{\sigma},M_{\sigma})$ with
\bean
&& L_{\mu}, L_{\sigma} \leq C_{N,4} \{ \log(1/\delta)\}^{2}, \quad \|\bd_{\mu} \|_{\infty},  \| \bd_{\sigma}\|_{\infty} \leq C_{N,4}  \{ \log(1/\delta)\}^{2}
\\
&& s_{\mu}, s_{\sigma} \leq C_{N,4}  \{ \log(1/\delta)\}^{3}, \quad M_{\mu}, M_{\sigma} \leq C_{N,4} \log ( 1/\delta)
\eean
such that
\be
\left\vert \mu_{t} - f_{\mu}(t) \right\vert \leq \delta \quad \quad \text{and} \quad \quad \left\vert \sigma_{t} - f_{\sigma}(t) \right\vert \leq \delta \label{eqthm1:nnmtst}
\ee
for $ t \geq \delta$, where $C_{N,4}$ is the constant in Lemma~\ref{secnn:mtst}.
for any $0 \leq t \leq (2\overline{\tau})^{-1}$.

Since $\log (1/x) = -\log x$ for any $x > 0$, Lemma~\ref{secnn:log} implies that there exists a positive constant $D_{1} = D_{1}(\underline{\tau})$ and neural network $f_{\log} \in \cF_{\rm NN}(L_{\log}, \bd_{\log}, s_{\log}, M_{\log})$ with
\bean
&& L_{\log} \leq D_{1} \{ \log(1/\delta)\}^{2} \log \log (1/\delta), \quad \|\bd_{\log}\|_{\infty}\leq D_{1}  \{ \log(1/\delta)\}^{3}
\\
&& s_{\log}  \leq D_{1}  \{ \log(1/\delta)\}^{5} \log \log (1/\delta), \quad M_{\log} \leq \exp \left( D_{1} \{\log (1/\delta) \}^{2} \right)
\eean
such that $
\left\vert \log (1/x) - f_{\log}(\widetilde x) \right\vert \leq \sqrt{\underline{\tau} \delta}/2 + (2/\sqrt{\underline{\tau} \delta}) \vert x - \widetilde x \vert
$
for $\sqrt{\underline{\tau} \delta}/2 \leq x \leq (\sqrt{\underline{\tau} \delta}/2)^{-1}$ and $\widetilde x \in \bbR$.
Combining with (\ref{eqthm1:nnmtst}), we have
\be \begin{split}
 \left\vert \log(1/\sigma_{t}) - f_{\rm log}(f_{\sigma}(t)  ) \right\vert \leq   \frac{2 \sqrt{\delta}}{\sqrt{\underline{\tau}}} + \frac{\sqrt{\underline{\tau} \delta}}{2} = \left(\frac{2}{\sqrt{\underline{\tau}}} + \frac{\sqrt{\underline{\tau}}}{2}   \right) \sqrt{\delta} \label{eqthm1:logbd}
\end{split} \ee
for $\delta \leq t \leq (2 \overline{\tau})^{-1}$.
Let 
\bean
\tau_{\rm mult} = \tau_{\rm bd} + 1
\eean
Lemma~\ref{secnn:mult} implies that for $k \geq 2$, there exists a neural network $f_{\rm mult}^{(k)} \in \cF_{\rm NN}(L_{\rm mult}^{(k)}, \bd_{\rm mult}^{(k)}, s_{\rm mult}^{(k)}, M_{\rm mult}^{(k)})$ with 
\bean
&& L_{\rm mult}^{(k)} \leq C_{N,1} (k \tau_{\rm mult} +1)  \log k  \log (1/\delta) , \quad \| \bd_{\rm mult}^{(k)} \|_{\infty} = 48k, \quad
\\
&& s_{\rm mult}^{(k)} \leq  C_{N,1}k(\tau_{\rm mult}+1)   \log(1/\delta) , \quad M_{\rm mult}^{(k)} = \{\log (1/\delta) \}^{k\tau_{\rm mult}}  \label{eqthm1:nnmultcomp}
\eean
such that 
\be
\left\vert f_{\rm mult}^{(k)}(\widetilde x_1,\ldots, \widetilde x_k) - \prod_{i=1}^{k} x_i \right\vert \leq \delta + k \{\log (1/\delta)\}^{(k-1)\tau_{\rm mult}}\widetilde \epsilon \label{eqthm1:mult}
\ee
for any $\bx = (x_1,\ldots,x_k) \in \bbR^{k}$ with $ \|\bx\|_{\infty} \leq \{\log (1/\delta)\}^{\tau_{\rm mult} }$ and $\widetilde \bx = (\widetilde x_1,\ldots,\widetilde x_{k}) \in \bbR^{k}$ with $\| \bx - \widetilde \bx\|_{\infty} \leq \widetilde \epsilon$,
where $0 < \widetilde \epsilon \leq 1 $ and $C_{N,1}$ is the constant in Lemma~\ref{secnn:mult}.
Combining Lemma~\ref{secnn:lin} and Lemma~\ref{secnn:comp} with the last display, there exists a neural network $f_{\rm pow}^{(k)} \in \cF_{\rm NN}(L_{\rm pow}^{(k)}, \bd_{\rm pow}^{(k)},  s_{\rm pow}^{(k)} ,  M_{\rm pow}^{(k)}  )$ with
\bean
L_{\rm pow}^{(k)} =  L_{\rm mult}^{(k)} +2, \quad \| \bd_{\rm pow}^{(k)}\|_{\infty} \leq 96k, \quad s_{\rm pow}^{(k)} \leq 2s_{\rm mult}^{(k)} + 8k, \quad M_{\rm pow}^{(k)} =  M_{\rm mult}^{(k)} \vee 1
\eean
such that
\be
\left\vert f_{\rm pow}^{(k)}(\widetilde x) - x^{k} \right\vert \leq \delta + k \{ \log (1/\delta)\}^{(k-1)\tau_{\rm mult}}\widetilde \epsilon
\label{eqthm1:nnpow}
\ee
for any $\vert x \vert \leq \{\log (1/\delta)\}^{\tau_{\rm mult}}$ and $\widetilde x \in \bbR$ with $\vert x - \widetilde x \vert \leq \widetilde \epsilon$.
For $\delta \leq t \leq (2\overline{\tau})^{-1}$, we have
\be
\vert \log (1/\sigma_{t}) \vert \leq \log (1/\sqrt{\underline{\tau} \delta}) \leq \log(1/\delta), \label{eqthm1:logstbd}
\ee
where the first inequality holds by (\ref{eqthm:mtstbd}) and the last inequality holds with small enough $\delta$.
Combining (\ref{eqthm1:logbd}) with (\ref{eqthm1:nnpow}), it follows that
\be \begin{split}
& \left\vert \{ \log(1/\sigma_t)\}^{\tau_{\rm bd}+\frac{1}{2}} -   f_{\rm pow}^{(\tau_{\rm bd}+\frac{1}{2})} \left( f_{\log} \left( f_{\sigma}(t) \right) \right) \right\vert
\\
& \leq \delta + \left(\tau_{\rm bd}+\frac{1}{2} \right)\left(\frac{2}{\sqrt{\underline{\tau}}} + \frac{\sqrt{\underline{\tau}}}{2}   \right) \sqrt{\delta} \left\{ \log(1/\delta) \right\}^{(\tau_{\rm bd}-\frac{1}{2} )\tau_{\rm mult}} 
\\
& \leq D_{2} \sqrt{\delta} \{ \log(1/\delta) \}^{(\tau_{\rm bd}-\frac{1}{2} )\tau_{\rm mult}} \label{eqthm1:nnwj1}
\end{split} \ee
for $\delta \leq t \leq (2\overline{\tau})^{-1}$, where $D_{2} =1 + (\tau_{\rm bd}+\frac{1}{2} )(\frac{2}{\sqrt{\underline{\tau}}} + \frac{\sqrt{\underline{\tau}}}{2}  )$.
Combining (\ref{eqthm1:mult})0 with the last two displays, we have
\bean
&& \left\vert \{ \log(1/\sigma_t)\}^{\tau_{\rm bd}+\frac{1}{2}} x -   f_{\rm mult}^{(2)}\left( f_{\rm pow}^{(\tau_{\rm bd}+\frac{1}{2})} \left( f_{\log} \left( f_{\sigma}(t) \right) \right), x \right) \right\vert 
\\
&& \leq \delta + 2 D_{2} \sqrt{\delta}  \{\log (1/\delta)\}^{(\tau_{\rm bd}+\frac{1}{2} )\tau_{\rm mult}} \leq D_{3} \sqrt{\delta}  \{\log (1/\delta)\}^{(\tau_{\rm bd}+\frac{1}{2} )\tau_{\rm mult}}
\eean
for $\delta \leq  t \leq (2\overline{\tau})^{-1}$ and $\vert x \vert \leq 1$, where $D_{3} = 1+2D_{2}$.
Let $m \in {n_{\beta}} \bbN$ be a large enough value as described below.
Then, consider functions $g_{\rm y}^{(1)},\ldots,g_{\rm y}^{(m)} : [-1,1] \times [0,\infty) \to \bbR$ and $ f_{\rm y}^{(1)},\ldots,f_{\rm y}^{(m)} : \bbR \times \bbR \to \bbR$  such that
\bean
&& g_{\rm y}^{(j)}(x,t) 
\\
&& = 2\sqrt{2 \tau_{\rm tail}} \{\log (1/\sigma_t) \}^{\tau_{\rm bd} + \frac{1}{2}} 
 \left\{  - x-\mu_t + \frac{\mu_t{n_{\beta}}}{m} \left(  \widetilde x_{j- n_{\beta} \lfloor \frac{j}{{n_{\beta}}}  \rfloor} + 2 \left\lfloor \frac{j}{{n_{\beta}}} \right\rfloor +1\right)  \right\}, \quad j \in [m]
 \eean
 for $x \in [-1,1], t \in [0,\infty)$ and
 \bean
 f_{\rm y}^{(j)}(x,t) && =    2\sqrt{2 \tau_{\rm tail}}  \left\{ -  f_{\rm mult}^{(2)}\left( f_{\rm pow}^{(\tau_{\rm bd}+\frac{1}{2})} \left( f_{\log} \left( f_{\sigma}(t) \right) \right), x \right)  - f_{\rm mult}^{(2)}\left( f_{\rm pow}^{(\tau_{\rm bd}+\frac{1}{2})} \left( f_{\log} \left( f_{\sigma}(t) \right) \right), f_{\mu}(t) \right) \right.
\\
&& \left. \quad   + 
\frac{{n_{\beta}}}{m} \left(  \widetilde x_{j- n_{\beta} \lfloor \frac{j}{{n_{\beta}}}  \rfloor} + 2 \left\lfloor \frac{j}{{n_{\beta}}} \right\rfloor +1\right)
f_{\rm mult}^{(2)}\left( f_{\rm pow}^{(\tau_{\rm bd}+\frac{1}{2})} \left( f_{\log} \left( f_{\sigma}(t) \right) \right), f_{\mu}(t) \right) \right\}
,\quad j \in [m]
\eean
for $x,t \in \bbR$, where $\{(\widetilde x_j,\widetilde w_j) : j \in [{n_{\beta}}] \}$ are the constants in Lemma~\ref{sec:qdeapprox}.
Then,
\bean
&& \left\vert  g_{\rm y}^{(j)}(x,t) - f_{\rm y}^{(j)}(x,t)  \right\vert
 \\
 && \leq 2\sqrt{2\tau_{\rm tail}} \left\{ D_{3} + \frac{D_{3}{n_{\beta}}}{m} \left( \left\vert  \widetilde x_{j- n_{\beta} \lfloor \frac{j}{{n_{\beta}}}  \rfloor} \right\vert + 2 \left\lfloor \frac{j}{{n_{\beta}}} \right\rfloor +1\right) +1  \right\} \sqrt{\delta}  \{\log (1/\delta)\}^{(\tau_{\rm bd}+\frac{1}{2} )\tau_{\rm mult}}
\eean
and
\bean
\left\vert  g_{\rm y}^{(j)}(x,t) \right\vert  
 \leq 2\sqrt{2\tau_{\rm tail}} \left\{ 2 + \frac{2{n_{\beta}}}{m} \left( \left\vert  \widetilde x_{j- n_{\beta}  \lfloor \frac{j}{{n_{\beta}}}  \rfloor} \right\vert + 2 \left\lfloor \frac{j}{{n_{\beta}}} \right\rfloor + 1 \right) \right\} \{ \log(1/\delta) \}^{\tau_{\rm bd}+\frac{1}{2} }
\eean
for $\vert x \vert \leq 1$ and $\delta \leq t \leq (4 \overline{\tau})^{-1}$, where the last inequality holds by (\ref{eqthm1:logstbd}).
Then, there exists a constant $D_{4} = D_{4}(\beta, \tau_{\rm tail}, \tau_{\rm bd}, D_{3})$ such that
\be \begin{split}
& \left\vert f_{\rm y}^{(j)}(x,t)  \right\vert \leq \left\vert  g_{\rm y}^{(j)}(x,t) - f_{\rm y}^{(j)}(x,t)  \right\vert + \left\vert  g_{\rm y}^{(j)}(x,t) \right\vert \leq D_{4}  \{ \log(1/\delta) \}^{\tau_{\rm bd}+\frac{1}{2}}, \quad \text{and}
\\
& \left\vert  g_{\rm y}^{(j)}(x,t) - f_{\rm y}^{(j)}(x,t)  \right\vert \leq D_{4} \sqrt{\delta}  \{\log (1/\delta)\}^{(\tau_{\rm bd}+\frac{1}{2} )\tau_{\rm mult}}
\label{eqthm1:nnqd}
\end{split} \ee
for $\vert x \vert \leq 1$ and $\delta \leq t \leq (2\overline{\tau})^{-1}$ with small enough $\delta$ so that $\sqrt{\delta}  \{\log (1/\delta)\}^{(\tau_{\rm bd}+\frac{1}{2} )\tau_{\rm mult}} \leq \{ \log(1/\delta) \}^{\tau_{\rm bd}+\frac{1}{2}}$.
Lemma~\ref{secnn:comp} implies that $f_{\rm pow}^{(\tau_{\rm bd}+\frac{1}{2})} \circ f_{\log} \circ f_{\sigma} \in \cF_{\rm NN}(L_{\rm  pl\sigma},\bd_{\rm  pl\sigma},s_{\rm  pl\sigma},M_{\rm  pl\sigma})$ with
\bean
&& L_{\rm pl\sigma} = L_{\rm pow}^{(\tau_{\rm bd} + \frac{1}{2})} + L_{\log} + L_{\sigma} \leq D_{5} \{\log(1/\delta)\}^{2} \log \log (1/\delta),
\\
&& \|\bd_{\rm  pl\sigma} \|_{\infty} \leq 2 \max\left(  \| \bd_{\rm pow}^{(\tau_{\rm bd} + \frac{1}{2})} \|_{\infty}, \| \bd_{\log} \|_{\infty},   \|\bd_{\sigma}\|_{\infty} \right) \leq D_{5} \{\log(1/\delta)\}^{3},
\\
&& s_{\rm  pl\sigma}  \leq 2\left(  s_{\rm pow}^{(\tau_{\rm bd} + \frac{1}{2})} +  s_{\log} + s_{\sigma} \right) \leq D_{5} \{\log(1/\delta)\}^{5} \log \log (1/\delta),
\\
&& M_{\rm  pl\sigma}  = \max\left(  M_{\rm pow}^{(\tau_{\rm bd} + \frac{1}{2})},  M_{\log}, M_{\sigma} \right) \leq \exp\left(D_{5} \{\log(1/\delta)\}^{2} \right),
\eean
where $D_{5} = D_{5}(\tau_{\rm bd}, C_{N,1}, C_{N,4}, D_{1})$.
Then, Lemma~\ref{secnn:comp}, Lemma~\ref{secnn:par} and Lemma~\ref{secnn:lin} imply that
$f_{\rm y}^{(j)} \in \cF_{\rm NN}(L_{\rm x}, \bd_{\rm x}, s_{\rm x}, M_{\rm x}^{(j)})$ for $j \in [m]$ with
\be \begin{split}
& L_{\rm x} \leq D_{6} \{\log(1/\delta)\}^{2} \log \log (1/\delta),
\quad
\| \bd_{\rm x}\|_{\infty} \leq D_{6} \{\log(1/\delta)\}^{3},
\\
& s_{\rm x} \leq D_{6} \{\log(1/\delta)\}^{5}\log \log (1/\delta),
\quad
M_{\rm x}^{(j)} \leq \exp\left(D_{6} \{\log(1/\delta)\}^{2} \right), 
\label{eqthm1:nnxcomp}
\end{split} \ee
where $D_{6} = D_{6}(\beta,\tau_{\rm tail}, C_{N,1}, C_{N,4}, D_{3}, D_{5})$.
Let $C_{N,5}$ be the constant in Lemma~\ref{secnn:rec}. Then, there exists a neural network $f_{\rm rec} \in \cF_{\rm NN}(L_{\rm rec},\bd_{\rm rec},s_{\rm rec},M_{\rm rec})$ with
\bean
&& L_{\rm rec} \leq C_{N,5} \{ \log(1/\delta)\}^{2}, \quad \|\bd_{\rm rec}\|_{\infty}\leq C_{N,5}  \{ \log(1/\delta)\}^{3}
\\
&& s_{\rm rec}  \leq C_{N,5}  \{ \log(1/\delta)\}^{4}, \quad M_{\rm rec} \leq C_{N,5} \delta^{-2}
\eean
such that $\vert x^{-1} - f_{\rm rec}(x) \vert \leq \delta$ for $x \in [\delta, 1/\delta]$.
Combining with (\ref{eqthm1:nnmtst}), we have
\be \begin{split}
& \left\vert \frac{1}{\mu_t} - f_{\rm rec}(f_{\mu}(t)) \right\vert
\leq
\left\vert \frac{1}{\mu_t} - \frac{1}{f_{\mu}(t)} \right\vert + \left\vert \frac{1}{f_{\mu}(t)} - f_{\rm rec}(f_{\mu}(t)) \right\vert
\\
& \leq \left( \mu_{t}  \wedge  f_{\mu}(t)  \right)^{-2} \vert \mu_t - f_{\mu}(t) \vert + \delta \leq 17 \delta \label{eqthm1:nnrec}
\end{split} \ee 
for $\delta \leq t \leq (2\overline{\tau})^{-1}$, where the second inequality holds because $1/4 \leq 1/2 - \delta \leq   f_{\mu}(t) $ with $\delta \leq 1/4$.
Consider functions $\widetilde{f}_{\rm y}^{(1)}, \ldots, \widetilde f_{\rm y}^{(m)} : \bbR \times \bbR \to \bbR$ such that
\bean
\widetilde f_{\rm y}^{(j)}(x,t) = f_{\rm mult}^{(2)}\left(f_{\rm rec}(f_{\mu}(t)), x + f_{\rm mult}^{(2)}\left(f_{\rm y}^{(j)}(x,t) , f_{\sigma}(t) \right) \right), \quad j \in [m]
\eean
for $x,t \in \bbR$.
Then, Lemma~\ref{secnn:comp}, Lemma~\ref{secnn:par}, Lemma~\ref{secnn:lin} and Lemma~\ref{secnn:id} imply that
$\widetilde f_{\rm y}^{(j)} \in \cF_{\rm NN}(\widetilde L_{\rm x}, \widetilde \bd_{\rm x},\widetilde  s_{\rm x}, \widetilde M_{\rm x}^{(j)})$ for $j \in [m]$ with
\be \begin{split}
& \widetilde L_{\rm x} \leq D_{7} \{\log(1/\delta)\}^{2} \log \log (1/\delta),
\quad
\| \widetilde \bd_{\rm x} \|_{\infty}  \leq D_{7} \{\log(1/\delta)\}^{3},
\\
& \widetilde s_{\rm x} \leq D_{7} \{\log(1/\delta)\}^{5}\log \log (1/\delta),
\quad
\widetilde M_{\rm x}^{(j)} \leq \exp\left(D_{7} \{\log(1/\delta)\}^{2} \right), \label{eqthm1:nntildexcomp}
\end{split} \ee
where $D_{7} = D_{7}(C_{N,4}, C_{N,5}, D_{3}, D_{6})$.
Note that both $\vert \sigma_{t} \vert$ and $\vert g_{\rm y}^{(j)}(x,t) \vert$ are less than $ \{\log(1/\delta) \}^{\tau_{\rm mult}} $ for $\vert x \vert \leq 1$ and $\delta \leq t \leq (2\overline{\tau})^{-1}$ with small enough $\delta$, due to the (\ref{eqthm:mtstbd}) and (\ref{eqthm1:nnqd}).
Then,
\bean
\left\vert \sigma_t g_{\rm y}^{(j)}(x,t) - f_{\rm mult}^{(2)}\left(f_{\rm y}^{(j)}(x,t) , f_{\sigma}(t) \right) \right\vert \leq \delta + 2 D_{4} \sqrt{\delta}  \{\log (1/\delta)\}^{(\tau_{\rm bd}+\frac{3}{2} )\tau_{\rm mult} }
\eean
for $x \in [-1,1]$ and $ \delta \leq t \leq (2\overline{\tau})^{-1}$, where the inequality holds by combining (\ref{eqthm1:nnmtst}) and (\ref{eqthm1:nnqd}) with (\ref{eqthm1:mult}).
Also, both $\vert x+\sigma_t g_{\rm y}^{(j)}(x,t) \vert$ and $\vert \mu_{t}^{-1} \vert$ are less than $ \{\log(1/\delta) \}^{\tau_{\rm mult}} $ for $\vert x \vert \leq 1$ and $\delta \leq t \leq (2\overline{\tau})^{-1}$ with small enough $\delta$, due to the  (\ref{eqthm:mtstbd}) and (\ref{eqthm1:nnqd}). 
Combining (\ref{eqthm1:nnrec}) and (\ref{eqthm1:mult}) with the last display, we have
\be \begin{split}
& \left\vert \frac{x+\sigma_t g_{\rm y}^{(j)}(x,t)  }{\mu_t} -  \widetilde f_{\rm y}^{(j)}(x,t)  \right\vert
\\
& \leq
\delta + 2 \{\log (1/\delta) \}^{\tau_{\rm mult}} \max\left(17 \delta, \delta + 2 D_{4} \sqrt{\delta}  \{\log (1/\delta)\}^{(\tau_{\rm bd}+\frac{3}{2} )\tau_{\rm mult}  } \right)
\\
& \leq D_{8} \sqrt{\delta} \{\log (1/\delta)\}^{(\tau_{\rm bd}+\frac{5}{2} )\tau_{\rm mult} } \label{eqthm1:nninput}
\end{split} \ee
for $x \in [-1,1]$ and $\delta \leq t \leq (2\overline{\tau})^{-1}$, where $D_{8} = 35 + 2D_{4}$.
Consider functions $g_{\rm w}^{(1)},\ldots,g_{\rm w}^{(m)} : [0,\infty) \to \bbR$ and $f_{\rm w}^{(1)}, \ldots, f_{\rm w}^{(m)} : \bbR \to \bbR$ such that
\bean
g_{\rm w}^{(j)}(t) = 2\sqrt{2\tau_{\rm tail}  } {n_{\beta}} \widetilde w_{j- n_{\beta}  \lfloor \frac{j}{{n_{\beta}}} \rfloor} \{ \log(1/\sigma_t)\}^{\tau_{\rm bd}+\frac{1}{2}} 
\eean
for $t \in [0,\infty)$ and
\bean
f_{\rm w}^{(j)}(t) =  2\sqrt{2\tau_{\rm tail}  } {n_{\beta}} \widetilde w_{j- n_{\beta}  \lfloor \frac{j}{{n_{\beta}}} \rfloor} 
 f_{\rm pow}^{(\tau_{\rm bd}+\frac{1}{2})} \left( f_{\log} \left( f_{\sigma}(t) \right) \right)
\eean
for $t \in \bbR$. 
By (\ref{eqthm1:nnwj1}), we have
\be \begin{split}
\left\vert g_{\rm w}^{(j)}(t) - f_{\rm w}^{(j)}(t) \right\vert \leq D_{9} \sqrt{\delta}  \{\log (1/\delta)\}^{(\tau_{\rm bd}-\frac{1}{2} )\tau_{\rm mult}  }, \label{eqthm1:nnwj}
\end{split} \ee
for $\delta \leq t \leq (2\overline{\tau})^{-1}$, where $D_{9} = 2\sqrt{2\tau_{\rm tail}  } D_{2} {n_{\beta}} \max_{j \in [{n_{\beta}}]} \widetilde w_{j}   $.
Also, Lemma~\ref{secnn:comp} and Lemma~\ref{secnn:par} implies that
$ f_{\rm w}^{(j)} \in \cF_{\rm NN}( L_{\rm w},  \bd_{\rm w},  s_{\rm w}, M_{\rm w}^{(j)})$ for $j \in [m]$ with
\be \begin{split}
&  L_{\rm w} \leq   L_{\rm mult}^{(2)} +  L_{\rm  pl\sigma} \vee L_{\mu}  \leq  D_{10} \{\log(1/\delta)\}^{2} \log \log (1/\delta),
\\
& \|  \bd_{\rm w} \|_{\infty} \leq 4 \left\{
\|\bd_{\rm mult}^{(2)}\|_{\infty} \vee \left( \|\bd_{\rm  pl\sigma}\|_{\infty} \vee \|\bd_{\mu}\|_{\infty} \right)
\right\} \leq  D_{10} \{\log(1/\delta)\}^{3} ,
\\
& s_{\rm w} \leq 4 \left\{ s_{\rm mult}^{(2)} + 2\left(L_{\rm  pl\sigma} \vee L_{\mu} \right) + s_{\rm  pl\sigma} + s_{\mu}
\right\}  \leq  D_{10} \{\log(1/\delta)\}^{5} \log \log (1/\delta)
\\
& M_{\rm w}^{(j)} \leq \max\left\{ 2\sqrt{2\tau_{\rm tail}  } {n_{\beta}} \max_{j \in [{n_{\beta}}]} \widetilde w_{j},  M_{\rm mult}^{(2)}, M_{\rm  pl\sigma}, M_{\mu}, 1 \right\} \leq \exp\left(D_{10} \{\log(1/\delta)\}^{2} \right),
\label{eqthm1:nnwcomp}
\end{split} \ee
where $D_{10} = D_{10}(\beta, \tau_{\rm tail},C_{N,1}, C_{N,4}, D_{5})$.
Consider a function $\bff_{\rm pre}  : \bbR^{D} \times \bbR \to \bbR^{3mD}$ such that
\bean
&&  \left(  \bff_{\rm pre}(\bx,t)  \right)_{3m(i-1)+3(j-1) + 1} = f_{\rm y}^{(j)}(x_i,t),
\quad \left(  \bff_{\rm pre}(\bx,t)  \right)_{3m(i-1)+3(j-1) +2} = \widetilde f_{\rm y}^{(j)}(x_i,t),
\\
&& \left(  \bff_{\rm pre}(\bx,t)  \right)_{3m(i-1)+3(j-1)+3} = f_{\rm w}^{(j)}(t),
\quad i \in [D], j \in [m]
\eean
for $\bx \in \bbR^{D}$ and $t \in \bbR$.
Combining Lemma~\ref{secnn:par} with (\ref{eqthm1:nnxcomp}), (\ref{eqthm1:nntildexcomp}) and (\ref{eqthm1:nnwcomp}),
we have $\bff_{\rm pre} \in \cF_{\rm NN}(L_{\rm pre}, \bd_{\rm pre}, s_{\rm pre}, M_{\rm pre})$ with
\bean
&& L_{\rm pre} \leq \max\left(L_{\rm x}, \widetilde L_{\rm x}, L_{\rm w} \right) \leq D_{11}\{\log(1/\delta)\}^{2} \log \log (1/\delta),
\\
&& \|\bd_{\rm pre}\|_{\infty} \leq 2\left(mD \| \bd_{\rm x}\|_{\infty} + mD \| \widetilde \bd_{\rm x}\|_{\infty} + mD \| \bd_{\rm w}\|_{\infty} \right)
\leq D_{11} m \{\log(1/\delta)\}^{3} ,
\\
&& s_{\rm pre} \leq 2\left\{ 3mD\max\left(L_{\rm x}, \widetilde L_{\rm x}, L_{\rm w} \right) + mD s_{\rm x} + mD \widetilde s_{\rm x} + mD s_{\rm w}  \right\}
\\
&& \leq D_{11} m \{\log(1/\delta)\}^{5} \log \log (1/\delta),
\\
&& M_{\rm pre} \leq \max\left(M_{\rm x}, \widetilde M_{\rm x}, M_{\rm w}, 1 \right) \leq \exp\left(D_{11} \{\log (1/\delta)\}^2 \right),
\eean
where $D_{11} = D_{11}(D,D_{6},D_{7},D_{10})$.

The assumption (\bS) implies that $p_0 = g_{2} \circ \bg_{1}$ for functions $\bg_1: [-1,1]^{D} \to [-K,K]^{\vert \cI \vert}$ and $g_2 : [-K,K]^{\vert \cI \vert} \to \bbR$, where
$\bg_1 = (g_{11},\ldots, g_{1 \vert \cI \vert})$ with $g_{1i} \in \cH^{\beta,K}([-1,1]^{\vert I \vert}), I \in \cI$ and $g_2(x_1,\ldots,x_{\vert \cI \vert}) = \prod_{i=1}^{\vert \cI \vert} x_i$ for $x_1,\ldots,x_{\vert \cI \vert} \in [-K,K]$. A simple calculation yields that $g_2 \in \cH^{\gamma, \widetilde K}([-K,K]^{\vert \cI \vert})$ with $\widetilde K = (2K)^{\vert \cI \vert}$ for any $\gamma \geq \vert \cI \vert + 1$.
Since $\vert \cI \vert \leq 2^{D}$, Lemma 5 of \citet{chae2023likelihood} implies that there exists neural networks $f_{p_0} \in \cF_{\rm NN}(L_{p_0},\bd_{p_0},s_{p_0},M_{p_0})$ with
\bean
L_{p_0} \leq D_{12} \log m, \quad \| \bd_{p_0}\|_{\infty} \leq D_{12} m, \quad s_{p_0} \leq D_{12} m \log m, \quad M_{p_0} \leq 1
\eean
such that $ 
\vert p_0(\bx) - f_{p_0}(\bx) \vert \leq m^{-\frac{\beta}{d}}$
for $\| \bx \|_{\infty} \leq 1$, where $D_{12} = D_{12}(\beta,d,D,K)$. 
Since $p_0 \in \cH^{\beta,K}([-1,1]^{D})$, we have $\vert p_0(\bx) - p_0(\widetilde \bx)\vert \leq K D \| \bx - \widetilde \bx \|_{\infty}^{\beta \wedge 1}$ for any $\bx, \widetilde \bx \in [-1,1]^{D}$.
Let $\widetilde C_{2}$ be the constant in Lemma~\ref{sec:qdeapprox}.
Then, we have
\bean
\left\vert \frac{x+\sigma_t g_{\rm y}^{(j)}(x,t)  }{\mu_t} \right\vert \leq 1-\frac{ \{ \log(1/\sigma_{t}) \}^{-\tau_{\rm bd}}}{2} < 1
\eean
for $\vert x \vert \leq \mu_{t} - \mu_{t} \{ \log (1/\sigma_t)\}^{-\tau_{\rm bd}}$ and $\delta \leq t \leq \overline{\tau}^{-1}(\widetilde C_{2}^2 \wedge 1/4)$.
Combining with (\ref{eqthm1:nninput}), we have
\be \begin{split}
& \left\vert p_0 \left( \frac{\bx + \sigma_{t} \left( g_{\rm y}^{(j_1)}(x_1,t),\ldots,g_{\rm y}^{(j_D)} (x_D,t)  \right)^{\top} }{\mu_t} \right) - f_{p_0} \left( \widetilde f_{\rm y}^{(j_1)}(x_1,t),\ldots,\widetilde f_{\rm y}^{(j_D)}(x_D,t) \right) \right\vert
\\
& \leq K D D_{8}^{\beta \wedge 1} {\delta}^{\frac{\beta \wedge 1}{2}} \{\log(1/\delta)\}^{(\beta \wedge 1)(\tau_{\rm bd}+\frac{5}{2} )\tau_{\rm mult}}  + m^{-\frac{\beta}{d}} \defeq \epsilon_{p_0}, \quad j_{1}, \ldots, j_{D} \in [m] \label{eqthm1:nnp0}
\end{split}
\ee
for $\| \bx\|_{\infty} \leq \mu_t - \mu_t \{ \log (1/\sigma_{t})\}^{-\tau_{\rm bd}}$ and $\delta \leq t \leq \overline{\tau}^{-1}(\widetilde C_{2}^2 \wedge 1/4)$.
Let $C_{N,3}$ be the constant in Lemma~\ref{secnn:exp}. Then, there exists a neural network $f_{\rm exp} \in \cF_{\rm NN} ( L_{\rm exp} , \bd_{\rm exp} , s_{\rm exp} , M_{\rm exp} )$ with
\bean
&& L_{\exp} \leq C_{N,3} \log(1/\delta) \log \log (1/\delta), \quad \|\bd_{\exp}\|_{\infty} \leq C_{N,3} \{\log(1/\delta)\}^{3},
\\
&& s_{\exp} \leq C_{N,3} \{\log(1/\delta)\}^{4}, \quad M_{\exp} \leq C_{N,3} \delta^{-1},
\eean
such that $\vert e^{-x} - f_{\exp}(\widetilde x) \vert \leq \delta + \vert x - \widetilde x \vert$ for any $x \geq 0$ and $\widetilde x \in \bbR$.
Consider a function $f_{\phi} : \bbR^{D} \to \bbR$ such that
\bean
f_{\phi}(\bx) = (2\pi)^{-\frac{D}{2}}f_{\exp}\left( \sum_{i=1}^{D} \frac{f_{\rm pow}^{(2)}(x_i)}{2}  \right)
\eean
for $\bx = (x_1,\ldots,x_{D}) \in \bbR^{D}$.
Then, Lemma~\ref{secnn:comp}, Lemma~\ref{secnn:par} and Lemma~\ref{secnn:lin} imply that
$f_{\phi} \in \cF_{\rm NN}(L_{\phi}, \bd_{\phi}, s_{\phi}, M_{\phi} )$ with
\bean 
&& L_{\phi} \leq D_{13} \log(1/\delta) \log \log (1/\delta), 
\quad \| \bd_{\phi} \|_{\infty}
\leq  D_{13} \{ \log(1/\delta) \}^{3}
\\
&& s_{\phi} 
\leq  D_{13} \{ \log(1/\delta) \}^{4}, 
\quad M_{\phi}  \leq   D_{13} \delta^{-1},
\eean
where $D_{13} = D_{13}(D, C_{N,1}, C_{N,3})$.
Combining (\ref{eqthm1:nnqd}) with (\ref{eqthm1:nnpow}), it follows that
\be \begin{split}
& \left\vert \prod_{i=1}^{D} \phi \left(g_{\rm y}^{(j_i)}(x_i,t) \right)  - f_{\phi}\left(f_{\rm y}^{(j_1)}(x_1,t),\ldots,f_{\rm y}^{(j_D)}(x_D,t)  \right) \right\vert
\\
& \leq (2\pi)^{-\frac{D}{2}} \left[ \delta + \left\vert \sum_{i=1}^{D} \frac{ \left\{g_{\rm y}^{(j_i)}(x_i,t) \right\}^2 - f_{\rm pow}^{(2)}\left( f_{\rm y}^{(j_i)}(x_i,t) \right)  }{2} \right\vert \right]
\\
& \leq (2\pi)^{-\frac{D}{2}} \left[  \delta + \frac{1}{2} \sum_{i=1}^{D} \left( \delta + 2\{ \log(1/\delta)\}^{\tau_{\rm mult}} \left\vert g_{\rm y}^{(j_i)}(x_i,t) - f_{\rm y}^{(j_i)}(x_i,t) \right\vert   \right) \right]
\\
& \leq D_{14} \sqrt{\delta} \{ \log(1/\delta)\}^{(\tau_{\rm bd}+\frac{3}{2} )\tau_{\rm mult}}  \defeq \epsilon_{\phi}, \quad j_1,\ldots,j_{D} \in [m] \label{eqthm1:nnpsi}
\end{split}
\ee
for $\|\bx\|_{\infty} \leq 1$ and $\delta \leq t \leq (2\overline{\tau})^{-1}$, where $D_{14} = D_{14}(D,D_{4})$.
Combining (\ref{eqthm1:nnwj}) with (\ref{eqthm1:mult}), we have
\be \begin{split}
& \left\vert \prod_{i=1}^{D} g_{\rm w}^{(j_i)}(t) - f_{\rm mult}^{(D)}\left( f_{\rm w}^{(j_1)}(t),\ldots,f_{\rm w}^{(j_D)}(t) \right)  \right\vert
\\
& \leq \delta + D D_{9} \sqrt{\delta} \{\log (1/\delta) \}^{( \tau_{\rm bd} + D - \frac{3}{2}) \tau_{\rm mult} }
\\
&
 \leq D_{15}  \sqrt{\delta} \{\log (1/\delta) \}^{ ( \tau_{\rm bd} + D - \frac{3}{2}) \tau_{\rm mult}  } \defeq \epsilon_{\rm w} \label{eqthm1:nnwprod}
\end{split} \ee
for $\delta \leq t \leq (2\overline{\tau})^{-1}$ with small enough $\delta$ so that $\vert g_{\rm w}^{(j_i)}(t) \vert \leq \{ \log (1/\delta)\}^{\tau_{\rm bd}}$ for each $j_i \in [m]$, where $D_{15} = 1+DD_{9}$.

Note that $\vert p_0(\bx) \vert \leq K$ for any $\|\bx\|_{\infty} \leq 1$, $\vert \phi(x) \vert \leq  1/\sqrt{2\pi}$ for any $x \in \bbR$ and $\vert g_{\rm y}^{(j)}(\bx,t)\vert, \vert g_{\rm w}^{(j)}(t) \vert \leq D_{16} \{\log(1/\delta)\}^{\tau_{\rm bd}+\frac{1}{2}}$ for $j \in [m]$, $\|\bx\|_{\infty} \leq 1, \delta \leq t \leq (4 \overline{\tau})^{-1}$, where $D_{16} = 2\sqrt{2\tau_{\rm tail}} (2 + {n_{\beta}}) \max_{j \in [{n_{\beta}}]} \widetilde w_j$.
Let
$\widetilde{f}_{\rm mult}^{(2)} \in \cF_{\rm NN}(\widetilde{L}_{\rm mult}^{(2)} ,\widetilde{\bd}_{\rm mult}^{(2)} ,\widetilde{s}_{\rm mult}^{(2)} ,\widetilde{M}_{\rm mult}^{(2)} )$ be the neural network in Lemma~\ref{secnn:mult}, 
with
\bean
&& \widetilde{L}_{\rm mult}^{(2)} \leq C_{N,1} \log 2 \left\{ (2D\tau_{\rm bd}+D +2) \log(1/\delta)+ D \log D_{16} \right\},
\\
&& \| \widetilde{\bd}_{\rm mult}^{(2)}\|_{\infty} \leq 96,
\\
&& \widetilde{s}_{\rm mult}^{(2)} \leq 2 C_{N,1} \left\{(D\tau_{\rm bd}+D/2+1) \log(1/\delta) + D\log D_{16}   \right\},
\\
&& \widetilde{M}_{\rm mult}^{(2)} = D_{16}^{2D} \{ \log(1/\delta) \}^{2D(\tau_{\rm bd}+\frac{1}{2})}
\eean
such that 
\bean
\vert \widetilde{f}_{\rm mult}^{(2)}(\widetilde \bx) - x_1 x_2 \vert \leq \delta + 2D_{16}^{D} \{ \log(1/\delta) \}^{D(\tau_{\rm bd}+\frac{1}{2})}  \widetilde \epsilon
\eean
for all $\| \bx\|_{\infty} \leq D_{16}^{D} \{ \log(1/\delta) \}^{D(\tau_{\rm bd}+\frac{1}{2})}$, $\widetilde \bx \in \bbR^2$ with $\| \bx - \widetilde \bx \|_{\infty} \leq \widetilde \epsilon$.
Also, let 
\bean
\overline{f}_{\rm mult}^{(3)} \in \cF_{\rm NN} ( \overline{L}_{\rm mult}^{(3)} ,  \overline{\bd}_{\rm mult}^{(3)} , \overline{s}_{\rm mult}^{(3)} ,\overline{M}_{\rm mult}^{(3)} )
\eean
be the neural network in Lemma~\ref{secnn:mult} with
\bean
&& \overline{L}_{\rm mult}^{(3)} \leq C_{N,1} \log 3 \left\{\log(1/\delta) + 3 \log  (K\vee1) \right\}, \quad \| \overline{\bd}_{\rm mult}^{(3)} \|_
{\infty} \leq 144,
\\
&& \overline{s}_{\rm mult}^{(3)} \leq 3 C_{N,1} \left\{\log(1/\delta) +  \log  (K\vee1) \right\}, \quad \overline{M}_{\rm mult}^{(3)} = K^{3} \vee 1
\eean
such that 
\bean
\vert \overline{f}_{\rm mult}^{(3)}(\widetilde \bx) - x_1 x_2 x_3 \vert \leq \delta + 3(K^2 \vee 1)\widetilde \epsilon
\eean
for all $\|\bx\|_{\infty} \leq K \vee 1$, $\widetilde \bx \in \bbR^3$ with $\| \bx - \widetilde \bx \|_{\infty} \leq \widetilde \epsilon$.
Consider functions $\bff_{\rm main} : \bbR^{D} \times \bbR^{D} \times \bbR^{D} \to \bbR^{D+1}$ such that
\bean
&& \left( \bff_{\rm main}(\bx,\widetilde \bx, \bw) \right)_{i} = \widetilde{f}_{\rm mult}^{(2)}\left( \overline{ f}_{\rm mult}^{(3)} \left(x_i, f_{p_0}(\widetilde \bx), f_{\phi}(\bx)\right), f_{\rm mult}^{(D)}(\bw)\right), \quad i \in [D],
\\
&& \left( \bff_{\rm main}(\bx,\widetilde \bx, \bw) \right)_{D+1} = \widetilde{f}_{\rm mult}^{(2)} \left( \overline{f}_{\rm mult}^{(3)}\left(1, f_{p_0}(\widetilde \bx), f_{\phi}(\bx) \right), f_{\rm mult}^{(D)}(\bw) \right)
\eean
for $\bx = (x_1,\ldots,x_{D}) \in \bbR^{D}$ and $\widetilde \bx, \bw \in \bbR^{D}$. Lemma~\ref{secnn:comp}, Lemma~\ref{secnn:par} and Lemma~\ref{secnn:id} implies that $\bff_{\rm main} \in \cF_{\rm NN}(L_{\rm main}, \bd_{\rm main}, s_{\rm main}, M_{\rm main})$ with
\bean
&& L_{\rm main} \leq D_{17} \left[ \log m + \log(1/\delta) \log \log (1/\delta) \right], \quad \| \bd_{\rm main} \|_{\infty} \leq D_{17} \left[ m + \{\log(1/\delta)\}^{3}  \right]
\\
&& s_{\rm main} \leq D_{17} \left[ m\log m +   \{\log(1/\delta)\}^{4}  \right], \quad M_{\rm main} \leq D_{17} \delta^{-1},
\eean
where $D_{17} = D_{17}(D,K,C_{N,1},D_{10}, D_{12}, D_{13}, D_{16})$.
For $\bj = ( j_{1},\ldots,j_{D}) \in [m]^{D}$, consider a function $\bff^{(\bj)} : \bbR^{D} \times \bbR \to \bbR^{3D}$ such that
\bean
&& \bff^{(\bj)}(\bx,t) 
\\
&& = \left(f_{\rm y}^{(j_1)}(x_1,t),\ldots,f_{\rm y}^{(j_D)}(x_D,t)
, \widetilde f_{\rm y}^{(j_1)}(x_1,t),\ldots,\widetilde f_{\rm y}^{(j_D)}(x_D,t)
, f_{\rm w}^{(j_1)}(t),\ldots,f_{\rm w}^{(j_D)}(t) \right)^{\top}.
\eean
By (\ref{eqthm1:nnp0}), (\ref{eqthm1:nnpsi}) and (\ref{eqthm1:nnwprod}), we have
\bean
&& \left\vert  p_0 \left( \frac{\bx + \sigma_{t} \left( g_{\rm y}^{(j_1)}(x_1,t),\ldots,g_{\rm y}^{(j_D)} (x_D,t)  \right)^{\top} }{\mu_t} \right) \prod_{i=1}^{D} \phi\left( g_{\rm y}^{(j_i)}(x_i,t) \right)g_{\rm w}^{(j_i)}(t) \right.
\\
&& \quad - \left. \left(  \bff_{\rm main}\left(  \bff^{(\bj)}(\bx,t) \right)\right)_{D+1}   \right\vert
\\
&& \leq 
\delta + 2D_{16}^{D} \{ \log (1/\delta) \}^{D(\tau_{\rm bd}+\frac{1}{2})} \max \left\{ \delta + 3(K^2 \vee 1) \left(\epsilon_{p_0} \vee \epsilon_{\phi} \right)   , 
\epsilon_{\rm w} \right\}
\\
&& \leq D_{18} \{ \log (1/\delta) \}^{D(\tau_{\rm bd}+\frac{1}{2})}\left[ m^{-\frac{\beta}{d}} + {\delta}^{\frac{\beta \wedge 1}{2}} \{\log(1/\delta)\}^{( \tau_{\rm bd} + D + \frac{3}{2}) \tau_{\rm mult}  }  \right]
\eean
and
\bean
&& \left\vert  g_{\rm y}^{(j_k)}(x_k,t)  p_0 \left( \frac{\bx + \sigma_{t} \left( g_{\rm y}^{(j_1)}(x_1,t),\ldots,g_{\rm y}^{(j_D)} (x_D,t)  \right)^{\top} }{\mu_t} \right) \prod_{i=1}^{D} \phi\left( g_{\rm y}^{(j_i)}(x_i,t) \right)g_{\rm w}^{(j_i)}(t) \right.
\\
&& \quad \left. - \left(\bff_{\rm main}\left( \bff^{(\bj)}(\bx,t) \right)\right)_{k}   \right\vert
\\
&& \leq 
\delta + 2D_{16}^{D} \{ \log (1/\delta) \}^{D(\tau_{\rm bd}+\frac{1}{2})} \max \left\{ \delta + 3(K^2 \vee 1) \left(\epsilon_{p_0} \vee \epsilon_{\phi} \right)   , 
\epsilon_{\rm w} \right\}
\\
&& \leq D_{18} \{ \log (1/\delta) \}^{D(\tau_{\rm bd}+\frac{1}{2})}\left[ m^{-\frac{\beta}{d}} + {\delta}^{\frac{\beta \wedge 1}{2}} \{\log(1/\delta)\}^{ ( \tau_{\rm bd} + D + \frac{3}{2}) \tau_{\rm mult} }  \right]
, \quad k \in [D], \ j_1,\ldots,j_{D} \in [m]
\eean
for $\|\bx\|_{\infty} \leq \mu_{t} - \mu_{t} \{ \log (1/\sigma_t) \}^{-\tau_{\rm bd}}$ and $\delta \leq t \leq \overline{\tau}^{-1}(\widetilde C_{2}^2 \wedge 1/4)$, where 
\bean
D_{18} = D_{18} ( \beta, D, K, D_{8}, D_{14}, D_{15}, D_{16} ).
\eean
Let $\widetilde C_{1}$ be the constant in Lemma~\ref{sec:qdeapprox}. 
It follows that
\bean
&& \mu_{t}^{D} \left\| \frac{1}{m^{D}} \sum_{\bj \in [m]^{D}} \bff_{\rm main}\left( \bff^{(\bj)}(\bx,t) \right) 
- \begin{pmatrix}
\sigma_t  \nabla p_t(\bx)
\\
  p_t(\bx)
\end{pmatrix}
\right\|_{\infty}
\\
&& \leq \widetilde C_{1} K \{ \log (1/\sigma_{t})\}^{(\tau_{\rm bd} + \frac{1}{2} )(D-1)}  \left( \sigma_{t}^{\tau_{\rm tail}} + m^{-\beta} \{ \log (1/\sigma_{t})\}^{(\tau_{\rm bd} + \frac{1}{2} )(\beta + 1 )} \right)
\\
&& \quad + D_{18}  \{ \log (1/\delta) \}^{D(\tau_{\rm bd}+\frac{1}{2})}\left[ m^{-\frac{\beta}{d}} + {\delta}^{\frac{\beta \wedge 1}{2}} \{\log(1/\delta)\}^{ ( \tau_{\rm bd} + D + \frac{3}{2}) \tau_{\rm mult} }  \right]
\eean
for $\delta \leq t \leq \overline{\tau}^{-1} (\widetilde C_{2}^2 \wedge 1/2)$.
Let 
\bean
m = n_{\beta} \lfloor \widetilde m \rfloor
\quad \text{and} \quad
\delta = \widetilde m^{- \tau_{\min}}
\eean
with large enough $\widetilde m > 0$.
Since $\tau_{\rm min} \geq \frac{4\beta}{d(\beta \wedge 1)}$, we have ${\delta}^{\frac{\beta \wedge 1}{2}} \{\log(1/\delta)\}^{( \tau_{\rm bd} + D + \frac{3}{2}) \tau_{\rm mult}} \leq \widetilde m^{-\frac{\beta}{d}}$ for large enough $\widetilde m$. Then,
\be \begin{split}
& \left\| \frac{1}{m^{D}} \sum_{\bj \in [m]^{D}} \bff_{\rm main}\left( \bff^{(\bj)}(\bx,t) \right) 
- \begin{pmatrix}
\sigma_t  \nabla p_t(\bx)
\\
  p_t(\bx)
\end{pmatrix}
\right\|_{\infty}
\\
& \leq D_{19} \left( \log \widetilde m \right)^{(\tau_{\rm bd}+\frac{1}{2}) (D-1)  } \left\{
t^{\frac{\tau_{\rm tail}}{2}} + \widetilde m^{-\frac{\beta}{d}} \left( \log \widetilde m \right)^{(\tau_{\rm bd}+\frac{1}{2})(\beta + 1 )} \right\}, \label{eqthm1:error}
\end{split} \ee
where $D_{19} = D_{19}(\beta, K, D, d, \tau_{\rm bd}, \tau_{\rm tail}, 
\tau_{\rm min}, \overline{\tau}, \underline{\tau}, \widetilde C_{1}, D_{18})$.

  Recall that $\bff_{\rm pre} : \bbR^{D} \times \bbR \to \bbR^{3mD} $ and $\bff_{\rm main} : \bbR^{3D} \to \bbR^{D+1}$.
Define a function $\widetilde \bff_{\rm pre} : \bbR^{D} \times \bbR \to \bbR^{6mD}$ as
\bean
\widetilde \bff_{\rm pre}(\bx,t) = \rho \begin{pmatrix}
\bff_{\rm pre}(\bx,t)
\\
-\bff_{\rm pre}(\bx,t)
\end{pmatrix}
\eean
for $\bx \in \bbR^{D}$ and $t \in \bbR$.
For each $\bj = (j_1,\ldots,j_{D}) \in [m]^{D}$, there exists a $6mD \times 6mD $ permutation matrix $Q_{1}^{(\bj)} $ such that 
\bean
Q_{1}^{(\bj)} \widetilde  \bff_{\rm pre} (\bx,t)
=
\rho \begin{pmatrix}
 \bff^{(\bj)}(\bx,t) \\
- \bff^{(\bj)}(\bx,t) \\
\mathbf{0}_{6mD - 6D }
 \end{pmatrix} \in \bbR^{6mD }.
\eean
Let $W_{1},\bb_{1}, \ldots, W_{L_{\rm main}},\bb_{L_{\rm main}} $ be the weight matrices and shift vectors of the neural network $\bff_{\rm main}$, where $\bd_{\rm main} = (d_1,\ldots,d_{L_{\rm main}+1})$ with $d_1 = 3D$, $d_{L_{\rm main}+1} = D + 1$ and $W_{l} \in \bbR^{d_{l+1} \times d_{l}}$, $\bb_{l} \in \bbR^{d_{l+1}}$ for $l \in [L_{\rm main}]$.
Since $x \vee 0 -( -x \vee 0) = x$ for any $x \in \bbR$, we have
\bean
 \widetilde W_{1} Q_{1}^{(\bj)} \widetilde  \bff_{\rm pre}(\bx,t) + \widetilde \bb_{1}
=  
\begin{pmatrix}
W_1 \bff^{(\bj)}(\bx,t)  + \bb_{1}
\\
\mathbf{0}_{m^{D} d_{2} - d_{2} }
\end{pmatrix}
\in \bbR^{m^{D} d_{2} }
,
\quad \forall \bj \in [m]^{D},
 \eean
 where
 \bean
 && \widetilde W_{1} = 
\begin{pmatrix}
W_1 \ & -W_1 \
& \mathbf{0}
\\
\mathbf{0} \ & \mathbf{0} \ & \mathbf{0}
\end{pmatrix} \in \bbR^{m^{D} d_{2} \times 6mD},
\qquad
\widetilde \bb_{1} =
\begin{pmatrix}
\bb_{1}
\\
\mathbf{0} 
\end{pmatrix} \in \bbR^{m^{D} d_{2} }.
 \eean
For each $\bj \in [m]^{D}$, there exists a $m^{D} d_{2} \times m^{D} d_{2}$ permutation matrix $R_{1}^{(\bj)}$ such that
\bean
R_{1}^{(\bj)}  \left(  \widetilde W_{1} Q_{1}^{(\bj)} \widetilde  \bff_{\rm pre}(\bx,t) + \widetilde \bb_{1} \right) 
=  
\begin{pmatrix}
\mathbf{0}_{ d_{2} \sum_{i=1}^{D}  m^{i-1}(j_{i}-1) }
\medskip
\\
W_1 \bff^{(\bj)}(\bx,t)  + \bb_{1}
\\
\medskip
\mathbf{0}_{ m^{D} d_{2} - d_{2} \sum_{i=1}^{D}  m^{i-1}(j_{i}-1) -
 d_{2} }
\end{pmatrix}
\in \bbR^{m^{D} d_{2}}.
\eean
It follows that
\bean
&&  \left( \sum_{ \bj \in [m]^{D}} R_{1}^{(\bj)}  \left(  \widetilde W_{1} Q_{1}^{(\bj)} \widetilde  \bff_{\rm pre}(\bx,t) + \widetilde \bb_{1} \right) \right)_{ d_{2} \sum_{i=1}^{D}  m^{i-1}(\widetilde j_{i}-1) +1 : d_{2} \sum_{i=1}^{D}  m^{i-1}(\widetilde j_{i}-1) + d_{2} }
\\
&& =
W_1 \bff^{(\widetilde \bj)}(\bx,t)  + \bb_{1}, \quad \forall  \widetilde \bj = ( \widetilde j_{1},\ldots,\widetilde j_{D} ) \in [m]^{D}.
\eean

Let $\widetilde \bd = (\widetilde d_{1},\ldots, \widetilde d_{L_{\rm main +2}})$, where $\widetilde d_{1} = 2md_{1} = 6 m D$, $\widetilde d_{l} = m^{D} d_{l}, l \in \{2,\ldots, L_{\rm main}\}, \widetilde d_{L_{\rm main}+1} = 2m^{D} d_{L_{\rm main}+1} = 2m^{D} (D + 1) $ and $\widetilde d_{L_{\rm main}+2} = 2 d_{L_{\rm main}+1} = 2 ( D + 1)$.
For each $l \in \{ 2, \ldots, L_{\rm main} - 1 \}$, define block-sparse matrix and vector as
\bean
&& \widetilde W_{l} = 
\begin{pmatrix}
W_{l} & \  \mathbf{0}
\\
\mathbf{0} & \ \mathbf{0}
\end{pmatrix} \in \bbR^{\widetilde d_{l+1} \times \widetilde d_{l}}
\quad \text{and} \quad
\widetilde \bb_{l} = 
\begin{pmatrix}
\bb_{l} 
\\
\mathbf{0}
\end{pmatrix} \in \bbR^{\widetilde d_{l+1}}.
\eean
Moreover, for each  $l \in \{ 2, \ldots, L_{\rm main} - 1 \}$ and $\bj \in [m]^{D}$, there exist $\widetilde d_{l} \times \widetilde d_{l}$ permutation matrix $Q_{l}^{(\bj)}$  and  $\widetilde d_{l+1} \times \widetilde d_{l+1}$ permutation matrix $R_{l}^{(\bj)}$ such that
\bean
R_{l}^{(\bj)}  \left(  \widetilde W_{l} Q_{l}^{(\bj)} \bz + \widetilde \bb_{l} \right) 
=
\rho \begin{pmatrix}
\mathbf{0}_{ d_{l+1} \sum_{i=1}^{D}  m^{i-1}(j_{i}-1) }
\medskip
\\
W_l \bz_{d_{l} \sum_{i=1}^{D}  m^{i-1}(j_{i}-1) + 1: 
d_{l} \sum_{i=1}^{D}  m^{i-1}(j_{i}-1) + d_{l}} + \bb_{l}
\\
\medskip
\mathbf{0}_{ m^{D} d_{l+1} - d_{l+1} \sum_{i=1}^{D}  m^{i-1}(j_{i}-1) -
 d_{l+1} }
\end{pmatrix},
\quad \bz \in \bbR^{\widetilde d_{l}},
\eean
where $\bz_{n_1:n_2} \in \bbR^{n_{2}-n_{1}+1}$ denotes the subvector of $\bz$ from $n_1$-th component to $n_2$-th component.
It follows that
\bean
&& \left( \widetilde \bff_{L_{\rm main}-1} \circ  \cdots  \circ  \widetilde \bff_{1} \circ \widetilde \bff_{\rm pre}(\bx,t) \right)_{ d_{L_{\rm main}} \sum_{i=1}^{D}  m^{i-1} ( j_{i}-1) +1 : d_{L_{\rm main}}  \sum_{i=1}^{D}  m^{i-1}( j_{i}-1) +d_{L_{\rm main}}  }
\\
&& \quad = \rho \left(  W_{L_{\rm main}-1} \cdot +  \bb_{L_{\rm main}-1} \right) \circ \cdots \circ
\rho \left(  W_{1} \cdot  +  \bb_{1} \right) \circ \bff^{( \bj )}(\bx,t) ,
\quad \forall \bj \in [m]^{D},
\eean
where $\widetilde \bff_{l} : \bbR^{\widetilde d_{l}} \rightarrow \bbR^{\widetilde d_{l+1}}$ is a vector-valued function defined as
\bean
\widetilde \bff_{l}(\bz) = \rho \left( \sum_{ \bj \in [m]^{D}} R_{l}^{(\bj)}  \left( 
\widetilde W_{l} Q_{l}^{(\bj)} \bz + \widetilde \bb_{l}
\right) \right), \quad l \in [L_{{\rm main}}-1].
\eean

Let $\widetilde W_{L_{\rm main}}$ and $\widetilde \bb_{L_{\rm main}}$ be the block-sparse matrix and vector, respectively, defined as
\bean
\widetilde W_{L_{\rm main}} = 
\begin{pmatrix}
W_{L_{\rm main}} & \  \mathbf{0}
\\
-W_{L_{\rm main}} & \ \mathbf{0}
\\
\mathbf{0} & \ \mathbf{0}
\end{pmatrix} \in \bbR^{\widetilde d_{L_{\rm main}+1 } \times \widetilde d_{L_{\rm main}}}
\quad \text{and} \quad
\widetilde \bb_{L_{\rm main}} = 
\begin{pmatrix}
\bb_{L_{\rm main}} 
\\
-\bb_{L_{\rm main}} 
\\
\mathbf{0}
\end{pmatrix} \in \bbR^{\widetilde d_{L_{\rm main}+1 } }.
\eean
For each $\bj \in [m]^{D}$, there exist $\widetilde d_{L_{\rm main}} \times \widetilde d_{L_{\rm main}}$ permutation matrix $Q_{L_{\rm main}}^{(\bj)}$  and  $\widetilde d_{L_{\rm main}+1} \times \widetilde d_{L_{\rm main}+1}$ permutation matrix $R_{L_{\rm main}}^{(\bj)}$ such that
\bean
&& \left(\widetilde \bff_{L_{\rm main}} \circ  \cdots  \widetilde \bff_{1} \circ \widetilde \bff_{\rm pre}(\bx,t) \right)   _{ 2d_{L_{\rm main}+1} \sum_{i=1}^{D}  m^{i-1} ( j_{i}-1) +1 : 2d_{L_{\rm main}+1}  \sum_{i=1}^{D}  m^{i-1}( j_{i}-1) +d_{L_{\rm main}+1}  }
\\
&& \quad =  \rho \left( W_{L_{\rm main}} \cdot +  \bb_{L_{\rm main}} \right) \circ  \rho \left(  W_{L_{\rm main}-1} \cdot +  \bb_{L_{\rm main}-1} \right) \circ \cdots \circ
\rho \left(  W_{1} \cdot  +  \bb_{1} \right) \circ \bff^{( \bj )}(\bx,t)
\\
&& \quad = \rho \left( \bff_{\rm main}\left( \bff^{(\bj)}(\bx,t) \right)\right)
\eean
and
\bean
&& \left( \widetilde \bff_{L_{\rm main}} \circ  \cdots \circ  \widetilde \bff_{1} \circ \widetilde \bff_{\rm pre}(\bx,t) \right)_{ 2d_{L_{\rm main}+1} \sum_{i=1}^{D}  m^{i-1} ( j_{i}-1) + d_{L_{\rm main}+1} +1 : 2d_{L_{\rm main}+1}  \sum_{i=1}^{D}  m^{i-1}( j_{i}-1) + 2d_{L_{\rm main}+1}  }
\\
&& \quad  =  \rho \left( - W_{L_{\rm main}} \cdot -  \bb_{L_{\rm main}} \right) \circ  \rho \left(  W_{L_{\rm main}-1} \cdot +  \bb_{L_{\rm main}-1} \right) \circ \cdots \circ
\rho \left(  W_{1} \cdot  +  \bb_{1} \right) \circ \bff^{( \bj )}(\bx,t)
\\
&& \quad = \rho \left( - \bff_{\rm main}\left( \bff^{(\bj)}(\bx,t) \right) \right),
\eean
where
 $\widetilde \bff_{L_{\rm main}} : \bbR^{\widetilde d_{L_{\rm main}}} \rightarrow \bbR^{\widetilde d_{L_{\rm main}+1}}$ is a vector-valued function defined as
\bean
\widetilde \bff_{L_{\rm main}}(\bz) = \rho \left( \sum_{ \bj \in [m]^{D}} R_{L_{\rm main}}^{(\bj)}  \left( 
\widetilde W_{L_{\rm main}} Q_{L_{\rm main}}^{(\bj)} \bz + \widetilde \bb_{L_{\rm main}}
\right) \right).
\eean
Let $\widetilde W_{L_{\rm main} + 1}$ be a block-sparse matrix, defined as
\bean
\widetilde W_{L_{\rm main}+1}
=
\begin{pmatrix}
m^{-D} \bbI_{D+1} \ & -m^{-D} \bbI_{D+1} \
& \mathbf{0}
\\
\mathbf{0} \ & \mathbf{0} \ & \mathbf{0}
\end{pmatrix} \in \bbR^{ \widetilde d_{L_{\rm main}+2 } \times \widetilde d_{L_{\rm main}+1 }}.
\eean
For each $\bj \in [m]^{D}$, there exist  $\widetilde d_{L_{\rm main}+1} \times \widetilde d_{L_{\rm main} + 1} $ permutation matrix $Q_{L_{\rm main} + 1}^{(\bj)}$ and $\widetilde d_{L_{\rm main}+2} \times \widetilde d_{L_{\rm main} + 2}$ permutation matrix $R_{L_{\rm main} + 1}^{(\bj)}$ such that 
\bean
\left( \bff_1 (\bx,t) \right)_{1:D+1} =  \rho \left( m^{-D} \sum_{\bj \in [m]^{D}} \bff_{\rm main}\left( \bff^{(\bj)}(\bx,t) \right)  \right)
\eean
and
\bean
\left( \bff_1 (\bx,t) \right)_{D+2:2D+2} =  \rho \left( - m^{-D} \sum_{\bj \in [m]^{D}} \bff_{\rm main}\left( \bff^{(\bj)}(\bx,t) \right)  \right),
\eean
where 
\bean
\bff_{1} (\bx,t) = \rho \left( \sum_{ \bj \in [m]^{D} } R_{L_{\rm main} + 1}^{(\bj)} \widetilde W_{L_{\rm main}+1} Q_{L_{\rm main}+1}^{(\bj)}  \widetilde \bff_{L_{\rm main}}   \circ  \cdots \circ  \widetilde \bff_{1} \circ \widetilde \bff_{\rm pre}(\bx,t) \right).
\eean
Combining with (\ref{eqthm1:error}), we have
\be \begin{split}
& \left\| \bff(\bx,t)
- \begin{pmatrix}
\sigma_t \nabla p_t(\bx)
\\
 p_t(\bx)
\end{pmatrix}
\right\|_{\infty} 
\\
& \leq D_{19} \left( \log \widetilde m \right)^{(\tau_{\rm bd}+\frac{1}{2}) (D-1)  } \left\{
t^{\frac{\tau_{\rm tail}}{2}} + \widetilde m^{-\frac{\beta}{d}} \left( \log \widetilde m \right)^{(\tau_{\rm bd}+\frac{1}{2})(\beta + 1 )} \right\}
\label{eqthm1:errorfin}
\end{split}
\ee
for $\| \bx \|_{\infty} \leq \mu_{t} - \mu_{t}\{ \log(1/\sigma_{t}) \}^{-\tau_{\rm bd} }$  and $\widetilde m^{- \tau_{\min}} \leq t \leq \overline{\tau}^{-1} (\widetilde C_{2}^2 \wedge 1/2)$,
where $\bff: \bbR^{D} \times \bbR \rightarrow \bbR^{D+1}$ is a vector-valued function, defined as
\bean
\bff(\bx,t) = \begin{pmatrix}
\bbI_{D+1} & \ -\bbI_{D+1}
\end{pmatrix} \bff_{1}(\bx,t).
\eean
Note that $\| \widetilde {W}_{l} \|_{0} = \| W_{l} \|_{0}$, $\| \widetilde \bb_{l} \|_{0} = \| \bb_{l} \|_{0}$ for $l \in \{2,\ldots,L_{\rm main}-1\}$,
$\| \widetilde W_{1} \|_{0} = 2 \| W_{1} \|_{0}$, $\| \widetilde \bb_{1} \|_{0} = 2 \| \bb_{1} \|_{0}$, $\| \widetilde W _{L_{\rm main}} \|_{0} = 2 \| W_{L_{\rm main}} \|_{0}$,
$\| \widetilde \bb_{L_{\rm main}} \|_{0} = 2 \| \bb_{L_{\rm main}} \|_{0}$ and $\| \widetilde W_{L_{\rm main}+1} \|_{0} = 2 D $.
Let $
\overline{L} = L_{\rm pre} + L_{\rm main} + 2 $ and $\overline{\bd} = 
(\overline{d}_1,\ldots, \overline{d}_{ \overline{L} +1}) \in \bbN^{ \overline{L} +1} $ with
\bean
(\overline{d}_1,\ldots,\overline{d}_{L+1}) =
(d_{\rm pre}^{(1)},\ldots,d_{\rm pre}^{(L_{\rm pre})},\widetilde d_{1},\ldots,   \widetilde d_{L_{\rm main}+2}, d_{L_{\rm main}+1}
),
\eean
where $\bd_{\rm pre} = (d_{\rm pre}^{(1)},\ldots,d_{\rm pre}^{(L_{\rm pre}+1)})$.
For $1 \leq i \leq L_{\rm pre}$, let 
$\cQ_{i}$ and $\cR_{i}$ be the set of $\overline{d}_{i} \times \overline{d}_{i}$ and $\overline{d}_{i+1} \times \overline{d}_{i+1}$ identity matrix, respectively. For $ L_{\rm pre} < i \leq L-1$, let 
\bean
\cQ_{i} = \left( Q_{i-L_{\rm pre}}^{(\bj)} \right)_{\bj \in [m]^{D}} \quad \text{and} \quad
\cR_{i} = \left( R_{i-L_{\rm pre}}^{(\bj)} \right)_{\bj \in [m]^{D}}.
\eean
Let $\bfm = (m_1,\ldots ,m_{ \overline{L} -1})$ with $ m_{i} = 1$ for $i \leq L_{\rm pre}$ and $m_{i} = m^{D}$ for $ i > L_{\rm pre}$, and $\cP = ((\cQ_{l}, \cR_{l}))_{l \in [\overline{L}-1]}$.
Then, $\bff \in \cF_{\rm WSNN}(\overline L, \overline \bd, \overline s, \overline M,\cP_{\bfm})$ with
\bean
\overline s \leq 2s_{\rm pre} + 2s_{\rm main} + 4 d_{L_{\rm main}+1} \quad \text{ and } \quad
\overline M = \max\left(M_{\rm pre}, M_{\rm main}, 1, m^{-D} \right).
\eean
Recall that $m = n_{\beta} \lfloor \widetilde m \rfloor $ and $\delta = \widetilde m^{- \tau_{\min}}$.
Thus,
\bean
&& \overline L \leq D_{20} (\log \widetilde m )^{2} \log \log \widetilde m, \quad \| \overline \bd\|_{\infty} \leq D_{20} \widetilde m^{D+1},
\\
&& \overline s \leq D_{20} \widetilde m (\log \widetilde m)^{5} \log \log \widetilde m^{D+1}, \quad \overline M \leq \exp \left( D_{20} \{ \log \widetilde m \}^{2} \right),
\eean
where $D_{20} = D_{20}(\tau_{\rm min}, D_{11}, D_{17}, n_{\beta})$.
Combining (\ref{eqthm1:errorfin}) with the last display, the assertion follows by re-defining the constants.
\end{proof}


\subsection{Proof of Proposition~\ref{secpt:2}}

\begin{proof}

Let $\tau_{\rm tail} = 2 \vee \sqrt{ (D+3)/(2e) }$.
Given small enough $\delta > 0$ as described below, we have
\bean
&& \int_{ \substack{ \| \bx-\mu_{t}\by \|_{\infty} \geq  \tau_{\rm tail} \sigma_{t}\sqrt{ \log ( 1 / \delta ) } \\ \| \by \|_{\infty} \leq 1   }} \ p_0(\by) \phi_{\sigma_{t}}(\bx-\mu_{t} \by) \d \by
\\
&& \leq K \int_{  \| \bx-\mu_{t}\by \|_{\infty} \geq  \tau_{\rm tail} \sigma_{t}\sqrt{ \log ( 1 / \delta ) }   } \ \phi_{\sigma_{t}}(\bx-\mu_{t} \by) \d \by  = K \mu_{t}^{-D} \int_{  \| \bz \|_{\infty} \geq  \tau_{\rm tail}\sqrt{ \log ( 1 / \delta ) }   } \phi_{1}(\bz) \d \bz
\\
&& \leq K \mu_{t}^{-D} \sum_{i=1}^{D} \int_{\vert z_i \vert \geq  \tau_{\rm tail} \sqrt{ \log ( 1 / \delta ) }   } \phi(z_i) \d z_i \leq 2 K D \mu_t^{-D} \delta^{ \tau_{\rm tail}^2 / 2}
\eean
for $ \bx \in \bbR^{D}$ and $t > 0$, where the last inequality holds by the tail probability of the standard normal distribution.
Also, for $i \in [D] $, we have
\bean
&& \left\vert \int_{ \substack{ \| \bx-\mu_{t}\by \|_{\infty} \geq \tau_{\rm tail} \sigma_{t} \sqrt{ \log (1 / \delta ) } \\ \| \by \|_{\infty} \leq 1  } } \  \left( \frac{\mu_{t} y_i - x_i }{\sigma_t} \right) p_0(\by) \phi_{\sigma_{t}}(\bx-\mu_{t} \by) \d \by \right\vert
\\
&& \leq K \mu_{t}^{-D}   \int_{ \| \bz\|_{\infty} \geq \tau_{\rm tail} \sqrt{ \log ( 1 / \delta ) } } \  \left\vert  z_i \right\vert \phi_{1}(\bz) \d \bz
\leq K \mu_{t}^{-D} \sum_{j=1}^{D}  \int_{\vert z_j \vert \geq  \tau_{\rm tail} \sqrt{ \log ( 1 / \delta ) }   } \ \vert z_i\vert \phi_{1}(\bz) \d \bz
\\
&& = K  \mu_{t}^{-D} \left\{  (D-1) \bbE[\vert Z \vert]  \int_{\vert z \vert \geq \tau_{\rm tail} \sqrt{ \log ( 1 / \delta ) }   } \phi(z) \d z + \int_{\vert z \vert \geq \tau_{\rm tail} \sqrt{ \log ( 1 / \delta ) }   } \vert z \vert \phi(z) \d z  \right\}
\\
&& \leq K  \mu_{t}^{-D} \left\{ 2  (D-1) \sqrt{2/\pi} \delta^{ \tau_{\rm tail}^2 / 2 } + \sqrt{\bbE[Z^2]} \sqrt{ \bbP \left( \vert Z \vert \geq \tau_{\rm tail} \sqrt{ \log ( 1 / \delta ) } \right) }   \right\} 
\\
&&  \leq  K \mu_t^{-D} \left\{ 2 (D-1) \sqrt{2 / \pi} + \sqrt{2}  \right\} \delta^{\tau_{\rm tail}^2 / 4},
\eean
for $ \bx \in \bbR^{D}$ and $t > 0$, where $Z$ denotes the one-dimensional standard normal random variable and the second inequality holds by the Cauchy-Schwarz inequality.
Since $\mu_t^{-D} \leq 2^{D}$ for $0 \leq t \leq (2\overline{\tau})^{-1}$,
\be \begin{split}
 & \left\vert  p_t(\bx) - \int_{ \substack{ \| \bx-\mu_{t}\by \|_{\infty} \leq \tau_{\rm tail} \sigma_{t} \sqrt{ \log ( 1 / \delta ) } \\ \| \by \|_{\infty} \leq 1  } } \ p_0(\by) \phi_{\sigma_{t}}(\bx-\mu_{t} \by) \d \by \right\vert 
 \leq  K D  2^{D+1}  \delta^{ \tau_{\rm tail}^2 / 2 },
 \\
 & \left\| \sigma_{t} \nabla p_t(\bx) -  \int_{ \substack{ \| \bx-\mu_{t}\by \|_{\infty} \leq \tau_{\rm tail} \sigma_{t} \sqrt{ \log ( 1 / \delta ) } \\ \| \by \|_{\infty} \leq 1  } } \ \left( \frac{\mu_t \by - \bx}{\sigma_t} \right) p_0(\by) \phi_{\sigma_{t}}(\bx-\mu_{t} \by) \d \by \right\|_{\infty}
 \\
 & \leq K 2^{D} \left\{ 2 (D-1) \sqrt{2 / \pi} + \sqrt{2}  \right\} \delta^{\tau_{\rm tail}^2 /  4}
 \label{eqprop2:tailbd}
\end{split} \ee
for $\bx \in \bbR^{D}$ and $0 < t \leq (2\overline{\tau})^{-1}$.
For $0 < t \leq (2 \overline{\tau})^{-1}$ and $\bx,\by \in \bbR^{D}$ with $\|\bx\|_{\infty} \geq \mu_{t} - \tau_{\rm x} \{ \log (1/\sigma_{t}) \}^{-\tau_{\rm bd}} $ and $\| \bx-\mu_{t}\by \|_{\infty} \leq \tau_{\rm tail} \sigma_{t} \sqrt{ \log ( 1 / \delta) } $, we have
\bean
&& \|\by \|_{\infty} \geq \frac{ \|\bx\|_{\infty} - \| \bx - \mu_{t}\by \|_{\infty} }{\mu_{t}} \geq 1 - \left( \frac{ \tau_{\rm x} \{ \log (1/\sigma_{t}) \}^{-\tau_{\rm bd}} + \tau_{\rm tail} \sigma_{t} \sqrt{ \log ( 1 / \delta ) } }{\mu_{t}}  \right)
 \\
 && \geq 1- 2\left[ \tau_{\rm x} \left\{ \log ( 1/ \sqrt{2 \overline{\tau} t} ) \right\}^{-\tau_{\rm bd}} + \tau_{\rm tail} \sqrt{ 2\overline{\tau} t \log ( 1 / \delta ) } \right],
\eean
where the last inequality holds by (\ref{eqthm:mtstbd}).
For $0 < t \leq \delta^{\tau_{\rm t}}$ and small enough $\delta$ so that $\delta^{ \tau_{\rm t} } \leq (2 \overline {\tau} )^{-1}$, the last display is bounded by
\bean
&& 1- 2\left[ \tau_{\rm x} \left\{ \log ( 1/ \sqrt{2 \overline{\tau} \delta^{\tau_{\rm t}} } ) \right\}^{-\tau_{\rm bd}} + \tau_{\rm tail} \sqrt{ 2\overline{\tau} \delta^{\tau_{\rm t}} } \sqrt{ \log ( 1 / \delta ) } \right].
\eean
Moreover, the last display is lower bounded by $1 - 2 D_{1}$ for small enough $\delta$, where
\bean
D_{1} = \left( 2 \left\lfloor 2 \left\{ \log ( 1 / \delta) \right\}^{\widetilde \tau_{\rm bd}} \vee 4 \right\rfloor + 2 \right)^{-1}.
\eean
Then, $ D_{1} \leq ( \{ \log ( 1 / \delta ) \}^{- \widetilde \tau_{\rm bd}}  /2) \wedge (1/4)$ and $D_{1}^{-1} \in 2\bbN$.
Let $\by^{(1)},\ldots,\by^{(D_2)} \in \bbR^{D}$ be distinct vectors satisfying that
\bean
\left\{\by^{(1)},\ldots,\by^{(D_{2})} \right\} = \left\{ D_{1} (n_1,\ldots,n_{D})^{\top}  : n_i \in \bbZ, i \in [D] \right\} \cap \left\{\by \in \bbR^{D} : \|\by\|_{\infty} = 1- D_{1} \right\},
\eean
where $D_{2} = (2 / D_{1} - 1)^{D} - (2 / D_{1} - 3)^{D}$.
Let $\cY_{i} = \{\by \in \bbR^{D} : \|\by - \by^{(i)} \|_{\infty} \leq D_{1} \}$ for $i \in [D_{2}]$. 
For any $i \in [D_2]$ and $\by \in \cY_{i}$, we have $1- 2 D_{1} \leq \| \by \|_{\infty} \leq 1$ and $1- 2 D_{1} \leq \|\by^{(i)} \|_{\infty} \leq 1$. 
Assume that the density $p_0$ satisfies
\bean
\sup_{\balpha \in \bbN^{D} } \sup_{ 1 - \{ \log ( 1 / \delta) \}^{- \widetilde \tau_{\rm bd} } \leq \| \bx \|_{\infty} \leq 1 } \left\vert ( \D^{\balpha} p_0 (\bx) ) \right\vert \leq K.
\eean
With small enough $\delta$ so that $D_{3} = \lfloor \log_{4/e} (1/\delta) \rfloor +1 \geq 2$, Taylor's theorem for multivariate function implies that
\bean
p_0(\by) = \sum_{0\leq k. < D_{3}} \frac{(\D^{\bk} p_0 )(\by^{(i)}) }{\bk !} (\by - \by^{(i)})^{\bk} +  \sum_{k. = D_{3}} \frac{(\D^{\bk} p_0 )(\xi\by^{(i)} + (1-\xi) \by ) }{\bk !} (\by - \by^{(i)})^{\bk}
\eean
for a suitable $\xi \in [0,1]$ and $i \in [D_2], \by \in \cY_i$, where $(\by-\by^{(i)})^{\bk} = \prod_{j=1}^{D} (y_j - y^{(i)}_j)^{k_j}$, $\bk ! = \prod_{j=1}^{D} k_j!$ and $k. = \| \bk \|_{1}$.
A simple calculation yields that
\bean
&& \left\vert p_0(\by) -  \sum_{0\leq k. < D_{3}} \frac{(\D^{\bk} p_0 )(\by^{(i)}) }{\bk !} (\by - \by^{(i)})^{\bk} \right\vert \cdot 1\{ \|\by - \by^{(i)} \|_{\infty} \leq D_{1} \}
\\
&& \leq \sum_{k. = D_{3}} K \prod_{j=1}^{D} \left( \frac{e D_{1} }{k_j} \right)^{k_j} \leq K (D_{3}+1)^{D} \left( \frac{e}{4} \right)^{D_{3}}
\eean
for $\by \in  \bbR^{D}$ and $i \in [D_2]$ because  $k! \geq k^{k} e^{-k}$ for any $k \in \bbZ_{\geq 0}$ and $0 < D_{1} \leq 1/4$.
Since $D_{3} \geq \log \delta / \log (e / 4)$, $(e/4)^{D_{3}} \leq \delta$.
Then, the last display is further bounded by
\bean
K \left(   \lfloor \log_{4/e} (1/\delta) \rfloor + 2 \right)^{D} \delta \leq K3^{D} \delta \left\{ \frac{\log (1/\delta) }{\log (4/e)} \right\}^{D}.
\eean

Since $\cY_1,\ldots,\cY_{D_{2}}$ are mutually disjoint except on a set of Lebesgue measure zero and $ \bigcup_{i=1}^{D_{2}} \cY_i = \{\by \in \bbR^{D}: 1- 2 D_{1} \leq  \| \by \|_{\infty} \leq 1 \}$, we have
\bean
&& \int_{ \substack{ \| \bx-\mu_{t}\by \|_{\infty} \leq \tau_{\rm tail} \sigma_{t} \sqrt{ \log ( 1 / \delta ) }
\\ \| \by  \|_{\infty} \leq 1}} g(\by) \d \by
= \sum_{i=1}^{D_{2}} \int_{ \substack{ \| \bx-\mu_{t}\by \|_{\infty} \leq \tau_{\rm tail} \sigma_{t} \sqrt{ \log ( 1 / \delta ) }
\\ \| \by - \by^{(i)}  \|_{\infty} \leq D_{1} }} g(\by) \d \by
\\
&& = \int_{ \substack{ \| \bx-\mu_{t}\by \|_{\infty} \leq \tau_{\rm tail} \sigma_{t} \sqrt{ \log ( 1 / \delta ) }
\\ \|\by\|_{\infty} \leq 1  }}
\sum_{i=1}^{D_{2}} g(\by) \cdot 1\{ \| \by - \by^{(i)}  \|_{\infty} \leq D_{1} \} \d \by
\eean
for any continuous function $g : \bbR^{D} \to \bbR$,  $\|\bx\|_{\infty} \geq \mu_{t} - \tau_{\rm x} \{ \log (1/\sigma_{t}) \}^{-\tau_{\rm bd}}$ and $0 < t \leq \delta^{\tau_{\rm t}}$.
Combining (\ref{eqprop2:tailbd}) with the last two displays, we have
\bean
&& \left\vert p_t(\bx) - \sum_{i=1}^{D_{2}} \int_{ \substack{ \| \bx-\mu_{t}\by \|_{\infty} \leq \tau_{\rm tail} \sigma_{t} \sqrt{ \log ( 1 / \delta ) }
\\ \| \by - \by^{(i)} \|_{\infty} \leq D_{1}}}  \left\{ \sum_{0\leq k. < D_{3}} \frac{(\D^{\bk} p_0 )(\by^{(i)}) }{\bk !}   (\by - \by^{(i)})^{\bk} \right\} \phi_{\sigma_{t}}(\bx-\mu_{t} \by)  \d \by \right\vert
\\
 && \leq  K D 2^{D+1} \delta^{ \tau_{\rm tail}^2 / 2 } +  K3^{D} \delta \left\{ \frac{\log (1/\delta) }{\log (e/4)} \right\}^{D} 
 \\
 && \quad \cdot  \int_{ \substack{ \| \bx-\mu_{t}\by \|_{\infty} \leq \tau_{\rm tail} \sigma_{t} \sqrt{ \log ( 1 / \delta ) }
\\ \| \by  \|_{\infty} \leq 1}} \sum_{i=1}^{D_{2}}  \phi_{\sigma_{t}}(\bx-\mu_{t} \by) \cdot 1\{ \| \by - \by^{(i)} \|_{\infty} \leq D_{1} \}  \d \by   
\\
&& =  K D 2^{D+1} \delta^{ \tau_{\rm tail}^2 / 2 } + K  3^{D} \delta \left\{ \frac{\log (1/\delta) }{\log (4/e)} \right\}^{D} 
 \int_{ \substack{ \| \bx-\mu_{t}\by \|_{\infty} \leq \tau_{\rm tail} \sigma_{t} \sqrt{ \log ( 1 / \delta ) }
\\ \| \by  \|_{\infty} \leq 1}}  \phi_{\sigma_{t}}(\bx-\mu_{t} \by)  \d \by   
\eean
for $\|\bx\|_{\infty} \geq \mu_{t} - \tau_{\rm x} \{ \log (1/\sigma_{t}) \}^{-\tau_{\rm bd}}$ and $0 < t \leq \delta^{\tau_{\rm t}}$.
Moreover, the last display is bounded by
\be
 K D 2^{D+1} \delta^{ \tau_{\rm tail}^2 / 2 } + K  6^{D} \delta \left\{ \frac{\log (1/\delta) }{\log (4/e)} \right\}^{D} \label{eqprop2:p0errorbd}
\ee
because $\int_{\bbR^{D}} \phi_{\sigma_t}(\bx-\mu_t \by) \d \by = \mu_t^{-D}$ and $\mu_t^{-D} \leq 2^{D}$ by (\ref{eqthm:mtstbd}).
Also, we have
\bean
&& \Bigg\| \sigma_{t} \nabla p_t(\bx) -  \sum_{i=1}^{D_{2}} \int_{ \substack{ \| \bx-\mu_{t}\by \|_{\infty} \leq \tau_{\rm tail} \sigma_{t} \sqrt{ \log ( 1 / \delta ) }
\\ \| \by - \by^{(i)} \|_{\infty} \leq D_{1} } } 
\\
&& \qquad \left\{  \sum_{0\leq k. < D_{3}}  \frac{(\D^{\bk} p_0 )(\by^{(i)}) }{\bk !}    \left(\frac{ \mu_t \by - \bx  }{\sigma_t} \right)   (\by - \by^{(i)})^{\bk} \right\}
 \phi_{\sigma_{t}}(\bx-\mu_{t} \by) \d \by \Bigg\|_{\infty}
 \\
 && \leq   K 2^{D} \left\{ 2 (D-1) \sqrt{2 / \pi} + \sqrt{2}  \right\} \delta^{\tau_{\rm tail}^2 / 4 }  +      K3^{D} \delta \left\{ \frac{\log (1/\delta) }{\log (4/e)} \right\}^{D}
 \\
 && \qquad \cdot \int_{ \substack{ \| \bx-\mu_{t}\by \|_{\infty} \leq \tau_{\rm tail} \sigma_{t} \sqrt{ \log ( 1 / \delta ) }
\\ \| \by \|_{\infty} \leq 1}} \sum_{i=1}^{D_{2}}  \left\| \frac{ \mu_t \by - \bx  }{\sigma_t} \right\|_{\infty}  \phi_{\sigma_{t}}(\bx-\mu_{t} \by) \cdot 1\{ \| \by - \by^{(i)} \|_{\infty} \leq D_{1} \}    \d \by   
\eean
for $\| \bx \|_{\infty} \geq \mu_{t} - \tau_{\rm x} \{ \log(1/\sigma_t) \}^{-\tau_{\rm bd}}$ and $0 < t \leq \delta^{\tau_{\rm t}}$.
Moreover, the last display is bounded by
\be
 K 2^{D} \left\{ 2 (D-1) \sqrt{2 / \pi} + \sqrt{2}  \right\} \delta^{\tau_{\rm tail}^2 / 4 }    +  2 K  \tau_{\rm tail}    6^{D} \{ \log (4/e) \}^{-D} \delta \{ \log ( 1 / \delta) \}^{D + 1/2}
 \label{eqprop2:p0errorbdgrad}
\ee
because $\int_{\bbR^{D}} \phi_{\sigma_t}(\bx-\mu_t \by) \d \by = \mu_t^{-D}$ and $\mu_t^{-D} \leq 2^{D}$ by (\ref{eqthm:mtstbd}).

For $x,y \in \bbR$ and $t > 0$, Taylor's theorem yields that 
\bean
\left\vert \exp \left( -\frac{(x-\mu_{t}y)^2}{2 \sigma_{t}^2} \right) - \sum_{l=0}^{D_{4}-1} \frac{1}{l!} \left( -\frac{(x-\mu_{t}y)^2}{2 \sigma_{t}^2} \right)^{l} \right\vert 
\leq \frac{1}{D_{4} !} \left( \frac{(x-\mu_{t}y)^2}{2 \sigma_{t}^2} \right)^{D_{4}},
\eean
where $D_{4} = \lfloor 2 e \tau_{\rm tail}^2  \log (1/\delta)  \rfloor +1$ with small enough $\delta$ so that $D_{4} \geq 1$.
For $\vert x - \mu_{t} y \vert \leq  \tau_{\rm tail} \sigma_{t} \sqrt{ \log ( 1 / \delta ) } $ and $ t \geq 0 $, the last display is further bounded by
\bean
\left(\frac{ e \tau_{\rm tail}^2 \log ( 1 / \delta ) }{D_{4}} \right)^{D_{4}} 
\leq 2^{-D_{4}} 
\leq \delta^{2 e \tau_{\rm tail}^2 \log 2 } \leq \delta^{ e \tau_{\rm tail}^2 },
\eean
where the last inequality holds because $1/2 \leq \log 2$ and $ 0 < \delta < 1$.
Then,
\bean
&& \left\vert \int_{ \substack{\vert x - \mu_{t} y \vert \leq  \tau_{\rm tail} \sigma_{t} \sqrt{ \log ( 1 / \delta ) } \\ \vert y - \widetilde y \vert \leq \widetilde \tau } } \ \left(\frac{\mu_t y - x}{\sigma_t}\right)^{m}  (y-\widetilde y)^{k} \exp \left( -\frac{(x-\mu_{t}y)^2}{2 \sigma_{t}^2} \right)  \d y \right.
\\
&& \quad \left. - \sum_{l=0}^{D_{4}-1} \frac{1}{l!}
\int_{ \substack{ \vert x - \mu_{t} y \vert \leq  \tau_{\rm tail} \sigma_{t} \sqrt{ \log ( 1 / \delta ) } \\ \vert y - \widetilde y \vert \leq D_{1} } } \ \left(\frac{\mu_t y - x}{\sigma_t} \right)^{m}  (y-\widetilde y)^{k}  \left( -\frac{(x-\mu_{t}y)^2}{2 \sigma_{t}^2} \right)^{l} \d y \right\vert
\\
&& \leq \delta^{e \tau_{\rm tail}^2 } \int_{ \substack{ \vert x - \mu_{t} y \vert \leq  \tau_{\rm tail} \sigma_{t} \sqrt{ \log ( 1 / \delta ) } \\ \vert y - \widetilde y \vert \leq D_{1} } } \left\vert \frac{\mu_t y - x}{\sigma_t} \right\vert^{m} \vert y - \widetilde y \vert^{k} \d y 
\\
&& \leq \delta^{e \tau_{\rm tail}^2}   \int_{\vert y \vert \leq 1} D_{1}^{k} \left\{ \tau_{\rm tail}\sqrt{ \log ( 1 / \delta ) } \right\}^{m} \d y  \leq \delta^{e \tau_{\rm tail}^2} \left\{  \tau_{\rm tail} \sqrt{ \log ( 1 / \delta ) } \right\}^{m} 
\eean
for any $x, \widetilde y \in \bbR$ with $\vert \widetilde y \vert = 1-D_{1}$,$t \geq 0, k \in \bbZ_{\geq 0}$ and $m \in \{0,1\}$, where the last inequality holds because $D_{1} < 1$.
Moreover,
\bean
&& \left\vert \sum_{l=0}^{D_{4}-1} \frac{1}{l!}
\int_{ \substack{\vert x - \mu_{t} y \vert \leq  \tau_{\rm tail} \sigma_{t} \sqrt{ \log ( 1 / \delta ) }\\ \vert y - \widetilde y \vert \leq D_{1} }} \ \left(\frac{\mu_t y - x}{\sigma_t}\right)^{m}   (y-\widetilde y)^{k}  \left( -\frac{(x-\mu_{t}y)^2}{2 \sigma_{t}^2} \right)^{l} \d y \right\vert
\\
&& \leq  \left(1+ \delta^{e \tau_{\rm tail}^2} \right)\left\{ \tau_{\rm tail} \sqrt{ \log ( 1 / \delta ) }\right\}^{m} 
\eean
for any $x, \widetilde y \in \bbR$ with $\vert \widetilde y \vert = 1- D_{1} $,$t \geq 0, k \in \bbZ_{\geq 0}$ and $m \in \{0,1\}$
 because
\bean
&& \left\vert \int_{ \substack{ \vert x - \mu_{t} y \vert \leq  \tau_{\rm tail} \sigma_{t} \sqrt{ \log ( 1 / \delta ) } \\ \vert y - \widetilde y \vert \leq D_{1} } } \ \left(\frac{\mu_t y - x}{\sigma_t}\right)^{m} (y-\widetilde y)^{k} \exp \left( -\frac{(x-\mu_{t}y)^2}{2 \sigma_{t}^2} \right)  \d y \right\vert
\\
&& \leq  D_{1}^{k} \{ \tau_{\rm tail} \sqrt{ \log (1/\delta)} \}^{m} \int_{ \vert y \vert \leq 1} 1 \d y
\leq \left\{ \tau_{\rm tail} \sqrt{\log(1/\delta)} \right\}^{m} .
\eean
Note that $\vert \prod_{j=1}^{D} x_j - \prod_{j=1}^{D} \widetilde x_j \vert \leq D C^{D-1} \| \bx - \widetilde \bx\|_{\infty}$ for any $\bx, \widetilde \bx \in [-C,C]^{D}$.
Since the last two displays are bounded by $2 \{ \tau_{\rm tail} \sqrt{\log(1/\delta)} \}^{m} $, we have
\be \begin{split}
& \left\vert \int_{ \substack{ \| \bx-\mu_{t}\by \|_{\infty} \leq \tau_{\rm tail} \sigma_{t} \sqrt{ \log ( 1 / \delta ) }
\\ \| \by - \by^{(i)} \|_{\infty} \leq D_{1}  } } \ \left(\frac{\mu_t y_h - x_h}{\sigma_t}\right)^{m}    (\by - \by^{(i)})^{\bk} \phi_{\sigma_{t}}(\bx-\mu_{t} \by) \d \by  -   \right.
\\
& \prod_{j=1}^{D}  \sum_{l=0}^{D_{4}-1} \left.  \frac{1}{l! \sqrt{2\pi} \sigma_t}
\int_{ \substack{ \vert x_j-\mu_{t} y_j \vert \leq \tau_{\rm tail} \sigma_{t} \sqrt{ \log ( 1 / \delta ) }
\\ \vert y_j - y^{(i)}_j \vert \leq D_{1} }} 
\ \left(\frac{\mu_t y_h - x_h}{\sigma_t}\right)^{m \cdot 1\{h = j\} }   (y_j - y^{(i)}_j)^{k_j} \left( -\frac{(x_j -\mu_{t} y_j)^2}{2 \sigma_{t}^2} \right)^{l}  \d y_j \right\vert
\\
& \leq (2\pi \sigma_{t}^2)^{-\frac{D}{2}} D 2^{D-1} \delta^{e \tau_{\rm tail}^2 } \left\{ \tau_{\rm tail} \sqrt{\log(1/\delta)} \right\}^{mD}
\label{eqprop2:experrorbd}
\end{split} \ee
for $\bx \in \bbR^{D}, t \geq 0, h \in [D], i \in [D_{2}], j \in [D], \bk \in \bbZ_{\geq 0}^{D}$ and $m \in \{0,1\}$ with small enough $\delta$ so that $\tau_{\rm tail} \sqrt{\log(1/\delta)} > 1 $.
With $m = 0$ in the last display, the last integral satisfies that
\bean
&& \frac{1}{l! \sqrt{2\pi} \sigma_t}\int_{ \substack{ \vert x_j-\mu_{t} y_j \vert \leq \tau_{\rm tail} \sigma_{t} \sqrt{ \log ( 1 / \delta ) }
\\ \vert y_j - y^{(i)}_j \vert \leq D_{1} } } \   \left(y_j - y^{(i)}_j \right)^{k_j} \left( -\frac{(x_j -\mu_{t} y_j)^2}{2 \sigma_{t}^2} \right)^{l}  \d y_j 
\\
&& = \frac{ \mu_{t}^{-1 } }{l!  \sqrt{2\pi}}\int_{ \substack{ \vert z_j \vert \leq \tau_{\rm tail} \sqrt{ \log ( 1 / \delta ) }
\\ \vert \mu_t^{-1}\sigma_t z_j + \mu_t^{-1} x_j - y_j^{(i)} \vert \leq D_{1}  } } \   \left(\mu_t^{-1}\sigma_t z_j + \mu_t^{-1} x_j - y_j^{(i)} \right)^{k_j} z_j^{2l} (-2)^{-l}  \d z_j 
\\
&& = \frac{(-2)^{-l}  \mu_{t}^{-1 }  }{l! \sqrt{2\pi}}\int_{ \substack{ \vert z_j \vert \leq \tau_{\rm tail}  \sqrt{ \log ( 1 / \delta ) }
\\ \vert \mu_t^{-1}\sigma_t z_j + \mu_t^{-1} x_j - y_j^{(i)} \vert \leq D_{1}  } }  \ \sum_{r_j = 0}^{k_j} 
\binom{k_j}{r_j}
\left(\mu_t^{-1} \sigma_t \right)^{r_j}  \left(\mu_t^{-1} x_j - y_j^{(i)} \right)^{k_j-r_j} z_j^{r_j+2l} \ \d z_j
\\
&& = \frac{(-2)^{-l} \mu_t^{-k_j - 1}  }{l! \sqrt{2\pi}}  \sum_{r_j = 0}^{k_j} 
\binom{k_j}{r_j}
 \sigma_t^{r_j}  \left( x_j - \mu_t y_j^{(i)} \right)^{k_j-r_j}  \left( \frac{ \overline{z}_{i,j}^{r_j+2l+1} - \underline{z}_{i,j}^{r_j+2l+1} }{r_j+2l+1} \right) \defeq P_{i,j,k_j,l}(x_j, t),
\eean
where
\bean
&& \overline{z}_{i,j} = \min \left( \max \left( \frac{ \mu_t(y_j^{(i)} + D_{1} ) - x_j }{\sigma_{t}}, -\tau_{\rm tail} \sqrt{ \log ( 1 / \delta ) } \right), \tau_{\rm tail} \sqrt{ \log ( 1 / \delta ) } \right),
\\
&& \underline{z}_{i,j} =\min \left( \max \left( \frac{ \mu_t(y_j^{(i)} - D_{1} ) - x_j }{\sigma_{t}}, -\tau_{\rm tail} \sqrt{ \log ( 1 / \delta ) } \right), \tau_{\rm tail} \sqrt{ \log ( 1 / \delta ) } \right).
\eean
Combining (\ref{eqthm:mtstbd}), (\ref{eqprop2:p0errorbd}) and (\ref{eqprop2:experrorbd}) with the last two displays, we have
\bean
&&  \left\vert p_t(\bx) - g_t(\bx) \right\vert
 \\
&& \leq 
 K D 2^{D+1} \delta^{\tau_{\rm tail}^2 / 2 }
 + K    6^{D} \delta \left\{ \frac{\log (1/\delta) }{\log (4/e)} \right\}^{D}
 \\
&& \quad + \sum_{i=1}^{D_2} 
\sum_{0 \leq k. <D_3} \left\vert \frac{ (\D^{\bk} p_0)(\by^{(i)})  }{\bk!} \right\vert (2\pi \sigma_{t}^2)^{-\frac{D}{2}} D 2^{D-1} \delta^{e \tau_{\rm tail}^2 }  
\\
&& \leq
 D_{5} \left[ \delta^{\tau_{\rm tail}^2 / 2} + \delta \{ \log (1/\delta) \}^{D} +  \delta^{e\tau_{\rm tail}^2 - D/2} \{ \log ( 1 / \delta) \}^{D \widetilde \tau_{\rm bd} + D  }  \right]
\eean
for $\|\bx\|_{\infty} \geq \mu_{t} -\tau_{\rm x} \{ \log (1/\sigma_{t}) \}^{-\tau_{\rm bd}}$ and $\delta \leq t \leq \delta^{\tau_{\rm t}}$,
where $D_{5} = D_{5}(D, K, \underline{\tau}, \tau_{\rm tail})$ and $g_{t} : \bbR^{D} \to \bbR$ is a function such that
\bean
 g_t(\bx)=
 \sum_{i=1}^{D_{2}} \sum_{0 \leq k. < D_{3}} \left\{ 
 (\D^{\bk} p_0)(\by^{(i)})  \right\} \prod_{j=1}^{D} \sum_{l=0}^{D_{4}-1} \frac{  P_{i,j,k_j,l}(x_j, t) }{k_j !},
\quad \bx \in \bbR^{D}.
\eean
Since $\tau_{\rm tail} \geq 2 \vee \sqrt{(D+3)/(2e)}$, we have
\be
\left\vert p_t(\bx) - g_t(\bx) \right\vert
\leq 3 D_{5} \delta \{ \log ( 1 / \delta) \}^{D} 
\label{eqprop2:ptpoly}
\ee
for small enough $\delta$.
Similarly, with $m = 1$ and $h = j$ in (\ref{eqprop2:experrorbd}), the last integral in (\ref{eqprop2:experrorbd}) satisfies that
\bean
&& \frac{1}{l! \sqrt{2\pi } \sigma_t} \int_{ \substack{ \vert x_j-\mu_{t} y_j \vert \leq \tau_{\rm tail} \sigma_{t} \sqrt{ \log ( 1 / \delta ) }
\\ \vert y_j - y^{(i)}_j \vert \leq D_{1} } } \
\left(\frac{\mu_t y_j - x_j}{\sigma_t}\right)
(y_j - y^{(i)}_j)^{k_j} \left( -\frac{(x_j -\mu_{t} y_j)^2}{2 \sigma_{t}^2} \right)^{l}  \d y_j  
\\
&& = \frac{ \mu_{t}^{-1} }{l! \sqrt{2\pi}}\int_{ \substack{ \vert z_j \vert \leq \tau_{\rm tail}  \sqrt{ \log ( 1 / \delta ) }
\\ \vert \mu_t^{-1}\sigma_t z_j + \mu_t^{-1} x_j - y_j^{(i)} \vert \leq D_{1}  } } \   \left(\mu_t^{-1}\sigma_t z_j + \mu_t^{-1} x_j - y_j^{(i)} \right)^{k_j} z_j^{2l+1} (-2)^{-l}  \d z_j
\\
&& = \frac{(-2)^{-l} \mu_t^{-k_j - 1} }{l! \sqrt{2\pi}}  \sum_{r_j = 0}^{k_j} 
\binom{k_j}{r_j}
 \sigma_t^{r_j}  \left( x_j - \mu_t y_j^{(i)} \right)^{k_j-r_j}  \left( \frac{ \overline{z}_{i,j}^{r_j+2l+2} - \underline{z}_{i,j}^{r_j+2l+2} }{r_j+2l+2} \right) \defeq \widetilde P_{i,j,k_j,l}(x_j, t).
\eean
Combining (\ref{eqthm:mtstbd}), (\ref{eqprop2:p0errorbdgrad}) and (\ref{eqprop2:experrorbd}) with the last display, we have
\bean
 && \left\vert \sigma_t  \left( \nabla p_t(\bx) \right)_{h} - \widetilde g_t^{(h)}(\bx) \right\vert
 \\
 && \leq 
 K 2^{D} \left\{ 2 (D-1) \sqrt{2 / \pi} + \sqrt{2}  \right\} \delta^{\tau_{\rm tail}^2 / 4 }    +  2 K  \tau_{\rm tail}    6^{D} \{ \log (4/e) \}^{-D} \delta \{ \log ( 1 / \delta) \}^{D + 1/2}
\\
&& \quad + \sum_{i=1}^{D_2} 
\sum_{0 \leq k. <D_3} \left\vert \frac{ (\D^{\bk} p_0)(\by^{(i)})  }{\bk!} \right\vert (2\pi \sigma_{t}^2)^{-\frac{D}{2}} D 2^{D-1} \delta^{e \tau_{\rm tail}^2 } \left\{ \tau_{\rm tail} \sqrt{\log(1/\delta)} \right\}^{D}
\\
&& \leq
 D_{6} \left[ \delta^{ \tau_{\rm tail}^2 / 4 } + \delta \{ \log (1/\delta) \}^{D+1/2 } +   \delta^{e\tau_{\rm tail}^2 - D/2} \{ \log ( 1 / \delta) \}^{D ( \widetilde \tau_{\rm bd} + 3/2) } \right], \quad h \in [D]
\eean
for $\|\bx\|_{\infty} \geq \mu_{t} - \tau_{\rm x} \{ \log (1/\sigma_{t}) \}^{-\tau_{\rm bd}} $ and $\delta \leq t \leq \delta^{\tau_{\rm t}}$,
where $D_{6} = D_{6}(D, K, \tau_{\rm tail}, \underline{\tau})$ and $\widetilde g_{t}^{(h)} : \bbR^{D} \to \bbR, h \in [D]$ is a function such that
\bean
&& \widetilde g_t^{(h)}(\bx) = \sum_{i=1}^{D_{2}} \sum_{0 \leq k. < D_{3}} \left\{ 
 (\D^{\bk} p_0)(\by^{(i)})  \right\} \left\{  \prod_{\substack{j=1 \\ j \neq h}}^{D} \sum_{l=0}^{D_{4}-1} \frac{  P_{i,j,k_j,l}(x_j, t) }{k_j !} \right\}
 \left\{ \sum_{l=0}^{D_{4}-1} \frac{ \widetilde P_{i,h,k_h,l} (x_h, t) }{k_h !} \right\}.
\eean
Since $\tau_{\rm tail} \geq 2 \vee \sqrt{(D+3)/(2e)}$, we have
\be
\left\vert \sigma_t  \left( \nabla p_t(\bx) \right)_{h} - \widetilde g_t^{(h)}(\bx) \right\vert
\leq 3 D_{6} \delta \{ \log ( 1 / \delta) \}^{D}
, \quad h \in [D]
 \label{eqprop2:nablapoly}
\ee
for small enough $\delta$.

Let $0 < \widetilde \delta < \delta$ be a small enough value as described below.
With ${\widetilde \delta}^2 < 1/2$, Lemma~\ref{secnn:mtst} implies that
there exist neural networks  $f_{\mu} \in \cF_{\rm NN}(L_{\mu},\bd_{\mu},s_{\mu},M_{\mu}), f_{\sigma} \in \cF_{\rm NN}(L_{\sigma},\bd_{\sigma},s_{\sigma},M_{\sigma})$ with
\bean
&& L_{\mu}, L_{\sigma} \leq C_{N,4} \{ \log(1/\widetilde \delta)\}^{2}, \quad \|\bd_{\mu} \|_{\infty},  \| \bd_{\sigma}\|_{\infty} \leq C_{N,4}  \{ \log(1/\widetilde \delta)\}^{2}
\\
&& s_{\mu}, s_{\sigma} \leq C_{N,4}  \{  \log(1/\widetilde \delta)\}^{3}, \quad M_{\mu}, M_{\sigma} \leq C_{N,4}  \log ( 1/\widetilde \delta)
\eean
such that $
\vert \mu_{t} - f_{\mu}(t) \vert \leq \widetilde \delta$ and $ \vert \sigma_{t} - f_{\sigma}(t) \vert \leq \widetilde \delta$
for $ t \geq \widetilde \delta$, where $C_{N,4}$ is the constant in Lemma~\ref{secnn:mtst}.
Also, Lemma~\ref{secnn:rec} implies that there exist a neural network $f_{\rm rec} \in \cF_{\rm NN}(L_{\rm rec}, \bd_{\rm rec}, s_{\rm rec}, M_{\rm rec} )$ with
\bean
&& L_{\rm rec} \leq C_{N,5} \{ \log(1/ \widetilde \delta )\}^{2}, \quad \|\bd_{\rm rec}\|_{\infty} \leq C_{N,5}  \{ \log (1/ \widetilde \delta )\}^{3}
\\
&& s_{\rm rec}  \leq C_{N,5}  \{ \log(1/ \widetilde \delta )\}^{4}, \quad M_{\rm rec} \leq C_{N,5} \widetilde \delta^{-2}
\eean
such that $\vert 1/x - f_{\rm rec}(x) \vert \leq  \widetilde \delta $ for any $x \in [ \widetilde \delta,1/ \widetilde \delta]$, where $C_{N,5}$ is the constant in Lemma~\ref{secnn:rec}.
Since $  \sigma_t-\widetilde \delta \leq f_{\sigma}(t)  \leq \sigma_t + \widetilde \delta$ for $t \geq \widetilde \delta$ and $\sqrt{\underline{\tau} t } \leq \sigma_t \leq 1$ for $t \geq \delta$, we have $\delta \leq f_{\sigma}(t)  \leq 2$ for $ t \geq \delta $ with small enough $\widetilde \delta$ so that $\widetilde \delta \leq \sqrt{\underline{\tau} 
\delta} - \delta$ and $\widetilde \delta \leq 1$.
A simple calculation yields that
\be \begin{split}
& \left\vert 1/\sigma_t - f_{\rm rec}(f_{\sigma}(t)) \right\vert \leq
\left\vert 1/\sigma_t - 1/ f_{\sigma}(t) \right\vert  + \left\vert  1/ f_{\sigma}(t) - f_{\rm rec}(f_{\sigma}(t)) \right\vert 
\\
& \leq \{ \sigma_t \wedge f_{\sigma}(t)\}^{-2} \vert \sigma_t - f_{\sigma}(t) \vert + \widetilde \delta \leq (1+\delta^{-2}) \widetilde \delta  \label{eqprop2:nnlogrecst}
\end{split} \ee
for $t \geq \delta$. 
Lemma~\ref{secnn:mult} implies that there exists a neural network 
\bean
\widetilde f_{\rm mult}^{(k)} \in \cF_{\rm NN} ( \widetilde L_{\rm mult}^{(k)}, \widetilde \bd_{\rm mult}^{(k)}, \widetilde s_{\rm mult}^{(k)}, \widetilde M_{\rm mult}^{(k)} ), \quad k \geq 2
\eean
with 
\bean
&& \widetilde L_{\rm mult}^{(k)} \leq C_{N,1} \log k  \{\log(1/ \widetilde \delta) + \log(1/\delta) \}, \quad \widetilde d_{\rm mult}^{(k)} = (k,48k,\ldots,48k,1)^{\top},
\\
&& \widetilde s_{\rm mult}^{(2)} \leq  C_{N,1} k \{\log(1/ \widetilde \delta) 
+  \log(1/ \delta) \}, \quad  \widetilde M_{\rm mult}^{(k)} = \delta^{-k}
\eean
such that 
\be
\left\vert \widetilde f_{\rm mult}^{(k)}(\widetilde x_1,\ldots, \widetilde x_k) - \prod_{i=1}^{k} x_i \right\vert \leq \widetilde \delta + k \delta^{-(k-1)} \widetilde \epsilon \label{eqprop2:nnmult}
\ee
for any $\bx = (x_1,\ldots,x_k) \in \bbR^{k}$ with $ \|\bx\|_{\infty} \leq \delta^{-1}$ and $\widetilde \bx = (\widetilde x_1,\ldots,\widetilde x_{k}) \in \bbR^{k}$ with $\| \bx - \widetilde \bx\|_{\infty} \leq \widetilde \epsilon$,
where $0 < \widetilde \epsilon \leq 1 $ and $C_{N,1}$ is the constant in Lemma~\ref{secnn:mult}.
Let 
\bean
f_{\rm clip} \in \cF_{\rm NN}(2,(1,2,1)^{\top}, 7, \tau_{\rm tail} \sqrt{\log(1/\delta)} )
\eean
be the neural network in Lemma~\ref{secnn:clip} such that
$f_{\rm clip}(x) = (x \vee -\tau_{\rm tail}\sqrt{\log(1/\delta)}  ) \wedge \tau_{\rm tail} \sqrt{\log(1/\delta)}$ for $x \in \bbR$.
For $i \in [D_{2}]$ and $ j \in [D]$, consider functions $\overline{f}_{i,j}, \underline{f}_{i,j} : \bbR \times \bbR \to \bbR$ such that
\bean
&& \overline{f}_{i,j} (x,t) = f_{\rm clip}\left( \widetilde f_{\rm mult}^{(2)}\left( f_{\rm rec}\left(f_{\sigma}(t) \right), \{ y_j^{(i)} +  D_{1} \} f_{\mu}(t) - x \right)  \right),
 \\
 && \underline{f}_{i,j} (x,t) = f_{\rm clip}\left( \widetilde f_{\rm mult}^{(2)}\left( f_{\rm rec}\left(f_{\sigma}(t) \right), \{ y_j^{(i)} -  D_{1} \} f_{\mu}(t) - x \right) \right),
\eean
for $x,t \in \bbR$.
Note that $\vert y_j^{(i)}+ D_{1} \vert \leq 1 $ and $\vert y_j^{(i)} - D_{1} \vert \leq 1 $ for $i \in [D_2]$.
Combining (\ref{eqprop2:nnlogrecst}) and (\ref{eqprop2:nnmult}) with the last display, both $\vert \overline{z}_{i,j} - \overline{f}_{i,j} (x,t) \vert$ and $\vert \underline{z}_{i,j} - \underline{f}_{i,j} (x,t) \vert$ are bounded by
\be
\widetilde \delta + 2 \delta^{-1} ( 1 + \delta^{-2} ) \widetilde \delta
\leq 5 \delta^{-3} \widetilde \delta \label{eqprop2:nnzij}
\ee
for $ \vert x \vert \leq  \mu_{t} + \tau_{\rm x} \sigma_{t} \sqrt{\log (1/\delta)}$ and $\delta \leq t \leq \delta^{\tau_{\rm t}}$ with small enough $\delta$ so that $\vert (y_j^{(i)} + D_{1} ) \mu_t - x \vert $, $\vert (y_j^{(i)} - D_{1} ) \mu_t - x \vert $ and $ 1/\sigma_t$ are upper bounded by $\delta^{-1}$.
Since $  \mu_t -\widetilde \delta \leq f_{\mu}(t)  \leq \mu_t + \widetilde \delta$ for $t \geq \widetilde \delta$, we have $1/4 \leq 1/2  - \widetilde \delta \leq f_{\mu}(t)  \leq 1+\widetilde \delta \leq 2$ and $1/4 \leq \mu_t \leq 2$ for $\delta \leq t \leq D_{1}$ with small enough $\widetilde \delta$ by (\ref{eqthm:mtstbd}).
A simple calculation yields that
\be \begin{split}
& \left\vert 1/\mu_t - f_{\rm rec}( f_{\mu}(t) ) \right\vert \leq 
\left\vert 1/\mu_t - 1/f_{\mu}(t) \right\vert + \left\vert   1/f_{\mu}(t)  - f_{\rm rec}( f_{\mu} (t)  ) \right\vert
\\
& \leq \{ \mu_t \wedge f_{\mu}(t)\}^{-2} \vert \mu_t - f_{\mu}(t) \vert + \widetilde \delta \leq 17 \widetilde \delta \label{eqprop2:nnmtrec}
\end{split} \ee
for $\delta \leq t \leq \delta^{\tau_{\rm t}}$.
For any $i \in [D_2], j \in [D], k \in \{0,\ldots,D_{3}-1\}$, consider functions $f_{i,j,k}, \widetilde f_{i,j,k} : \bbR \times \bbR \to \bbR$ such that
\bean
&& f_{i,j,k}(x,t)  = \sum_{l=0}^{D_{4}-1} \sum_{r=0}^{k} \binom{k}{r} \left\{ \frac{(-2)^{-l}}{k! l! \sqrt{2\pi} (r+2l+1)  } \right\} \left\{ \overline f_{i,j,k,l,r,r+2l+1} -  \underline f_{i,j,k,l,r,r+2l+1} \right\},
\\
&& \widetilde f_{i,j,k}(x,t)  =  \sum_{l=0}^{D_{4}-1} \sum_{r=0}^{k} \binom{k}{r} \left\{ \frac{(-2)^{-l}}{k! l! \sqrt{2\pi} (r+2l+2)  } \right\} \left\{ \overline f_{i,j,k,l,r,r+2l+2} -  \underline f_{i,j,k,l,r,r+2l+2} \right\},
\eean
for $x,t \in \bbR$, where
\bean
 && \overline f_{i,j,k,l,r,s}  = \widetilde f_{\rm mult}^{(2k+s+1)} \left(  f_{\rm rec}\left(f_{\mu}(t) \right) \cdot \mathbf{1}_{k+1}, f_{\sigma}(t) \cdot \mathbf{1}_{r}, \left\{ x - f_{\mu}(t) y_j^{(i)} \right\} \cdot \mathbf{1}_{k-r}, \overline{f}_{i,j}(x,t) \cdot \mathbf{1}_{s}    \right),
 \\
&& \underline f_{i,j,k,l,r,s}  = \widetilde f_{\rm mult}^{(2k+s+1)} \left(  f_{\rm rec}\left(f_{\mu}(t) \right) \cdot \mathbf{1}_{k+1}, f_{\sigma}(t) \cdot \mathbf{1}_{r}, \left\{ x - f_{\mu}(t) y_j^{(i)} \right\} \cdot \mathbf{1}_{k-r}, \underline{f}_{i,j}(x,t) \cdot \mathbf{1}_{s}    \right),
\eean
for $ s \in \{r+2l+1, r+2l+2 \}$.
Combining (\ref{eqprop2:nnmtrec}), (\ref{eqprop2:nnlogrecst}), (\ref{eqprop2:nnzij}) and (\ref{eqprop2:nnmult}) with the last two displays, the definition of $P_{i,j,k,l}(x, t)$ and $\widetilde P_{i,j,k,l}(x, t)$ implies that
\bean
&& \left\vert \sum_{l=0}^{D_{4}-1 }  \frac{ P_{i,j,k,l}(x, t) }{k!} -  f_{i,j,k}(x,t)   \right\vert 
\\
&& \leq  \sum_{l=0}^{D_{4}-1} \sum_{r=0}^{k} \binom{k}{r} \left\{ \frac{2^{-l+1}}{k! l! \sqrt{2\pi} (r+2l+2)  } \right\} \left\{ \widetilde \delta + 5(2k+r+2l+1) \delta^{-2k-r-2l-4} \widetilde \delta  \right\}
\eean
and
\bean
&& \left\vert \sum_{l=0}^{D_{4}-1 }  \frac{ \widetilde P_{i,j,k,l}(x, t) }{k!} -  \widetilde f_{i,j,k}(x,t)   \right\vert 
\\
&& \leq  \sum_{l=0}^{D_{4}-1} \sum_{r=0}^{k} \binom{k}{r} \left\{ \frac{2^{-l+1}}{k! l! \sqrt{2\pi} (r+2l+3)  } \right\} \left\{ \widetilde \delta + 5(2k+r+2l+2) \delta^{-2k-r-2l-5} \widetilde \delta  \right\}
\eean
for $i \in [D_2], j \in [D], k \in \{0,\ldots,D_{3}-1\}, \vert x \vert \leq  \mu_{t} + \tau_{\rm x} \sigma_{t} \sqrt{\log (1/\delta)}$ and $\delta \leq t \leq \delta^{\tau_{\rm t}}$ with small enough $\delta$ so that $ \sigma_{t}, \mu_{t}^{-1}, \vert x - y_j^{(i)} \vert, \tau_{\rm tail} \sqrt{ \log (1/\delta) } $ are all upper bounded by $ \delta^{-1}$.
Since $\sum_{r=0}^{k} \binom{k}{r} = 2^{k}$ and $2^{k}/k! \leq 2$ for all $k \in \bbN$,
the last two displays are bounded by
\be
 \sum_{l = 0}^{D_{4}-1} \left( \frac{ 2^{-l+2}}{l! \sqrt{2\pi} ( 2l + 1) } \right) \left\{ 1 + 5( 3 D_{4} + 2l 1 ) \delta^{- 3D_{3} - 2l } \right\} \widetilde \delta
\leq \delta^{-D_{7} \log(1/\delta)  } \widetilde \delta, \label{eqprop2:nnpijk}
\ee
where $D_{7} = D_{7}(\tau_{\rm tail})$.
For any $i \in [D_2], j \in [D], k \in \{0,\ldots,D_{3}-1\}, l \in \{0,\ldots,D_{4}-1\}, \vert x \vert \leq \mu_t + \tau_{\rm x} \sigma_{t} \sqrt{ \log(1/\delta) } $ and $\delta \leq t \leq \delta^{\tau_{\rm t}}$, we have
\bean
\left\vert \frac{ P_{i,j,k,l}(x, t) }{k!} \right\vert \leq  \left\{\frac{2^{-l-k}  }{k! l! \sqrt{2\pi}(r+2l+1)} \sum_{r = 0}^{k} \binom{k}{r} \right\}  \left\{ 2\mu_{t} + \tau_{\rm x} \sigma_{t} \sqrt{\log (1/\delta)} \right\}^{k} \left\{ \tau_{\rm tail} \sqrt{ \log (1/\delta) } \right\}^{k+2l+1}
\eean
and
\bean
\left\vert \frac{ \widetilde P_{i,j,k,l}(x, t) }{k!} \right\vert \leq  \left\{\frac{2^{-l-k}  }{k! l! \sqrt{2\pi}(r+2l+1)} \sum_{r = 0}^{k} \binom{k}{r} \right\}  \left\{ 2\mu_{t} + \tau_{\rm x} \sigma_{t} \sqrt{\log (1/\delta)}\right\}^{k} \left\{ \tau_{\rm tail} \sqrt{ \log (1/\delta) } \right\}^{k+2l+2}
\eean
by the definition of $P_{i,j,k,l}(x)$ and $\widetilde P_{i,j,k,l}(x)$.
Since $\sum_{r=0}^{k} \binom{k}{r} = 2^{k}$ and $2^{k}/k! \leq 2$ for all $k \in \bbN$,
the last two displays are bounded by $\{D_{8} \log (1/\delta) \}^{k+l+1}$, where $D_{8} = D_{8}(\tau_{\rm tail}, \tau_{\rm x})$.
Then,
\bean
&& \left\vert \sum_{l=0}^{D_{4}-1 }  \frac{ P_{i,j,k,l}(x) }{k!}   \right\vert \leq D_{4}\{D_{8} \log (1/\delta) \}^{D_{3} + D_{4} -1 } \leq \{ \log ( 1 / \delta) \}^{ D_{9} \log(1/\delta)  } \quad \text{and}
\\
&&  \left\vert \sum_{l=0}^{D_{4}-1 }  \frac{ \widetilde P_{i,j,k,l}(x) }{k!}   \right\vert \leq D_{4}\{D_{8} \log (1/\delta) \}^{ D_{3} + D_{4} - 1 } \leq \{ \log ( 1 / \delta) \}^{ D_{9} \log(1/\delta)  }
\eean
for $i \in [D_2], j \in [D], k \in \{0,\ldots,D_{3}-1\}, \vert x \vert \leq \mu_{t} + \tau_{\rm x} \sigma_{t} \sqrt{\log (1/\delta)} $ and $\delta \leq t \leq \delta^{\tau_{\rm t}}$,
where $D_{9} = D_{9}(\tau_{\rm tail}, D_{8})$.
Consider functions $f, \widetilde f^{(1)},\ldots, \widetilde f^{(D)} : \bbR^{D} \times \bbR \to \bbR$ such that
\bean
&& f(\bx,t) = \sum_{i=1}^{D_{2}}  \sum_{0 \leq k. < D_{3}} \left\{ (\D^{\bk} p_0)(\by^{(i)})  \right\}
f_{\rm mult}^{(D)} \left( f_{i,1,k_1}(x_1,t),\ldots, f_{i,D,k_D}(x_D, t)  \right) \quad \text{and}
\\
&& \widetilde f^{(h)}(\bx,t)= \sum_{i=1}^{D_{2}} \sum_{0 \leq k. < D_{3}} \left\{(\D^{\bk} p_0)(\by^{(i)})  \right\} f_{\rm mult}^{(D)}\left(\widetilde f_{i,h,k_h}(x_h, t), \underbrace{   f_{i,1,k_1}(x_1,t),\ldots, f_{i,D,k_D}(x_D, t)}_{{\rm without} \   f_{i,h,k_h}(x_h,t)}    \right),
\eean
where $ f_{\rm mult}^{(D)} \in \cF_{\rm NN}( L_{\rm mult}^{(D)},  \bd_{\rm mult}^{(D)}, s_{\rm mult}^{(D)},  M_{\rm mult}^{(D)})$ is the neural network in Lemma~\ref{secnn:mult} with
\bean
&&  L_{\rm mult}^{(D)} \leq C_{N,1} \log D  [ \log(1/ \widetilde \delta) + D D_{9} \log(1/\delta) \log \log (1/\delta) ], \quad  d_{\rm mult}^{(D)} = (D,48D,\ldots,48D,1)^{\top},
\\
&&  s_{\rm mult}^{(D)} \leq   C_{N,1} D [\log(1/ \widetilde \delta) 
+  D_{9} \log(1/\delta) \log \log (1/\delta) ], \quad   M_{\rm mult}^{(D)} = \{ \log ( 1 / \delta) \}^{ D D_{9} \log(1/\delta)  }
\eean
such that 
\bean
\left\vert f_{\rm mult}^{(D)}(\widetilde x_1,\ldots, \widetilde x_{D}) - \prod_{i=1}^{D} x_i \right\vert \leq \widetilde \delta + D \{ \log ( 1 / \delta) \}^{ (D-1) D_{9} \log(1/\delta)  } \widetilde \epsilon 
\eean
for any $\bx = (x_1,\ldots,x_{D}) \in \bbR^{D}$ with $ \|\bx\|_{\infty} \leq \{ \log ( 1 / \delta) \}^{ D_{9} \log(1/\delta)  } $ and $\widetilde \bx = (\widetilde x_1,\ldots,\widetilde x_{D}) \in \bbR^{D}$ with $\| \bx - \widetilde \bx\|_{\infty} \leq \widetilde \epsilon$.
Combining (\ref{eqprop2:nnpijk}) with the last display, we have
\bean
&& \left\vert f(\bx,t) - g_t(\bx) \right\vert
\\
&& \leq  \sum_{i=1}^{D_{2}}  \sum_{0 \leq k. < D_{3}} \left\vert (\D^{\bk} p_0)(\by^{(i)})  \right\vert \left\{ 1 + D \delta^{-D_{7} \log(1/\delta)  } \{ \log ( 1 / \delta) \}^{ (D-1) D_{9} \log(1/\delta)  }  \right\} \widetilde \delta
\\
&& \leq K D_{2} D_{3}^{D} \left\{ 1 + D \delta^{-D_{7} \log(1/\delta)  } \{ \log ( 1 / \delta) \}^{ (D-1) D_{9} \log(1/\delta)  }  \right\} \widetilde \delta 
\leq \delta^{- D_{10} \log(1/\delta)  } \widetilde \delta
\eean
for $\| \bx \|_{\infty} \leq \mu_{t} + \tau_{\rm x} \sigma_{t} \sqrt{\log (1/\delta)} $ and $\delta \leq t \leq D_{1}$, where $D_{10} = D_{10}(D,K,\widetilde \tau_{\rm bd}, D_{7},D_{9})$ is a large enough constant. Similarly, we have
\bean
\left\vert \widetilde f^{(h)}(\bx,t) - \widetilde g^{(h)}_t(\bx) \right\vert 
\leq \delta^{- D_{10} \log(1/\delta) } \widetilde \delta, \quad h \in [D]
\eean
for $\| \bx \|_{\infty} \leq \mu_{t} + \tau_{\rm x} \sigma_{t} \sqrt{\log (1/\delta)} $ and $\delta \leq t \leq \delta^{\tau_{\rm t}}$.
Let $\widetilde \delta = \delta^{D_{10} \log(1/\delta) +1 }$. Combining (\ref{eqprop2:ptpoly}) and (\ref{eqprop2:nablapoly}) with the last two displays, we have
\bean
&& \left\vert p_t(\bx) - f(\bx,t) \right\vert
\leq \delta + 3 D_{5} \delta \{ \log (1/\delta)\}^{D}
\leq \left( 1 + 3 D_{5} \right) \delta \{ \log (1/\delta)\}^{D}
\qquad \text{and}
\\
&& \left\vert \sigma_t  \left( \nabla p_t(\bx) \right)_{h} - \widetilde f^{(h)}(\bx,t) \right\vert 
\leq \delta + 3 D_{6} \delta \{ \log (1/\delta)\}^{D}
\leq \left( 1 + 3 D_{6} \right) \delta \{ \log (1/\delta)\}^{D}, \quad h \in [D]
\eean
for $\mu_t - \tau_{\rm x}  \{ \log(1/\sigma_t) \}^{-\tau_{\rm bd}} \leq \| \bx \|_{\infty} \leq \mu_{t} + \tau_{\rm x} \sigma_{t} \sqrt{\log (1/\delta)} $ and $\delta \leq t \leq \delta^{\tau_{\rm t}}$.
Note that
\bean
D_{2} \leq D_{13} \{ \log ( 1 / \delta ) \}^{D \widetilde \tau_{\rm bd}},
\quad D_{3} \leq D_{13} \log (1 / \delta),
\quad D_{4} \leq D_{13} \log (1 / \delta), 
\eean
where $D_{13} = D_{13}(D, \tau_{\rm tail})$.
Consider a function $ \bff : \bbR^{D} \times \bbR \to \bbR^{D+1}$ such that
\bean
\bff(\bx,t) = ( \widetilde f^{(1)}(\bx,t) , \ldots , \widetilde f^{(D)}(\bx,t), f(\bx,t) )^{\top}
\eean
for $\bx \in \bbR^{D}$ and $ t \in \bbR$. Lemma~\ref{secnn:comp}, Lemma~\ref{secnn:par}, Lemma~\ref{secnn:lin} and Lemma~\ref{secnn:id}  implies that $\bff \in \cF_{\rm NN}(L,\bd,s,M)$ with
\bean
&& L \leq D_{11} \{ \log(1/\delta)\}^{4}, \quad \| \bd\|_{\infty} \leq D_{11} \{ \log(1/\delta)\}^{7+D \widetilde \tau_{\rm bd} + D},
\\
&& s \leq D_{11}  \{ \log(1/\delta)\}^{11 + D \widetilde \tau_{\rm bd} + D }, \quad M \leq \exp( D_{11} \{\log(1/\delta) \}^{2}),
\eean
where $D_{11} = D_{11}( D, K, \tau_{\rm tail}, C_{N,1},  C_{N,4}, C_{N,5}, D_{9} , D_{13})$.
The assertion follows by re-defining the constants.

\end{proof}

\subsection{Proof of Proposition~\ref{secpt:3}}

\begin{proof}
Let
\bean
\tau_{\rm t} = \overline{\tau}^{-1}
\quad \text{and} \quad
\tau_{\rm tail}
= \left\{  4(D \overline{\tau} \tau_{\rm t} + 1 ) \vee \left( \frac{D+1}{e} \right) \right\}^{\frac{1}{2}}.
\eean
Let $t_* > 0$ and $0 < \delta < 1$ be small enough values as described below.
By the Markov property of $(\bX_{t})_{t \geq 0}$, we have
\bean
p_{t_*+t}(\bx) = \int_{\bbR^{D}} p_{t_*}(\by) \phi_{\sigma_{t}}(\bx-\mu_{t} \by) \d \by
\eean
for any $\bx \in \bbR^{D}$ and $t \geq 0$.
Let $C_{S,1} = C_{S,1}(D,K,\tau_1)$ be the constant in Lemma~\ref{secsc:ptbound}. Since $\vert p_{t_*}(\bx) \vert \leq C_{S,1}$ for any $\bx \in \bbR^{D}$, we have
\bean
&& \int_{  \| \bx-\mu_{t}\by \|_{\infty} \geq \tau_{\rm tail} \sigma_{t} \sqrt{\log(1/\delta)} } \ p_{t_*}(\by) \phi_{\sigma_{t}}(\bx-\mu_{t} \by) \d \by
\\
&& \leq C_{S,1} \int_{  \| \bx-\mu_{t}\by \|_{\infty} \geq \tau_{\rm tail} \sigma_{t} \sqrt{\log(1/\delta)}    } \ \phi_{\sigma_{t}}(\bx-\mu_{t} \by) \d \by  = C_{S,1} \mu_{t}^{-D} \int_{  \| \bz \|_{\infty} \geq \tau_{\rm tail}  \sqrt{\log(1/\delta)}   } \phi_{1}(\bz) \d \bz
\\
&& \leq C_{S,1} \mu_{t}^{-D} \sum_{i=1}^{D} \int_{\vert z_i \vert \geq \tau_{\rm tail}  \sqrt{\log(1/\delta)}    } \phi(z_i) \d z_i \leq 2 C_{S,1} D \mu_t^{-D} \delta^{\frac{\tau_{\rm tail}^2}{2}}
\eean
for $ \bx \in \bbR^{D}$ and $t \geq 0$, where the last inequality holds by the tail probability of the standard normal distribution.
Also, 
\bean
&& \left\vert \int_{  \| \bx-\mu_{t}\by \|_{\infty} \geq  \tau_{\rm tail} \sigma_{t} \sqrt{\log(1/\delta)}  } \  \left( \frac{\mu_{t} y_i - x_i }{\sigma_t} \right) p_{t_*}(\by) \phi_{\sigma_{t}}(\bx-\mu_{t} \by) \d \by \right\vert
\\
&& \leq C_{S,1} \mu_{t}^{-D}  \int_{ \| \bz\|_{\infty} \geq \tau_{\rm tail} \sqrt{\log(1/\delta)} } \  \left\vert  z_i \right\vert \phi_{1} (\bz) \d \bz
\leq C_{S,1} \mu_{t}^{-D}  \sum_{j=1}^{D}  \int_{ \vert z_i \vert \geq \tau_{\rm tail} \sqrt{\log(1/\delta)} } \ \vert z_i \vert \phi_{1}(\bz) \d \bz
\\
&& =  C_{S,1}  \mu_{t}^{-D} \left\{  (D-1)  \bbE [\vert Z \vert] \int_{\vert z \vert \geq \tau_{\rm tail} \sqrt{\log(1/\delta)}    } \phi(z) \d z + \int_{\vert z \vert \geq \tau_{\rm tail} \sqrt{\log(1/\delta)}    } \vert z \vert \phi(z) \d z  \right\}
\\
&& \leq 2  C_{S,1}  \mu_{t}^{-D} \left\{  (D-1) \sqrt{2/\pi} \delta^{\tau_{\rm tail}^2 /2} + \sqrt{\bbE[Z^2]} \delta^{\tau_{\rm tail}^2 /4}  \right\}
\\
&&  \leq 2  C_{S,1} D \mu_t^{-D}  \delta^{\frac{\tau_{\rm tail}^2}{4}}, \quad i \in [D]
\eean
for $ \bx \in \bbR^{D}$ and $t \geq 0$, where $Z$ denotes the one-dimensional standard normal random variable and the second inequality holds by the Cauchy-Schwarz inequality.
Note that $\exp(-\overline{\tau} t) \leq \mu_t \leq \exp(-\underline{\tau} t)$ for $t \geq 0$. 
Then,
\be \begin{split}
 & \left\vert  p_{t_*+t}(\bx) - \int_{  \| \bx-\mu_{t}\by \|_{\infty} \leq \tau_{\rm tail} \sigma_{t} \sqrt{\log (1/\delta)} } \ p_{t_*}(\by) \phi_{\sigma_{t}}(\bx-\mu_{t} \by) \d \by \right\vert
 \leq 2 C_{S,1} D  \delta^{\frac{\tau_{\rm tail}^2}{2} - D\overline{\tau} \tau_{\rm t}},
 \\
 & \left\| \sigma_{t} \nabla p_{t_*+t}(\bx) -  \int_{  \| \bx-\mu_{t}\by \|_{\infty} \leq \tau_{\rm tail} \sigma_{t} \sqrt{\log (1/\delta)}  } \ \left( \frac{\mu_t \by - \bx}{\sigma_t} \right)^{\top} p_{t_*}(\by) \phi_{\sigma_{t}}(\bx-\mu_{t} \by) \d \by \right\|_{\infty}
 \\
 & \leq 2  C_{S,1} D  \delta^{\frac{\tau_{\rm tail}^2}{4} - D\overline{\tau} \tau_{\rm t}}
 \label{eqprop3:tailbd}
\end{split} \ee
for $\bx \in \bbR^{D}$ and $0 \leq t \leq \tau_{\rm t} \log (1/\delta)$.
Lemma~\ref{secsc:ptbound} implies that $p_{t_*}(\bx) \leq C_{S,1} \delta^{\tau_{\rm tail}^2/2} $ for $\|\bx\|_{\infty} \geq \mu_{t_*} + \tau_{\rm tail} \sigma_{t_*} \sqrt{\log(1/\delta)}$. Then,
\bean
&& \left\vert \int_{ \substack{  \| \bx-\mu_{t}\by \|_{\infty} \leq \tau_{\rm tail} \sigma_{t} \sqrt{\log (1/\delta)} \\  \|\by\|_{\infty} \geq \mu_{t_*} + \tau_{\rm tail} \sigma_{t_*} \sqrt{\log(1/\delta)} }} p_{t_*}(\by) \phi_{\sigma_{t}}(\bx-\mu_{t} \by) \d \by  \right\vert
\\
&& \leq 2 C_{S,1}   \delta^{\frac{\tau_{\rm tail}^2}{2}}  \int_{ \substack{  \| \bx-\mu_{t}\by \|_{\infty} \leq \tau_{\rm tail} \sigma_{t} \sqrt{\log (1/\delta)} \\  \|\by\|_{\infty} \geq \mu_{t_*} + \tau_{\rm tail} \sigma_{t_*} \sqrt{\log(1/\delta)} }} \phi_{\sigma_{t}}(\bx-\mu_{t} \by) \d \by 
\\
&& \leq  C_{S,1}  \mu_t^{-D} \delta^{\frac{\tau_{\rm tail}^2}{2}} \int_{\bbR^{D}} \phi_{1}(\bz) \d \bz \leq  C_{S,1} \delta^{\frac{\tau_{\rm tail}^2}{2} - D \overline{\tau} \tau_{\rm t} }
\eean
for $\bx \in \bbR^{D}$ and $0 \leq t \leq \tau_{\rm t} \log (1/\delta)$, where the last inequaltiy holds because $\mu_t^{-D} \leq \exp(D\overline{\tau}t)$.
Also,
\bean
&& \left\vert \int_{ \substack{  \| \bx-\mu_{t}\by \|_{\infty} \leq \tau_{\rm tail} \sigma_{t} \sqrt{\log (1/\delta)} \\  \|\by\|_{\infty} \geq \mu_{t_*} + \tau_{\rm tail} \sigma_{t_*} \sqrt{\log(1/\delta)} }} \left( \frac{\mu_t y_i - x_i}{\sigma_t} \right) p_{t_*}(\by) \phi_{\sigma_{t}}(\bx-\mu_{t} \by) \d \by  \right\vert
\\
&& \leq C_{S,1} \delta^{\frac{\tau_{\rm tail}^2}{2}} \mu_t^{-D} \int_{\bbR^{D}} \vert z_i \vert \phi_{1}(\bz) \d \bz \leq \left(\sqrt{\frac{2}{\pi}} \right) C_{S,1}  \delta^{\frac{\tau_{\rm tail}^2}{2} - D\overline{\tau} \tau_{\rm t}}
\eean
for $\bx \in \bbR^{D}$ and $0 \leq t \leq \tau_{\rm t} \log (1/\delta)$.
Combining with (\ref{eqprop3:tailbd}), we have
\be \begin{split}
 & \left\vert  p_{t_*+t}(\bx) - \int_{ \substack{  \| \bx-\mu_{t}\by \|_{\infty} \leq \tau_{\rm tail} \sigma_{t} \sqrt{\log (1/\delta)} \\  \|\by\|_{\infty} \leq \mu_{t_*} + \tau_{\rm tail} \sigma_{t_*} \sqrt{\log(1/\delta)} }} \ p_{t_*}(\by) \phi_{\sigma_{t}}(\bx-\mu_{t} \by) \d \by \right\vert
 \\
 & \leq  C_{S,1} (2D+1)  \delta^{\frac{\tau_{\rm tail}^2}{2} - D\overline{\tau} \tau_{\rm t}}
  \label{eqprop3:tailbd2}
\end{split} \ee
and
\bean
 && \left\| \sigma_{t} \nabla p_{t_*+t}(\bx) -  \int_{ \substack{  \| \bx-\mu_{t}\by \|_{\infty} \leq \tau_{\rm tail} \sigma_{t} \sqrt{\log (1/\delta)} \\  \|\by\|_{\infty} \leq \mu_{t_*} + \tau_{\rm tail} \sigma_{t_*} \sqrt{\log(1/\delta)} }} \ \left( \frac{\mu_t \by - \bx}{\sigma_t} \right)^{\top} p_{t_*}(\by) \phi_{\sigma_{t}}(\bx-\mu_{t} \by) \d \by \right\|_{\infty}
 \\
 && \leq C_{S,1} (2D+ \sqrt{2/\pi})  \delta^{\frac{\tau_{\rm tail}^2}{4} - D\overline{\tau} \tau_{\rm t}}
\eean
for $\bx \in \bbR^{D}$ and $0 \leq t \leq \tau_{\rm t} \log (1/\delta)$.
Let $m_* \in \bbN_{\geq 2}$ be a large enough value as described below and $\by^{(1)},\ldots,\by^{(D_1)} \in \bbR^{D}$ be distinct vectors satisfying that
\bean
&& \left\{\by^{(1)},\ldots,\by^{(D_1)} \right\} = \left\{ \tau_* (n_1,\ldots,n_{D})^{\top} : n_i \in \bbZ, i \in [D] \right\} \cap \left\{ \by \in \bbR^{D} : \| \by \|_{\infty} \leq (m_*-1)\tau_* \right\},
\eean
where $\tau_* = \{ \mu_{t_*} + \tau_{\rm tail} \sigma_{t_*} \sqrt{\log(1/\delta)}\} / m_*$ and $D_{1} = (2m_*-1)^{D}$. Let $\cY_i = \{ \by \in \bbR^{D} : \| \by-\by^{(i)} \|_{\infty} \leq \tau_* \}$ for $i \in [D_1]$.
Taylor's theorem for multivariate function implies that
\bean
p_{t_*}(\by) = \sum_{0\leq k. < \tau_{\rm sm}} \frac{(\D^{\bk} p_{t_*} )(\by^{(i)}) }{\bk !} (\by - \by^{(i)})^{\bk} +  \sum_{k. = \tau_{\rm sm}} \frac{(\D^{\bk} p_{t_*} )(\xi\by^{(i)} + (1-\xi) \by ) }{\bk !} (\by - \by^{(i)})^{\bk}
\eean
for a suitable $\xi \in [0,1]$ and $i \in [D_1], \by \in \bbR^{D}$, where $(\by-\by^{(i)})^{\bk} = \prod_{j=1}^{D} (y_j - y^{(i)}_j)^{k_j}$, $\bk ! = \prod_{j=1}^{D} k_j!$ and $k. = \| \bk \|_{1}$.
Combining with Lemma~\ref{secsc:ptderivativebound}, we have
\bean
&& \left\vert p_{t_*}(\by) -  \sum_{0\leq k. <\tau_{\rm sm}} \frac{(\D^{\bk} p_{t_*} )(\by^{(i)}) }{\bk !} (\by - \by^{(i)})^{\bk} \right\vert \cdot 1\{ \|\by-\by^{(i)} \|_{\infty} \leq \tau_* \}
\\
&& \leq \sum_{k. = \tau_{\rm sm}} C_{S,3} \sigma_{t_*}^{-\tau_{\rm sm}} \prod_{j=1}^{D} \left( \frac{e \tau_*}{k_j} \right)^{k_j} \leq C_{S,3} (\tau_{\rm sm}+1)^{D} \left( \frac{e \tau_*}{\sigma_{t_*}} \right)^{\tau_{\rm sm}}
\eean
for $\by \in  \bbR^{D}$ and $i \in [D_1]$ because  $k! \geq k^{k} e^{-k}$ for any $k \in \bbZ_{\geq 0}$, where $C_{S,3} = C_{S,3}(D,K,\tau_{\rm sm}, \overline{\tau}, \underline{\tau} )$ is the constant in Lemma~\ref{secsc:ptderivativebound}.
Since $\cY_1,\ldots,\cY_{D_{1}}$ are mutually disjoint except on a set of Lebesgue measure zero and $ \bigcup_{i=1}^{D_{1}} \cY_i = \{\by \in \bbR^{D}:  \| \by \|_{\infty} \leq \mu_{t_*} + \tau_{\rm tail} \sigma_{t_*} \sqrt{\log(1/\delta)} \}$, 
we have
\bean
&& \int_{   \|\by\|_{\infty} \leq \mu_{t_*} + \tau_{\rm tail} \sigma_{t_*} \sqrt{\log(1/\delta)} } \ g(\by) \d \by = \sum_{i=1}^{D_1} \int_{   \|\by - \by^{(i)} \|_{\infty} \leq \tau_* }  g(\by) \d \by 
\\
&& = \int_{   \|\by\|_{\infty} \leq \mu_{t_*} + \tau_{\rm tail} \sigma_{t_*} \sqrt{\log(1/\delta)} } \ \sum_{i=1}^{D_1} g(\by) \cdot 1\{ \|\by-\by^{(i)} \|_{\infty} \leq \tau_* \} \d \by
\eean
for any continuous function $g : \bbR^{D} \to \bbR$.
Combining (\ref{eqprop3:tailbd2}) with the last two displays, we have
\be \begin{split}
& \left\vert  p_{t_*+t}(\bx) - \sum_{i=1}^{D_1}   \int_{ \substack{  \| \bx-\mu_{t}\by \|_{\infty} \leq \tau_{\rm tail} \sigma_{t} \sqrt{\log (1/\delta)} \\  \| \by - \by^{(i)} \|_{\infty} \leq \tau_* }} \left\{ \sum_{0\leq k. <\tau_{\rm sm}} \frac{(\D^{\bk} p_{t_*} )(\by^{(i)}) }{\bk !}(\by-\by^{(i)})^{\bk} \right\} \phi_{\sigma_{t}}(\bx-\mu_{t} \by) \d \by \right\vert
\\
& \leq  C_{S,1} (2 D +1)  \delta^{\frac{\tau_{\rm tail}^2}{2} - D\overline{\tau} \tau_{\rm t}} 
\\
& \quad +   C_{S,3} (\tau_{\rm sm}+1)^{D} \left( \frac{e \tau_*}{\sigma_{t_*}} \right)^{\tau_{\rm sm}} \int_{ \substack{  \| \bx-\mu_{t}\by \|_{\infty} \leq \tau_{\rm tail} \sigma_{t} \sqrt{\log (1/\delta)} \\  \|\by\|_{\infty} \leq \mu_{t_*} + \tau_{\rm tail} \sigma_{t_*} \sqrt{\log(1/\delta)} }} \ \sum_{i=1}^{D_1}  \phi_{\sigma_{t}}(\bx-\mu_{t} \by) \cdot 1\{ \|\by-\by^{(i)} \|_{\infty} \leq \tau_* \} \d \by
\\
& \leq C_{S,1} (2 D +1)  \delta^{\frac{\tau_{\rm tail}^2}{2} - D\overline{\tau} \tau_{\rm t}} +  C_{S,3} (\tau_{\rm sm}+1)^{D} \left( \frac{e \tau_* }{\sigma_{t_*}} \right)^{\tau_{\rm sm}} \delta^{ -D\overline{\tau} \tau_{\rm t} } 
\label{eqprop3:p0errorbd}
\end{split} \ee
for $\bx \in \bbR^{D}$ and $0 \leq t \leq \tau_{\rm t} \log(1/\delta)$, where the last inequality holds because $\int_{\bbR^{D}} \phi_{\sigma_t}(\bx-\mu_t \by) \d \by = \mu_t^{-D}$ and $\mu_t^{-D} \leq \exp(D\overline{\tau}t) \leq \delta^{-D\overline{\tau}\tau_{\rm t} }$.
Also, we have
\be \begin{split}
& \left\| \sigma_t \nabla p_{t_*+t}(\bx) - \sum_{i=1}^{D_1} \int_{ \substack{  \| \bx-\mu_{t}\by \|_{\infty} \leq \tau_{\rm tail} \sigma_{t} \sqrt{\log (1/\delta)} \\  \| \by - \by^{(i)} \|_{\infty} \leq \tau_* }}    \right.
\\
& \left. \qquad \left\{ \sum_{0\leq k. <\tau_{\rm sm}} \frac{(\D^{\bk} p_{t_*} )(\by^{(i)}) }{\bk !}  \ \left( \frac{\mu_t \by - \bx}{\sigma_t} \right)^{\top} (\by-\by^{(i)})^{\bk} \right\} \phi_{\sigma_{t}}(\bx-\mu_{t} \by) \d \by \right\|_{\infty}
\\
& \leq    C_{S,1} (2D + \sqrt{2 / \pi})  \delta^{\frac{\tau_{\rm tail}^2}{4} - D\overline{\tau} \tau_{\rm t}}  +  C_{S,3} (\tau_{\rm sm}+1)^{D} \left( \frac{e \tau_*}{\sigma_{t_*}} \right)^{\tau_{\rm sm}}
\\
& \qquad \cdot  \int_{ \substack{  \| \bx-\mu_{t}\by \|_{\infty} \leq \tau_{\rm tail} \sigma_{t} \sqrt{\log (1/\delta)} \\  \|\by\|_{\infty} \leq \mu_{t_*} + \tau_{\rm tail} \sigma_{t_*} \sqrt{\log(1/\delta)} }} \sum_{i=1}^{D_1}  \left\| \frac{\mu_t \by - \bx}{\sigma_t} \right\|_{\infty}  \phi_{\sigma_{t}}(\bx-\mu_{t} \by) \cdot 1\{ \|\by-\by^{(i)} \|_{\infty} \leq \tau_* \} \d \by
\\
& \leq    C_{S,1} (2D + \sqrt{2 / \pi}) \delta^{\frac{\tau_{\rm tail}^2}{4} - D\overline{\tau} \tau_{\rm t}} + C_{S,3} \tau_{\rm tail} (\tau_{\rm sm}+1)^{D} \left( \frac{e \tau_*}{\sigma_{t_*}} \right)^{\tau_{\rm sm}} \delta^{ -D\overline{\tau} \tau_{\rm t} }  \sqrt{\log(1/\delta)} \label{eqprop3:p0errorbdgrad}
\end{split} \ee
for $\bx \in \bbR^{D}$ and $0 \leq t \leq \tau_{\rm t} \log(1/\delta)$, where the last inequality holds because $\int_{\bbR^{D}} \phi_{\sigma_t}(\bx-\mu_t \by) \d \by = \mu_t^{-D}$ and $\mu_t^{-D} \leq \exp(D\overline{\tau}t) \leq \delta^{-D\overline{\tau}\tau_{\rm t} }$.

For $x,y \in \bbR$ and $t > 0$, Taylor's theorem yields that 
\bean
\left\vert \exp \left( -\frac{(x-\mu_{t}y)^2}{2 \sigma_{t}^2} \right) - \sum_{l=0}^{D_{2}-1} \frac{1}{l!} \left( -\frac{(x-\mu_{t}y)^2}{2 \sigma_{t}^2} \right)^{l} \right\vert 
\leq \frac{1}{D_{2} !} \left( \frac{(x-\mu_{t}y)^2}{2 \sigma_{t}^2} \right)^{D_{2}},
\eean
where $D_{2} = \lfloor 2e \tau_{\rm tail}^2 \log(1/\delta) \rfloor +1$ with small enough $\delta$ so that $D_{2} \geq 1$.
For $\vert x - \mu_{t} y \vert \leq  \tau_{\rm tail} \sigma_{t} \sqrt{\log(1/\delta)}$ and $t \geq 0$, the last display is further bounded by
\bean
\left(\frac{e \tau_{\rm tail}^2 \log (1/\delta)  }{D_{2}} \right)^{D_{2}} \leq 2^{-D_{2}} \leq \delta^{2   e\tau_{\rm tail}^2  \log 2} \leq \delta^{e\tau_
{\rm tail}^2},
\eean
where the last inequality holds because $1/2 \leq \log 2$ and $0 < \delta < 1$.
Then,
\bean
&& \left\vert \int_{ \substack{\vert x - \mu_{t} y \vert \leq  \tau_{\rm tail} \sigma_{t} \sqrt{\log(1/\delta)} \\ \vert y - \widetilde y \vert \leq \tau_* } } \ \left(\frac{\mu_t y - x}{\sigma_t}\right)^{u}  (y-\widetilde y)^{k} \exp \left( -\frac{(x-\mu_{t}y)^2}{2 \sigma_{t}^2} \right)  \d y \right.
\\
&& \quad \left. - \sum_{l=0}^{D_{2}-1} \frac{1}{l!}
\int_{ \substack{ \vert x - \mu_{t} y \vert \leq \tau_{\rm tail} \sigma_{t} \sqrt{\log(1/\delta)} \\ \vert y - \widetilde y \vert \leq \tau_* } } \ \left(\frac{\mu_t y - x}{\sigma_t} \right)^{u}  (y-\widetilde y)^{k}  \left( -\frac{(x-\mu_{t}y)^2}{2 \sigma_{t}^2} \right)^{l} \d y \right\vert
\\
&& \leq \delta^{e\tau_{\rm tail}^2} \int_{ \substack{ \vert x - \mu_{t} y \vert \leq  \tau_{\rm tail} \sigma_{t} \sqrt{\log(1/\delta)} \\ \vert y - \widetilde y \vert \leq \tau_* } } \left\vert \frac{\mu_t y - x}{\sigma_t} \right\vert^{u} \vert y - \widetilde y \vert^{k} \d y 
\\
&& \leq \delta^{e\tau_{\rm tail}^2} \int_{\vert y \vert \leq m_* \tau_*} \left\{ \tau_{\rm tail} \sqrt{\log(1/\delta)} \right\}^{u}    \tau_*^{k}   \ \d y
= m_* \tau_*^{k+1} \left\{ \tau_{\rm tail} \sqrt{\log(1/\delta)} \right\}^{u} 
 \delta^{e\tau_{\rm tail}^2}
\eean
for any $x, \widetilde y \in \bbR$ with $\vert \widetilde y \vert = (m_*-1)\tau_*$,$t \geq 0, k \in \bbZ_{\geq 0}$ and $u \in \{0,1\}$.
Note that 
$ \mu_{t_*} \leq 1- \underline{\tau}t_* / 2 \leq 1$ and $\sqrt{\underline{\tau} t_*} \leq \sigma_{t_*} \leq \sqrt{2\overline{\tau}t_*} \leq 1$ with $t_* \leq (2\overline{\tau})^{-1}$.
For $k \leq \tau_{\rm sm}$, the last display is bounded by 
\bean
&& (1+\tau_{\rm tail})^{k+1} m_*^{-k}    \delta^{e\tau_{\rm tail}^2} \{ \log(1/\delta) \}^{\frac{k+1}{2}} \left\{ \tau_{\rm tail} \sqrt{\log(1/\delta)} \right\}^{u} 
\\
&& \leq \tau_{\rm tail} (1+\tau_{\rm tail})^{\tau_{\rm sm}+1} \delta^{e\tau_{\rm tail}^2}  \{\log(1/\delta)\}^{\frac{\tau_{\rm sm}}{2}+1} \
\eean
because $m_* \tau_* = \mu_{t_*}  + \sigma_{t_*} \tau_{\rm tail} \sqrt{ \log (1/\delta)}$ and  $\mu_{t_*}  + \sigma_{t_*} \tau_{\rm tail} \sqrt{ \log (1/\delta)} \leq (1+\tau_{\rm tail}) \sqrt{\log(1/\delta)}$ with small enough $\delta$ so that $\delta \leq 1/e$ and $\tau_{\rm tail} \sqrt{\log(1/\delta)} \geq 1$.
Moreover,
\bean
&& \left\vert \sum_{l=0}^{D_{2}-1} \frac{1}{l!}
\int_{ \substack{ \vert x - \mu_{t} y \vert \leq \tau_{\rm tail} \sigma_{t} \sqrt{\log(1/\delta)} \\ \vert y - \widetilde y \vert \leq \tau_* } } \ \left(\frac{\mu_t y - x}{\sigma_t} \right)^{u}  (y-\widetilde y)^{k}  \left( -\frac{(x-\mu_{t}y)^2}{2 \sigma_{t}^2} \right)^{l} \d y \right\vert
\\
&& \leq  \tau_{\rm tail}\left(1+ \delta^{e\tau_{\rm tail}^2} \right)
 (1+\tau_{\rm tail})^{ \tau_{\rm sm} +1}  \{ \log(1/\delta) \}^{\frac{ \tau_{\rm sm}}{2}+1} 
\eean
for any $x, \widetilde y \in \bbR$ with $\vert \widetilde y \vert = (m_*-1)\tau_*$,$t \geq 0, 0 \leq k \leq \tau_{\rm sm}$ and $u \in \{0,1\}$
 because
\bean
&& \left\vert \int_{ \substack{\vert x - \mu_{t} y \vert \leq  \tau_{\rm tail} \sigma_{t} \sqrt{\log(1/\delta)} \\ \vert y - \widetilde y \vert \leq \tau_* } } \ \left(\frac{\mu_t y - x}{\sigma_t}\right)^{u}  (y-\widetilde y)^{k} \exp \left( -\frac{(x-\mu_{t}y)^2}{2 \sigma_{t}^2} \right)  \d y \right\vert 
\\
&& \leq  \int_{\vert y \vert \leq m_* \tau_*} \left\{ \tau_{\rm tail} \sqrt{\log(1/\delta)} \right\}^{u} \tau_*^k \d y \leq m_* \tau_*^{k+1} \tau_{\rm tail} \sqrt{\log(1/\delta)} 
\\
&& \leq \tau_{\rm tail} (1+\tau_{\rm tail})^{\tau_{\rm sm}+1}  \{ \log(1/\delta) \}^{\frac{\tau_{\rm sm}}{2}+1}.
\eean
Note that $\vert \prod_{j=1}^{D} x_j - \prod_{j=1}^{D} \widetilde x_j \vert \leq D C^{D-1} \| \bx - \widetilde \bx\|_{\infty}$ for any $\bx, \widetilde \bx \in [-C,C]^{D}$.
Since the last two displays are bounded by
$
2\tau_{\rm tail}(1+\tau_{\rm tail})^{\tau_{\rm sm}+1}  \{\log(1/\delta)\}^{\frac{\tau_{\rm sm}}{2} +1},
$
we have
\be \begin{split}
& \left\vert \int_{ \substack{ \| \bx-\mu_{t}\by \|_{\infty} \leq \tau_{\rm tail} \sigma_{t} \sqrt{\log (1/\delta)}
\\ \| \by - \by^{(i)} \|_{\infty} \leq \tau_*  } } \ \left(\frac{\mu_t y_h - x_h}{\sigma_t}\right)^{u}    (\by - \by^{(i)})^{\bk} \phi_{\sigma_{t}}(\bx-\mu_{t} \by) \d \by  -   \right.
\\
& \prod_{j=1}^{D}  \sum_{l=0}^{D_{2}-1} \left.  \frac{1}{l! \sqrt{2\pi } \sigma_t}
\int_{ \substack{ \vert x_j-\mu_{t} y_j \vert \leq \tau_{\rm tail} \sigma_{t} \sqrt{\log (1/\delta)}
\\ \vert y_j - y^{(i)}_j \vert \leq \tau_* } } 
\ \left(\frac{\mu_t y_h - x_h}{\sigma_t}\right)^{u \cdot 1\{h = j\} }   (y_j - y^{(i)}_j)^{k_j} \left( -\frac{(x_j -\mu_{t} y_j)^2}{2 \sigma_{t}^2} \right)^{l}  \d y_j \right\vert
\\
& \leq (2\pi \sigma_{t}^2)^{-\frac{D}{2}} D 2^{D-1} \tau_{\rm tail}^{D}(1+\tau_{\rm tail})^{D(\tau_{\rm sm}+1)}  \delta^{e \tau_{\rm tail}^2}  \{\log(1/\delta)\}^{D(\frac{\tau_{\rm sm}}{2} +  1)} \label{eqprop3:experrorbd}
 \end{split} \ee
for $\bx \in \bbR^{D}, t > 0, h \in [D], i \in [D_{1}], j \in [D], \bk \in \bbZ_{\geq 0}^{D}$ and $u \in \{0,1\}$ with $k. \leq \tau_{\rm sm}$.
With $u = 0$ in the last display, the second integral satisfies that
\bean
&& \frac{1}{l! \sqrt{2\pi} \sigma_t}\int_{ \substack{ \vert x_j-\mu_{t} y_j \vert \leq \tau_{\rm tail} \sigma_{t} \sqrt{\log (1/\delta)}
\\ \vert y_j - y^{(i)}_j \vert \leq \tau_* } } \   \left(y_j - y^{(i)}_j \right)^{k_j} \left( -\frac{(x_j -\mu_{t} y_j)^2}{2 \sigma_{t}^2} \right)^{l}  \d y_j 
\\
&& = \frac{\mu_{t}^{-1}}{l!  \sqrt{2\pi}}\int_{ \substack{ \vert z_j \vert \leq \tau_{\rm tail}  \sqrt{\log (1/\delta)}
\\ \vert \mu_t^{-1}\sigma_t z_j + \mu_t^{-1} x_j - y_j^{(i)} \vert \leq \tau_*  } } \   \left(\mu_t^{-1}\sigma_t z_j + \mu_t^{-1} x_j - y_j^{(i)} \right)^{k_j} z_j^{2l} (-2)^{-l}  \d z_j 
\\
&& = \frac{(-2)^{-l} \mu_{t}^{-1}  }{l! \sqrt{2\pi}}\int_{ \substack{ \vert z_j \vert \leq \tau_{\rm tail}  \sqrt{\log (1/\delta)}
\\ \vert \mu_t^{-1}\sigma_t z_j + \mu_t^{-1} x_j - y_j^{(i)} \vert \leq \tau_*  } }  \ \sum_{r_j = 0}^{k_j} 
\binom{k_j}{r_j}
\left(\mu_t^{-1} \sigma_t \right)^{r_j}  \left(\mu_t^{-1} x_j - y_j^{(i)} \right)^{k_j-r_j} z_j^{r_j+2l} \ \d z_j
\\
&& = \frac{(-2)^{-l} \mu_t^{-k_j-1}  }{l! \sqrt{2\pi} }  \sum_{r_j = 0}^{k_j} 
\binom{k_j}{r_j}
 \sigma_t^{r_j}  \left( x_j - \mu_t y_j^{(i)} \right)^{k_j-r_j}  \left( \frac{ \overline{z}_{i,j}^{r_j+2l+1} - \underline{z}_{i,j}^{r_j+2l+1} }{r_j+2l+1} \right) \defeq P_{i,j,k_j,l}(x_j, t),
\eean
where
\bean
&& \overline{z}_{i,j} = \min \left( \max \left( \frac{ \mu_t (y_j^{(i)} + \tau_* ) - x_j }{\sigma_{t}}, -\tau_{\rm tail} \sqrt{\log (1/\delta)} \right), \tau_{\rm tail} \sqrt{\log (1/\delta)} \right),
\\
&& \underline{z}_{i,j} =\min \left( \max \left( \frac{ \mu_t (y_j^{(i)} - \tau_* ) - x_j }{\sigma_{t}}, -\tau_{\rm tail} \sqrt{\log (1/\delta)} \right), \tau_{\rm tail} \sqrt{\log (1/\delta)} \right).
\eean
Combining (\ref{eqprop3:p0errorbd}) and (\ref{eqprop3:experrorbd}) with the last two displays, we have
\bean
&&  \left\vert p_{t_*+t}(\bx) - g_{t_*,t}(\bx) \right\vert
 \\
 &&\leq 
C_{S,1} (2D +1)  \delta^{\frac{\tau_{\rm tail}^2}{2} - D\overline{\tau} \tau_{\rm t}} +  C_{S,3} (\tau_{\rm sm}+1)^{D} \left( \frac{e \tau_* }{\sigma_{t_*}} \right)^{\tau_{\rm sm}} \delta^{ -D\overline{\tau} \tau_{\rm t} } 
 \\
 && \quad + \sum_{i=1}^{D_1} 
\sum_{0 \leq k. < \tau_{\rm sm}} \left\vert \frac{ (\D^{\bk} p_{t_*})(\by^{(i)})  }{\bk!} \right\vert (2\pi \sigma_{t}^2)^{-\frac{D}{2}} D 2^{D-1} \tau_{\rm tail}^{D}(1+\tau_{\rm tail})^{D(\tau_{\rm sm}+1)}  \delta^{e \tau_{\rm tail}^2}  \{\log(1/\delta)\}^{D(\frac{\tau_{\rm sm}}{2} +  1)}
\\
\eean
for $\bx \in \bbR^{D}$ and $0 < t \leq \tau_{\rm t} \log(1/\delta)$, where $g_{t_*,t} : \bbR^{D} \to \bbR$ is a function such that
\bean
 g_{t_*,t}(\bx)=
 \sum_{i=1}^{D_{1}} \sum_{0 \leq k. < \tau_{\rm sm}} \left\{ 
\frac{ (\D^{\bk} p_{t_*})(\by^{(i)}) }{\bk! } \right\} \prod_{j=1}^{D} \sum_{l=0}^{D_{2}-1}  P_{i,j,k_j,l}(x_j, t)
\eean
for $\bx \in \bbR^{D}$. 
Let 
\bean
m_* = \left\lfloor m^{1/D} +1 \right\rfloor
\quad \text{and} \quad
t_* = m^{ - \frac{ 2-2\tau_{\rm low}}{D}}
\eean
with large enough $m \in \bbR$ so that $t_* \leq (2\overline{\tau})^{-1}$ and $m_* \in \bbN_{\geq 2}$.
Since $\sigma_t^2 \geq 1-\exp(-2\underline{\tau} t)$ for $ t \geq 0$ and $1-\exp(-x) \geq x/2$ for $0 \leq x \leq 1$, we have $\sigma_t^{-D} \leq (\underline{\tau} \delta)^{-D/2}$ for $t \geq \delta$ with $\delta \leq \underline{\tau}^{-1}$.
Then,
\bean
&&  \left\vert p_{t_*+t}(\bx) - g_{t_*,t}(\bx) \right\vert
\\
&& \leq  D_{3} \left[   \delta^{\frac{D\tau_{\rm tail}^2}{2} - D\overline{\tau} \tau_{\rm t}} + m_*^{-\tau_{\rm sm}} t_{*}^{-\frac{\tau_{\rm sm}}{2}}   \delta^{ -D\overline{\tau} \tau_{\rm t} } \{ \log(1/\delta)\}^{\frac{\tau_{\rm sm}}{2}}  + m_*^{D}t_{*}^{-\frac{\tau_{\rm sm}}{2}} \delta^{e \tau_{\rm tail}^2 - D}  \{\log(1/\delta)\}^{D(\frac{\tau_{\rm sm}}{2} +  1)}  \right]
\\
&& \leq D_{3} \left[ \delta^{\frac{D\tau_{\rm tail}^2}{2} - D\overline{\tau} \tau_{\rm t}} + m^{-\frac{\tau_{\rm low} \tau_{\rm sm} }{D}}\delta^{-D\overline{\tau} \tau_{\rm t}  }  \{\log(1/\delta)\}^{\frac{\tau_{\rm sm}}{2} } + 2^{D} m^{\frac{D+1-\tau_{\rm low}}{D}} \delta^{e \tau_{\rm tail}^2 - D}  \{\log(1/\delta)\}^{D(\frac{\tau_{\rm sm}}{2} +  1)} \right]
\eean
for $\bx \in \bbR^{D}$ and $\delta \leq t \leq \tau_{\rm t} \log(1/\delta)$,
where $D_{3} = D_{3}(D,\tau_{\rm sm},\underline{\tau},\tau_{\rm tail},C_{S,1},C_{S,3})$ and the last inequality holds because $m^{1/D} \leq m_{*} \leq 2m^{1/D}$.
Since $\tau_{\rm tail}^2 = 4(D \overline{\tau} \tau_{\rm t} + 1 ) \vee (D+1)/e$, we have
\bean
\frac{\tau_{\rm tail}^2}{2} -D \overline{\tau} \tau_{\rm t} \geq 1
\quad \text{and} \quad e \tau_{\rm tail}^2 - D \geq 1.
\eean
Then,
\be \begin{split}
& \left\vert p_{t_*+t}(\bx) - g_{t_*,t}(\bx) \right\vert
\\
& \leq D_{3} \left[ \delta + m^{-\frac{\tau_{\rm low} \tau_{\rm sm} }{D}}\delta^{-D\overline{\tau} \tau_{\rm t}  }  \{\log(1/\delta)\}^{\frac{\tau_{\rm sm}}{2} } + 2^{D} m^{\frac{D+1-\tau_{\rm low}}{D}} \delta \{\log(1/\delta)\}^{D(\frac{\tau_{\rm sm}}{2} +  1)} \right]
\label{eqprop3:ptpoly}
\end{split} \ee
for $\bx \in \bbR^{D}$ and $\delta \leq t \leq \tau_{\rm t} \log(1/\delta)$.
Similarly, with $u = 1$ and $h = j$ in (\ref{eqprop3:experrorbd}), the last integral in (\ref{eqprop3:experrorbd}) satisfies that
\bean
&& \frac{1}{l! \sqrt{2\pi }\sigma_t} \int_{ \substack{ \vert x_j-\mu_{t} y_j \vert \leq \tau_{\rm tail} \sigma_{t} \sqrt{\log (1/\delta)} 
\\ \vert y_j - y^{(i)}_j \vert \leq \tau_* } } \
\left(\frac{\mu_t y_j - x_j}{\sigma_t}\right)
(y_j - y^{(i)}_j)^{k_j} \left( -\frac{(x_j -\mu_{t} y_j)^2}{2 \sigma_{t}^2} \right)^{l}  \d y_j  
\\
&& = \frac{\mu_{t}^{-1}}{l! \sqrt{2\pi}}\int_{ \substack{ \vert z_j \vert \leq \tau_{\rm tail}  \sqrt{\log (1/\delta)}
\\ \vert \mu_t^{-1}\sigma_t z_j + \mu_t^{-1} x_j - y_j^{(i)} \vert \leq \widetilde \tau_*  } } \   \left(\mu_t^{-1}\sigma_t z_j + \mu_t^{-1} x_j - y_j^{(i)} \right)^{k_j} z_j^{2l+1} (-2)^{-l}  \d z_j
\\
&& = \frac{(-2)^{-l} \mu_t^{-k_j-1}  }{l! \sqrt{2\pi}}  \sum_{r_j = 0}^{k_j} 
\binom{k_j}{r_j}
 \sigma_t^{r_j}  \left( x_j - \mu_t y_j^{(i)} \right)^{k_j-r_j}  \left( \frac{ \overline{z}_{i,j}^{r_j+2l+2} - \underline{z}_{i,j}^{r_j+2l+2} }{r_j+2l+2} \right) \defeq \widetilde P_{i,j,k_j,l}(x_j, t).
\eean
Combining (\ref{eqprop3:p0errorbdgrad}) and (\ref{eqprop3:experrorbd}) with the last display, we have
\bean
&& \left\vert \sigma_t  \left( \nabla p_{t_*+t}(\bx) \right)_{h} - \widetilde g_{t_*,t}^{(h)}(\bx) \right\vert
 \\
&& \leq 
   C_{S,1} (2D + \sqrt{2/\pi})  \delta^{\frac{ \tau_{\rm tail}^2}{4} - D\overline{\tau} \tau_{\rm t}} + C_{S,3} \tau_{\rm tail} (\tau_{\rm sm}+1)^{D} \left( \frac{e \tau_*}{\sigma_{t_*}} \right)^{\tau_{\rm sm}} \delta^{ -D\overline{\tau} \tau_{\rm t} }  \sqrt{\log(1/\delta)}
 \\
&& \quad + \sum_{i=1}^{D_1} 
\sum_{0 \leq k. < \tau_{\rm sm}} \left\vert \frac{ (\D^{\bk} p_{t_*})(\by^{(i)})  }{\bk!} \right\vert (2\pi \sigma_{t}^2)^{-\frac{D}{2}} D 2^{D-1} \tau_{\rm tail}^{D}(1+\tau_{\rm tail})^{D(\tau_{\rm sm}+1)}  \delta^{e \tau_{\rm tail}^2}  \{\log(1/\delta)\}^{D(\frac{\tau_{\rm sm}}{2} +  1)}
\\
&&  \leq D_{4} \left[   \delta^{\frac{\tau_{\rm tail}^2}{4} - D\overline{\tau} \tau_{\rm t}} + m_*^{-\tau_{\rm sm}} t_{*}^{-\frac{\tau_{\rm sm}}{2}}   \delta^{ -D\overline{\tau} \tau_{\rm t} } \{ \log(1/\delta)\}^{\frac{\tau_{\rm sm}+1}{2}}  + m_*^{D}t_{*}^{-\frac{\tau_{\rm sm}}{2}} \delta^{e \tau_{\rm tail}^2-D}  \{\log(1/\delta)\}^{D(\frac{\tau_{\rm sm}}{2} +  1)}  \right]
\\
&& = D_{4} \left[ \delta^{\frac{\tau_{\rm tail}^2}{4} - D\overline{\tau} \tau_{\rm t}} + m^{-\frac{\tau_{\rm low} \tau_{\rm sm} }{D}}\delta^{-D \overline{\tau} \tau_{\rm t}}  \{\log(1/\delta)\}^{\frac{\tau_{\rm sm} +1}{2} } + 2^{D} m^{\frac{D+1-\tau_{\rm low}}{D}} \delta^{e \tau_{\rm tail}^2 - D}  \{\log(1/\delta)\}^{D(\frac{\tau_{\rm sm}}{2} +  1)} \right]
\eean
for $h \in [D]$, $\bx \in \bbR^{D}$ and $\delta \leq t \leq \tau_{\rm t} \log(1/\delta)$,
where $D_{4} = D_{4}(D,\tau_{\rm sm},\underline{\tau},\tau_{\rm tail},C_{S,1},C_{S,3})$ and $\widetilde g_{t_*,t}^{(h)} : \bbR^{D} \to \bbR, h \in [D]$ is a function such that
\bean
\widetilde g_{t_*,t}^{(h)}(\bx)
= \sum_{i=1}^{D_{1}} \sum_{0 \leq k. < \tau_{\rm sm}} \left\{ 
 (\D^{\bk} p_{t_*})(\by^{(i)})  \right\} \left\{  \prod_{\substack{j=1 \\ j \neq h}}^{D} \sum_{l=0}^{D_{2}-1}   \frac{ P_{i,j,k_j,l}(x_j, t) }{k_j !} \right\}
 \left\{ \sum_{l=0}^{D_{2}-1} \frac{ \widetilde P_{i,h,k_h,l} (x_h, t) }{k_h !} \right\}.
\eean
Since $\tau_{\rm tail}^2 = 4(D \overline{\tau} \tau_{\rm t} + 1 ) \vee (D+1)/e$, we have
\bean
\frac{\tau_{\rm tail}^2}{4} -D \overline{\tau} \tau_{\rm t} \geq 1
\quad \text{and} \quad e \tau_{\rm tail}^2 - D \geq 1.
\eean
Then,
\be \begin{split}
& \left\vert \sigma_t  \left( \nabla p_{t_*+t}(\bx) \right)_{h} - \widetilde g_{t_*,t}^{(h)}(\bx) \right\vert
\\
& \leq D_{4} \left[ \delta + m^{-\frac{\tau_{\rm low} \tau_{\rm sm} }{D}}\delta^{-D \overline{\tau} \tau_{\rm t}}  \{\log(1/\delta)\}^{\frac{\tau_{\rm sm} +1}{2} } + 2^{D} m^{\frac{D+1-\tau_{\rm low}}{D}} \delta  \{\log(1/\delta)\}^{D(\frac{\tau_{\rm sm}}{2} +  1)} \right], \quad h \in [D]
 \label{eqprop3:nablapoly}
\end{split} \ee
for $\bx \in \bbR^{D}$ and $\delta \leq t \leq \tau_{\rm t} \log(1/\delta)$.
Let
\bean
\delta = m^{- \frac{\tau_{\rm low} \tau_{\rm sm} + D + 1 - \tau_{\rm low}  }{D (1 + D \overline{\tau} \tau_{\rm t} ) }}
\eean
for large enough $m \in \bbR$.
Combining (\ref{eqprop3:ptpoly}) and (\ref{eqprop3:nablapoly}) with the last display, we have
\be \begin{split}
& \left\vert p_{t_*+t}(\bx) - g_{t_*,t}(\bx) \right\vert
\leq D_{5} m^{ - \frac{ \tau_{\rm low} \tau_{\rm sm} - (D + 1 - \tau_{\rm low}) D \overline{\tau} \tau_{\rm t} }{D ( 1 + D \overline{\tau} \tau_{\rm t} ) } } ( \log m )^{D (\frac{\tau_{\rm sm}}{2} + 1) },
\\
& \left\vert \sigma_t  \left( \nabla p_{t_*+t}(\bx) \right)_{h} - \widetilde g_{t_*,t}^{(h)}(\bx) \right\vert
\leq D_{5} m^{ - \frac{ \tau_{\rm low} \tau_{\rm sm} - (D + 1 - \tau_{\rm low}) D \overline{\tau} \tau_{\rm t} }{D ( 1 + D \overline{\tau} \tau_{\rm t} ) } } ( \log m )^{D (\frac{\tau_{\rm sm}}{2} + 1) }, \quad h \in [D]
\label{eqprop3:bothpoly}
\end{split} \ee
for $\bx \in \bbR^{D}$ and $\delta \leq t \leq \tau_{\rm t} \log(1/\delta)$, where $D_{5} = D_{5} ( D, \overline{\tau}, \tau_{\rm t}, \tau_{\rm sm}, \tau_{\rm low}, D_{3}, D_{4} )$.

Let $0 < \widetilde \delta < \delta$ be a small enough value as described below.
With ${\widetilde \delta}^2 < 1/2$, Lemma~\ref{secnn:mtst} implies that
there exist neural networks  $f_{\mu} \in \cF_{\rm NN}(L_{\mu},\bd_{\mu},s_{\mu},M_{\mu}), f_{\sigma} \in \cF_{\rm NN}(L_{\sigma},\bd_{\sigma},s_{\sigma},M_{\sigma})$ with
\be \begin{split}
& L_{\mu}, L_{\sigma} \leq C_{N,4} \{ \log(1/\widetilde \delta)\}^{2}, \quad \|\bd_{\mu} \|_{\infty},  \| \bd_{\sigma}\|_{\infty} \leq C_{N,4}  \{ \log(1/\widetilde \delta)\}^{2}
\\
& s_{\mu}, s_{\sigma} \leq C_{N,4}  \{  \log(1/\widetilde \delta)\}^{3}, \quad M_{\mu}, M_{\sigma} \leq C_{N,4}  \log ( 1/\widetilde \delta) \label{eqprop3:nnmtst}
\end{split} \ee
such that $
\vert \mu_{t} - f_{\mu}(t) \vert \leq \widetilde \delta$ for $t \geq 0$ and $ \vert \sigma_{t} - f_{\sigma}(t) \vert \leq \widetilde \delta$
for $ t \geq \widetilde \delta$, where $C_{N,4}$ is the constant in Lemma~\ref{secnn:mtst}.
Also, Lemma~\ref{secnn:rec} implies that there exist a neural network $f_{\rm rec} \in \cF_{\rm NN}(L_{\rm rec}, \bd_{\rm rec}, s_{\rm rec}, M_{\rm rec} )$ with
\bean
&& L_{\rm rec} \leq C_{N,5} \{ \log(1/ \widetilde \delta )\}^{2}, \quad \|\bd_{\rm rec}\|_{\infty} \leq C_{N,5}  \{ \log (1/ \widetilde \delta )\}^{3}
\\
&& s_{\rm rec}  \leq C_{N,5}  \{ \log(1/ \widetilde \delta )\}^{4}, \quad M_{\rm rec} \leq C_{N,5} \widetilde \delta^{-2}
\eean
such that $\vert 1/x - f_{\rm rec}(x) \vert \leq  \widetilde \delta $ for any $x \in [ \widetilde \delta,1/ \widetilde \delta]$, where $C_{N,5}$ is the constant in Lemma~\ref{secnn:rec}.
Since $  \sigma_t-\widetilde \delta \leq f_{\sigma}(t)  \leq \sigma_t + \widetilde \delta$ for $t \geq \widetilde \delta$ and $\sqrt{\underline{\tau} \delta} \leq \sigma_t \leq 1$ for $t \geq \delta$ , we have $\delta \leq f_{\sigma}(t)  \leq 2$ for $t \geq \delta$ with small enough $\widetilde \delta$ so that $\widetilde \delta \leq \sqrt{\underline{\tau} \delta} - \delta$ and $\widetilde \delta \leq 1$.
Then,
\be \begin{split}
& \left\vert 1/\sigma_t - f_{\rm rec}(f_{\sigma}(t)) \right\vert \leq
\left\vert 1/\sigma_t - 1/ f_{\sigma}(t) \right\vert  + \left\vert  1/ f_{\sigma}(t) - f_{\rm rec}(f_{\sigma}(t)) \right\vert 
\\
& \leq \{ \sigma_t \wedge f_{\sigma}(t)\}^{-2} \vert \sigma_t - f_{\sigma}(t) \vert + \widetilde \delta \leq (1+\delta^{-2}) \widetilde \delta  \label{eqprop3:nnrecst}
\end{split} \ee
 for $t \geq \delta$. 
Since $\mu_t - \widetilde \delta \leq f_{\mu}(t) \leq \mu_t + \widetilde \delta $ for $t \geq \widetilde \delta$ and $\delta^{\overline{\tau} \tau_{\rm t} } \leq \mu_t \leq 1$ for $0 \leq t \leq \tau_{\rm t} \log (1/\delta)$, we have $\delta^{\overline{\tau} \tau_{\rm t} }/2 \leq f_{\mu}(t) \leq 2$  for $\delta \leq t \leq \tau_{\rm t} \log (1/\delta)$ with small enough $\widetilde \delta$ so that $\widetilde \delta \leq \delta^{\overline{\tau} \tau_{\rm t} }/2 $ and $\widetilde \delta \leq 1$. 
Then,
\be \begin{split}
& \left\vert 1/\mu_t - f_{\rm rec}(f_{\mu}(t)) \right\vert \leq
\left\vert 1/\mu_t - 1/ f_{\mu}(t) \right\vert  + \left\vert  1/ f_{\mu}(t) - f_{\rm rec}(f_{\mu}(t)) \right\vert 
\\
& \leq \{ \mu_t \wedge f_{\mu}(t)\}^{-2} \vert \mu_t - f_{\mu}(t) \vert + \widetilde \delta \leq (1+ 4 \delta^{-2\overline{\tau} \tau_{\rm t} })  \widetilde \delta
\label{eqprop3:nnrecmt}
\end{split} \ee
for $\delta \leq t \leq \tau_{\rm t} \log (1/\delta)$.
Lemma~\ref{secnn:mult} implies that for $k \geq 2$, there exists a neural network $\widetilde f_{\rm mult}^{(k)} \in \cF_{\rm NN}(\widetilde L_{\rm mult}^{(k)}, \widetilde \bd_{\rm mult}^{(k)},\widetilde s_{\rm mult}^{(k)}, \widetilde M_{\rm mult}^{(k)})$ with 
\bean
&& \widetilde L_{\rm mult}^{(k)} \leq C_{N,1} \log k  \{\log(1/ \widetilde \delta) + \overline{\tau} \tau_{\rm t} \log(1/\delta) \}, \quad \widetilde d_{\rm mult}^{(k)} = (k,48k,\ldots,48k,1)^{\top},
\\
&& \widetilde s_{\rm mult}^{(2)} \leq  C_{N,1} k \{\log(1/ \widetilde \delta) 
+  \overline{\tau} \tau_{\rm t} \log(1/ \delta) \}, \quad  \widetilde M_{\rm mult}^{(k)} = \delta^{- \overline{\tau} \tau_{\rm t} k}
\eean
such that 
\be
\left\vert \widetilde f_{\rm mult}^{(k)}(\widetilde x_1,\ldots, \widetilde x_k) - \prod_{i=1}^{k} x_i \right\vert \leq \widetilde \delta + k \delta^{-\overline{\tau} \tau_{\rm t} (k-1)} \widetilde \epsilon \label{eqprop3:nnmult}
\ee
for any $\bx = (x_1,\ldots,x_k) \in \bbR^{k}$ with $ \|\bx\|_{\infty} \leq \delta^{-1}$ and $\widetilde \bx = (\widetilde x_1,\ldots,\widetilde x_{k}) \in \bbR^{k}$ with $\| \bx - \widetilde \bx\|_{\infty} \leq \widetilde \epsilon$,
where $0 < \widetilde \epsilon \leq 1 $ and $C_{N,1}$ is the constant in Lemma~\ref{secnn:mult}.
Let 
\bean
f_{\rm clip} \in \cF_{\rm NN}(2,(1,2,1)^{\top}, 7, \tau_{\rm tail} \sqrt{\log(1/\delta)} )
\eean
be the neural network in Lemma~\ref{secnn:clip} such that
$f_{\rm clip}(x) = (x \vee -\tau_{\rm tail}\sqrt{\log(1/\delta)}  ) \wedge \tau_{\rm tail} \sqrt{\log(1/\delta)}$ for $x \in \bbR$.
For $i \in [D_{1}]$ and $ j \in [D]$, consider functions $\overline{f}_{i,j}, \underline{f}_{i,j} : \bbR \times \bbR \to \bbR$ such that
\bean
&& \overline{f}_{i,j} (x,t) = f_{\rm clip}\left( \widetilde f_{\rm mult}^{(2)}\left( f_{\rm rec}\left(f_{\sigma}(t) \right), \{ y_j^{(i)} +  \tau_* \} f_{\mu}(t) - x \right)  \right),
 \\
 && \underline{f}_{i,j} (x,t) = f_{\rm clip}\left( \widetilde f_{\rm mult}^{(2)}\left( f_{\rm rec}\left(f_{\sigma}(t) \right), \{ y_j^{(i)} -  \tau_* \} f_{\mu}(t) - x \right) \right),
\eean
for $x,t \in \bbR$.
Note that both $\vert y_j^{(i)}+ \tau_* \vert$ and $\vert y_j^{(i)} - \tau_*\vert $ are upper bounded by $ m_* \tau_* \leq (1+\tau_{\rm tail} )\sqrt{\log(1/\delta)}$.
Then, for $\vert x \vert \leq 1 + \tau_{\rm x}  \sqrt{ \log (1/\delta) }$, both $\vert (y_j^{(i)}+ \tau_*)\mu_t - x \vert$ and $\vert ( y_j^{(i)} - \tau_* ) \mu_t - x \vert $ are upper bounded by $\delta^{- \overline{\tau} \tau_{\rm t} }$ with small enough $\delta$.
Combining (\ref{eqprop3:nnmtst}), (\ref{eqprop3:nnrecst}) and (\ref{eqprop3:nnmult}) with the last display, both $\vert \overline{z}_{i,j} - \overline{f}_{i,j} (x,t) \vert $ and $\vert \underline{z}_{i,j} - \underline{f}_{i,j} (x,t) \vert $ are bounded by 
\be
\widetilde \delta + 2 \delta^{- \overline{\tau} \tau_{\rm t} } ( 1 + \delta^{ -2 \overline{\tau} \tau_{\rm t} } /4 ) \widetilde \delta
\leq 5 \delta^{-3 \overline{\tau} \tau_{\rm t} } \widetilde \delta ,  \label{eqprop3:nnzij}
\ee
for $ \vert x \vert \leq \mu_t +  \tau_{\rm x} \sigma_{t} \sqrt{ \log(1/\delta) }$ and $\delta \leq t \leq \tau_{\rm t} \log (1/\delta)$.
Since $  \mu_t -\widetilde \delta \leq f_{\mu}(t)  \leq \mu_t + \widetilde \delta$ for $t \geq \widetilde \delta$, we have $1/4 \leq 1/2  - \widetilde \delta \leq f_{\mu}(t)  \leq 1+\widetilde \delta \leq 2$ and $1/4 \leq \mu_t \leq 2$ for $\delta \leq t \leq D_{1}$ with small enough $\widetilde \delta$ by (\ref{eqthm:mtstbd}).
For any $i \in [D_2], j \in [D], k \in \{0,\ldots,\tau_{\rm sm}-1\}$, consider functions $f_{i,j,k}, \widetilde f_{i,j,k} : \bbR \times \bbR \to \bbR$ such that
\bean
&& f_{i,j,k}(x,t)  = \sum_{l=0}^{D_{2}-1} \sum_{r=0}^{k} \binom{k}{r} \left\{ \frac{(-2)^{-l}}{k! l! \sqrt{2\pi} (r+2l+1)  } \right\} \left\{ \overline f_{i,j,k,l,r,r+2l+1} -  \underline f_{i,j,k,l,r,r+2l+1} \right\},
\\
&& \widetilde f_{i,j,k}(x,t)  =  \sum_{l=0}^{D_{2}-1} \sum_{r=0}^{k} \binom{k}{r} \left\{ \frac{(-2)^{-l}}{k! l! \sqrt{2\pi} (r+2l+2)  } \right\} \left\{ \overline f_{i,j,k,l,r,r+2l+2} -  \underline f_{i,j,k,l,r,r+2l+2} \right\},
\eean
for $x,t \in \bbR$, where
\bean
 && \overline f_{i,j,k,l,r,s}  = \widetilde f_{\rm mult}^{(2k+s+1)} \left(  f_{\rm rec}\left(f_{\mu}(t) \right) \cdot \mathbf{1}_{k+1}, f_{\sigma}(t) \cdot \mathbf{1}_{r}, \left\{ x- y_j^{(i)}f_{\mu}(t) \right\} \cdot \mathbf{1}_{k-r}, \overline{f}_{i,j}(x,t) \cdot \mathbf{1}_{s}    \right),
 \\
&& \underline f_{i,j,k,l,r,s}  = \widetilde f_{\rm mult}^{(2k+s+1)} \left(  f_{\rm rec}\left(f_{\mu}(t) \right) \cdot \mathbf{1}_{k+1}, f_{\sigma}(t) \cdot \mathbf{1}_{r}, \left\{ x- y_j^{(i)}f_{\mu}(t) \right\} \cdot \mathbf{1}_{k-r} , \underline{f}_{i,j}(x,t) \cdot \mathbf{1}_{s}    \right),
\eean
for $ s \in \{r+2l+1, r+2l+2 \}$.
Combining (\ref{eqprop3:nnmtst}), (\ref{eqprop3:nnrecmt}), (\ref{eqprop3:nnzij}) and (\ref{eqprop3:nnmult}) with the last two displays, we have
\bean
&& \left\vert \sum_{l=0}^{D_{2}-1 }  \frac{ P_{i,j,k,l}(x, t) }{k!} -  f_{i,j,k}(x,t)   \right\vert 
\\
&& \leq  \sum_{l=0}^{D_{2}-1} \sum_{r=0}^{k} \binom{k}{r} \left\{ \frac{2^{-l+1}}{k! l! \sqrt{2\pi} (2r+r+2l+1)  } \right\} \left\{ \widetilde \delta + 5 (2k+r+2l+2) \delta^{-(2k+r+2l+4)\overline{\tau} \tau_{\rm t}} \widetilde \delta  \right\}
\eean
and
\bean
&& \left\vert \sum_{l=0}^{D_{2}-1 }  \frac{ P_{i,j,k,l}(x, t) }{k!} -  f_{i,j,k}(x,t)   \right\vert 
\\
&& \leq  \sum_{l=0}^{D_{2}-1} \sum_{r=0}^{k} \binom{k}{r} \left\{ \frac{2^{-l+1}}{k! l! \sqrt{2\pi} (r+2l+2)  } \right\} \left\{ \widetilde \delta + 5 (2k+r+2l+3) \delta^{-(2k+r+2l+5)\overline{\tau} \tau_{\rm t}} \widetilde \delta  \right\}
\eean
for $i \in [D_1], j \in [D], k \in \{0,\ldots,\tau_{\rm sm}-1\}, \vert x \vert \leq \mu_t + \tau_{\rm x} \sigma_t \sqrt{ \log (1/\delta) }$ and $\delta \leq t \leq \tau_{\rm t} \log(1/\delta)$ with small enough $\delta$ so that $\sigma_t, \vert x - \mu_t y_j^{(i)} \vert, \tau_{\rm tail} \sqrt{ \log ( 1 / \delta) }$ are bounded by $\delta^{-\overline{\tau} \tau_{\rm t}  }$.
Since $\sum_{r=0}^{k} \binom{k}{r} = 2^{k}$ and $2^{k}/k! \leq 2$ for all $k \in \bbN$,
the last two displays are bounded by
\be
\sum_{l = 0}^{D_{2} - 1} \left( \frac{2^{-l+2}}{l! \sqrt{2\pi} (2l + 1) }  \right) \left\{ 1 + 5(3 \tau_{\rm sm} + 2 l  ) \delta^{-(3 \tau_{\rm sm} + 2 l + 1 ) \overline{\tau} \tau_{\rm t} } \right\} \widetilde \delta
\leq \delta^{-D_{6} \log(1/\delta)  } \widetilde \delta, \label{eqprop3:nnpijk}
\ee
where $D_{6} = D_{6}(\overline{\tau}, \tau_{\rm tail}, \tau_{\rm sm}, \tau_{\rm t})$.
For any $i \in [D_1], j \in [D], k \in \{0,\ldots,\tau_{\rm sm}-1\}, l \in \{0,\ldots,D_{2}-1\}, \vert x \vert \leq \mu_t + \tau_{\rm x} \sigma_t \sqrt{ \log(1/\delta)}  $ and $\delta \leq t \leq \tau_{\rm t} \log (1/\delta)$, we have
\bean
&& \left\vert \frac{ P_{i,j,k,l}(x, t) }{k!} \right\vert
\\
&& \leq  \left\{\frac{2^{-l+1} \mu_t^{-k-1}  }{k! l! \sqrt{2\pi}(r+2l+1)} \sum_{r = 0}^{k} \binom{k}{r} \right\}  \left\{ ( 2 + \tau_{\rm x} + \tau_{\rm tail} ) \sqrt{\log (1/\delta)}  \right\}^{k} \left\{ \tau_{\rm tail} \sqrt{ \log (1/\delta) } \right\}^{k+2l+1}
\eean
and
\bean
&& \left\vert \frac{ \widetilde P_{i,j,k,l}(x, t)  }{k!} \right\vert
\\
&& \leq  \left\{\frac{2^{-l+1} \mu_t^{-k-1}  }{k! l! \sqrt{2\pi}(r+2l+2)} \sum_{r = 0}^{k} \binom{k}{r} \right\}  \left\{ ( 2 + \tau_{\rm x} + \tau_{\rm tail} ) \sqrt{\log (1/\delta)}  \right\}^{k} \left\{ \tau_{\rm tail} \sqrt{ \log (1/\delta)} \right\}^{k+2l+2}
\eean
because $\mu_t \leq 1, \sigma_t \leq 1$, and $y_j^{(i)} \leq m_* \tau_* \leq (1+\tau_{\rm tail}) \sqrt{\log (1/\delta)}$.
Since $\sum_{r=0}^{k} \binom{k}{r} = 2^{k}$ and $2^{k}/k! \leq 2$ for all $k \in \bbN$,
the last two displays are bounded by 
\bean
\left( \frac{4}{\sqrt{2\pi}} \right) \delta^{- k \overline{\tau} \tau_{\rm t} }  \left\{  ( 2 + \tau_{\rm x} + \tau_{\rm tail})^{2} \tau_{\rm tail}^2 \log (1/\delta) \right\}^{k+l+1}.
\eean
Then, both $\vert \sum_{l=0}^{D_{2}-1 } P_{i,j,k,l}(x, t) / k!  \vert$ and $\vert \sum_{l=0}^{D_{2}-1 }   \widetilde P_{i,j,k,l}(x, t) / k!  \vert$ are upper bounded by
\bean
&&  D_{2}\left( \frac{4}{\sqrt{2\pi}} \right) \delta^{- ( \tau_{\rm sm} - 1 ) \overline{\tau} \tau_{\rm t} }  \left\{  ( 2 + \tau_{\rm x} + \tau_{\rm tail})^{2} \tau_{\rm tail}^2 \log (1/\delta) \right\}^{ \tau_{\rm sm} + D_{2} - 1 } \leq \{ \log ( 1 / \delta) \}^{D_{7} \log(1/\delta)  }
\eean
for $i \in [D_1], j \in [D], k \in \{0,\ldots,\tau_{\rm sm}-1\}, \vert x \vert \leq \mu_t + \tau_{\rm x} \sigma_t \sqrt{\log (1/\delta)}$ and $\delta \leq t \leq \tau_{\rm t} \log (1/\delta)$,
where $D_{7} = D_{7}(\tau_{\rm tail},\tau_{\rm sm}, \tau_{\rm x}, \tau_{\rm t}, \overline{\tau})$.
Consider functions $f_{t_*}, \widetilde f_{t_*}^{(1)},\ldots, \widetilde f_{t_*}^{(D)} : \bbR^{D} \times \bbR \to \bbR$ such that
\bean
&& f_{t_*}(\bx,t) = \sum_{i=1}^{D_{1}}  \sum_{0 \leq k. < \tau_{\rm sm}} \left\{ (\D^{\bk} p_{t_*})(\by^{(i)})  \right\}
f_{\rm mult}^{(D)}\left( f_{i,1,k_1}(x_1,t),\ldots, f_{i,D,k_D}(x_D, t)   \right) \quad \text{and}
\\
&& \widetilde f_{t_*}^{(h)}(\bx,t)= \sum_{i=1}^{D_{1}} \sum_{0 \leq k. <  \tau_{\rm sm}} \left\{(\D^{\bk} p_{t_*})(\by^{(i)})  \right\} f_{\rm mult}^{(D)}\left(\widetilde f_{i,h,k_h}(x_h, t), \underbrace{   f_{i,1,k_1}(x_1,t),\ldots, f_{i,D,k_D}(x_D, t)}_{{\rm without} \   f_{i,h,k_h}(x_h,t)}    \right),
\eean
where $ f_{\rm mult}^{(D)} \in \cF_{\rm NN}( L_{\rm mult}^{(D)},  \bd_{\rm mult}^{(D)}, s_{\rm mult}^{(D)},  M_{\rm mult}^{(D)})$ is the neural network in Lemma~\ref{secnn:mult} with
\bean
&&  L_{\rm mult}^{(D)} \leq C_{N,1} \log D  [ \log(1/ \widetilde \delta) + D D_{7}  \log(1/\delta) \log \log ( 1/ \delta) ], \quad  d_{\rm mult}^{(D)} = (D,48D,\ldots,48D,1)^{\top},
\\
&&  s_{\rm mult}^{(D)} \leq   C_{N,1} D [\log(1/ \widetilde \delta) 
+  D_{7}  \log(1/\delta) \log \log ( 1 / \delta) ], \quad   M_{\rm mult}^{(D)} = \{ \log (1/\delta) \} ^{DD_{7} \log(1/\delta)  }
\eean
such that 
\bean
\left\vert \widetilde f_{\rm mult}^{(D)}(\widetilde x_1,\ldots, \widetilde x_{D}) - \prod_{i=1}^{D} x_i \right\vert
\leq \widetilde \delta + D \{ \log (1/\delta) \} ^{ 
(D-1) D_{7} \log(1/\delta)  } \widetilde \epsilon 
\eean
for any $\bx = (x_1,\ldots,x_{D}) \in \bbR^{D}$ with $ \|\bx\|_{\infty} \leq \{ \log (1/\delta) \} ^{DD_{7} \log(1/\delta)  }$ and $\widetilde \bx = (\widetilde x_1,\ldots,\widetilde x_{D}) \in \bbR^{D}$ with $\| \bx - \widetilde \bx\|_{\infty} \leq \widetilde \epsilon$.
Combining (\ref{eqprop3:nnpijk}) with the last display, we have
\bean
&& \left\vert f_{t_*}(\bx,t) - g_t(\bx) \right\vert
\\
&& \leq  \sum_{i=1}^{D_{1}}  \sum_{0 \leq k. < \tau_{\rm sm}} \left\vert (\D^{\bk} p_{t_*})(\by^{(i)})  \right\vert \left\{ 1 + D \delta^{ - D_{6} \log ( 1 / \delta) } \{ \log (1/\delta) \} ^{ (D-1) D_{7} \log(1/\delta)  } \right\} \widetilde \delta
\\
&& \leq  D_{1}  \tau_{\rm sm}^{D} C_{S,3} \sigma_{t_*}^{-\tau_{\rm sm}} \left\{ 1 + D \delta^{ - D_{6} \log ( 1 / \delta) } \{ \log (1/\delta) \} ^{ (D-1) D_{7} \log(1/\delta)  } \right\} \widetilde \delta
\\
&& \leq m^{\frac{\tau_{\rm sm}(1-\tau_{\rm low}) +D }{D}} \delta^{- D_{8} \log(1/\delta)  }  \widetilde \delta 
\eean
for $\| \bx \|_{\infty} \leq \mu_t + \tau_{\rm x} \sigma_t \sqrt{\log (1/\delta)}$ and $\delta \leq t \leq \tau_{\rm t} \log(1/\delta)$, where $D_{8} = D_{8} (D , \underline{\tau} , \tau_{\rm sm} , C_{S,3},D_{6}, D_{7})$.
Similarly, we have
\bean
&& \left\vert \widetilde f_{t_*}^{(h)}(\bx,t) - \widetilde g^{(h)}_t(\bx) \right\vert
\\
&& \leq  \sum_{i=1}^{D_{1}}  \sum_{0 \leq k. < \tau_{\rm sm}} \left\vert (\D^{\bk} p_{t_*})(\by^{(i)})  \right\vert  \left\{ 1 + D \delta^{ - D_{6} \log ( 1 / \delta) } \{ \log (1/\delta) \} ^{ (D-1) D_{7} \log(1/\delta)  } \right\} \widetilde \delta
\\
&& \leq m^{\frac{\tau_{\rm sm}(1-\tau_{\rm low}) +D }{D}} \delta^{- D_{8} \log(1/\delta)  }  \widetilde \delta, \quad h \in [D]
\eean
for $\| \bx \|_{\infty} \leq \mu_t + \tau_{\rm x } \sigma_t \sqrt{\log (1/\delta)}$ and $\delta \leq t \leq \tau_{\rm t} \log(1/\delta)$.
Let 
\bean
\widetilde \delta = \delta^{D_{8} \log (1/\delta)} m^{- \frac{\tau_{\rm sm} + D}{D}} 
\eean
with large enough $m$.
Combining (\ref{eqprop3:bothpoly}) with the second last display,  we have
\bean
&& \left\vert p_{t_*+t}(\bx) - f_{t_*}(\bx,t) \right\vert
\leq (1 + D_{5}) m^{ - \frac{ \tau_{\rm low} \tau_{\rm sm} - ( D + 1 - \tau_{\rm low} ) D \overline{\tau} \tau_{\rm t}   }{D ( 1 + D \overline{\tau} \tau_{\rm t}  )}}   
 (\log m)^{D(\frac{\tau_{\rm sm}}{2} + 1)},
\\
&& \left\vert \sigma_t  \left( \nabla p_{t_*+t}(\bx) \right)_{h} - \widetilde f_{t_*}^{(h)}(\bx,t) \right\vert
\leq (1 + D_{5}) m^{ - \frac{ \tau_{\rm low} \tau_{\rm sm} - ( D + 1 - \tau_{\rm low} ) D \overline{\tau} \tau_{\rm t}  }{D ( 1 + D \overline{\tau} \tau_{\rm t}  )}}   
 (\log m)^{D(\frac{\tau_{\rm sm}}{2} + 1)},
 \quad h \in [D]
\eean
for $\| \bx \|_{\infty} \leq \mu_t + \tau_{\rm x} \sigma_t \sqrt{\log (1/\delta)}$ and $\delta \leq t \leq \tau_{\rm t}\log(1/\delta)$.
Consider a function $ \bff_{t_*} : \bbR^{D} \times \bbR \to \bbR^{D+1}$ such that
$ \bff_{t_*}(\bx,t) = ( f_{t_*}(\bx,t), \widetilde f_{t_*}^{(1)}(\bx,t) , \ldots , \widetilde f_{t_*}^{(D)}(\bx,t) )^{\top}$ for $\bx \in \bbR^{D}$ and $ t \in \bbR$. Lemma~\ref{secnn:comp}, Lemma~\ref{secnn:par}, Lemma~\ref{secnn:lin} and Lemma~\ref{secnn:id}  implies that $\bff_{t_*} \in \cF_{\rm NN}(L,\bd,s,m)$ with
\bean
&& L \leq D_{9} ( \log m )^{4} , \quad \| \bd\|_{\infty} \leq D_{9} m (\log m)^{9},
\\
&& s \leq D_{9}  m (\log m)^{9}, \quad M \leq \exp(D_{9} (\log m)^2 ),
\eean
where $D_{9} = D_{9}(D , \overline{\tau}, \underline{\tau}, \tau_{\rm t}, \tau_{\rm x}, \tau_{\rm sm}, \tau_{\rm low}, \tau_{\rm tail}, C_{S,3}, C_{N,4}, C_{N,5}, D_{7}, D_{8} )$ is a large enough constant.
The assertion follows by re-defining the constants.

\end{proof}


\subsection{Proof of Theorem~\ref{secthm:1}}

In this subsection, we provide the proof of Theorem~\ref{secthm:1} by combining  Propositions \ref{secpt:1} to \ref{secpt:3}.

\textit{Proof of Theorem~\ref{secthm:1}.}
Let $m > 0$ be a large enough value as described below and 
\bean
D_{1} =\sqrt{ \frac{ 8\beta}{d} }  \vee \sqrt{2\tau_{\rm min} + \frac{ 4\beta }{d}}.
\eean
We will approximate $\nabla \log p_t(\bx)$ for $\|\bx\|_{\infty} \leq  \mu_t + \sigma_t D_{1} \sqrt{\log m}$ and $m^{-\tau_{\rm min}} \leq t \leq \tau_{\rm max} \log m$ by neural networks by dividing the analysis into  the following three cases:
\ben
\item (Interior of near-support) $\|\bx\|_{\infty} \leq \mu_t- \{ \log (1/\sigma_t)\}^{-3/2}$ and $m^{-\tau_{\rm min}} \leq t \leq  3m^{-\frac{1}{2D}}$
\item (Boundary of near-support) $\mu_t-  \tau_{\rm min}^{3/2} \{    (4D)^{3/2} +3  \} \{ \log (1/\sigma_t)\}^{-3/2} \leq  \|\bx\|_{\infty} \leq \mu_t + \sigma_t D_{1} \sqrt{\log m}$ 
\\
and $m^{-\tau_{\rm min}} \leq  t \leq 3m^{-\frac{1}{2D}}$
\item (large $t$) $\|\bx\|_{\infty} \leq  \mu_t + \sigma_t D_{1} \sqrt{\log m}$ and $2 m^{-\frac{1}{2D}} \leq t \leq \tau_{\rm max}  \log m$.
\een
Then, we combine the networks into a single network and derive the approximation error over the entire region $(\bx,t) \in \bbR^{D} \times  [m^{-\tau_{\rm min}},\tau_{\rm max} \log m]$.

\subsubsection{Interior of Near-Support}
Let $\tau_{\rm tail}^{(1)} = 4D\beta / d $ and $\tau_{\rm bd}^{(1)} = 3/2$.
Let 
\bean
&& \widetilde C_{3} = \widetilde C_{3} ( \beta, d, D, K, \overline{\tau}, \underline{\tau}, \tau_{\rm bd}^{(1)}, \tau_{\rm tail}^{(1)}, \tau_{\rm min} ), 
\\
&& \widetilde C_{4} = \widetilde C_{4} ( \beta, d, D, K,  \underline{\tau}, \tau_{\rm bd}^{(1)}, \tau_{\rm tail}^{(1)}, \tau_{\rm min} ),
\\
&& \widetilde C_{5} = \widetilde C_{5} ( \beta, d, \underline{\tau}, \tau_{\rm bd}^{(1)}, \tau_{\rm tail}^{(1)}, \tau_{\min} )
\eean
be the constants in Proposition~\ref{secpt:1}, where $(\tau_{\rm tail}, \tau_{\rm bd})$ is replaced by $(\tau_{\rm tail}^{(1)}, \tau_{\rm bd}^{(1)} )$.
Also, let $\widetilde C_{2} = \widetilde C_{2}(\beta,D, \tau_{\rm bd}^{(1)}, \tau_{\rm tail}^{(1)})$ be the constant in Lemma~\ref{sec:qdeapprox}, where $( \tau_{\rm tail}, \tau_{\rm bd})$ is replaced by $(\tau_{\rm tail}^{(1)}, \tau_{\rm bd}^{(1)} )$.
For large enough $m$ so that $m \geq \widetilde C_{5}$ and $3m^{-1/(2D)} \leq \overline{\tau}^{-1} (\widetilde C_{2}^2 \wedge 1/2)$, Proposition~\ref{secpt:1} implies that
there exists a collection of permutation matrices $\cP^{(1)} = ( \cQ_{i}^{(1)}, \cR_{i}^{(1)} )_{i \in [L-1]} $ and weight-sharing neural networks 
\bean
\bff^{(1)} = (f_1^{(1)},\ldots,f_{D+1}^{(1)})^{\top} \in \cF_{\rm NN}( L^{(1)}, \bd^{(1)}, s^{(1)}, M^{(1)}, \cP^{(1)}_{\bfm^{(1)}} )
\eean
with 
\bean
&& L^{(1)} \leq \widetilde C_{3} (\log m )^{2} \log \log m, \quad \|\bd^{(1)}\|_{\infty} \leq  \widetilde C_{3} m^{D+1},
\\
&& s^{(1)} \leq  \widetilde C_{3} m (\log m)^{5} \log \log m, \quad M^{(1)} \leq \exp (  \widetilde C_{3} \{ \log m \}^{2} ), 
\\
&& \| \bfm^{(1)} \|_{\infty} \leq \widetilde C_{3} m^{D}
\eean
satisfying
\bean
&& \left\| \begin{pmatrix}
\sigma_t \nabla p_t(\bx) 
\\
 p_t(\bx)
\end{pmatrix}
-  \bff^{(1)}(\bx,t) \right\|_{\infty} \leq  \widetilde C_{4} \left( 3^{\frac{2D\beta}{d}} + 1 \right)  m^{-\frac{\beta}{d}} (\log m)^{2D + 2\beta} \defeq \epsilon_{1}
\eean
for $\|\bx\|_{\infty} \leq \mu_t- \{ \log (1/\sigma_t)\}^{-3/2}$ and $m^{-\tau_{\rm min}} \leq t \leq  3 m^{-\frac{1}{2D}}$ because $ \mu_{t} \leq 1$.
Note that $C_{S,1}^{-1} \leq p_t(\bx) \leq C_{S,1}$ for $\| \bx \|_{\infty} \leq \mu_t$, $t \geq 0$, and $ \| \sigma_t \nabla p_t(\bx) \|_{\infty} \leq C_{S,3}$ for $\bx \in \bbR^{D}, t \geq 0$, where $C_{S,1} = C_{S,1}(D , K, \tau_1)$ and $ C_{S,3} = C_{S,3}(D,K, \overline{\tau}, \underline{\tau} )$ are the constants in Lemma~\ref{secsc:ptbound} and Lemma~\ref{secsc:ptderivativebound}, respectively.
Also, $C_{S,1}^{-1}/2 \leq p_t(\bx) - \epsilon_{1} \leq f_{D+1}^{(1)}(\bx,t) \leq  p_t(\bx) + \epsilon_{1} \leq 2 C_{S,1}$  for $\|\bx\|_{\infty} \leq \mu_t-\mu_t \{ \log (1/\sigma_t)\}^{-3/2}$ and $m^{-\tau_{\rm min}} \leq t \leq  3 m^{-\frac{1}{2D}}$ with large enough $m$ so that $\epsilon_{1} \leq C_{S,1}^{-1}/2$.
Let $f_{{\rm rec}}^{(\rm in)} \in \cF_{\rm NN}(L_{{\rm rec}}^{(\rm in)},\bd_{{\rm rec}}^{(\rm in)},s_{{\rm rec}}^{(\rm in)},M_{{\rm rec}}^{(\rm in)})$ be the neural networks in Lemma~\ref{secnn:rec} with
\bean
&& L_{{\rm rec}}^{(\rm in)} \leq C_{N,5}  \{ \beta \log m /d  \}^2 , \quad \| \bd_{{\rm rec}}^{(\rm in)} \|_{\infty} \leq C_{N,5}  \{ \beta \log m /d  \}^3 ,
\\
&& s_{{\rm rec}}^{(\rm in)} \leq C_{N,5}  \{ \beta \log m /d  \}^4, \quad M_{{\rm rec}}^{(\rm in)} \leq  C_{N,5} m^{\frac{2\beta}{d}}
\eean
such that $\vert 1/x - f_{{\rm rec}}^{(\rm in)}(x) \vert \leq m^{-\beta/d}$ for $x \in [m^{-\beta/d}, m^{\beta/d} ]$.
For large enough $m$ so that $m^{-\beta/d} \leq C_{S,1}^{-1}/2$,
\bean
&& \left\vert \frac{1}{p_t(\bx)} - f_{{\rm rec}}^{(\rm in)}\left(f_{D+1}^{(1)}(\bx,t)\right) \right\vert \leq \left\vert \frac{1}{p_t(\bx)} - \frac{1}{f_{D+1}^{(1)}(\bx,t)} \right\vert + \left\vert \frac{1}{f_{D+1}^{(1)}(\bx,t)} - f_{{\rm rec}}^{(\rm in)}\left(f_{D+1}^{(1)}(\bx,t)\right) \right\vert
\\
&& \leq \{ p_t(\bx) \wedge f_{D+1}^{(1)}(\bx,t)  \}^{-2} \epsilon_1 + m^{-\frac{\beta}{d}} \leq (4 C_{S,1}^{2} + 1) \epsilon_1 
\eean
for $\|\bx\|_{\infty} \leq \mu_t- \{ \log (1/\sigma_t)\}^{-3/2}$ and $m^{-\tau_{\rm min}} \leq t \leq  3 m^{-\frac{1}{2D}}$. 
Let 
\bean
f_{{\rm mult}}^{(\rm in)} \in \cF_{\rm NN}(L_{{\rm mult}}^{(\rm in)},\bd_{{\rm mult}}^{(\rm in)},s_{{\rm mult}}^{(\rm in)},M_{{\rm mult}}^{(\rm in)})
\eean
be the neural networks in Lemma~\ref{secnn:mult} with
\bean
&& L_{{\rm mult}}^{(\rm in)} \leq C_{N,1} \log 2 \{ \beta \log m /d + 2 \log ( C_{S,1} \vee C_{S,3}) + 2\log 2 \}, \quad \bd_{{\rm mult}}^{(\rm in)} = (2,96,\ldots,96,1)^{\top},
\\
&& s_{{\rm mult}}^{(\rm in)} \leq C_{N,1} 2 \{ \beta \log m /d  +  \log ( C_{S,1} \vee C_{S,3}) + \log 2 \}, \quad M_{{\rm mult}}^{(\rm in)} = ( C_{S,1} \vee C_{S,3})^{2}
\eean
such that $\vert f_{{\rm mult}}^{(\rm in)} (\widetilde \bx) - x_1 x_2 \vert \leq m^{-\beta/d} + 2( C_{S,1} \vee C_{S,3}) \widetilde \epsilon$ for all $\|\bx\|_{\infty} \leq 2( C_{S,1} \vee C_{S,3})$, $\widetilde \bx \in \bbR^{2}$ with $\|\bx-\widetilde \bx \|_{\infty} \leq \widetilde \epsilon$ and $\vert f_{{\rm mult}}^{(\rm in)} (\bx) \vert \leq   ( C_{S,1} \vee C_{S,3})^{2} $ for all $\bx \in \bbR^2$, where $C_{N,1}$ is the constant in Lemma~\ref{secnn:mult}.
Then,
\be
\begin{split}
& \left\vert \frac{\sigma_t (\nabla p_t(\bx))_{i}}{p_t(\bx)} - f_{{\rm mult}}^{(\rm in)} \left( f_i^{(1)}(\bx,t) ,  f_{\rm rec}^{(\rm in)}\left(f_{D+1}^{(1)}(\bx,t) \right) \right)  \right\vert 
\\
& \leq m^{-\frac{\beta}{d}} + 2( C_{S,1} \vee C_{S,3})(4 C_{S,1}^{2} + 1) \epsilon_1 \leq  D_{2} m^{-\frac{\beta}{d}} (\log m)^{2D + 2\beta}, \quad i \in [D]
\label{eqthm4:nn1}
\end{split} \ee
for $\|\bx\|_{\infty} \leq \mu_t- \{ \log (1/\sigma_t)\}^{-3/2}$ and $m^{-\tau_{\rm min}} \leq t \leq  3 m^{-\frac{1}{2D}}$, where $D_{2} = D_{2}( \beta, D, C_{S,1}, C_{S,3}, \widetilde C_{4} )$.
Let $0 < \delta \leq \underline{T} = m^{-\tau_{\rm min}}$  be a small enough value as described below.
With ${\delta} < 1/2$, Lemma~\ref{secnn:mtst} implies that
there exist neural networks  $f_{\sigma} \in \cF_{\rm NN}(L_{\sigma},\bd_{\sigma},s_{\sigma},M_{\sigma})$ with
\be \begin{split}
& L_{\sigma} \leq C_{N,4} \{ \log(1/ \delta)\}^{2}, \quad   \| \bd_{\sigma}\|_{\infty} \leq C_{N,4}  \{ \log(1/ \delta)\}^{2}
\\
&  s_{\sigma} \leq C_{N,4}  \{  \log(1/ \delta)\}^{3}, \quad  M_{\sigma} \leq C_{N,4}  \log ( 1/ \delta) \label{eqthm4:nnst}
\end{split} \ee
such that $ \vert \sigma_{t} - f_{\sigma}(t) \vert \leq  \delta$
for $ t \geq  \delta$, where $C_{N,4}$ is the constant in Lemma~\ref{secnn:mtst}.
Also, Lemma~\ref{secnn:rec} implies that there exists a neural network $f_{\rm rec} \in \cF_{\rm NN}(L_{\rm rec}, \bd_{\rm rec}, s_{\rm rec}, M_{\rm rec} )$ with
\bean
&& L_{\rm rec} \leq C_{N,5} \{ \log(1/  \delta )\}^{2}, \quad \|\bd_{\rm rec}\|_{\infty} \leq C_{N,5}  \{ \log (1/  \delta )\}^{3}
\\
&& s_{\rm rec}  \leq C_{N,5}  \{ \log(1/  \delta )\}^{4}, \quad M_{\rm rec} \leq C_{N,5}  \delta^{-2}
\eean
such that $\vert 1/x - f_{\rm rec}(x) \vert \leq   \delta $ for any $x \in [  \delta,1/  \delta]$.
Since $  \sigma_t- \delta \leq f_{\sigma}(t)  \leq \sigma_t +  \delta$ for $t \geq \delta$ and $\sqrt{\underline{\tau}  \underline{T}} \leq \sigma_t \leq 1$ for $t \geq  \underline{T}$ , we have $ \underline{T} \leq f_{\sigma}(t)  \leq 2$ for $t \geq  \underline{T}$ with large enough $m$ so that $  \underline{T} \leq \sqrt{\underline{\tau}  \underline{T}} -  \underline{T}$ and $ \underline{T} \leq 1$.
Then,
\be \begin{split}
& \left\vert 1/\sigma_t - f_{\rm rec}(f_{\sigma}(t)) \right\vert \leq
\left\vert 1/\sigma_t - 1/ f_{\sigma}(t) \right\vert  + \left\vert  1/ f_{\sigma}(t) - f_{\rm rec}(f_{\sigma}(t)) \right\vert 
\\
& \leq \{ \sigma_t \wedge f_{\sigma}(t)\}^{-2} \vert \sigma_t - f_{\sigma}(t) \vert +  \delta \leq (1+ \underline{T}^{-2})  \delta  = (1 + m^{2\tau_{\rm min}}) \delta  \label{eqthm4:nnrecst}
\end{split} \ee
 for $t \geq  \underline{T}$. 
Lemma~\ref{secnn:mult} implies that there exists a neural network 
\bean
f_{\rm mult} \in \cF_{\rm NN}( L_{\rm mult},  \bd_{\rm mult}, s_{\rm mult},  M_{\rm mult})
\eean
with 
\bean
&&  L_{\rm mult} \leq C_{N,1} \log 2  \{\log(1/  \delta) + 3DD_{1}^2 \log m \}, \quad  \bd_{\rm mult} = (2,96,\ldots,96,1)^{\top},
\\
&& s_{\rm mult} \leq  C_{N,1} 2 \{\log(1/  \delta) 
+ 3DD_{1}^2 \log m \}, \quad   M_{\rm mult} =  m^{6DD_{1}^2 }
\eean
such that 
\be
\left\vert  f_{\rm mult}(\widetilde x_1, \widetilde x_2) - x_1 x_2\right\vert \leq  \delta + 2  m^{3DD_{1}^2} \widetilde \epsilon \label{eqthm4:nnmult}
\ee
for any $\bx = (x_1,x_2) \in \bbR^{2}$ with $ \|\bx\|_{\infty} \leq 
m^{3DD_{1}^2}$ and $\widetilde \bx = (\widetilde x_1,\widetilde x_{2}) \in \bbR^{k}$ with $\| \bx - \widetilde \bx\|_{\infty} \leq \widetilde \epsilon$,
where $0 < \widetilde \epsilon \leq 1 $.
Consider functions $\widetilde f_{1}^{(1)},\ldots,\widetilde f_{D}^{(1)} : \bbR^{D} \times \bbR \to \bbR$ such that
\bean
\widetilde f^{(1)}_{i}(\bx,t) =  f_{\rm mult} \left( f_{{\rm mult}}^{(\rm in)} \left( f_i^{(1)}(\bx,t) ,  f_{\rm rec}^{(\rm in)}\left(f_{D+1}^{(1)}(\bx,t) \right) \right) , f_{\rm rec} ( f_{\sigma}(t)) \right), \quad i \in [D]
\eean
for $\bx \in \bbR^{D}$ and $t \in \bbR$.
For large enough $m$ so that $m^{3DD_{1}^2} \geq  4(C_{S,1} \vee C_{S,3})^2$, we have
\bean
&& \left\vert \sigma_{t}^{-1} f_{{\rm mult}}^{(\rm in)} \left( f_i^{(1)}(\bx,t) ,  f_{\rm rec}^{(\rm in)}\left(f_{D+1}^{(1)}(\bx,t) \right) \right) - \widetilde f^{(1)}_{i}(\bx,t) \right\vert
\\
&& \leq \delta +  2  m^{3DD_{1}^2} (1+  m^{2\tau_{\rm min}}) \delta \leq 5 m^{5D D_{1}^2} \delta, \quad i \in [D]
\eean
for $\bx \in \bbR^{D}$ and $t \geq \underline{T}$ because $D_{1}^2 \geq \tau_{\rm min}$.
Combining (\ref{eqthm4:nn1}) with the last display, we have
\be \begin{split}
& \left\vert (\nabla \log p_t(\bx))_{i} - \widetilde f_{i}^{(1)}(\bx,t)  \right\vert \sigma_{t}
\\
& \leq \left\vert \frac{\sigma_t (\nabla p_t(\bx))_{i}}{p_t(\bx)} - f_{{\rm mult}}^{(\rm in)} \left( f_i^{(1)}(\bx,t) ,  f_{\rm rec}^{(\rm in)}\left(f_{D+1}^{(1)}(\bx,t) \right) \right)  \right\vert + 4 m^{5DD_{1}^2} \delta \sigma_{t}
\\
& \leq D_{2} m^{-\frac{\beta}{d}} (\log m)^{2D+2\beta } +  4 m^{5DD_{1}^2} \delta, \quad i \in [D] \label{eqthm4:nn1err}
\end{split} \ee
for $\|\bx\|_{\infty} \leq \mu_t- \{ \log (1/\sigma_t)\}^{-3/2}$ and $m^{-\tau_{\rm min}} \leq t \leq  3 m^{-\frac{1}{2D}}$.

\subsubsection{Boundary of Near-Support}
Let $\delta^{(2)} = m^{-(3D D_{1}^2 + 2\beta / d)}$ and
\bean
&& \tau_{\rm bd}^{(2)} = 3/2,
\quad \widetilde \tau_{\rm bd}^{(2)} = 1,
\\
&& \tau_{\rm x}^{(2)} = \left\{ \left(3D D_{1}^2 + \frac{2\beta}{d} \right)^{-\frac{1}{2}} \vee 1 \right\}
\left[ D_{1} \vee \tau_{\rm min}^{\frac{3}{2}} \left\{    (4D)^{\frac{3}{2}} +3  \right\} \right],
\\
&& \tau_{\rm t}^{(2)} = \left\{ 3D \left( 3D D_{1}^2 + \frac{2\beta}{d} \right) \right\}^{-1} \wedge \frac{1}{2}.
\eean
Let $\widetilde C_{6} = \widetilde C_{6}( D, K, \overline{\tau}, \underline{\tau}, \tau_{\rm x}^{(2)} )$, $\widetilde C_{7} = \widetilde C_{7} ( D , K , \underline{\tau} )$,  $ \widetilde C_{8} = \widetilde C_{8}( D,  \overline{\tau}, \tau_{\rm bd}^{(2)}, \tau_{\rm x}^{(2)}, \tau_{\rm t}^{(2)}, \widetilde \tau_{\rm bd}^{(2)} )$
be the constants in Proposition~\ref{secpt:2}, where $(\tau_{\rm bd}, \widetilde \tau_{\rm bd}, \tau_{\rm x},\tau_{\rm t})$ is replaced by
$(\tau_{\rm bd}^{(2)}, \widetilde \tau_{\rm bd}^{(2)}, \tau_{\rm x}^{(2)},\tau_{\rm t}^{(2)} )$.
For large enough $m$, we have 
\bean
\delta^{(2)}\leq \widetilde C_{8},
\quad 3m^{-\frac{1}{2D}} \leq \left\{ \delta^{(2)} \right\}^{\tau_{\rm t}^{(2)}},
\quad \left\{  \log \left( 1 / \delta^{(2)} \right) \right\}^{- \widetilde \tau_{\rm bd}^{(2)} } \leq \tau_2,
\quad
2 C_{S,1} m^{D D_{1}^2 }  \leq \left\{ \delta^{(2)} \right\}^{-1}.
\eean
Also, a simple calculation yields that
\bean
\delta^{(2)} \leq m^{-\tau_{\rm min}},
\quad \tau_{\rm x}^{(2)} \geq  \tau_{\rm min}^{\frac{3}{2}} \left\{  (4D)^{\frac{3}{2}} +3  \right\},
\quad
D_{1} \sqrt{\log m}
\leq \tau_{\rm x}^{(2)} \sqrt{ \log \left( 1 / \delta^{(2)} \right) }.
\eean
Then, Proposition~\ref{secpt:2} implies that
that there exists a neural network 
\bean
\bff^{(2)} = (f_1^{(2)},\ldots,f_{D+1}^{(2)})^{\top} \in \cF_{\rm NN} ( L^{(2)}, \bd^{(2)}, s^{(2)}, M^{(2)} )
\eean
with 
\bean
&& L \leq \widetilde C_{6} \left\{ \log \left(1/\delta^{(2)} \right) \right\}^{4}, \quad \| \bd\|_{\infty} \leq \widetilde C_{6} \left\{ \log \left(1/\delta^{(2)} \right) \right\}^{ 7 + 2D },
\\
&& s \leq \widetilde C_{6}  \left\{ \log \left(1/\delta^{(2)} \right) \right\}^{11 + 2D }, \quad M \leq \exp \left( \widetilde C_{6} \left\{ \log \left(1/\delta^{(2)} \right) \right\}^{2} \right),
\eean
such that
\bean
\left\| \begin{pmatrix}
\sigma_t \nabla p_t(\bx) 
\\
 p_t(\bx)
\end{pmatrix}
-  \bff^{(2)}(\bx,t) \right\|_{\infty} 
\leq  \widetilde C_{7} \delta^{(2)}  \left\{ \log \left( 1 / \delta^{(2)} \right) \right\}^{D} \defeq \epsilon_{2}
\eean
for $\mu_t-   \tau_{\rm min}^{3/2} \{    (4D)^{3/2} +3  \} \{ \log (1/\sigma_t)\}^{-3/2} \leq  \|\bx\|_{\infty} \leq \mu_t + \sigma_t D_{1} \sqrt{\log m}$ and $m^{-\tau_{\rm min}} \leq  t \leq 3m^{-\frac{1}{2D}}$.
Lemma~\ref{secsc:ptbound} implies that $C_{S,1}^{-1} m^{-D D_{1}^2 } \leq  p_t(\bx) \leq C_{S,1} $
for $\|\bx\|_{\infty} \leq \mu_t + \sigma_{t} D_{1} \sqrt{\log m}$ and $t \geq 0$.
Note that $C_{S,1}^{-1} m^{-D D_{1}^2 } / 2 \leq p_t(\bx) - \epsilon_{2} \leq f_{D+1}^{(2)}(\bx,t) \leq  p_t(\bx) + \epsilon_{2} \leq 2 C_{S,1} $  for $\|\bx\|_{\infty} \leq \mu_t +  \sigma_t D_{1} \sqrt{\log m}$ and $m^{-\tau_{\rm min}} \leq t \leq  3 m^{-\frac{1}{2D}}$ with large enough $m$ so that $\epsilon_{2} \leq C_{S,1}^{-1} m^{-D D_{1}^2 } /2 $.
Let 
\bean
f_{{\rm rec}}^{(\rm bd)} \in \cF_{\rm NN}(L_{{\rm rec}}^{(\rm bd)},\bd_{{\rm rec}}^{(\rm bd)}, s_{{\rm rec}}^{(\rm bd)},M_{{\rm rec}}^{(\rm bd)})
\eean
be the neural networks in Lemma~\ref{secnn:rec} with
\bean
&& L_{{\rm rec}}^{(\rm bd)} \leq C_{N,5}  \{ \log ( 1 / \delta^{(2)} )  \}^2 , \quad \| \bd_{{\rm rec}}^{(\rm bd)} \|_{\infty} \leq  C_{N,5}  \{ \log ( 1 / \delta^{(2)} )  \}^3 ,
\\
&& s_{{\rm rec}}^{(\rm bd)} \leq  C_{N,5}  \{ \log ( 1 / \delta^{(2)} )  \}^4, \quad M_{{\rm rec}}^{(\rm bd)} \leq  C_{N,5}    \{ \delta^{(2)}   \}^{-2} 
\eean
such that $\vert 1/x - f_{{\rm rec}}^{(\rm bd)}(x) \vert \leq \delta^{(2)} $ for $x \in [\delta^{(2)}, 1 / \delta^{(2)}  ]$.
Then,
\bean
&& \left\vert \frac{1}{p_t(\bx)} - f_{{\rm rec}}^{(\rm bd)} \left(f_{D+1}^{(2)}(\bx,t)\right) \right\vert \leq \left\vert \frac{1}{p_t(\bx)} - \frac{1}{f_{D+1}^{(2)}(\bx,t)} \right\vert
+ \left\vert \frac{1}{f_{D+1}^{(2)}(\bx,t)} - f_{{\rm rec}}^{(\rm bd)} \left(f_{D+1}^{(2)}(\bx,t)\right) \right\vert
\\
&& \leq \{ p_t(\bx) \wedge f_{D+1}^{(2)}(\bx,t)  \}^{-2} \epsilon_2 +  \delta^{(2)}
\leq 4 C_{S,1}^2 m^{2D D_{1}^2} \epsilon_2 + \delta^{(2)}
\\
&& \leq ( 4 C_{S,1}^2 +1 ) \epsilon_{2} m^{2 D D_{1}^2}
\eean
for $\mu_t-   \tau_{\rm min}^{3/2} \{    (4D)^{3/2} +3  \} \{ \log (1/\sigma_t)\}^{-3/2} \leq  \|\bx\|_{\infty} \leq \mu_t + \sigma_t D_{1} \sqrt{\log m}$ and $m^{-\tau_{\rm min}} \leq t \leq  3 m^{-\frac{1}{2D}}$. 
Let 
\bean
f_{{\rm mult}}^{(\rm bd)} \in \cF_{\rm NN}( L_{{\rm mult}}^{(\rm bd)}, \bd_{{\rm mult}}^{(\rm bd)}, s_{{\rm mult}}^{(\rm bd)}, M_{{\rm mult}}^{(\rm bd)} )
\eean
be the neural networks in Lemma~\ref{secnn:mult} with
\bean
&& L_{{\rm mult}}^{(\rm bd)} \leq C_{N,1} \log 2 \{ ( 2\beta / d + 1 + 2 D D_{1}^2) \log m  + 2 \log (C_{S,1} \vee C_{S,3} ) + 2\log 2  \},
\\
&& \bd_{{\rm mult}}^{(\rm bd)} = (2,96,\ldots,96,1)^{\top},
\\
&& s_{{\rm mult}}^{(\rm bd)} \leq C_{N,1} 2 \{ ( 2\beta / d + 1 + D D_{1}^2 ) \log m  +  \log ( C_{S,1} \vee C_{S,3}) + \log 2 \},
\\
&& M_{{\rm mult}}^{(\rm bd)} = ( C_{S,1} \vee C_{S,3})^{2}  m^{2 D D_{1}^2}
\eean
such that $\vert f_{{\rm mult}}^{(\rm bd)} (\widetilde \bx) - x_1 x_2 \vert \leq m^{-2\beta /d - 1} + 2( C_{S,1} \vee C_{S,3}) m^{D D_{1}^2 } \widetilde \epsilon$ for all $\|\bx\|_{\infty} \leq ( C_{S,1} \vee C_{S,3}) m^{D D_{1}^2 }$, $\widetilde \bx \in \bbR^{2}$ with $\|\bx-\widetilde \bx \|_{\infty} \leq \widetilde \epsilon$ and $\vert f_{{\rm mult}}^{(\rm bd)} (\bx) \vert \leq   ( C_{S,1} \vee C_{S,3})^{2} m^{2 D D_{1}^2} $ for all $\bx \in \bbR^2$.
Then,
\be
\begin{split}
& \left\vert \frac{\sigma_t (\nabla p_t(\bx))_{i}}{p_t(\bx)} - f_{{\rm mult}}^{(\rm bd)} \left( f_i^{(2)}(\bx,t) ,  f_{\rm rec}^{(\rm bd)}\left(f_{D+1}^{(2)}(\bx,t) \right) \right)  \right\vert 
\\
& \leq m^{-2\beta /d - 1} + 2( C_{S,1} \vee C_{S,3})(4 C_{S,1}^2 + 1) \epsilon_{2} m^{3 D D_{1}^2}   \leq  D_{3} m^{-2\beta / d} (\log m)^{D}, \quad i \in [D]
\label{eqthm4:nn2}
\end{split} \ee
for $\mu_t-   \tau_{\rm min}^{3/2} \{    (4D)^{3/2} +3  \} \{ \log (1/\sigma_t)\}^{-3/2} \leq  \|\bx\|_{\infty} \leq \mu_t + \sigma_t D_{1} \sqrt{\log m}$ and $m^{-\tau_{\rm min}} \leq t \leq  3 m^{-\frac{1}{2D}}$,
where $D_{3} = D_{3} (\beta, d, D, \widetilde C_{7}, C_{S,1}, C_{S,3}, D_{1} )$.
Consider functions $\widetilde f_{1}^{(2)},\ldots,\widetilde f_{D}^{(2)} : \bbR^{D} \times \bbR \to \bbR$ such that
\bean
\widetilde f^{(2)}_{i}(\bx,t) =  f_{\rm mult} \left( f_{{\rm mult}}^{(\rm bd)} \left( f_i^{(2)}(\bx,t) ,  f_{\rm rec}^{(\rm bd)}\left(f_{D+1}^{(2)}(\bx,t) \right) \right) , f_{\rm rec} ( f_{\sigma}(t)) \right), \quad i \in [D]
\eean
for $\bx \in \bbR^{D}$ and $t \in \bbR$.
For large enough $m$ so that $m^{3D D_{1}^2} \geq ( C_{S,1} \vee C_{S,3})^{2} m^{2 D D_{1}^2}$, (\ref{eqthm4:nnrecst}) and (\ref{eqthm4:nnmult}) implies that
\bean
&& \left\vert \sigma_{t}^{-1} f_{{\rm mult}}^{(\rm bd)} \left( f_i^{(2)}(\bx,t) ,  f_{\rm rec}^{(\rm bd)}\left(f_{D+1}^{(2)}(\bx,t) \right) \right) - \widetilde f^{(2)}_{i}(\bx,t) \right\vert
\\
&& \leq \delta +  2  m^{3DD_{1}^2} (1+  m^{2\tau_{\rm min}}) \delta \leq 5 m^{5D D_{1}^2} \delta, \quad i \in [D]
\eean
for $\bx \in \bbR^{D}$ and $t \geq m^{-\tau_{\rm min}}$.
Combining (\ref{eqthm4:nn2}) with the last display, we have
\be \begin{split}
&  \left\vert (\nabla \log p_t(\bx))_{i} - \widetilde f_{i}^{(2)}(\bx,t)  \right\vert \sigma_{t}
\\
& \leq \left\vert \frac{\sigma_t (\nabla p_t(\bx))_{i}}{p_t(\bx)} - f_{{\rm mult}}^{(\rm bd)} \left( f_i^{(2)}(\bx,t) ,  f_{\rm rec}^{(\rm bd)}\left(f_{D+1}^{(2)}(\bx,t) \right) \right)  \right\vert +  5 m^{5D D_{1}^2} \delta \sigma_{t}
\\
& \leq D_{3} m^{-2\beta/d} (\log m)^{D} + 5 m^{5D D_{1}^2} \delta \leq D_{3} m^{-\beta/d} + 5 m^{5D D_{1}^2} \delta, \quad i \in [D]
\label{eqthm4:nnpt2err}
\end{split} \ee
for $\mu_t-  \tau_{\rm min}^{3/2} \{    (4D)^{3/2} +3  \} \{ \log (1/\sigma_t)\}^{-3/2} \leq  \|\bx\|_{\infty} \leq \mu_t + \sigma_t D_{1} \sqrt{\log m}$ and $m^{-\tau_{\rm min}} \leq t \leq  3 m^{-\frac{1}{2D}}$, where the last ienquality holds for large enough $m$ so that $(\log m)^{D} \leq m^{\beta/d}$.

\subsubsection{Large $t$}
Let $m^{(3)} = \sqrt{m}, \tau_{\rm low}^{(3)} = 1/2$,
\bean
&& \tau_{\rm sm}^{(3)} = \left\lfloor \left\{ \left( \frac{2D}{\tau_{\rm low}^{(3)}} \right) \left(1 + D  \right) \left( \frac{2\beta}{d} + \frac{1}{2D} + 3 D D_1^2 \right)
+ D \left( \frac{ D + 1 - \tau_{\rm low}^{(3)} }{\tau_{\rm low}^{(3)}} \right)  \right\} \right.
\\
&& \qquad \qquad 
\left. \vee \left\{  D (1 + D) \left(   \frac{ \{ \tau_{\rm max}  \overline{\tau} \} \vee \{ 1/2D \} }{ \tau_{\rm low}^{(3)}} \right)  - \frac{ D + 1 - \tau_{\rm low}^{(3)} }{\tau_{\rm low}^{(3)}} \right\}   \right\rfloor + 1
\\
&& \tau_{\rm x}^{(3)} = D_{1} \left\{ \frac { 2 D \left( 1 + D \right) }{  \tau_{\rm low}^{(3)} \tau_{\rm sm}^{(3)} + D + 1 - \tau_{\rm low}^{(3)} }  \right\}^{\frac{1}{2}}.
\eean
Also, let 
\bean
t_* = \left\{ m^{(3)} \right\}^{- \frac{  2 - 2 \tau_{\rm low}^{(3)} }{D} },
\quad
\delta^{(3)} = \left\{ m^{(3)} \right\}^{ - \frac {  \tau_{\rm low}^{(3)} \tau_{\rm sm}^{(3)} + D + 1 - \tau_{\rm low}^{(3)} }{ D ( 1 + D  ) } },
\eean
and $\widetilde C_{9}, \widetilde C_{10}, \widetilde C_{11}$ be the constants in Proposition~\ref{secpt:3} depending on $ ( D, K, \overline{\tau},  \underline{\tau}, \tau_{\rm x}^{(3)},  \tau_{\rm sm}^{(3)}, \tau_{\rm low}^{(3)} )$, where $(\tau_{\rm x}, \tau_{\rm sm}, \tau_{\rm low})$ is replaced by $( \tau_{\rm x}^{(3)}, \tau_{\rm sm}^{(3)}, \tau_{\rm low}^{(3)} )$.
A simple calculation yields that
\bean
&& t_* = m^{-\frac{1}{2D}},
\quad
\left\{ m^{(3)} \right\}^{- \frac {  \tau_{\rm low}^{(3)} \tau_{\rm sm}^{(3)} - ( D + 1  - \tau_{\rm low}^{(3)} ) D }{ D ( 1 + D)  }}
\leq m^{-\frac{2\beta}{d} -\frac{1}{2D} - 3D D_{1}^2 },
\\
&& \tau_{\rm x}^{(3)} \sqrt{ \log \left( 1 / \delta^{(3)} \right) } = D_{1} \sqrt{\log m},
\quad
\delta^{(3)} \leq  m ^{-\frac{1}{2D}},
\quad \tau_{\rm max} \log m \leq  \overline{\tau}^{-1} \log \left( 1 / \delta^{(3)} \right).
\eean
Also, for large enough $m$, we  have
\bean
m^{(3)} \geq \widetilde C_{11}
\quad \text{and}
\quad
\left\{ \log m^{(3)} \right\}^{ \frac{D\tau_{\rm sm}^{(3)}}{2} + D  } \leq m^{\frac{1}{4D} }.
\eean
Then, Proposition~\ref{secpt:3} implies that
that there exists a neural network
\bean
\bff^{(3)} = (f_1^{(3)},\ldots,f_{D+1}^{(3)})^{\top} \in \cF_{\rm NN} (L^{(3)} , \bd^{(3)}, s^{(3)}, M^{(3)} )
\eean
with 
\bean
&& L^{(3)} \leq \widetilde C_{9} (\log m / 2 )^{4}, \quad \|\bd^{(3)}\|_{\infty} \leq  \widetilde C_{9} \sqrt{m} (\log m / 2)^{9},
\\
&& s^{(3)} \leq  \widetilde C_{9} \sqrt{m}  (\log m / 2)^{9}, \quad M^{(3)} \leq \exp (  \widetilde C_{9} \{ \log m / 2 \}^{2} ), 
\eean
such that
\bean
&& \left\| \begin{pmatrix}
\sigma_{t-t_*}  \nabla p_t(\bx) 
\\
 p_t(\bx)
\end{pmatrix}
-  \bff^{(3)}(\bx,t - t_*) \right\|_{\infty} \leq  \widetilde C_{10} 2^{-D(\tau_{\rm sm}^{(3)} / 2 + 1 )}  m^{-\frac{2\beta}{d} -\frac{1}{4D} - 3D D_{1}^2 }  \defeq \epsilon_{3}
\eean
for $\|\bx\|_{\infty} \leq  \mu_t + \sigma_t D_{1} \sqrt{\log m}$ and $2 m^{-\frac{1}{2D}} \leq t \leq \tau_{\rm max}  \log m$, where $t_* = m^{-1/(2D)}$.
Note that $C_{S,1}^{-1} m^{-D D_{1}^2 } / 2 \leq p_t(\bx) - \epsilon_{3} \leq f_{D+1}^{(3)}(\bx,t) \leq  p_t(\bx) + \epsilon_{3} \leq 2 C_{S,1}$ for $\|\bx\|_{\infty} \leq  \mu_t + \sigma_t D_{1} \sqrt{\log m}$ and $2 m^{-\frac{1}{2D}} \leq t \leq \tau_{\rm max}  \log m$ with large enough $m$ so that $\epsilon_{3} \leq C_{S,1}^{-1} m^{-D D_{1}^2 } /2 $.
Then,
\bean
&& \left\vert \frac{1}{p_t(\bx)} - f_{{\rm rec}}^{(\rm bd)}\left(f_{D+1}^{(3)}(\bx,t-t_*)\right) \right\vert
\\
&& \leq \left\vert \frac{1}{p_t(\bx)} - \frac{1}{f_{D+1}^{(3)}(\bx,t-t_*)} \right\vert 
+ \left\vert \frac{1}{f_{D+1}^{(3)}(\bx,t-t_*)} - f_{{\rm rec}}^{(\rm bd)}\left(f_{D+1}^{(3)}(\bx,t-t_*)\right) \right\vert
\\
&& \leq \{ p_t(\bx) \wedge f_{D+1}^{(3)}(\bx,t - t_*)  \}^{-2} \epsilon_3
+ \delta^{(2)}
\leq 4 C_{S,1}^2 \epsilon_{3} m^{2 D D_{1}^2} + \delta^{(2)}
\\ 
&& \leq (4 C_{S,1}^2 +\widetilde C_{10} 2^{-D(\tau_{\rm sm}^{(3)} / 2 + 1 )}) \epsilon_{3} m^{2 D D_{1}^2}
\eean
for $\|\bx\|_{\infty} \leq  \mu_t + \sigma_t D_{1} \sqrt{\log m}$ and $2 m^{-\frac{1}{2D}} \leq t \leq \tau_{\rm max}  \log m$.
The last inequality holds because $2 DD_{1}^2 \geq 4 D \tau_{\rm min} \geq 4/3 \geq 1/(4D)$.
Then,
\be
\begin{split}
& \left\vert \frac{\sigma_{t-t_*} (\nabla p_t(\bx))_{i}}{p_t(\bx)} - f_{{\rm mult}}^{(\rm bd)} \left( f_i^{(3)}(\bx,t-t_*) ,  f_{\rm rec}^{(\rm bd)}\left(f_{D+1}^{(3)}(\bx,t-t_*) \right) \right)  \right\vert 
\\
& \leq m^{-2\beta /d - 1} + 2(C_{S,1} \vee C_{S,3})(4 C_{S,1}^2 +\widetilde C_{10} 2^{-D(\tau_{\rm sm}^{(3)} / 2 + 1 )})  \epsilon_{3} m^{3 D D_{1}^2}  \leq  D_{4} m^{-\beta/d - 1/ (4D) } 
\label{eqthm4:nn3}
\end{split} \ee
for $\|\bx\|_{\infty} \leq  \mu_t + \sigma_t D_{1} \sqrt{\log m}$ and $2 m^{-\frac{1}{2D}} \leq t \leq \tau_{\rm max}  \log m$.
where $D_{4} = D_{4} (D, \tau_{\rm sm}^{(3)}, \widetilde C_{10}, C_{S,1}, C_{S,3} ) $.
Consider functions $\widetilde f_{1}^{(3)},\ldots,\widetilde f_{D}^{(3)} : \bbR^{D} \times \bbR \to \bbR$ such that
\bean
\widetilde f^{(3)}_{i}(\bx,t) =  f_{\rm mult} \left( f_{{\rm mult}}^{(\rm bd)} \left( f_i^{(3)}(\bx,t - t_*) ,  f_{\rm rec}^{(\rm bd)}\left(f_{D+1}^{(3)}(\bx,t - t_*) \right) \right) , f_{\rm rec} ( f_{\sigma}(t - t_* )) \right), \quad i \in [D]
\eean
for $\bx \in \bbR^{D}$ and $t \in \bbR$.
Combining with (\ref{eqthm4:nnrecst}) and (\ref{eqthm4:nnmult}), we have
\bean
&& \left\vert \sigma_{t - t_*}^{-1} f_{{\rm mult}}^{(\rm bd)} \left( f_i^{(3)}(\bx,t - t_*) ,  f_{\rm rec}^{(\rm bd)}\left(f_{D+1}^{(3)}(\bx,t - t_*), \right) \right) - \widetilde f^{(3)}_{i}(\bx,t) \right\vert
\\
&& \leq \delta +  2  m^{3DD_{1}^2} (1+  m^{2\tau_{\rm min}}) \delta \leq 5 m^{5D D_{1}^2} \delta, \quad i \in [D]
\eean
for $\bx \in \bbR^{D}$ and $t \geq 2m^{-1/(2D)}$.
Note that $\vert \sigma_{t} / \sigma_{t-t_*} \vert \leq \vert \sigma_{t-t_*}^{-1} \vert \leq (2\underline{\tau})^{-1/2} m^{1/(4D) }$ for $t \geq 2m^{-1/(2D)}$.
Combining (\ref{eqthm4:nn3}) with the last display, we have
\bean
&& \left\vert (\nabla \log p_t(\bx))_{i} - \widetilde f_{i}^{(3)}(\bx,t)  \right\vert \sigma_{t - t_*} 
\\
&& \leq \left\vert \frac{\sigma_{t - t_*} (\nabla p_t(\bx))_{i}}{p_t(\bx)} - f_{{\rm mult}}^{(\rm bd)} \left( f_i^{(3)}(\bx,t - t_*) ,  f_{\rm rec}^{(\rm bd)}\left(f_{D+1}^{(3)}(\bx,t - t_*) \right) \right)  \right\vert + 5 m^{5D D_{1}^2} \delta\sigma_{t - t_*}
\\
&& \leq D_{4}  m^{-\beta/d -1/(4D)}+  5 m^{5D D_{1}^2} \delta, \quad i \in [D]
\eean
and
\bean
&&  \left\vert (\nabla \log p_t(\bx))_{i} - \widetilde f_{i}^{(3)}(\bx,t)  \right\vert \sigma_{t}
\\
&& \leq  \left\vert (\nabla \log p_t(\bx))_{i} - \widetilde f_{i}^{(3)}(\bx,t)  \right\vert  \sigma_{t - t_*}  (2\underline{\tau})^{-1/2} m^{1/(4D) }
\\
&& \leq D_{4} (2\underline{\tau})^{-1/2}  \sqrt{\underline{\tau}} m^{-\beta/d }
+ 5 (2\underline{\tau})^{-1/2}  \sqrt{\underline{\tau}} m^{5 D D_{1}^2 + 1/(4D)} \delta, \quad i \in [D]
\eean
for $\|\bx\|_{\infty} \leq  \mu_t + \sigma_t D_{1} \sqrt{\log m}$ and $2 m^{-\frac{1}{2D}} \leq t \leq \tau_{\rm max}  \log m$.
Let $\delta = m^{-5 D D_{1}^2 - 1/(4D) - \beta/d}$.
Then, there exists a positive constant $D_{5} = D_{5}(\underline{\tau}, D_{2},D_{3},D_{4})$ such that (\ref{eqthm4:nn1err}), (\ref{eqthm4:nnpt2err}) and the last display are bounded by
\be
D_{5} m^{-\frac{\beta}{d}} (\log m)^{ 2D+2\beta } \defeq \epsilon_{4} \label{eqthm4:nnerrtot}
\ee

\subsubsection{Combining into a Single Function}
For large enough $m$ so that $3m^{-1/(2D)} \leq (2\overline{\tau})^{-1}$, we have
\bean
\left\{ \log (1/\sqrt{\underline{\tau}}) + \tau_{\rm min} \log m /2 \right\}^{-3/2} \leq \{ \log(1/\sigma_{t}) \} ^{-3/2} \leq \left\{  \log(1/\sqrt{6\overline{\tau}}) + \log m /(4D) \right\}^{-3/2}
\eean
for $m^{-\tau_{\rm min}} \leq t \leq 3m^{-1/(2D)}$ because $\sqrt{\underline{\tau}t }  \leq \sigma_{t} \leq \sqrt{2\overline{\tau}t}$ for $0 \leq t \leq (2\overline{\tau})^{-1}$.
For large enough $m$ so that $\log (1/\sqrt{\underline{\tau}}) \leq \tau_{\rm min} \log m /2 $, it follows that
\bean
\tau_{\rm min}^{-3/2} (\log m)^{-3/2} \leq \{ \log(1/\sigma_{t}) \} ^{-3/2} \leq (4D)^{3/2} (\log m)^{-3/2}
\eean
for $m^{-\tau_{\rm min}} \leq t \leq 3m^{-1/(2D)}$.
Then,
\be
 \mu_t-   \tau_{\rm min}^{3/2} \{    (4D)^{3/2} +3  \} \{ \log (1/\sigma_t)\}^{-3/2}
 < \mu_t-  \overline{x}  < \mu_t-    \underline{x}
< \mu_{t} - \{ \log (1/\sigma_{t}) \}^{-3/2} \label{eqthm4:clipxineq}
\ee
for $m^{-\tau_{\rm min}} \leq t \leq 3m^{-1/(2D)}$, where 
\bean
\overline{x} =  \{    (4D)^{3/2} + 2  \} (\log m)^{-3/2}
\text{\quad and \quad}
\underline{x} =  \{    (4D)^{3/2} + 1  \}   (\log m)^{-3/2}.
\eean
Consider piecewise linear functions $f_{\rm swit, x}^{(1)}, f_{\rm swit, x}^{(2)} : \bbR \to [0,1]$ such that
\bean
&& f_{\rm swit, x}^{(1)} (x) =  (\log m)^{3/2}  \rho \left( -f_{\rm clip}^{(\underline{x}, \overline{x})} (x) + \overline{x} \right) = \frac{1}{\overline{x} - \underline{x}} \max\left( - (x \vee \underline{x}) \wedge \overline{x}  + \overline{x}, 0 \right),
\\
&& f_{\rm swit, x}^{(2)} (x) =  (\log m)^{3/2}  \rho \left( f_{\rm clip}^{(\underline{x}, \overline{x})} (x) - \underline{x} \right) = \frac{1}{\overline{x} - \underline{x}} \max\left(  (x \vee \underline{x}) \wedge \overline{x}  - \underline{x}, 0 \right),
\eean
for $x \in \bbR$, where $f_{\rm clip}^{(\underline{x}, \overline{x})} \in \cF_{\rm NN}(2,(1,2,1)^{\top}, 7, \overline{x} \vee (\log m)^{3/2})$ is the neural network in Lemma~\ref{secnn:clip}.
Note that $f_{\rm swit, x}^{(1)}(x) + f_{\rm swit, x}^{(2)}(x) = 1$ for $x \in \bbR$, and $f_{\rm swit, x}^{(1)}(x) = 0$ for $x \geq \overline{x}$ and $f_{\rm swit, x}^{(2)}(x) = 0$ for $x \leq \underline{x}$.
Combining with (\ref{eqthm4:nnerrtot}) and (\ref{eqthm4:clipxineq}), we have
\bean
&& \left\vert f_{\rm swit, x}^{(1)} \left( \mu_{t} - f_{\max}(\bx) \right) \widetilde f_{i}^{(1)}(\bx,t) + f_{\rm swit, x}^{(2)} \left( \mu_{t} - f_{\max}(\bx)   \right)  \widetilde f_{i}^{(2)}(\bx,t) - (\nabla \log p_t (\bx))_{i}    \right\vert
\\
&& \leq \epsilon_{4} / \sigma_{t}, \quad i \in [D]
\eean
for $\|\bx\|_{\infty} \leq  \mu_t + \sigma_t D_{1} \sqrt{\log m}$ and $m^{-\tau_{\rm min}} \leq t \leq 3 m^{-\frac{1}{2D}}$, where $f_{\rm max} : \bbR^{D} \to \bbR$ is a function such that
\bean
&& f_{\rm max}(\bx) = \| \bx \|_{\infty}
\\
&& = \rho \left( \cdots \rho \left( \rho \left( \rho( \vert x_1 \vert - \vert x_2 \vert ) + \vert x_2 \vert  - \vert  x_3 \vert  \right) + \vert  x_{3} \vert  -  \vert x_{4} \vert \right) \cdots +  \vert x_{D-1} \vert  -  \vert x_{D} \vert  \right) +  \vert x_{D} \vert .
\eean
Note also that $\vert x \vert = \rho(x) + \rho(-x)$.
Lemma~\ref{secnn:mtst} implies that
there exist neural networks  $f_{\mu} \in \cF_{\rm NN}(L_{\mu},\bd_{\mu},s_{\mu},M_{\mu})$ with
\bean
&& L_{\mu} \leq C_{N,4} \{ \log(1/ \delta)\}^{2}, \quad ,  \| \bd_{\mu}\|_{\infty} \leq C_{N,4}  \{ \log(1/ \delta)\}^{2}
\\
&&  s_{\mu} \leq C_{N,4}  \{  \log(1/ \delta)\}^{3}, \quad  M_{\mu} \leq C_{N,4}  \log ( 1/ \delta) 
\eean
such that $ \vert \mu_{t} - f_{\mu}(t) \vert \leq  \delta$
for $ t \geq  0$.
Since $f_{\rm swit, x}^{(1)}$ and $f_{\rm swit, x}^{(2)}$ are $(\log m)^{\tau_{\rm bd}}$-Lipschitz continuous, $\vert f_{\rm swit, x}^{(i)} (\mu_{t} - \|\bx\|_{\infty}) -  f_{\rm swit, x}^{(i)} (f_\mu(t) - \|\bx\|_{\infty}) \vert \leq \delta (\log m)^{\tau_{\rm bd}} $ for each $i \in \{1,2\}$, $\bx \in \bbR^{D}$ and $t \geq 0$.
For $i \in [D]$, consider a function $f_{i}^{({\rm x})} : \bbR^{D} \times \bbR \to \bbR$ such that
\bean
&& f_{i}^{({\rm x})} (\bx,t)
\\
&& = f_{\rm mult} \left( f_{\rm swit, x}^{(1)} \left( f_{\mu}(t) - f_{\rm max}(\bx) \right),  \widetilde f_{i}^{(1)}(\bx,t) \right) + f_{\rm mult} \left( f_{\rm swit, x}^{(2)}  \left( f_\mu(t) -  f_{\rm max}(\bx)  \right)  , \widetilde f_{i}^{(2)}(\bx,t) \right)
\eean
for $\bx \in \bbR^{D}$ and $t \in \bbR$.
Combining with (\ref{eqthm4:nnmult}), we have
\be \begin{split}
& \left\vert f_{i}^{({\rm x})} (\bx,t) - (\nabla \log p_t (\bx))_{i}    \right\vert
\\
& \leq \epsilon_{4} / \sigma_{t} + 2 \delta + 4  m^{3DD_{1}^2} \delta (\log m)^{\tau_{\rm bd}} \leq \epsilon_{4} / \sigma_{t} + 6m^{3D D_{1}^2 } \delta (\log m)^{\tau_{\rm bd}}, \quad i \in [D]
\label{eqthm4:nnxclip}
\end{split} \ee
for $\|\bx\|_{\infty} \leq  \mu_t + \sigma_t D_{1} \sqrt{\log m}$ and $m^{-\tau_{\rm min}} \leq t \leq 3 m^{-\frac{1}{2D}}$.
Similarly, let $\underline{t} = 2m^{-\frac{1}{2D}}$ and $\overline{t} = 3m^{-\frac{1}{2D}}$, consider piecewise linear functions $f_{\rm swit, t}^{(1)}, f_{\rm swit, t}^{(2)} : \bbR \to [0,1]$ such that
\bean
&& f_{\rm swit, t}^{(1)} (t) =  m^{\frac{1}{2D}}  \rho \left( -f_{\rm clip}^{(\underline{t}, \overline{t})} (t) + \overline{t} \right) = \frac{1}{\overline{t} - \underline{t}} \max\left( - (t \vee \underline{t}) \wedge \overline{t}  + \overline{t}, 0 \right),
\\
&& f_{\rm swit, t}^{(2)} (t) =  m^{\frac{1}{2D}}   \rho \left( f_{\rm clip}^{(\underline{t}, \overline{t})} (t) - \underline{t} \right) = \frac{1}{\overline{t} - \underline{t}} \max\left(  (t \vee \underline{t}) \wedge \overline{t}  - \underline{t}, 0 \right),
\eean
where $f_{\rm clip}^{(\underline{t}, \overline{t})} \in \cF_{\rm NN}(2,(1,2,1)^{\top}, 7, \overline{t} \vee m^{\frac{1}{2D}} )$ is the neural network in Lemma~\ref{secnn:clip}.
Note that $f_{\rm swit, t}^{(1)}(t) + f_{\rm swit, t}^{(2)}(t) = 1$ for $t \in \bbR$, and $f_{\rm swit, t}^{(1)}(t) = 0$ for $t \geq \overline{t}$ and $f_{\rm swit, t}^{(2)}(t) = 0$ for $t \leq \underline{t}$.
Combining with (\ref{eqthm4:nnerrtot}) and (\ref{eqthm4:nnxclip}), we have
\bean
&& \left\vert f_{\rm swit, t}^{(1)}(t)  f_{i}^{({\rm x})} (\bx,t) + f_{\rm swit, t}^{(2)} \left( t  \right)  \widetilde f_{i}^{(3)}(\bx,t) - (\nabla \log p_t (\bx))_{i}    \right\vert
\\
&& \leq \epsilon_{4} / \sigma_{t} + 6m^{3D D_{1}^2 } \delta (\log m)^{\tau_{\rm bd}}, \quad i \in [D]
\eean
for $\|\bx\|_{\infty} \leq  \mu_t + \sigma_t D_{1} \sqrt{\log m}$ and $m^{-\tau_{\rm min}} \leq t \leq \tau_{\rm max} \log m$.
Consider a vector-valued function $\bff^{(\rm x,t)}  = ( f_1^{(\rm x,t)},\ldots,f_D^{(\rm x,t)} ) : \bbR^{D} \times \bbR \rightarrow \bbR$ such that
\bean
f_i^{(\rm x,t)} (\bx,t) = f_{\rm mult} \left( f_{\rm swit, t}^{(1)} \left(t \right),   f_{i}^{({\rm x})}(\bx,t) \right) + f_{\rm mult} \left( f_{\rm swit, t}^{(2)}  \left(t \right)  , \widetilde f_{i}^{(3)}(\bx,t) \right), \quad i \in [D]
\eean
for $\bx \in \bbR^{D}$ and $t \in \bbR$.
Combining with (\ref{eqthm4:nnmult}), we have
\be \begin{split}
& \left\vert f_i^{(\rm x,t)} (\bx,t) - (\nabla \log p_t (\bx))_{i}    \right\vert
\leq \epsilon_{4} / \sigma_{t} + 6m^{3D D_{1}^2 } \delta (\log m)^{\tau_{\rm bd}}  + 4\delta
\\
& \leq \epsilon_{4} / \sigma_{t} + 10 m^{3D D_{1}^2 } \delta (\log m)^{\tau_{\rm bd}} \leq 11 \epsilon_{4} / \sigma_{t}, \quad i \in [D]
\label{eqthm4:nnerrfin}
\end{split} \ee
for $\|\bx\|_{\infty} \leq  \mu_t + \sigma_t D_{1} \sqrt{\log m}$ and $m^{-\tau_{\rm min}} \leq t \leq \tau_{\rm max} \log m$, where the last inequality holds with large enough $m$ so that $m^{3D D_1^2} \delta (\log m)^{\tau_{\rm bd}} \leq \epsilon_{4}$.
Since $\| \bx \|_{\infty} \leq \|\bx\|_{2}$ for any $\bx \in \bbR^{D}$, Lemma~\ref{secsc:scorebound} implies that $\sigma_{t} \| \nabla \log p_t(\bx) \|_{\infty} \leq C_{S,2} D_{1} \sqrt{\log m} $ for $\|\bx\|_{\infty} \leq  \mu_t + \sigma_t D_{1} \sqrt{\log m}$ and $t \geq 0$ with large enough $m$ so that $D_{1} \sqrt{\log m} \geq 1$, where $C_{S,2} = C_{S,2}(D,K,\tau_{1}, \overline{\tau}, \underline{\tau})$ is the constant in Lemma~\ref{secsc:scorebound}. 
Combining with the last display, we have
\bean
\sigma_{t} \left\vert f_i^{(\rm x,t)} (\bx,t) \right\vert \leq 11\epsilon_{4} + C_{S,2} D_{1} \sqrt{\log m} \leq D_{6} \sqrt{\log m}, \quad i \in [D]
\eean
for $\|\bx\|_{\infty} \leq  \mu_t + \sigma_t D_{1} \sqrt{\log m}$ and $m^{-\tau_{\rm min}} \leq t \leq \tau_{\rm max} \log m$ with large enough $m$ so that $\epsilon_{4} \leq \sqrt{\log m}$, where $D_{6} = D_{6}(\beta,d,D,C_{S,2})$.
Consider a function $\bff = (f_1,\ldots,f_{D})^{\top} : \bbR^{D} \times \bbR \to \bbR^{D}$ such that
\bean
 f_i(\bx, t) 
= \left( f_i^{(\rm x,t)} (\bx,t) \vee - \sigma_{t}^{-1} D_6 \sqrt{\log m} \right) \wedge \sigma_{t}^{-1} D_6 \sqrt{\log m}, \quad i \in [D]
\eean
for $\bx \in \bbR^{D}$ and $t \in \bbR$.
Note that $f_i(\bx,t) = f_i^{(\rm x,t)} (\bx,t)$ for $i \in [D]$, $\|\bx\|_{\infty} \leq  \mu_t + \sigma_t D_{1} \sqrt{\log m}$ and $m^{-\tau_{\rm min}} \leq t \leq \tau_{\rm max} \log m$.
Combining with (\ref{eqthm4:nnerrfin}),
\be
\sigma_{t} \left\| \bff (\bx,t) - \nabla \log p_t (\bx)  \right\|_{\infty} 
\leq 11 \epsilon_{4} = 11 D_{5} m^{-\frac{\beta}{d}} (\log m)^{2D + 2\beta} \label{eqthm4:nnerrwhole}
\ee
for $\|\bx\|_{\infty} \leq  \mu_t + \sigma_t D_{1} \sqrt{\log m}$ and $m^{-\tau_{\rm min}} \leq t \leq \tau_{\rm max} \log m$.

\subsubsection{Outside of Near-Support}
Note that 
\bean
&& \int_{\underline{T}}^{\overline{T}} \int_{\bbR^{D}} \left\| \nabla \log p_t(\bx) - \bff(\bx,t) \right\|_2^2 p_t(\bx) \d \bx 
\\
&& \leq \int_{\underline{T}}^{\overline{T}} \int_{\|\bx\|_{\infty} \leq  \mu_t + \sigma_t D_{1} \sqrt{\log m}} \left\| \nabla \log p_t(\bx) - \bff(\bx,t) \right\|_2^2 p_t(\bx) \d \bx\\
&&  + \int_{\underline{T}}^{\overline{T}} \int_{\|\bx\|_{\infty} \ge\mu_t + \sigma_t D_{1} \sqrt{\log m}} \left\| \nabla \log p_t(\bx) - \bff(\bx,t) \right\|_2^2 p_t(\bx) \d \bx
\eean
For $t \geq 0$,
\bean
&& \int_{\|\bx\|_{\infty} \geq \mu_{t} + \sigma_{t} D_{1} \sqrt{\log m}}  p_t(\bx) \d \bx 
=  \int_{\|\bx\|_{\infty} \geq \mu_{t} + \sigma_{t}  D_{1} \sqrt{\log m}} \int_{\| \by \|_{\infty} \leq 1} p_0(\by) \phi_{\sigma_t}(\bx- \mu_{t} \by) \d \by \d \bx
\\
&& = \int_{\| \by \|_{\infty} \leq 1} p_0(\by) \int_{\|\sigma_{t}\bz + \mu_{t} \by  \|_{\infty} \geq \mu_{t} + \sigma_{t}  D_{1} \sqrt{\log m}} \phi_{1}(\bz) \d \bz \d \by
\\
&& \leq \int_{\| \by \|_{\infty} \leq 1} p_0(\by) \sum_{i=1}^{D} \int_{\vert \sigma_{t}z_i + \mu_{t} y_i  \vert \geq \mu_{t} + \sigma_{t}  D_{1} \sqrt{\log m}} \phi_{1}(\bz) \d \bz \d \by
\\
&& \leq \int_{\| \by \|_{\infty} \leq 1} p_0(\by) \sum_{i=1}^{D} \int_{\vert z_i \vert \geq  D_{1} \sqrt{\log m}} \phi_{1}(\bz) \d \bz \d \by =  D \int_{\vert z \vert \geq D_{1} \sqrt{\log m}} \phi(z) \d z,
\eean
where the last inequality holds because $\vert y_{i} \vert \leq 1$.
By the tail probability of the standard normal distribution, the last display is bounded by
\be
2 D m^{-D_{1}^2/2}. \label{eqthm4:tailbd1}
\ee
Let $ C_{S,2} = C_{S,2}(D,K,\tau_1, \overline{\tau}, \underline{\tau} )$ be the constant in Lemma~\ref{secsc:scorebound}.
Then, $\| \nabla \log p_t(\bx) \|_{2} \leq C_{S,2} (\|\bx\|_{\infty} - \mu_{t} ) /\sigma_{t}^2$ for $\|\bx\|_{\infty} \geq \mu_{t} + D_{1} \sqrt{\log m}$ with large enough $m$ so that $D_{1} \sqrt{\log m } \geq 1$.
Combining with the last display, we have
\bean
&& \int_{\|\bx\|_{\infty} \geq \mu_{t} + \sigma_{t} D_{1} \sqrt{\log m}} \| \nabla \log p_t(\bx) \|_2^2 p_t(\bx) \d \bx 
\\
&& \leq 2 C_{S,2}^2 \sigma_{t}^{-4} \int_{\|\bx\|_{\infty} \geq \mu_{t} +  \sigma_{t} D_{1} \sqrt{\log m}} \left(  \|\bx\|_{\infty}^2 + \mu_{t}^2  \right) p_t(\bx) \d \bx
\\
&& \leq 2 C_{S,2}^2 \sigma_{t}^{-4}\int_{\|\bx\|_{\infty} \geq \mu_{t} +  \sigma_{t} D_{1} \sqrt{\log m}}\| \bx\|_2^2 p_t(\bx) \d \bx +  2 D C_{S,2}^2 \sigma_{t}^{-4} \mu_{t}^2 m^{- D_1^2 / 2}
\eean
for $t \geq 0$.
A simple calculation yields that
\bean
&& \int_{\|\bx\|_{\infty} \geq \mu_{t} +  \sigma_{t} D_{1} \sqrt{\log m}} \|\bx\|_{2}^2  p_t(\bx) \d \bx 
\\
&& = \sum_{i=1}^{D} \int_{\| \by \|_{\infty} \leq 1} p_0(\by)  \int_{\|\bx\|_{\infty} \geq \mu_{t} + \sigma_{t}  D_{1} \sqrt{\log m}}  x_i^2  \phi_{\sigma_t}(\bx- \mu_{t} \by) \d \bx \d \by 
\\
&& = \sum_{i=1}^{D} \int_{\| \by \|_{\infty} \leq 1} p_0(\by) \int_{\|\sigma_{t}\bz + \mu_{t} \by  \|_{\infty} \geq \mu_{t} + \sigma_{t}  D_{1} \sqrt{\log m}} (\sigma_{t} z_i + \mu_{t} y_i)^2 \phi_{1}(\bz) \d \bz \d \by
\\
&& \leq \sum_{i=1}^{D} \int_{\| \by \|_{\infty} \leq 1} p_0(\by) \int_{\|\sigma_{t}\bz + \mu_{t} \by  \|_{\infty} \geq \mu_{t} + \sigma_{t}  D_{1} \sqrt{\log m}} 2 ( \sigma_{t}^2 z_i^2 + \mu_{t}^2 y_i^2) \phi_{1}(\bz) \d \bz \d \by 
\\
&&\leq \sum_{i=1}^{D} \int_{\| \by \|_{\infty} \leq 1} p_0(\by) \sum_{j=1}^{D}  \int_{ \vert \sigma_{t} z_j + \mu_{t} y_j \vert \geq \mu_{t} + \sigma_{t}  D_{1} \sqrt{\log m}} 2 ( \sigma_{t}^2 z_i^2 + \mu_{t}^2 y_i^2) \phi_{1}(\bz) \d \bz \d \by 
\eean
for $t \geq 0$.
Since $\vert y_j \vert \leq 1$ in the last integral, the last display is bounded by
\bean
&& \sum_{i=1}^{D} \int_{\| \by \|_{\infty} \leq 1} p_0(\by) \sum_{j=1}^{D}  \int_{ \vert  z_j \vert \geq  D_{1} \sqrt{\log m}} 2 ( \sigma_{t}^2 z_i^2 + \mu_{t}^2 y_i^2 ) \phi_{1}(\bz) \d \bz \d \by 
\\
&& \leq 2 \sigma_{t}^2 \sum_{i=1}^{D} \sum_{j=1}^{D}  \int_{\vert z_j \vert \geq  D_{1} \sqrt{\log m}} z_i^2 \phi_{1}(\bz) \d \bz  + 4 D^2 \mu_{t}^2 m^{- D_1^2 /2},
\eean
where the last inequality holds by (\ref{eqthm4:tailbd1}).
Furthermore,
\bean
&& \sum_{i=1}^{D} \sum_{j=1}^{D}  \int_{\vert z_j \vert \geq  D_{1} \sqrt{\log m}} z_i^2 \phi_{1}(\bz) \d \bz
\\
&& = \sum_{i=1}^{D} \left\{ (D-1) \bbE[Z^2] \int_{\vert z \vert \geq  D_{1} \sqrt{\log m}} \phi(z) \d z +  \int_{\vert z \vert \geq  D_{1} \sqrt{\log m}} z^2 \phi(z) \d z   \right\}
\\
&& \leq 2 D \left\{ (D-1) m^{-D_{1}^2/2} + \sqrt{\bbE [Z^4]} m^{-D_{1}^2/4} \right\} \leq 2 D (D+ \sqrt{3} -1 ) m^{-D_1^2 /4 },
\eean
where the first inequality holds by the Cauchy-Schwarz inequality.
Hence, there exists a constant $D_{7} = D_{7}(D, \underline{\tau}, C_{S,2})$ such that
\bean
\int_{\|\bx\|_{\infty} \geq \mu_{t} +  \sigma_{t} D_{1} \sqrt{\log m}} \| \nabla \log p_t(\bx) \|_2^2 p_t(\bx) \d \bx  \leq D_{7} \sigma_{t}^{-2} m^{-2\beta/d}
\eean
for $t \geq m^{-\tau_{\rm min}}$ because $\mu_{t} \leq 1$ and $\sigma_{t} \geq \sqrt{\underline{\tau} m^{-\tau_{\rm min}}}$.
Combining with (\ref{eqthm4:tailbd1}), we have
\bean
&& \int_{\|\bx\|_{\infty} \geq \mu_{t} +  \sigma_{t} D_{1} \sqrt{\log m}} \left\| \nabla \log p_t(\bx) -  \bff(\bx,t) \right\|_2^2 p_t(\bx) \d \bx  
\\
&& \leq \int_{\|\bx\|_{\infty} \geq \mu_{t} +  \sigma_{t} D_{1} \sqrt{\log m}} 2 \| \nabla \log p_t(\bx) \|_2^2 p_t(\bx) \d \bx + \int_{\|\bx\|_{\infty} \geq \mu_{t} +  \sigma_{t} D_{1} \sqrt{\log m}} 2 \| \bff(\bx,t) \|_2^2 p_t(\bx) \d \bx 
\\
&& \leq 2 D_{7} \sigma_{t}^{-2} m^{-2 \beta/d}
+ 4 \sigma_{t}^{-2} D^2 D_{6}^2 m^{-D_{1}^2 /2 } \log m
\\
&& \leq \sigma_{t}^{-2} \left( 2 D_{7} m^{-2 \beta/d}
+ 4 D^2 D_{6}^2 m^{-2\beta/d} \log m \right)
\eean
for $t \geq m^{-\tau_{\rm min}}$, where the second inequality holds because $\| \bff(\bx,t)\|_{\infty} \leq \sigma_{t}^{-1} D_{6} \sqrt{\log m}$ for $\bx \in \bbR^{D}, t \in \bbR$.
Combining with (\ref{eqthm4:nnerrwhole}), we have 
\bean
&& \sigma_{t}^2 \int_{\bbR^{D}} \left\| \nabla \log p_t(\bx) -  \bff(\bx,t) \right\|_2^2 p_t(\bx) \d \bx  
\\
&& \leq  D 11^2 D_{5}^2 m^{-\frac{2\beta}{d}} (\log m)^{4D +4\beta } + 2 D_{7} m^{-\frac{2\beta}{d}} + 4 D^2 D_{6}^2 m^{-D_{1}^2 /2 } \log m
\\
&& \leq D_{8} m^{-\frac{2\beta}{d}} (\log m)^{4D + 4\beta} 
\eean
for $m^{-\tau_{\rm min}} \leq t \leq \tau_{\rm max} \log m$, where $D_{8} = D_{7}(D, D_{5}, D_{6}, D_{7})$.
Since $\sigma_{t}^2 \geq \underline{\tau} t $ for $t \geq 0$, we have
\bean
&& \int_{\underline{T}}^{\overline{T}} \int_{\bbR^{D}} \left\| \nabla \log p_t(\bx) -  \bff(\bx,t) \right\|_2^2 p_t(\bx) \d \bx  \d t
\\
&& \leq  D_{8} \underline{\tau}^{-1} m^{-\frac{2\beta}{d}} (\log m)^{4D + 4\beta}  \left( \log \overline{T} - \log \underline {T} \right)
\\
&& \leq D_{9} m^{-\frac{2\beta}{d}} (\log m)^{4D + 4\beta + 1},
\eean
where $D_{9} = D_{9}(\underline{\tau}, \tau_{\rm max}, \tau_{\rm min}, D_{8})$.
Lemma~\ref{secnn:comp}, Lemma~\ref{secnn:par}, Lemma~\ref{secnn:lin}, Lemma~\ref{secnn:id} and Lemma~\ref{secnn:sharing} imply that $\bff^{(\rm x,t)}  \in \cF_{\rm WSNN}(L,\bd,s,m,M,\cP)$ with
\bean
&& L \leq D_{10} ( \log m )^{6} \log \log m , \quad \| \bd\|_{\infty} \leq D_{10} m^{D+1},
\\
&& s \leq D_{10}  m (\log m)^{5} \log \log m, \quad M \leq \exp(D_{10} \{ \log m \}^6 ),
\eean
$\| \bfm \|_{\infty} \leq D_{10} m^{D}$ and the set of permutation matrices $\cP$,
where 
\bean
D_{10} = D_{10}(\beta, d, D, \tau_{\rm min}, \widetilde C_{3}, \widetilde C_{6}, \widetilde C_{9}, C_{N,1}, C_{N,4}, C_{N,5}, C_{S,1}, D_{1})
\eean
is a large enough constant.
The assertion follows by re-defining the constants.
\hfill\BlackBox\\[2mm]

\section{Proofs for the Convergence Rate}

In this section, we provide the proof of Theorem~\ref{secthm:2}.
We begin by outlining auxiliary lemmas and a proposition.

\begin{lemma}[Error bound for small $t$]
\label{secsc:p0pt}
Let $\beta, K > 0$ be given and suppose the true density $p_0$ belongs to $\cH^{\beta, K}([-1,1]^{D})$. 
Then, there exist positive constants $\widetilde C_{12} = \widetilde C_{12}(\beta, D, K ,\overline{\tau}, \underline{\tau})$ and $ \widetilde C_{13} = \widetilde C_{13}(\overline{\tau}, \underline{\tau})$ such that
\bean
\int_{\bbR^{D}} \left\vert p_0(\bx) - p_t(\bx) \right\vert \d \bx \leq \widetilde C_{12} \left\{ t \log (1/t) \right\}^{\frac{\beta \wedge 1}{2}} 
\eean
for $0 < t \leq \widetilde C_{13}$.
\end{lemma}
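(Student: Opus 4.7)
The plan is to factor the forward dynamics into a deterministic contraction followed by Gaussian convolution, handle each piece separately, and track carefully the boundary of $[-1,1]^D$ since it is the source of the logarithmic factor.

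Define $q_t(\by) = \mu_t^{-D} p_0(\by/\mu_t)$, the Lebesgue density of $\mu_t \bX_0$. A direct change of variables gives $p_t = q_t * \phi_{\sigma_t, D}$, so by the triangle inequality
\begin{equation*}
 \int_{\bbR^D} |p_0(\bx) - p_t(\bx)| \d\bx
 \;\leq\; \underbrace{\int_{\bbR^D} |p_0(\bx) - q_t(\bx)| \d\bx}_{=: I_1(t)} + \underbrace{\int_{\bbR^D} |q_t(\bx) - (q_t * \phi_{\sigma_t, D})(\bx)| \d\bx}_{=: I_2(t)}.
\end{equation*}
Throughout I will use $1 - \mu_t \leq \overline{\tau} t$ and $\underline{\tau} t \leq \sigma_t^2 \leq 2\overline{\tau} t$, valid for $t \leq (2\overline{\tau})^{-1}$ by \eqref{eqthm:mtstbd}, and write $\beta' = \beta \wedge 1$, so that $p_0$ is $(K, \beta')$-H\"older on $[-1,1]^D$ (with a constant depending on $\beta, D, K$).

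For $I_1(t)$ I would partition $\bbR^D$ into the interior $\{\|\by\|_\infty \leq \mu_t\}$, the shell $S_t = \{\mu_t < \|\by\|_\infty \leq 1\}$, and the exterior $\{\|\by\|_\infty > 1\}$. On the exterior, both $p_0(\by)$ and $p_0(\by/\mu_t)$ vanish. On $S_t$ we have $q_t \equiv 0$ while $p_0 \leq K$, so this contributes at most $K\,\mathrm{vol}(S_t) = K\,2^D(1 - \mu_t^D) \lesssim t$. On the interior both $\by$ and $\by/\mu_t$ lie in $[-1,1]^D$, and the H\"older bound together with $\|\by - \by/\mu_t\|_\infty \leq (\mu_t^{-1} - 1) \lesssim t$ yields
\begin{equation*}
  |p_0(\by) - \mu_t^{-D} p_0(\by/\mu_t)|
  \;\leq\; K\,(\mu_t^{-1} - 1)^{\beta'} + K\,(\mu_t^{-D} - 1)
  \;\lesssim\; t^{\beta'} + t,
\end{equation*}
uniformly in $\by$, giving an interior contribution of order $t^{\beta'}$. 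Altogether $I_1(t) \lesssim t^{\beta'}$.

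For $I_2(t)$, set $A_t = \sigma_t \sqrt{c \log(1/t)}$ with $c = c(D) > 0$ to be chosen, and split into the bulk $B_t = \{\|\bx\|_\infty \leq \mu_t - A_t\}$ and its complement $B_t^c$. On $B_t^c$ both $|q_t|$ and $|q_t * \phi_{\sigma_t,D}|$ are bounded by $\mu_t^{-D}\|p_0\|_\infty \lesssim 1$; since $\mathrm{vol}(B_t^c \cap \{\|\bx\|_\infty \leq \mu_t + A_t\}) \lesssim A_t$, combined with a standard Gaussian tail estimate on $\{\|\bx\|_\infty > \mu_t + A_t\}$ (via Lemma~\ref{secsc:ptbound}, choosing $c$ large enough to absorb it into $t$), this contributes at most $O(A_t) = O(\sqrt{t \log(1/t)})$. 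On $B_t$ I would write
\begin{equation*}
  (q_t * \phi_{\sigma_t,D})(\bx) - q_t(\bx) = \int_{\bbR^D} \bigl[q_t(\bx - \sigma_t \bz) - q_t(\bx)\bigr]\phi_{1,D}(\bz)\,\d\bz
\end{equation*}
and split the $\bz$-integral at $\|\bz\|_\infty \leq \sqrt{c\log(1/t)}$. On the head, $\bx - \sigma_t\bz$ lies in $[-\mu_t, \mu_t]^D$, so the H\"older bound gives $|q_t(\bx - \sigma_t\bz) - q_t(\bx)| \lesssim \sigma_t^{\beta'}\|\bz\|_\infty^{\beta'}$; integrating first over $\bz$ (bounded moment) and then over $B_t$ (bounded volume) contributes $O(\sigma_t^{\beta'}) = O(t^{\beta'/2})$. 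The tail of the $\bz$-integral contributes a polynomially small-in-$t$ term by Gaussian concentration. Hence $I_2(t) \lesssim \sqrt{t \log(1/t)} + t^{\beta'/2} \lesssim (t \log(1/t))^{\beta'/2}$.

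Summing, $I_1(t) + I_2(t) \lesssim t^{\beta'} + (t\log(1/t))^{\beta'/2} \lesssim (t\log(1/t))^{\beta'/2}$ for $t$ smaller than some $\widetilde C_{13}$ depending only on $(\overline{\tau}, \underline{\tau})$, as required. The main obstacle is the $B_t^c$ shell in $I_2$: the discontinuity of $p_0$ at $\partial [-1,1]^D$ (viewed as a density on all of $\bbR^D$) produces a boundary layer of width $\sigma_t\sqrt{\log(1/t)}$ under Gaussian smoothing, and it is this contribution that enforces the $\log(1/t)$ factor in the final rate; the choice of $c$ must be large enough so that the Gaussian tail bleeding into $\{\|\bx\|_\infty > \mu_t + A_t\}$ is negligible compared to the boundary shell itself, but this is done once and absorbed into the constant $\widetilde C_{12}$.
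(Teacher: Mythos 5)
Your proof is correct and takes a genuinely different route from the paper. The paper works directly with the convolution representation of $p_t$ against $p_0$: on $[-1,1]^D$ it estimates $|p_0(\bx)-p_t(\bx)|$ by inserting $p_0(\bx)-p_0(\by)$ under the Gaussian kernel, exploits the \Holder\ modulus, and splits the $\by$-integral at the threshold $\delta = 1-\mu_t+2\sigma_t\sqrt{\log(1/\sigma_t)}$; outside $[-1,1]^D$ it separately estimates the escaped mass of $p_t$ via a shell of width $\asymp \sigma_t\sqrt{\log(1/\sigma_t)}$ plus a Gaussian tail. You instead factor the OU step as a deterministic contraction $p_0 \mapsto q_t$ (density of $\mu_t \bX_0$) followed by Gaussian convolution $q_t \mapsto q_t * \phi_{\sigma_t}$, and control $I_1$ (pure rescaling) and $I_2$ (pure smoothing) by separate spatial partitions. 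The two decompositions are substantively different: yours isolates the two distinct mechanisms (mean contraction vs.\ noise injection) and makes transparent that $I_1 \lesssim t^{\beta\wedge 1}$ is lower order, while the paper fuses them into a single kernel estimate and is correspondingly shorter. Both arguments correctly identify the boundary layer of width $\asymp \sigma_t\sqrt{\log(1/t)}$ — where the compactly supported density meets Gaussian blurring — as the source of the logarithmic factor, and both select the truncation radius to balance that shell against the Gaussian tail. I verified the small details in your sketch: on the interior piece of $I_1$ you need $\|\by-\by/\mu_t\|_\infty = \|\by\|_\infty(\mu_t^{-1}-1)\leq 1-\mu_t$ (your stated bound $\mu_t^{-1}-1$ also works since $\mu_t\geq 1/2$); on the bulk piece of $I_2$ the \Holder\ constant of $q_t$ picks up a harmless $\mu_t^{-D-\beta\wedge 1}\lesssim 1$; and the final comparison $t^{\beta\wedge 1}+\sqrt{t\log(1/t)}+t^{(\beta\wedge 1)/2}\lesssim\{t\log(1/t)\}^{(\beta\wedge 1)/2}$ holds for small $t$ because $\beta\wedge 1\leq 1$ and $t\log(1/t)<1$.
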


For any function $\bff : \bbR^{n_1} \to \bbR^{n_2}, n_1, n_2 \in \bbN$ and $C > 0$, denote $\| \cdot \|_{L^{\infty}( [-C,C]^{n_1} ) }$ as the sup-norm over $[-C,C]^{n_1}$, defined as 
\bean
\| \bff \|_{L^{\infty}( [-C,C]^{n_1} ) }
= \sup_{\bx \in [-C,C]^{n_1}} \| \bff(\bx) \|_{\infty}.
\eean
For a function space $\cF$, $N(\delta,\cF,d)$ denotes the covering numbers with respect to the (pseudo)-metric $d$; see \citet{van1996weak} for the detailed definition.
The following lemma provides a covering number of $\cF_{\rm WSNN}$.
Our main proof strategy follows the proof of Lemma 5 from \citet{schmidt2020nonparametric}, with modifications for weight-sharing networks.

\begin{lemma}[Covering number of $\cF_{\rm WSNN}$]
\label{sec:covering}
Let $C$ and $ 0 <  \delta < 1 $ be given.
For the class of weight-sharing neural networks $ \cF_{\rm WSNN} ( L, \bd, s,  M, \cP_{\bfm})$,
we have
\bean
&& \log N \left(\delta, \cF_{\rm WSNN}(L,\bd,s,M,\cP_{\bfm}), \| \cdot \|_{L^{\infty}([-C,C]^{d_{1}})} \right) 
\\
&& \leq (s+1) \log \left( \frac{4 L^2 \|\bd\|_{\infty}^2 \left\{  \| \bfm\|_{\infty} \|\bd\|_{\infty} (M \vee 1) \right\}^{L} \left(L + C + 3 \right)   }{\delta}  \right).
\eean
\end{lemma}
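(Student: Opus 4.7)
\medskip

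\noindent\textit{Proof proposal.} The plan is to adapt the argument for Lemma~5 of \citet{schmidt2020nonparametric} to the weight-sharing setting. The two nontrivial ingredients are (i) a forward $L^{\infty}$ bound on any $\bff\in\cF_{\rm WSNN}$, and (ii) a Lipschitz-in-parameters bound controlling $\|\bff-\widetilde{\bff}\|_{L^{\infty}([-C,C]^{d_{1}})}$ by the maximum entrywise perturbation of the weight matrices $W_{i}$ and biases $\bb_{i}$. Once these two estimates are in place, the standard sparse-grid covering argument applied to the parameter vector $(W_{i},\bb_{i})_{i=1}^{L}$ yields the stated bound.

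For step (i), note that a permutation matrix preserves $\|\cdot\|_{\infty}$, so
\begin{equation*}
 \|R_{i}^{(j)}(W_{i}Q_{i}^{(j)}\bz+\bb_{i})\|_{\infty}\leq \|W_{i}Q_{i}^{(j)}\bz\|_{\infty}+\|\bb_{i}\|_{\infty}\leq \|\bd\|_{\infty}M\|\bz\|_{\infty}+M,
\end{equation*}
and summing over $j$ multiplies by at most $m_{i}$; since $\rho$ is $1$-Lipschitz and fixes $0$, the layer map satisfies $\|\bff_{i}(\bz)\|_{\infty}\leq \|\bfm\|_{\infty}(\|\bd\|_{\infty}M\|\bz\|_{\infty}+M)$. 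Setting $A:=\|\bfm\|_{\infty}\|\bd\|_{\infty}(M\vee 1)$ and iterating the recursion, I expect $\|(\bff_{L-1}\circ\cdots\circ\bff_{1})(\bx)\|_{\infty}\lesssim A^{L-1}(C+L)$ for $\bx\in[-C,C]^{d_{1}}$, and one more application of $W_{L}\cdot+\bb_{L}$ gives $\|\bff(\bx)\|_{\infty}\leq A^{L}(C+L+1)$.

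For step (ii), I proceed by induction over layers. Suppose two networks share $(L,\bd,s,M,\cP_{\bfm})$ and their free parameters satisfy $\max_{i}(\|W_{i}-\widetilde{W}_{i}\|_{\infty}\vee\|\bb_{i}-\widetilde{\bb}_{i}\|_{\infty})\leq \epsilon$. Denote by $\bz_{i},\widetilde{\bz}_{i}$ the respective pre-activations at layer $i$ evaluated at a common input $\bx\in[-C,C]^{d_{1}}$. Writing the layer update as
\begin{equation*}
 \bz_{i+1}=\rho\!\left(\textstyle\sum_{j}R_{i}^{(j)}(W_{i}Q_{i}^{(j)}\bz_{i}+\bb_{i})\right)
\end{equation*}
and using (a) $\rho$ is $1$-Lipschitz in $\|\cdot\|_{\infty}$, (b) each $R_{i}^{(j)}$ is a permutation, so $\sum_{j}R_{i}^{(j)}$ has $\ell^{\infty}\to\ell^{\infty}$ operator norm at most $\|\bfm\|_{\infty}$, and (c) $\|W\bz\|_{\infty}\leq \|\bd\|_{\infty}\|W\|_{\infty}\|\bz\|_{\infty}$, I get
\begin{equation*}
 \|\bz_{i+1}-\widetilde{\bz}_{i+1}\|_{\infty}\leq A\,\|\bz_{i}-\widetilde{\bz}_{i}\|_{\infty}+\|\bfm\|_{\infty}(\|\bd\|_{\infty}\|\widetilde{\bz}_{i}\|_{\infty}+1)\epsilon.
\end{equation*}
Combined with the forward bound of step (i), Gr\"onwall-type telescoping gives $\|\bff-\widetilde{\bff}\|_{L^{\infty}([-C,C]^{d_{1}})}\leq L A^{L}(C+L+1)\epsilon$, with the last layer contributing the final factor of $A$.

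The parameters $\theta=(W_{i},\bb_{i})_{i=1}^{L}$ lie in $\bbR^{T}$ with $T\leq L(\|\bd\|_{\infty}^{2}+\|\bd\|_{\infty})\leq 2L\|\bd\|_{\infty}^{2}$; at most $s$ coordinates are nonzero and each lies in $[-M,M]$. Choose the grid width $\epsilon=\delta/[L A^{L}(C+L+1)]$. Counting the $\binom{T}{s}$ choices of nonzero positions and the $(\lceil 2M/\epsilon\rceil+1)^{s}$ choices of gridded values, I bound the log-cardinality by $(s+1)\log\bigl(T(2M/\epsilon+1)\bigr)$, which after substituting $\epsilon$ and absorbing numerical constants into the $L+C+2$ factor produces exactly the bound claimed in the lemma.

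The main obstacle is step (ii): one must verify that each layer contributes only an additive $A\cdot\epsilon$ (not worse) error, so that the cumulative error after $L$ layers is $L A^{L}$ rather than something that degenerates as $A\to 1$, and one must check that the weight-sharing structure introduces no extra multiplicative factor beyond the $\|\bfm\|_{\infty}$ already present in $A$. This last point relies crucially on the $Q_{i}^{(j)},R_{i}^{(j)}$ being \emph{permutation} matrices, so that the compound operator $\sum_{j}R^{(j)}(WQ^{(j)}\cdot+\bb)$ may be analyzed as the superposition of $m_{i}$ copies of a single map whose $\ell^{\infty}$ operator norm is $\|\bd\|_{\infty}\|W\|_{\infty}$.
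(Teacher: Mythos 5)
Your proposal is correct and takes essentially the same route as the paper's proof: both adapt Lemma~5 of \citet{schmidt2020nonparametric} to the weight-sharing setting by observing that the $Q_i^{(j)}, R_i^{(j)}$ are permutations (hence $\ell^\infty$-isometries), so each layer is $m_i d_i M$-Lipschitz in its input and $m_i d_i \epsilon(1+\|\bx\|_\infty)$-perturbed under an $\epsilon$-change of $(W_i,\bb_i)$, and then telescoping across the $L$ layers before the standard sparse-grid count over at most $s$ active parameters among $T\leq 2L\|\bd\|_\infty^2$. Your Gr\"onwall recursion on layer outputs is precisely the paper's hybrid-composition telescope (their $\bff^{(l)}$ swapping the first $l$ layers), and the remaining bookkeeping coincides.
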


For $\underline{T}, \overline{T} > 0$ with $\underline{T} < \overline{T}$ and a function $\bff : \bbR^{D} \times \bbR \rightarrow \bbR^{D}$, recall the definition of loss function $\ell_{\bff} : [-1,1]^{D} \rightarrow \bbR$, given by
\bean
\ell_{\bff}(\bx) && = \int_{\underline{T}}^{\overline{T}} \bbE \Big[ \Big\| \bff(\bX_{t}, t) + \frac{\bX_t - \mu_t \bX_0}{\sigma_t^2} \Big\|_2^2 ~ \big| ~ \bX_0 = \bx \Big]\d t
\\
&& = \int_{\underline{T}}^{\overline{T}} \bbE \Big[ \Big\| \bff(\mu_t \bx+ \sigma_t \bZ, t) + \frac{\bZ}{\sigma_t} \Big\|_2^2 \Big]\d t,
\eean
where the last equality holds because $\bX_{t} = \mu_t \bX_{0} + \sigma_{t} \bZ$ with $D$-dimensional standard normal variable $\bZ$.
Define the pointwise excess risk by
\bean
\nu_{\bff}(\bx) = \ell_{\bff}(\bx) - \ell_{\bff_0}(\bx),
\eean
where $(\bx,t) \mapsto \bff_0(\bx,t) = \nabla \log p_t(\bx)$ is a score function.
Combining with (\ref{eq:smvincent}), we have
\bean
\bbE[ \nu_{\bff} (\bX_0) ] = \int_{\underline{T}}^{\overline{T}} \bbE \left[ \left\| \bff(\bX_t,t) - \bff_0(\bX_t,t) \right\|_2^2 \right] \d t.
\eean
The following lemma provides an upper bound on the sup-norm and the second moment (hence a variance-type bound) of the excess risk over the bounded function class.

\begin{lemma}[Upper bounds of excess risk]
\label{sec:exbd}
Let $K, \tau_1, F, \underline{T}, \overline{T} > 0$ be given and suppose the true density $p_0$ satisfies that $\tau_1 \leq p_0(\bx) \leq K$ for any $\bx \in [-1,1]^{D}$.
For any function $\bff : \bbR^{D} \times \bbR \rightarrow \bbR^{D}$ satisfying
$
\| \bff(\cdot, t) \|_{L^{\infty}(\bbR^{D})} \leq F \sigma_{t}^{-1}
$ for all $t \in [\underline{T}, \overline{T}]$, there exists a positive constant $\widetilde C_{14} = \widetilde C_{14} (D, K, \tau_1, \overline{\tau}, \underline{\tau})$ such that
\bean
\| \nu_{\bff} \|_{L^{\infty}([-1,1]^{D})} \leq \widetilde C_{14}  (F^2 \vee 1)  \left( \log \overline{T} - \log \underline{T} \right)
\eean
and
\bean
\bbE \left[ \left\{ \nu_{\bff}(\bX_0)\right\}^2 \right] \leq 2 \widetilde C_{14}  (F^2 \vee 1) \left( \log \overline{T} - \log \underline{T} \right) \bbE \left[ \nu_{\bff}(\bX_0) \right].
\eean
\end{lemma}

The following proposition provides an oracle inequality for the ERM estimator under the score-matching loss.
The statement is similar to Theorem C.4 from \citet{oko2023diffusion} while addressing the issue identified by \citet{yakovlev2025generalization}.
Our main proof strategy follows the proof of Proposition 12 from \cite{stephanovitch2025generalization} with modifications that control the excess risk variance.

\begin{proposition}[Oracle inequality for score matching]
\label{sec:oracle}
Let $K, \tau_1, F, \underline{T}, \overline{T} > 0$ be given and suppose the true density $p_0$ satisfies that $\tau_1 \leq p_0(\bx) \leq K$ for any $\bx \in [-1,1]^{D}$.
Let $\bX^1,\ldots,\bX^n$ be the \iid \ samples drawn from $p_0$. 
For the class $\cF$, consisting of continuous functions $\bff : \bbR^{D} \times \bbR \to \bbR^{D}$ with $\| \bff(\cdot,t)\|_{L^{\infty}(\bbR^{D})} \leq F \sigma_{t}^{-1}$ for all $t \in [\underline{T}, \overline{T}]$, let 
\bean
\widehat \bff \in \argmin_{\bff \in \cF} \frac{1}{n} \sum_{i=1}^{n} \ell_{\bff}(\bX^i)
\eean
be the ERM estimator over the class $\cF$.
Then, there exists a positive constant $\widetilde C_{14} = \widetilde C_{14} (D,K,\tau_1,\overline{\tau},\underline{\tau})$ such that
\bean
&& \int_{\underline{T}}^{\overline{T}} \bbE \left[ \left\| \widehat \bff(\bX_t,t) - \bff_0(\bX_t,t) \right\|_2^2 \right] \d t
\\
&& \leq 3 \inf_{\bff \in \cF} \int_{\underline{T}}^{\overline{T}} \bbE \left[ \left\|  \bff(\bX_t,t) - \bff_0(\bX_t,t) \right\|_2^2 \right] \d t
\\
&& \quad + \frac{\overline{C}_1}{n} \left\{ \log N \left( n^{-2}, \cF, \| \cdot\|_{L^{\infty} ([-\overline{C}_2,\overline{C}_2]^{D+1})} \right) + \log (2n) \right\},
\eean
where 
\bean
\overline{C}_1 = \widetilde C_{14} (F^2 \vee 1)  \left( \sqrt{\overline{T}} + \sqrt{\log \overline{T} - \log \underline{T}} \right) \left( \sqrt{\log \overline{T} - \log \underline{T}} \right)
\eean
and
\bean
\overline{C}_2 = \left( 1 + 2\sqrt{2 \log n}  \right) \vee \overline{T}.
\eean
\end{proposition}

\subsection{Proof of Lemma~\ref{secsc:p0pt}}
\begin{proof}
Note that $p_0$ is continuous and supported on $[-1,1]^{D}$.
Thus, for $t \geq 0$,
\bean
\int_{\bbR^{D}} \left\vert p_0(\bx) - p_t(\bx) \right\vert \d \bx
= \int_{ \| \bx \|_{\infty} \leq 1 } \left\vert p_0(\bx) - p_t(\bx) \right\vert \d \bx
+ \int_{ \| \bx \|_{\infty} \geq 1} p_t(\bx) \d \bx.
\eean
We will derive error bounds for each integral on the RHS.

Note that $p_0 \in \cH^{\beta, K}([-1,1]^{D})$.
If $\beta \leq 1$, $\vert p_0(\bx) - p_0(\by) \vert \leq K \| \bx - \by\|_{\infty}^{\beta}$  for any $\bx, \by \in [-1,1]^{D}$ by the definition of $\cH^{\beta, K}$.
If $\beta > 1$, Mean value theorem implies that 
$\vert p_0(\bx) - p_0(\by) \vert \leq K \| \bx - \by\|_{2}$ for any $\bx, \by \in [-1,1]^{D}$. 
Combining two cases, we have
\be
\vert p_0(\bx) - p_0(\by) \vert \leq K\sqrt{D} \| \bx - \by\|_{\infty}^{\beta \wedge 1}
, \quad \forall \bx,\by \in [-1,1]^{D}. \label{eqlemtv:lip}
\ee
A simple calculation yields that for any $\bx \in [-1,1]^{D}$,
\bean
&& p_0(\bx)-p_t(\bx)
\\
&& = \int_{\| \by \|_{\infty} \leq 1} \left\{ 
p_0(\bx) - p_0(\by) \right\} \phi_{\sigma_t}(\bx-\mu_t \by) \d \by + p_{0}(\bx) \left( 1 - \int_{\| \by \|_{\infty} \leq 1} \phi_{\sigma_t}(\bx-\mu_t \by) \d \by \right).
\eean
Then, for any $t \geq 0$,
\be \begin{split}
&  \int_{\| \bx \|_{\infty} \leq 1} \left\vert p_0(\bx) - p_t(\bx) \right\vert \d \bx 
\\
& \leq \int_{\| \bx \|_{\infty} \leq 1} \left\vert   \int_{\| \by \|_{\infty} \leq 1} \left\{ p_0(\bx) - p_0(\by) \right\} \phi_{\sigma _t}(\bx - \mu_t \by) \d \by \right\vert \d \bx 
\\
& \quad + \int_{\| \bx \|_{\infty} \leq 1} p_0(\bx) \left\vert  1 - \int_{\| \by \|_{\infty} \leq 1} \phi_{\sigma_t}(\bx-\mu_t \by) \d \by  \right\vert \d \bx
\label{eqlemtv:int}
\end{split}
\ee

We bound each term on the RHS.
For any $\delta > 0$, a simple calculation yields that
\be
\begin{split}
& \int_{\| \bx \|_{\infty} \leq 1} \left\vert   \int_{\| \by \|_{\infty} \leq 1} \left\{ p_0(\bx) - p_0(\by) \right\} \phi_{\sigma _t}(\bx - \mu_t \by) \d \by \right\vert \d \bx 
\\
& \leq K\sqrt{D} \int_{\| \bx \|_{\infty} \leq 1}    \int_{\| \by \|_{\infty} \leq 1}  \|\bx-\by\|_{\infty}^{\beta \wedge 1} \phi_{\sigma _t}(\bx - \mu_t \by) \d \by  \d \bx 
\\
& \leq  K\sqrt{D}\int_{\| \bx \|_{\infty} \leq 1} \left(  \delta^{\beta \wedge 1} \int_{\substack{ \| \by \|_{\infty} \leq 1 \\ \|\bx-\by\|_{\infty}\leq \delta  }}  \phi_{\sigma_{t}}(\bx-\mu_t \by) \d \by + 2^{\beta \wedge 1}  \int_{\substack{ \| \by \|_{\infty} \leq 1 \\ \|\bx-\by\|_{\infty} \geq \delta  }}  \phi_{\sigma_{t}}(\bx-\mu_t \by) \d \by \right) \d \bx
\\
& \leq K\sqrt{D} 2^{D} \mu_{t}^{-D} \delta^{\beta \wedge 1}
+ K\sqrt{D} 2^{\beta \wedge 1} \int_{\| \bx \|_{\infty} \leq 1} \int_{\substack{ \| \by \|_{\infty} \leq 1 \\ \|\bx-\by\|_{\infty}\geq \delta  }}  \phi_{\sigma_{t}}(\bx-\mu_t \by) \d \by \d \bx,
\label{eqlemtv:int1}
\end{split}
\ee
where the first inequality holds by (\ref{eqlemtv:lip}) and the last inequality holds because
\be
\int_{\bbR^{D}} \phi_{\sigma_t}(\bx-\mu_t \by) \d \by
= \mu_{t}^{-D}, \quad \forall \bx \in \bbR^{D}.
\label{eqlemtv:norm}
\ee

Since $1-\mu_t \geq 0 $, we have $\| \bx- \mu_{t} \by \|_{\infty} \geq \| \bx - \by\|_{\infty} -  ( 1 - \mu_t ) \|\by\|_{\infty} \geq \delta - ( 1 - \mu_t )$ for $\|\bx-\by\|_{\infty} \geq \delta$ and $\| \by\|_{\infty} \leq 1$.
Then, a simple calculation yields that
\bean
&& \int_{\| \bx \|_{\infty} \leq 1} \int_{\substack{ \| \by \|_{\infty} \leq 1 \\ \|\bx-\by\|_{\infty}\geq \delta  }}  \phi_{\sigma_{t}}(\bx-\mu_t \by) \d \by \d \bx \leq 
\int_{\| \bx \|_{\infty} \leq 1} \int_{ \|\bx-\mu_t\by\|_{\infty} \geq \delta - (1-\mu_t)  } \phi_{\sigma_t}(\bx-\mu_t \by) \d \by \d \bx
\\
&& = \mu_t^{-D} \int_{\| \bx \|_{\infty} \leq 1} \int_{\|\bz\|_{\infty} \geq \frac{\delta - (1-\mu_t)}{\sigma_t}} \phi_{1}(\bz) \d \bz \d \bx
= 2^{D} \mu_t^{-D} \int_{\|\bz\|_{\infty} \geq \frac{\delta - (1-\mu_t)}{\sigma_t}} \phi_{1}(\bz) \d \bz 
\\
&& \leq 2^{D} \mu_t^{-D} \sum_{i=1}^{D} \int_{\vert z_i \vert \geq \frac{\delta - (1-\mu_t)}{\sigma_t}} \phi_{1}(\bz) \d \bz
\leq D 2^{D+1} \mu_{t}^{-D} \exp \left( -\frac{ (\delta - (1-\mu_t))^2  }{2\sigma_t^2} \right)
\eean
for $\delta \geq 1-\mu_t$, where the last inequality holds by the tail probability of the standard normal distribution.
By (\ref{eqthm:mtstbd}), we have
\bean
 \frac{1}{2} \leq 1- \overline{\tau}t \leq \mu_t \leq 1- \frac{\underline{\tau}t}{2} \quad \text{and} \quad 
\sqrt{\underline{\tau}t } \leq \sigma_t \leq \sqrt{2\overline{\tau}t}
\eean
for any $0 \leq t \leq (2\overline{\tau})^{-1}$.
Let $\delta = 1-\mu_t +  2\sigma_{t} \sqrt{\log (1/\sigma_t)}$.
Combining last display with (\ref{eqlemtv:int1}), a simple calculation yields that
\be \begin{split}
 & \int_{\| \bx \|_{\infty} \leq 1} \left\vert   \int_{\| \by \|_{\infty} \leq 1} \left\{ p_0(\bx) - p_0(\by) \right\} \phi_{\sigma _t}(\bx - \mu_t \by) \d \by \right\vert \d \bx 
 \\
 & \leq K\sqrt{D} 2^{D} \mu_{t}^{-D} \delta^{\beta \wedge 1} + KD\sqrt{D} 2^{D + 1 + \beta \wedge 1} \mu_{t}^{-D} \exp \left( -\frac{ (\delta - (1-\mu_t))^2  }{2\sigma_t^2} \right)
 \\
 & \leq K\sqrt{D} 2^{D+1} \left( \overline{\tau} t + 2\sqrt{2 \overline{\tau} t \log (1/\sqrt{\underline{\tau} t}) } \right)^{\beta \wedge 1} + KD\sqrt{D} 2^{ 2D + 1 + \beta \wedge 1} \left( \sqrt{2\overline{\tau}t} \right)^{2}
 \\
 & \leq D_{1}  \left\{ t \log (1/t) \right\}^{\frac{\beta \wedge 1}{2}}
 \label{eqlemtv:intfin1}
 \end{split}
\ee
for any $0 \leq t \leq (2\overline{\tau})^{-1} \wedge 1$,
where $D_{1} = D_{1}(\beta,D,K,\overline{\tau},\underline{\tau})$.

For any $\bx \in \bbR^{D}$, we have
\bean
1 - \int_{\| \by \|_{\infty} \leq 1} \phi_{\sigma_t}(\bx-\mu_t \by) \d \by
&& = 1 - \mu_{t}^{-D} + \mu_{t}^{-D} - \int_{\| \by \|_{\infty} \leq 1} \phi_{\sigma_t}(\bx-\mu_t \by) \d \by
\\
&& = 1 - \mu_{t}^{-D} +  \int_{\| \by \|_{\infty} \geq 1} \phi_{\sigma_t}(\bx-\mu_t \by) \d \by,
\eean
where the last equality holds by (\ref{eqlemtv:norm}).
A simple calculation yields that 
\bean
\vert 1 - \mu_{t}^{-D} \vert
= \mu_{t}^{-D} \vert \mu_{t}^{D} - 1 \vert 
= \mu_{t}^{-D} \vert 1 -\mu_{t} \vert \sum_{k=0}^{D-1} \mu_{t}^{k}
\leq D 2^{D} \vert 1 - \mu_{t} \vert
\leq D 2^{D} \overline{\tau} t
\eean
for any $0 \leq t \leq (2\overline{\tau})^{-1}$.
For $\|\bx\|_{\infty} \leq \mu_{t} - 2 \sigma_{t} \sqrt{\log (1/\sigma_t)}$ and $\| \sigma_{t} \bz + \bx \|_{\infty} \geq \mu_t $, we have $\| \sigma_t \bz \|_{\infty} \geq \| \sigma_{t} \bz + \bx \|_{\infty} - \| \bx \|_{\infty} \geq 2 \sigma_{t} \sqrt{\log (1/\sigma_t)}$.
Then,
\bean
\int_{\| \by \|_{\infty} \geq 1} \phi_{\sigma_t}(\bx-\mu_t \by) \d \by
=
\mu_{t}^{-D} \int_{\| \sigma_t \bz + \bx \|_{\infty} \geq \mu_t} \phi_{1}(\bz) \d \bz
\leq \mu_{t}^{-D} \int_{\| \bz \|_{\infty} \geq 2 \sqrt{\log (1/\sigma_t)}} \phi_{1}(\bz) \d \bz.
\eean
By the tail probability of the standard normal distribution, the last display is bounded by
\bean
\mu_{t}^{-D} \sum_{i=1}^{D} \int_{\vert z_i \vert \geq 2 \sqrt{\log (1/\sigma_t) }} \phi_{1}(\bz) \d \bz
\leq 2 D \mu_{t}^{-D} \sigma_{t}^2
\leq  2^{D+2} D \overline{\tau} t
\eean
for any $0 \leq t \leq (2\overline{\tau})^{-1} \wedge 1$.
Then,
\bean
&& \int_{\| \bx \|_{\infty} \leq 1} p_0(\bx) \left\vert  1 - \int_{\| \by \|_{\infty} \leq 1} \phi_{\sigma_t}(\bx-\mu_t \by) \d \by  \right\vert \d \bx
\\
&& \leq D 2^{D} \overline{\tau} t + \int_{\| \bx \|_{\infty} \leq 1} p_0(\bx)  \int_{\| \by \|_{\infty} \geq 1} \phi_{\sigma_t}(\bx-\mu_t \by) \d \by \d \bx
\\
&& \leq D 2^{D} \overline{\tau} t + 
 2^{D+2} D \overline{\tau} t +
 \int_{\mu_{t} - 2 \sigma_{t} \sqrt{\log (1/\sigma_t)} \leq \| \bx \|_{\infty} \leq 1} p_0(\bx)  \int_{\| \by \|_{\infty} \geq 1} \phi_{\sigma_t}(\bx-\mu_t \by) \d \by \d \bx.
\eean
A simple telescoping sum implies that
\bean
&& 2^{D} - \left(2 \mu_{t} - 4 \sigma_{t} \sqrt{\log(1/\sigma_{t})} \right)^{D}
\\
&& = \left(2 - 2\mu_{t} +  4 \sigma_{t} \sqrt{\log(1/\sigma_{t})} \right) \sum_{k=0}^{D-1} 2^{D-1-k} \left(2 \mu_{t} - 4 \sigma_{t} \sqrt{\log(1/\sigma_{t})} \right)^{k} 
\\
&& \leq \left( 2 \overline{\tau} t + 4\sqrt{2 \overline{\tau}t \log (1/\sqrt{\underline{\tau} t} ) }    \right) 2^{D-1} D
\eean
for $0 \leq t \leq D_{2}$, where $D_{2} = D_{2} (\overline{\tau}, \underline{\tau})$ is a small enough constant so that $D_{2} \leq (2\overline{\tau})^{-1} \wedge 1$ and $2\mu_{t}-4\sigma_{t}\sqrt{\log(1/\sigma_{t})} \leq 2$ for $t \leq D_{2}$.
Combining (\ref{eqlemtv:norm}) with the last two displays, we have
\bean
 && \int_{\| \bx \|_{\infty} \leq 1} p_0(\bx) \left\vert  1 - \int_{\| \by \|_{\infty} \leq 1} \phi_{\sigma_t}(\bx-\mu_t \by) \d \by  \right\vert \d \bx
 \\
 && \leq  D 2^{D} \mu_{t}^{-D} \overline{\tau} t + 
 2^{D+2} D \overline{\tau} t +
 K D 2^{D} \left( \overline{\tau} t + 2\sqrt{2 \overline{\tau}t \log (1/\sqrt{\underline{\tau}t} }   \right)
 \\
 && \leq D_{3} \sqrt{t \log (1/t)}
\eean
for $0 \leq t \leq D_{2}$, where $D_{3} = D_{3}( D, K,  \overline{\tau}, \underline{\tau} ).$
Combinig (\ref{eqlemtv:int}) and (\ref{eqlemtv:intfin1}) with the last display, we have
\be
\int_{\| \bx \|_{\infty} \leq 1} \left\vert p_0(\bx) - p_t(\bx) \right\vert \d \bx 
\leq D_{1}  \left\{ t \log (1/t) \right\}^{\frac{\beta \wedge 1}{2}} + D_{3} \sqrt{t \log (1/t)}
\label{eqlemtv:intfinf}
\ee
for $0 \leq t \leq D_{2}$.

Note that 
\bean
\int_{\|\bx\|_{\infty} \geq 1} p_t(\bx) \d \bx
\leq \int_{\|\bx\|_{\infty} \geq \mu_t +  2 \sigma_{t} \sqrt{\log(1/\sigma_t)}} p_t(\bx) \d \bx
+ \int_{1 \leq \|\bx\|_{\infty} \leq \mu_t +  2 \sigma_{t} \sqrt{\log(1/\sigma_t)}} p_t(\bx) \d \bx
\eean
and we bound each term on the RHS.
For $\| \bx\|_{\infty} \geq  \mu_t + 2 \sigma_{t} \sqrt{\log(1/\sigma_t)}$ and $\| \by \|_{\infty} \leq 1$,
we have $\| \bx - \mu_t \by \|_{\infty} \geq \|\bx\|_{\infty}- \mu_t \|\by\|_{\infty} \geq 2 \sigma_{t} \sqrt{\log(1/\sigma_t)}$.

Then, 
\bean
&& \int_{\| \bx \|_{\infty} \geq \mu_t +  2 \sigma_{t} \sqrt{\log(1/\sigma_t)}} p_t(\bx) \d \bx = 
\int_{\| \bx \|_{\infty} \geq \mu_t +  2 \sigma_{t} \sqrt{\log(1/\sigma_t)}} \int_{\| \by \|_{\infty} \leq 1} p_0(\by) \phi_{\sigma_t}(\bx-\mu_t \by) \d \by \d \bx
\\
&&= \int_{\| \by \|_{\infty} \leq 1} p_0(\by) \int_{\| \bx \|_{\infty} \geq \mu_t +  2 \sigma_{t} \sqrt{\log(1/\sigma_t)}} \phi_{\sigma_t}(\bx-\mu_t \by)  \d \bx \d \by
\\
&& \leq \int_{\| \by \|_{\infty} \leq 1} p_0(\by) \int_{ \| \bx - \mu_t \by \|_{\infty} \geq  2 \sigma_{t} \sqrt{\log(1/\sigma_t)}} \phi_{\sigma_t}(\bx-\mu_t \by)  \d \bx \d \by
\\
&& = \int_{\| \by \|_{\infty} \leq 1} p_0(\by) \int_{ \| \bz \|_{\infty} \geq  2 \sqrt{\log(1/\sigma_t)}} \phi_{1}(\bz)  \d \bz \d \by
=  \int_{ \| \bz \|_{\infty} \geq  2 \sqrt{\log(1/\sigma_t)}} \phi_{1}(\bz)  \d \bz
\\
&& \leq  \sum_{i=1}^{D} \int_{\vert z_i \vert \geq 2 \sqrt{\log(1/\sigma_t)}} \phi_{1}(\bz) \d \bz
\leq 2 D  \sigma_t^{2} \leq 4 D  \overline{\tau} t 
,\quad \forall 0 \leq t \leq (2\overline{\tau})^{-1},
\eean
where the third inequality holds by the tail probability of the standard normal distribution.
A simple calculation yields that
\bean
&& \int_{1 \leq \| \bx \|_{\infty} \leq \mu_t + 2\sigma_t \sqrt{\log(1/\sigma_t)} } p_t(\bx) \d \bx 
\\
&& = \int_{1 \leq \| \bx \|_{\infty} \leq \mu_t + 2\sigma_t \sqrt{\log(1/\sigma_t)} } \int_{\|\by\|_{\infty} \leq 1} p_0(\by) \phi_{\sigma_t}(\bx-\mu_t\by) \d \by \d \bx 
\\
&& \leq \int_{1 \leq \| \bx \|_{\infty} \leq \mu_t + 2\sigma_t \sqrt{\log(1/\sigma_t)} } K \int_{\bbR^{D}} \phi_{\sigma_t}(\bx - \mu_t \by ) \d \by  \d \bx
= K \mu_{t}^{-D} \left\vert  \left( 2 \mu_t + 4\sigma_t \sqrt{\log(1/\sigma_t)} \right)^{D} - 2^{D}  \right\vert
\\
&& 
\leq K \mu_{t}^{-D} \left(\vert 2\mu_{t} - 2 \vert +  4 \sigma_{t} \sqrt{\log(1/\sigma_{t})} \right) \sum_{k=0}^{D-1} \left(2 \mu_{t} + 4 \sigma_{t} \sqrt{\log(1/\sigma_{t})} \right)^{{D-1-k}} 2^{k}
\\
&& \leq K 2^{D} \left( 2 \overline{\tau} t + 4 \sqrt{2 \overline{\tau} t \log (1/\sqrt{\underline{\tau}t}) } \right) D 4^{D}
\eean
for $0 \leq t \leq D_{4}$, where $D_{4} = D_{4}(\overline{\tau}, \underline{\tau})$ is a small enough constant so that $D_{4} \leq D_{2}$ and $\mu_{t} + 2 \sigma_{t} \sqrt{\log(1/\sigma_{t})} \leq 2$ for $t \leq D_{4}$.
Combining with the last two displays, we have
\bean
\int_{ \|\bx\|_{\infty} \geq 1 } p_t(\bx) \d \bx 
\leq D_{5} \sqrt{t \log (1/t) }
\eean
for $0 \leq t \leq D_{4}$, where $D_{5} = D_{5}(D,K,\overline{\tau})$.
Therefore, combining (\ref{eqlemtv:intfinf}) with the last display, 
 \bean
&& \int_{\bbR^{D}} \left\vert p_0(\bx) - p_t(\bx) \right\vert \d \bx = \int_{\| \bx \|_{\infty} \leq 1} \left\vert p_0(\bx) - p_t(\bx) \right\vert \d \bx
+ \int_{\| \bx \|_{\infty} \geq 1}  p_t(\bx) \d \bx
\\
&& \leq  D_{1}  \left\{ t \log (1/t) \right\}^{\frac{\beta \wedge 1}{2}} + D_{3} \sqrt{t \log (1/t)} + D_{5} \sqrt{t \log (1/t) } \leq D_{6} \left\{ t \log (1/t) \right\}^{\frac{\beta \wedge 1}{2}} 
\eean
where $D_{6} = D_1 + D_3 + D_5 $.
The assertion follows by redefining the constants.

\end{proof}

\subsection{Proof of Lemma~\ref{sec:covering}}
\begin{proof}
For neural networks $\bff,\widetilde \bff \in \cF_{\rm WSNN}(L,\bd,s,M,\cP_{\bfm})$, let $\{W_{l}, \bb_{l} \}_{l \in [L]}$ and $\{\widetilde W_{l}, \widetilde \bb_{l} \}_{l \in [L]}$ be the parameter matrices of $\bff$ and $\widetilde \bff$, repsectively.
For $l \in [L-1]$, let
\bean
\bff_{l}(\cdot) =  \rho \left( \sum_{j=1}^{m_{l}} R_{l}^{(j)} \left(  W_{l} Q_{l}^{(j)} \cdot + \bb_{l} \right) \right)
\text{\quad and \quad}
\widetilde \bff_{l}(\cdot) = \rho \left( \sum_{j=1}^{m_{l}} R_{l}^{(j)} \left(  \widetilde W_{l} Q_{l}^{(j)} \cdot + \widetilde \bb_{l} \right) \right).
\eean
Given $\epsilon > 0$, assume that all parameter values of $\bff$ and $\widetilde \bff$ are at most $\epsilon$ away from each other.
Then,
\bean
&& \left\| \bff_{l}(\bx) -  \bff_{l}(\widetilde \bx) \right\|_{\infty}
\leq 
\left\| \sum_{j=1}^{m_{l}}   R_{l}^{(j)}  W_{l} Q_{l}^{(j)} ( \bx - \widetilde \bx  ) \right\|_{\infty}
\\
&& \leq \sum_{j=1}^{m_{l}} \left\| W_{l} Q_{l}^{(j)} \left( \bx - \widetilde \bx \right) \right\|_{\infty}
\leq \sum_{j=1}^{m_l} \left\{ d_{l} \left\| W_{l} \right\|_{\infty} \cdot \left\| Q_{l}^{(j)} \left( \bx - \widetilde \bx \right) \right\|_{\infty}  \right\}
\\
&& \leq m_{l} d_{l} M \|\bx - \widetilde \bx\|_{\infty} , \quad l \in [L-1], \  \bx, \widetilde \bx \in \bbR^{d_{l}}.
\eean
because for  $\bz \in \bbR^{n_{2}}$, $n_{1},n_{2} \in \bbN$, we have $\| W \bz \|_{\infty} \leq n_{2} \| W\|_{\infty} \| \bz\|_{\infty}$ for any matrix $W \in \bbR^{n_{1} \times n_{2}}$ and $\| Q \bz \|_{\infty} = \| \bz\|_{\infty}$ for any $n_{1} \times n_{1}$ permutation matrix $Q$.
A simple calculation yields that
\be \begin{split}
& \left\|  \left( \bff_{L-1} \circ \cdots \circ \bff_{l}  \right) (\bx) -  \left( \bff_{L-1} \circ \cdots \circ \bff_{l}  \right) (\widetilde \bx) \right\|_{\infty}
\\
& \leq \left(\prod_{i=l}^{L-1}  m_{i} d_{i} M \right) \|\bx - \widetilde \bx\|_{\infty}, \quad l \in [L-1], \bx,\widetilde \bx \in \bbR^{d_{l}}. \label{eqcov:1}
\end{split} \ee
Similarly, we have
\bean
&& \left\| \bff_{l}(\bx) - \widetilde \bff_{l}(\bx) \right\|_{\infty}
\leq \left\| \sum_{j=1}^{m_{l}} R_{l}^{(j)} \left( \left(  W_{l} - \widetilde W_{l} \right) Q_{l}^{(j)} \bx + \bb_{l} - \widetilde \bb_{l} \right)  \right\|_{\infty}
\\
&& \leq \sum_{j=1}^{m_{l}} \left\| \left( W_{l} - \widetilde W_{l}  \right) Q_{l}^{(j)} \bx + \left( \bb_{l} - \widetilde \bb_{l} \right) \right\|_{\infty}
\leq \sum_{j=1}^{m_l} \left\{ \left\| W_{l} - \widetilde W_{l} \right\|_{\infty} \cdot \left\| Q_{l}^{(j)} \bx \right\|_{\infty} + \left\| \bb_{l} - \widetilde \bb_{l} \right\|_{\infty} \right\}
\\
&& \leq m_{l} \epsilon (1 + d_{l} \|\bx\|_{\infty}) 
\leq m_{l} d_{l} \epsilon (1 + \|\bx\|_{\infty}) , \quad l \in [L-1], \  \bx \in \bbR^{d_{l}}.
\eean
Note also that
\bean
\left\| \widetilde  \bff_{l}  (\bx)   \right\|_{\infty}
\leq  \sum_{i=1}^{m_l} \left\| R_{l}^{(i)} \left( \widetilde W_{l} Q_{l}^{(i)} \bx + \widetilde \bb_{l} \right) \right\|_{\infty}
\leq m_{l} d_{l} M \left( \| \bx \|_{\infty} + 1 \right)
\eean
and
\bean
\left\| \left( \widetilde  \bff_{l} \circ \cdots \circ \widetilde \bff_{1}  \right) (\bx)   \right\|_{\infty}
&& \leq \left(  \prod_{i=1}^{l} m_i d_i M \right) \| \bx \|_{\infty} + \sum_{k=0}^{l-1} \left( \prod_{i=k+1}^{l} m_i d_i M \right)
\\
&& \leq  \left( \| \bx \|_{\infty} + l \right) \prod_{i=1}^{l} \{ m_i d_i ( M \vee 1 ) \} .
\eean
It follows that
\be \begin{split}
& \left\| \left( \bff_{l+1} \circ \widetilde \bff_{l} \circ \cdots \circ \widetilde \bff_{1}  \right) (\bx) - \left( \widetilde \bff_{l+1} \circ \widetilde  \bff_{l} \circ \cdots \circ \widetilde \bff_{1}  \right) (\bx) \right\|_{\infty}
\\
& \leq  m_{l+1} d_{l+1} \epsilon \left\{ 1 + \left\| \left(  \widetilde \bff_{l} \circ \cdots \circ \widetilde \bff_{1} \right)(\bx) \right\|_{\infty} \right\}
\\
& \leq  m_{l+1} d_{l+1} \epsilon \left[ 1 + \left( \| \bx \|_{\infty} + l \right) \prod_{i=1}^{l} \{ m_i d_i ( M \vee 1 ) \}  \right]
\\
& \leq \epsilon  (M \vee 1)^{-1}   \left( \| \bx \|_{\infty} + l + 1 \right) \prod_{i=1}^{ l+1 } \left\{ m_i d_i (M \vee 1) \right\}, \quad l \in [L-2], \ \bx \in \bbR^{d_{1}}. \label{eqcov:2}
\end{split} \ee
Let
\bean
&& \bff^{(l)} (\cdot) =  \left( \bff_{L-1} \circ \cdots \circ \bff_{l+1} \circ \widetilde \bff_{l} \circ \cdots \widetilde \bff_{1}  \right) (\cdot), \quad l \in [L-2]
\\
&& \bff^{(0)}(\cdot) =  \left( \bff_{L-1} \circ \cdots \circ \bff_{1}  \right) (\cdot) \text{\quad and \quad} \bff^{(L-1)}(\cdot) =  \left( \widetilde \bff_{L-1} \circ \cdots \circ \widetilde \bff_{1} \right) (\cdot).
\eean
Combining with (\ref{eqcov:1}) and (\ref{eqcov:2}), we have
\bean
&& \left\| \bff^{(l)}(\bx) - \bff^{(l+1)}(\bx) \right\|_{\infty} 
\\
&& = \left\| \left( \bff_{L-1} \circ \cdots \circ \bff_{l+1} \circ \widetilde \bff_{l} \circ \cdots \widetilde \bff_{1}  \right) (\bx)
- \left( \bff_{L-1} \circ \cdots \circ \bff_{l+2} \circ \widetilde \bff_{l+1} \circ \cdots \widetilde \bff_{1}  \right) (\bx)
\right\|_{\infty}
\\
&&  \leq \epsilon \left( \prod_{i=l+2}^{L-1}  m_{i} d_{i} M \right)   (M \vee 1)^{-1}   \left( \| \bx \|_{\infty} + l + 1 \right) \prod_{i=1}^{ l+1 } \left\{ m_i d_i (M \vee 1) \right\}
\\
&& \leq \epsilon (M \vee 1)^{-1} \left( \| \bx\|_{\infty} + l + 1 \right)  \prod_{i=1}^{L-1} \left\{  m_{i} d_{i} ( M \vee 1) \right\} , \quad l \in [L-3], \bx \in \bbR^{d_{1}}
\eean
and
\bean
&& \left\| \bff^{(L-2)}(\bx) - \bff^{(L-1)}(\bx) \right\|_{\infty} 
\\
&& = \left\|  \left( \bff_{L-1} \circ \widetilde \bff_{L-2} \circ \cdots \circ \widetilde \bff_{1}  \right) (\bx) - \left( \widetilde \bff_{L-1} \circ \cdots \circ \widetilde \bff_{1} \right) (\bx)
\right\|_{\infty}
\\
&& \leq m_{L-1} d_{L-1} \epsilon \left[ 1 +  \left( \| \bx \|_{\infty} + L - 2 \right) \prod_{i=1}^{L - 2} \{ m_i d_i ( M \vee 1 ) \}   \right]
\\
&& \leq \epsilon (M \vee 1)^{-1} \left( \|\bx\|_{\infty}   + L - 1 \right)  \prod_{i=1}^{L-1}\left\{ m_i d_i (M \vee 1) \right\}  , \quad \bx \in \bbR^{d_{1}}.
\eean
A simple calculation yields that
\bean
&& \left\| \bff(\bx) - \widetilde \bff(\bx) \right\|_{\infty}
\leq \sum_{l=0}^{L-2} \left\| W_{L} \left( \bff^{(l)}(\bx) - \bff^{(l+1)}(\bx) \right) \right\|_{\infty} + \left\| \left(W_{L} - \widetilde W_{L} \right) \bff^{(L-1)}(\bx) - \left( \bb_{L} - \widetilde \bb_{L} \right) \right\|_{\infty}
\\
&& \leq d_{L} \left\| W_{L} \right\|_{\infty}  \sum_{l=0}^{L-2} \left\| \bff^{(l)}(\bx) - \bff^{(l+1)}(\bx) \right\|_{\infty} + d_{L} \left\| W_{L} - \widetilde W_{L} \right\|_{\infty} \cdot \left\| \bff^{(L-1)}(\bx) \right\|_{\infty} + \left\| \bb_{L} - \widetilde \bb_{L} \right\|_{\infty}
\\
&& \leq \epsilon d_{L} M (M \vee 1)^{-1} \left\{ \sum_{l = 0}^{L-2} ( l + 1 + \|\bx\|_{\infty}) \right\} \prod_{i=1}^{L-1} \left\{  m_i d_i (M \vee 1) \right\}  
\\
&& \quad + \epsilon d_{L} \left( \| \bx \|_{\infty} + L - 1 \right) \prod_{i=1}^{L-1} \{ m_i d_i ( M \vee 1 ) \}
+ \epsilon
\\
&& \leq  \epsilon d_{L} \left\{  \frac{L (L + 1)}{2} + L \| \bx \|_{\infty} + 2 \right\}
\prod_{i=1}^{L-1} \left\{ m_i d_i (M \vee 1) \right\} , \quad \bx \in \bbR^{d_{1}}.
\eean
For $\bx \in [-C,C]^{d_1}$, the last display is bounded by
\bean
\epsilon d_{L} L  \left( L + C + 3 \right)  \prod_{i=1}^{L-1} \left\{ m_i d_i (M \vee 1) \right\} \defeq \delta.
\eean
The total number of parameters in $\bff$ is $T \defeq \sum_{i=1}^{L} (d_{i}+1)d_{i+1}$ and there are $\binom{T}{s}$ combinations to pick $s$ non-zero paramters.
Since $s \leq T \leq 2 L \|\bd\|_{\infty}^2$ and $\binom{T}{s} \leq T{^s} \leq (2 L \|\bd\|_{\infty}^2 )^{s}$, we have
\bean
&& N \left(\delta, \cF_{\rm WSNN}(L,\bd,s,M,\cP_{\bfm}), \| \cdot \|_{L^{\infty}[-C,C]^{d_{1}}} \right)
\\
&& \leq \sum_{s_{0} = 1}^{s} \binom{T}{s_0}  N \left( \epsilon, [-M,M]^{s_{0}}, \| \cdot \|_{\infty} \right)
\\
&& \leq s \left(2 L \|\bd\|_{\infty}^2 \right)^{s} \left( \frac{2M}{\epsilon} \vee 1 \right)^{s}
\\
&& \leq \left(2 L \|\bd\|_{\infty}^2 \right)^{s+1}  \left( \frac{2 M L d_{L} \left(L + C + 3 \right) \prod_{i=1}^{L-1} \left\{  m_i d_i (M \vee 1) \right\}   }{\delta} \vee 1  \right)^{s}.
\eean
For $\delta < 1$, the last display is bounded by
\bean
\left( \frac{4 L^2 \|\bd\|_{\infty}^2 \left\{  \| \bfm\|_{\infty} \|\bd\|_{\infty} (M \vee 1) \right\}^{L} \left(L + C + 3 \right)   }{\delta}  \right)^{s+1}.
\eean
The assertion follows by taking the logarithm.
\end{proof}

\subsection{ Proof of Lemma~\ref{sec:exbd} }
\begin{proof}
For any $\bx \in [-1,1]^{D}$, we have
\bean
\ell_{\bff}(\bx) \leq \int_{\underline{T}}^{\overline{T}} 2 \bbE \left[ \left\| \bff(\mu_t \bx + \sigma_t \bZ, t) \right\|_2^2\right] + \frac{2 \bbE[ \|\bZ\|_2^2 ]}{\sigma_t^2} \d t
\leq \int_{\underline{T}}^{\overline{T}} \frac{2DF^2 + 2D}{\sigma_t^2} \d t,
\eean
where the last inequality holds because $\| \bff(\bx,t) \|_{\infty} \leq F \sigma_{t}^{-1}$.
Since $\sigma_t \geq \sqrt{\underline{\tau} t }$ for all $t \geq 0$ by (\ref{eqthm:mtstbd}), we have
\be
\vert \ell_{\bff}(\bx) \vert \leq \underline{\tau}^{-1} 2D(F^2 + 1) ( \log \overline{T} - \log \underline{T}).  \label{eqlemexbd:lf}
\ee
Lemma~\ref{secsc:scorebound} implies that there exists a constant $C_{S,2} = C_{S,2}(D, K,\tau_1, \overline{\tau}, \underline{\tau}) > 0 $ such that 
\bean
\| \bff_0(\mu_t \bx + \sigma_t \bz,t) \|_2 \leq \frac{C_{S,2}}{\sigma_t} \left( \|\bz\|_{\infty} \vee 1 \right)
\eean
for any $\bx \in [-1,1]^{D}$ and $\bz \in \bbR^{D}$. 
Then,
\bean
\ell_{\bff_0} (\bx) && \leq \int_{\underline{T}}^{\overline{T}} 2 \bbE \left[ \left\| \bff_0(\mu_t \bx + \sigma_t \bZ, t) \right\|_2^2\right] + \frac{2 \bbE[ \|\bZ\|_2^2 ]}{\sigma_t^2} \d t
\\
&& \leq \int_{\underline{T}}^{\overline{T}} \frac{2 C_{S,2}^2  \bbE[ \left( \| \bZ\|_{\infty} \vee 1 \right)^2  ] +  2D }{\sigma_t^2} \d t.
\eean
Since $(\| \bz \|_{\infty} \vee 1)^2 \leq (\| \bz \|_{\infty} + 1)^2 \leq (\| \bz \|_2 + 1)^2 \leq 2 \| \bz \|_2^2 + 2$, the last display is bounded by
\bean
\int_{\underline{T}}^{\overline{T}} \frac{2 C_{S,2}^2  (2D + 2)  +  2D }{\sigma_t^2} \d t \leq \underline{\tau}^{-1} \left\{ 2 C_{S,2}^2  (2D + 2)  +  2D \right\} ( \log \overline{T} - \log \underline{T}).
\eean
Combining the last display with (\ref{eqlemexbd:lf}), we have
\bean
\vert \nu_{\bff}(\bx) \vert = \vert \ell_{\bff}(\bx) - \ell_{\bff_0}(\bx) \vert \leq \ell_{\bff}(\bx) + \ell_{\bff_0}(\bx) \leq D_{1} (F^2 \vee 1) (\log \overline{T} - \log \underline{T}),
\eean
where $D_{1} = D_{1}(D, \underline{\tau}, C_{S,2})$. The first assertion follows by redefining the constant.

Simple calculation yields that
\bean
 && \vert \nu_{\bff}(\bx) \vert = \vert \ell_{\bff}(\bx) - \ell_{\bff_0}(\bx) \vert
= \left\vert \bbE \left[ \int_{\underline{T}}^{\overline{T}} \left\| \bff(\mu_t \bx + \sigma_t \bZ,t) + \frac{\bZ}{\sigma_t} \right\|_2^2 - \left\| \bff_0(\mu_t \bx + \sigma_t \bZ,t) + \frac{\bZ}{\sigma_t} \right\|_2^2 \d t   \right] \right\vert
\\
&& = \left\vert \bbE \left[ \int_{\underline{T}}^{\overline{T}} \left\{ (\bff - \bff_0) (\mu_t \bx + \sigma_t \bZ,t)  \right\}^{\top} \left\{ (\bff + \bff_0) (\mu_t \bx + \sigma_t \bZ,t)  + \frac{2\bZ}{\sigma_t} \right\} \d t
\right] \right\vert
\\
&& \leq \bbE\left[  \sqrt{ \int_{\underline{T}}^{\overline{T}} \left\| (\bff - \bff_0)(\mu_t \bx + \sigma_t \bZ,t)  \right\|_2^2 \d t }
\sqrt{ \int_{\underline{T}}^{\overline{T}} \left\| (\bff + \bff_0)(\mu_t \bx + \sigma_t \bZ,t) + \frac{2\bZ}{\sigma_t}  \right\|_2^2 \d t }
\right]
\\
&& \leq \sqrt{ \bbE\left[ \int_{\underline{T}}^{\overline{T}} \left\| (\bff - \bff_0)(\mu_t \bx + \sigma_t \bZ,t)  \right\|_2^2 \d t  \right] }
\sqrt{ \bbE\left[ 
 \int_{\underline{T}}^{\overline{T}} \left\| (\bff + \bff_0)(\mu_t \bx + \sigma_t \bZ,t) + \frac{2\bZ}{\sigma_t}  \right\|_2^2 \d t 
\right] }
\\
&& \leq \sqrt{ \bbE\left[ \int_{\underline{T}}^{\overline{T}} \left\| (\bff - \bff_0)(\mu_t \bx + \sigma_t \bZ,t)  \right\|_2^2 \d t  \right] }
\sqrt{ 2 \ell_{\bff}(\bx) + 2 \ell_{\bff_0}(\bx) },
\eean
where the first and second inequalities hold by the Cauchy-Schwarz inequality.
Combining with the last two displays, the second assertion follows by
\bean
\bbE \left[ \left\{ \nu_{\bff}(\bX_0) \right\}^2 \right]
&& \leq \bbE\left[  \left( 2 \ell_{\bff}(\bX_0) + 2 \ell_{\bff_0}(\bX_0) \right) \int_{\underline{T}}^{\overline{T}} \left\| (\bff - \bff_0)(\mu_t \bX_0 + \sigma_t \bZ,t)  \right\|_2^2 \d t 
\right]
\\
&& \leq 2 D_{1} (F^2 \vee 1) (\log \overline{T} - \log \underline{T}) \int_{\underline{T}}^{\overline{T}} \bbE \left[ \left\| \bff(\bX_t,t)  - \bff_0(\bX_t,t) \right\|_2^2 \right]  \d t .
\eean
\end{proof}

\subsection{  Proof of Proposition~\ref{sec:oracle} }

\begin{proof}
Let $\cF$ be the class of functions $\bff : \bbR^{D} \times \bbR \rightarrow \bbR^{D}$ satisfying $\| \bff(\cdot,t)\|_{L^{\infty}(\bbR^{D})} \leq F \sigma_{t}^{-1}$ for all $t \in [\underline{T}, \overline{T}]$.
Let $\cV = \{\nu_{\bff}(\cdot) : \bff \in \cF  \}$.
For $\nu \in \cV$, denote
\bean
R(\nu) = \int_{[-1,1]^{D}} \nu(\bx) p_0(\bx) \d \bx
\text{\quad and \quad}
\widehat R_{n}(\nu) = \frac{1}{n} \sum_{i=1}^{n} \nu(\bX^i).
\eean
Moreover, let
\bean
\widetilde{\nu} \in \argmin_{\nu \in \cV} R(\nu)
\text{\quad and \quad}
\widehat{\nu} \in \argmin_{\nu \in \cV} \widehat R_{n}(\nu).
\eean
Note 
\be
\begin{split}
R(\widehat \nu)
& = R(\widehat \nu) - \widehat R_{n}(\widehat \nu) + \widehat R_{n}(\widehat \nu)
\\
& \leq R(\widehat \nu) - \widehat R_{n}(\widehat \nu) + \widehat R_{n}(\widetilde \nu)
\\
& = R(\widetilde \nu) + \left\{ R(\widehat \nu) - \widehat R_{n}(\widehat \nu) \right\} + \left\{ \widehat R_{n}(\widetilde \nu) - R(\widetilde \nu) \right\},
\label{eq:exbd_totalbd}
\end{split}
\ee
where the inequality holds because $\widehat \nu$ is the ERM estimator.
We bound each bracket term on the RHS.

Let $\{\nu_1,\ldots,\nu_{N_0}\} \subseteq \cV$ be a minimal $\epsilon$-covering of $\cV$ in $\| \cdot \|_{L^{\infty}([-1,1]^{D})}$-norm.
Then, there exists $j_{*} \in \{1,\ldots,N_0\}$ such that $\| \widehat \nu - \nu_{j_*} \|_{L^{\infty}([-1,1]^{D})} \leq \epsilon$.
A simple calculation yields that
\be
 \vert R(\widehat \nu) - \widehat R_{n}(\widehat \nu) \vert
 \leq 2\epsilon +  \vert R(\nu_{j_*}) - \widehat R_{n}(\nu_{j_*}) \vert.
 \label{eq:exbdcov}
\ee
For any $\nu \in \cV$, Lemma~\ref{sec:exbd} implies that
\bean
\vert \nu(\bX^i) - R(\nu) \vert \leq D_1
\text{\quad and \quad}
\bbE \left[ \left\{ \nu(\bX^i) - R(\nu) \right\}^2 \right] \leq D_{1} R(\nu)
\quad \forall i \in [n],
\eean
where $D_{1} = 2 \widetilde C_{14} (F^2 \vee 1) (\log \overline{T} - \log \underline{T})$ and $\widetilde C_{14} = \widetilde C_{14} (D, K, \tau_1, \overline{\tau}, \underline{\tau} )$ is the contstant in Lemma~\ref{sec:exbd}.
Lemma 2.2.9 from \cite{van1996weak}, which is standard Bernstein's inequality for bounded random variables, implies that for any $r \geq 0$ and $\nu \in \cV$,
\bean
\bbP \left(  \left\vert R(\nu) - \widehat R_{n}(\nu) \right\vert \geq \frac{1}{n} \left( \frac{2D_{1}r}{3} + \sqrt{2 n D_{1} R(\nu) r} \right) \right) \leq 2 e^{-r}
\eean
because for any $M,v,t,x \geq 0$ with $x \geq 2Mt/3 + \sqrt{2vt}$, $x^2/(2v+2Mx/3) \geq t$.
For any $\delta > 0$,
$ \sqrt{ 2 D_{1} R(\nu) r / n } \leq \delta D_{1} R(\nu) + r/(n\delta)$
because $\sqrt{2 D_{1} R(\nu) r / n} = \sqrt{2  \{ \delta D_1 R(\nu)\} \{  r /(n\delta) \}   }$ and $\sqrt{2xy} \leq x + y$ for any $x,y \geq 0$.
Combining with the last two displays, we have
\bean
\bbP \left(  \left\vert R(\nu) - \widehat R_{n}(\nu) \right\vert \geq  \delta D_{1} R(\nu) + \frac{r}{n} \left( \frac{2D_{1}}{3} + \frac{1}{\delta} \right) \right) \leq 2 e^{-r}
\eean
for any $\delta, r > 0$ and $\nu \in \cV$.
The union bound implies that
\bean
\bbP \left(  \left\vert R(\nu_j) - \widehat R_{n}(\nu_j) \right\vert \geq  \delta D_{1} R(\nu_j) + \frac{r + \log N_0}{n} \left( \frac{2D_{1}}{3} + \frac{1}{\delta} \right)
\quad \text{for some } j \in [N_0]
\right) \leq 2 e^{-r}
\eean
and moreover,
\bean
\bbP \left(  \left\vert R(\nu_j) - \widehat R_{n}(\nu_j) \right\vert \leq  \delta D_{1} R(\nu_j) + \frac{r + \log N_0}{n} \left( \frac{2D_{1}}{3} + \frac{1}{\delta} \right)
\quad \text{for all } j \in [N_0]
\right) \geq 1 - 2 e^{-r}
\eean
for any $\delta>0$ and $r > 0$.
Then,
\bean
\bbE \left[ \vert R(\nu_{j_*}) - \widehat R_{n}(\nu_{j_*}) \vert \right]
&& \leq \delta D_{1} \bbE[ R(\nu_{j_*}) ] + \frac{r + \log N_0}{n} \left( \frac{2D_{1}}{3} + \frac{1}{\delta} \right) + 2 D_{1} e^{-r}
\\
&& \leq \delta D_{1} \epsilon + \delta D_{1} \bbE[ R(\widehat \nu) ]  + \frac{r + \log N_0}{n} \left( \frac{2D_{1}}{3} + \frac{1}{\delta} \right) + 2 D_{1} e^{-r}
\eean
for any $\delta>0$ and $r > 0$.
With $\delta = 1/(2D_1)$ and $r = 2 \log (2n)$, simple calculation yields that 
\bean
\bbE \left[ \vert R(\nu_{j_*}) - \widehat R_{n}(\nu_{j_*}) \vert \right]
\leq \frac{\bbE[ R(\widehat \nu) ]}{2}  + \frac{8D_1 (2 \log 2 + 2 \log n + \log N_0)}{3n} + \frac{D_{1}}{2 n^2} + \frac{\epsilon}{2}.
\eean
Combining with (\ref{eq:exbdcov}), we have
\bean
\bbE \left[ \vert R(\widehat \nu) - \widehat R_{n}(\widehat \nu) \vert \right]
\leq \frac {\bbE[ R(\widehat \nu) ]}{2}  + \frac{8D_1 (2 \log (2n) + \log N_0)}{3n} + \frac{D_{1}}{2 n^2} + \frac{5\epsilon}{2}.
\eean
Using the same computation as above for $\bbE \left[ \vert R(\widetilde \nu) - \widehat R_{n}(\widetilde \nu) \vert \right]$, we have
\bean
\bbE \left[ \vert R(\widetilde \nu) - \widehat R_{n}(\widetilde \nu) \vert \right]
\leq \frac{ R(\widetilde \nu)}{2}  + \frac{ 8D_1 (2 \log (2n) + \log N_0 ) }{3n} + \frac{ D_{1}}{2 n^2} + \frac{5\epsilon}{2}.
\eean
Combining (\ref{eq:exbd_totalbd}) with the last two displays, a simple calculation yields that
\bean
\frac{\bbE [R (\widehat \nu) ]}{2} \leq \frac{3 R(\widetilde \nu) }{2} + \frac{16 D_1 (2 \log (2n) + \log N_0)}{3n} + \frac{D_{1}}{ n^2} + 5\epsilon
\eean
and moreover,
\be
\begin{split}
& \int_{\underline{T}}^{\overline{T}} \bbE \left[ \left\| \widehat \bff(\bX_t,t) - \bff_0(\bX_t,t) \right\|_2^2 \right] \d t
\\
& \leq 3 \inf_{\bff \in \cF} \int_{\underline{T}}^{\overline{T}} \bbE \left[ \left\|  \bff(\bX_t,t) - \bff_0(\bX_t,t) \right\|_2^2 \right] \d t
+ \frac{ 32 D_1 (2 \log (2n) + \log N_0)}{3n} + \frac{2D_{1}}{ n^2} + 10\epsilon, \label{eq:exbdfin}
\end{split}
\ee
where $\widehat \nu = \nu_{\widehat \bff}$ for some $\widehat \bff \in \cF$.

Let $\widetilde \epsilon > 0$, which will be specified later and $C = (1 + 2 \sqrt{\log (1 / \widetilde \epsilon)}) \vee \overline{T}$.
Consider functions $\bff_1, \bff_2 \in \cF$ such that $\| \bff_1(\cdot) - \bff_2(\cdot) \|_{L^{\infty}([-C,C]^{D+1})} \leq \widetilde \epsilon$.
Then,
\be
\begin{split}
& \left\vert \nu_{\bff_1}(\bx) - \nu_{\bff_2}(\bx)  \right\vert
 = \left\vert \ell_{\bff_1}(\bx) - \ell_{\bff_2}(\bx)  \right\vert
\\
& = \left\vert \bbE \left[ \int_{\underline{T}}^{\overline{T}} \left\| \bff_1(\mu_t \bx + \sigma_t \bZ,t) + \frac{\bZ}{\sigma_t} \right\|_2^2 - \left\| \bff_2(\mu_t \bx + \sigma_t \bz,t) + \frac{\bZ}{\sigma_t} \right\|_2^2 \d t   \right] \right\vert
\\
& = \left\vert \bbE \left[ \int_{\underline{T}}^{\overline{T}} \left\{ (\bff_1 - \bff_2) (\mu_t \bx + \sigma_t \bZ,t)  \right\}^{\top} \left\{ (\bff_1 + \bff_2) (\mu_t \bx + \sigma_t \bZ,t)  + \frac{2\bZ}{\sigma_t} \right\} \d t
\right] \right\vert
\\
& \leq \bbE\left[  \sqrt{ \int_{\underline{T}}^{\overline{T}} \left\| (\bff_1 - \bff_2)(\mu_t \bx + \sigma_t \bZ,t)  \right\|_2^2 \d t }
\sqrt{ \int_{\underline{T}}^{\overline{T}} \left\| (\bff_1 + \bff_2)(\mu_t \bx + \sigma_t \bZ,t) + \frac{2\bZ}{\sigma_t}  \right\|_2^2 \d t }
\right]
\\
& \leq \sqrt{ \bbE\left[ \int_{\underline{T}}^{\overline{T}} \left\| (\bff_1 - \bff_2)(\mu_t \bx + \sigma_t \bZ,t)  \right\|_2^2 \d t  \right] }
\sqrt{ \bbE\left[ 
 \int_{\underline{T}}^{\overline{T}} \left\| (\bff_1 + \bff_2)(\mu_t \bx + \sigma_t \bZ,t) + \frac{2\bZ}{\sigma_t}  \right\|_2^2 \d t 
\right] }
\\
& \leq \sqrt{ \bbE\left[ \int_{\underline{T}}^{\overline{T}} \left\| (\bff_1 - \bff_2)(\mu_t \bx + \sigma_t \bZ,t)  \right\|_2^2 \d t  \right] }
\sqrt{ 2 \ell_{\bff_1}(\bx) + 2 \ell_{\bff_2}(\bx) }, 
\label{eq:exbddiff}
\end{split}
\ee
where the first and second inequalities hold by the Cauchy-Schwarz inequality.
Since $\| \bff_1 (\bx,t) - \bff_2 (\bx,t) \|_{2}^2 \leq 2 \| \bff_1(\bx,t) \|_2^2 + 2 \| \bff_2 (\bx,t)\|_2^2 \leq 4 D F^2 \sigma_{t}^{-2}$, we have
\bean
 \bbE\left[ \left\| (\bff_1 - \bff_2)(\mu_t \bx + \sigma_t \bZ,t)  \right\|_2^2 \right] \leq D \widetilde \epsilon^2 + 4 D F^2 \sigma_{t}^{-2} \bbP \left( \| \mu_t\bx + \sigma_{t}\bZ \|_{\infty} \geq C \right)
\eean
for any $t \in [\underline{T}, \overline{T}]$.
Let $\bZ = (Z_1,\ldots,Z_{D})$ and $\bx = (x_1,\ldots,x_{D})$.
Simple calculation yields that
\bean
&& \bbP \left( \| \mu_t\bx + \sigma_{t}\bZ \|_{\infty} \geq C \right)
\leq \sum_{i=1}^{D} \bbP \left( \vert \mu_t x_i + \sigma_{t}Z_i \vert \geq C \right)
\\
&& = \sum_{i=1}^{D} \left\{ \bbP \left(Z_i \geq \sigma_t^{-1} (C - \mu_t x_i) \right) +  \bbP \left( Z_i \leq -\sigma_{t}^{-1}(C+\mu_t x_i) \right) \right\}
\\
&& \leq  2  \sum_{i=1}^{D}\bbP \left(Z_i \geq 2\sqrt{\log (1/\widetilde \epsilon) } \right) ,
\eean
where the last inequality holds because $0 < \sigma_t \leq 1$ and $\mu_t \in [-1,1]$.
Combining with the tail probability of the standard normal distribution, we have
\bean
\bbP \left( \| \mu_t\bx + \sigma_{t}\bZ \|_{\infty} \geq C \right)
\leq 2 D \widetilde \epsilon^2.
\eean
Since $\sigma_{t} \geq \sqrt{\underline{\tau} t }$ for any $t \geq 0$, we have
\be
\begin{split}
 & \int_{\underline{T}}^{\overline{T}} \bbE\left[ \left\| (\bff_1 - \bff_2)(\mu_t \bx + \sigma_t \bZ,t)  \right\|_2^2 \right] \d t
\leq \int_{\underline{T}}^{\overline{T}} D \widetilde \epsilon^2 + 8 D^2 F^2 \sigma_{t}^{-2} \widetilde \epsilon^2 \d t
\\
 & \leq D \overline{T} \widetilde \epsilon^2 + 8D^2F^2 \underline{\tau}^{-1} \widetilde \epsilon^2 (\log \overline{T} - \log \underline{T})
 \leq D_{2} \left\{ \overline{T} + F^2 (\log \overline{T} - \log \underline{T} ) \right\} \widetilde \epsilon^2,
 \label{eq:exdiffbd}
 \end{split}
\ee
where $D_{2} = D_{2}(D, \underline{\tau})$.
For any $\bx \in [-1,1]^{D}$ and $\bff \in \cF$, we have
\bean
\ell_{\bff}(\bx) \leq \int_{\underline{T}}^{\overline{T}} 2 \bbE \left[ \left\| \bff(\mu_t \bx + \sigma_t \bZ, t) \right\|_2^2\right] + \frac{2 \bbE[ \|\bZ\|_2^2 ]}{\sigma_t^2} \d t
\leq \int_{\underline{T}}^{\overline{T}} \frac{2DF^2 + 2D}{\sigma_t^2} \d t,
\eean
where the last inequality holds because $\| \bff(\bx,t) \|_{\infty} \leq F \sigma_{t}^{-1}$.
Moreover,
\bean
\| \ell_{\bff}(\cdot) \|_{L^{\infty}([-1,1]^{D})} \leq \underline{\tau}^{-1} 2D(F^2 + 1) ( \log \overline{T} - \log \underline{T}).
\eean
Combining (\ref{eq:exbddiff}) and (\ref{eq:exdiffbd}) with the last display, we have
\bean
&& \left\vert \nu_{\bff_1}(\bx) - \nu_{\bff_2}(\bx)  \right\vert
\leq \sqrt{4 D_{2} D \underline{\tau}^{-1}  \left\{ \overline{T} + F^2 (\log \overline{T} - \log \underline{T} ) \right\} (F^2 + 1) ( \log \overline{T} - \log \underline{T}) \widetilde \epsilon^2  }
\\
&& \leq D_{3} (F + 1)^2 \left( \sqrt{\overline{T}} + \sqrt{\log \overline{T} - \log \underline{T}} \right) \left( \sqrt{\log \overline{T} - \log \underline{T}} \right) \widetilde \epsilon,
\eean
where $D_{3} = D_{3}(D_2, D, \underline{\tau})$.
Let $\epsilon =  D_{3} (F + 1)^2 ( \sqrt{\overline{T}} + \sqrt{\log \overline{T} - \log \underline{T}})  ( \sqrt{\log \overline{T} - \log \underline{T}} ) n^{-2}$.
Then, 
\bean
N \left( \epsilon, \cV, \| \cdot \|_{L^\infty([-1,1]^{D})}   \right)
\leq N \left( n^{-2}, \cF, \| \cdot \|_{L^\infty([-C,C]^{D+1})}   \right).
\eean
Combining with (\ref{eq:exbdfin}), we have
\bean
&& \int_{\underline{T}}^{\overline{T}} \bbE \left[ \left\| \widehat \bff(\bX_t,t) - \bff_0(\bX_t,t) \right\|_2^2 \right] \d t
\\
&& \leq 3 \inf_{\bff \in \cF} \int_{\underline{T}}^{\overline{T}} \bbE \left[ \left\|  \bff(\bX_t,t) - \bff_0(\bX_t,t) \right\|_2^2 \right] \d t
\\
&& \quad + \frac{ 32 D_1 \left\{ 2 \log (2n) + \log N \left( n^{-2}, \cF, \| \cdot \|_{L^\infty([-C,C]^{D+1})}   \right) \right\}}{3n}
\\
&& \quad + \frac{ D_{1} + 10 D_{3} (F + 1)^2 \left( \sqrt{\overline{T}} + \sqrt{\log \overline{T} - \log \underline{T}} \right) \left( \sqrt{\log \overline{T} - \log \underline{T}} \right) }{n^2}.
\eean
Since $D_{1} = 2 \widetilde C_{14} (F^2 \vee 1) (\log \overline{T} - \log \underline{T})$, there exists a constant $D_{4} = D_{4}(\widetilde C_{14}, D_{3})$ such that
\bean
&& \int_{\underline{T}}^{\overline{T}} \bbE \left[ \left\| \widehat \bff(\bX_t,t) - \bff_0(\bX_t,t) \right\|_2^2 \right] \d t
\\
&& \leq 3 \inf_{\bff \in \cF} \int_{\underline{T}}^{\overline{T}} \bbE \left[ \left\|  \bff(\bX_t,t) - \bff_0(\bX_t,t) \right\|_2^2 \right] \d t
\\
&& \quad + \frac{ D_{5}  \left\{ \log (2n) + \log N \left( n^{-2}, \cF, \| \cdot \|_{L^\infty([-C,C]^{D+1})}   \right)  \right\}  }{n},
\eean
where
\bean
D_{5} = D_{4}  (F^2 \vee 1)  \left( \sqrt{\overline{T}} + \sqrt{\log \overline{T} - \log \underline{T}} \right) \left( \sqrt{\log \overline{T} - \log \underline{T}} \right).
\eean
The assertion follows by redefining the constants.
\end{proof}

\subsection{ Proof of Theorem~\ref{secthm:2} }

In this subsection, we provide the proof of Theorem~\ref{secthm:2} with auxiliary lemmas.
For two probability measures $P$ and $Q$ on $\cX \subseteq \bbR^{D}$, the Kullback-Leibler (KL) divergence is defined as
\bean
\text{KL}(P,Q) = 
\begin{cases}
\int_{\cX} \log \frac{\d P}{\d Q} \d P, & \text{if } P \ll Q
\\
\infty, & \text{else}.
\end{cases}
\eean
We often denote $\text{KL}(P,Q)$ as $\text{KL}(p,q)$, where $p$ and $q$ are densities of $P$ and $Q$, respectively.
Hereafter, $C = C({\rm all})$ means that $C$ is a constant depending on $(\beta,d,D,K,\tau_{\rm min},\tau_{\rm max},\tau_{1},\tau_{2},\overline{\tau},\underline{\tau})$.

\medskip

\textit{Proof of Theorem~\ref{secthm:2}.}

Let $\widetilde \tau_{\rm min} = \tau_{\rm min} (2\beta + d) / d$ and $\widetilde \tau_{\rm max} = \tau_{\rm max}  (2\beta + d) / d$.
Let $C_{3}, C_{4} $ be the constants in Theorem~\ref{secthm:1} depending on $(\beta,d,D,K,\widetilde \tau_{\rm min},\widetilde \tau_{\rm max}, \tau_{1}, \tau_{2}, \overline{\tau}, \underline{\tau})$.
Then, by replacing $m$ in Theorem~\ref{secthm:1} with $n^{d / (2\beta+d)}$, $\underline{T} $ with $m^{- \widetilde \tau_{\rm min}}$, and $\overline{T}$ with $\widetilde \tau_{\rm max} \log m$, there exists a class of permutation matrices $\cP_{\bfm} $ and a class of weight-sharing neural networks $ \cF_{\rm WSNN} =  \cF_{\rm WSNN} (L,\bd,s,M,\cP_{\bfm})$ with
\bean
&& L \leq D_{1} ( \log n )^{6} \log \log n , \quad \| \bd\|_{\infty} \leq D_{1}  n^{\frac{d (D+1) }{2\beta + d}},
\\
&& s \leq D_{1}  n^{\frac{d}{2\beta + d}}  (\log n)^{5} \log \log n, \quad M \leq \exp(D_{1}  \{ \log n \}^6 ),
\\
&& \| \bfm \|_{\infty} \leq D_{1}  n^{\frac{dD}{2\beta + d}}
\eean
satisfying
\be
\inf_{  \bff \in \cF_{\rm WSNN} \cap \cF_{\infty} } \int_{\underline{T}}^{\overline{T}} \int_{\bbR^{D}} \left\| \bff(\bx,t) - \nabla \log p_{t}(\bx)  \right\|_2^2 p_t(\bx) \ \d \bx \d t
\leq D_{1}  n^{-\frac{2\beta}{2\beta + d}} (\log n )^{ 4D+ 4\beta +1 }, \label{eqthm2:approx}
\ee
for every $n \geq C_{4}^{(2\beta + d) / d}$, where $\underline{T} = n^{-\tau_{\rm min}}, \overline{T} = \tau_{\rm max} \log n, D_{1} = D_{1}(\beta, d, D, C_{3})$ and
\bean
\cF_{\infty}
=
\left\{ \| \bff(\cdot, t) \|_{L^\infty(\bbR^{D})} \leq D_{1}  \sigma_{t}^{-1} \sqrt{ \log n} \quad \forall t \in [\underline{T}, \overline{T}]  \right\}.
\eean
Let $\widehat \bff$ be an empirical risk minimizer over the class $\cF_{\rm WSNN} \cap \cF_{\infty}$, defined as in (\ref{eqthm2:erm}).
By the Triangle inequality, we have
\be
\bbE \left[ d_{\rm TV}(P_{0}, \widehat P_{\underline{T}} ) \right]
\leq d_{\rm TV}(P_{0}, P_{\underline{T}})  
+ \bbE \left[ d_{\rm TV}( P_{\underline{T}},  \widehat P_{\underline{T}}) \right]. \label{eqthm2:pftriangle}
\ee
We proceed to control each term on the RHS separately.

Let $\widetilde C_{12} = \widetilde C_{12}(\beta, D, K, \overline{\tau}, \underline{\tau})$ and $\widetilde C_{13}(\overline{\tau}, \underline{\tau})$ be the constants in Lemma~\ref{secsc:p0pt}. 
Let $ D_{2} = D_{2} ( \beta, d, \widetilde C_{13}, C_{4} )  $ be a positive constant such that $ D_{2} \geq C_{4}^{(2\beta + d) / d} $ and $ \underline{T} = n^{-\tau_{\rm min}} \leq \widetilde C_{13} $ for every $n \geq D_{2}$.
Then, Lemma~\ref{secsc:p0pt} implies that
\bean
&& d_{\rm TV}(P_{0}, P_{\underline{T}}) = \frac{1}{2} \int_{\bbR^{D}} \left\vert p_0(\bx) - p_{\underline{T}}(\bx) \right\vert \d \bx
\\
&& \leq 2^{-1} \widetilde  C_{12} \left\{ \underline{T} \log (1/ \underline{T}) \right\}^{\frac{ \beta \wedge 1 }{2}} 
= 2^{-1} \widetilde C_{12} \tau_{\rm min}^{(\beta \wedge 1)/2}   n^{- \frac{ \tau_{\rm min} (\beta \wedge 1) }{2}} ( \log n)^{\frac{\beta \wedge 1}{2}},
\eean
for every $n \geq D_{2}$.
Since $\tau_{\rm min} \geq \frac{2\beta}{(2\beta + d)(\beta \wedge 1)}$, the last display is bounded by
\be
2^{-1} \widetilde C_{12} \tau_{\rm min}^{(\beta \wedge 1)/2}   n^{-\frac{\beta }{2\beta + d}} (\log n)^{\frac{\beta \wedge 1}{2}}. \label{eqthm2:firbd}
\ee


Consider functions $q_1, q_2 : \bbR^{D} \times [0,\overline{T}-\underline{T}] \rightarrow \bbR$ such that $q_1(\bx,t) = p_{\overline{T}-t}(\bx)$ and $q_2(\bx,t) = \widehat p_{\overline{T}-t} (\bx)$.
Then, each $q_1$ and $q_2$ satisfy the corresponding well-known Fokker-Planck equation \citep{lebris2008existence, bogachev2022fokker, pavliotis2014stochastic} :
\bean
&& \frac{\partial }{\partial t} q_1(\bx,t) = - \sum_{i=1}^{D} \frac{\partial}{\partial x_i} \left[ \bb_1 (\bx,t) q_1(\bx,t) \right] + \sum_{i=1}^{D} \sum_{j=1}^{D} \frac{\partial^2}{\partial x_i \partial x_j} \left[ a_1(t) \delta_{ij} q_1(\bx,t) \right]
\\
&& \frac{\partial }{\partial t} q_2(\bx,t) = - \sum_{i=1}^{D} \frac{\partial}{\partial x_i} \left[ \bb_2 (\bx,t) q_2(\bx,t) \right] + \sum_{i=1}^{D} \sum_{j=1}^{D} \frac{\partial^2}{\partial x_i \partial x_j} \left[ a_2(t) \delta_{ij} q_2(\bx,t) \right],
\eean
where $\delta_{ij}$ denotes the Kronecker delta and
\bean
&& \bb_1 (\bx,t) = \alpha_{\overline{T}-t} \bx + 2 \alpha_{\overline{T}-t} \nabla \log p_{\overline{T}-t} (\bx), \quad a_1(t) = \alpha_{\overline{T}-t},
\\
&& \bb_2 (\bx,t) = \alpha_{\overline{T}-t} \bx + 2 \alpha_{\overline{T}-t} \widehat \bff( \bx, \overline{T}-t), \quad a_2(t) =  \alpha_{\overline{T}-t}.
\eean
Following the Remark 2.3 of \citet{bogachev2016distances}, we have
\be \begin{split}
& d_{\rm TV} (P_{\underline{T}},  \widehat P_{\underline{T}}) = d_{\rm TV} \left( q_{1}(\cdot, \overline{T} - \underline{T}),  q_{2}(\cdot, \overline{T} - \underline{T})  \right) 
\\
& \leq d_{\rm TV} \left( q_{1}(\cdot, 0),  q_{2}(\cdot, 0 )  \right)  + 
\left( \int_{\underline{T}}^{\overline{T}} \int_{\bbR^{D}} 4 \alpha_{t} \left\| \widehat \bff (\bx,t) - \nabla \log p_t( \bx) \right\|_2^2 p_t(\bx) \d \bx \d t \right)^{1/2}.
\label{eqthm2:secineq}
\end{split} \ee

Recall that $\widehat p_{\overline{T}} = \phi_1$. By Pinsker's inequality, we have
\be
d_{\rm TV} \left( q_{1}(\cdot, 0),  q_{2}(\cdot, 0)  \right) 
= d_{\rm TV} \left( p_{\overline{T}}, \phi_1 \right) 
\leq \sqrt{  \text{KL} \left( p_{\overline{T}}, \phi_{1} \right) / 2 }.
\label{eqthm2:thiineq}
\ee
Since KL divergence is convex in its first argument, Jensen's inequality implies that
\bean
 \text{KL} \left( p_{\overline{T}}, \phi_{1} \right) \leq  \int_{\bbR^{D}}  \text{KL} \left( \cN(\mu_{\overline{T}} \by, \sigma_{\overline{T}} \bbI_{D}) , \cN(\mathbf{0}_{D}, \bbI_{D}) \right) \d P_0(\by).
\eean
because $p_{\overline{T}}(\cdot) = \int_{\bbR^{D}} \phi_{\sigma_{\overline{T}}}(\cdot - \mu_{\overline{T}} \by) \d P_{0}(\by)$.
KL divergence between two $D$-dimensional Gaussian random variables is known as
\bean
&& \text{KL} \left( \cN(\mu_1, \Sigma_1)   , \cN(\mu_2, \Sigma_2) \right)
 \\
&& = \frac{1}{2} \left[ \log \left( \frac{ \det(\Sigma_{2}) }{\det(\Sigma_{1})} \right) - D + (\mu_1 - \mu_2)^{\top} \Sigma_{2}^{-1}  (\mu_1-\mu_2) + {\rm tr}\left( \Sigma_2^{-1} \Sigma_1 \right) \right].
\eean
Using that, we have
\bean
&&  \text{KL} \left( p_{\overline{T}}, \phi_{1} \right)
\leq \int_{\bbR^{D}} \frac{1}{2} \left( \mu_{\overline{T}}^2 \| \by \|_2^2 + D \sigma_{\overline{T}} - D - D \log \sigma_{\overline{T}}  \right) \d P_{0}(\by)
\\
&& = \frac{1}{2} \left( \mu_{\overline{T}}^2 \bbE_{P_0} \left[ \| \bX_{0} \|_2^2 \right]   + D \sigma_{\overline{T}} - D - D \log \sigma_{\overline{T}}  \right)
\\
&& \leq \frac{D}{2} \left\{ \mu_{\overline{T}}^2 + \left\vert \sqrt{1 - \mu_{\overline{T}}^2} -1 \right\vert + \left\vert \log \left(1 - \mu_{\overline{T}}^2 \right) / 2 \right\vert  \right\},
\eean
where the last inequality holds because $\bbE [ \| \bX_{0} \|_2^2 ] \leq D$ and $\mu_{\overline{T}}^2 + \sigma_{\overline{T}}^2 = 1$.
Since $\mu_{\overline{T}} \leq \exp(-\underline{\tau} \overline{T})$ and $ \vert \log (1-x) \vert \leq x/(1-x)$ for $ 0 < x < 1$, the last display is bounded by
\bean
\frac{D}{2} \left\{ \mu_{\overline{T}}^2 + \frac{\mu_{\overline{T}}^2}{\sqrt{1-\mu_{\overline{T}}^2} + 1} + \frac{ \mu_{\overline{T}}^2}{2 ( 1 - \mu_{\overline{T}}^2  ) }    \right\}.
\eean
Note also that $\mu_{\overline{T}}^2 \geq \exp ( - 2 \overline{\tau} \overline{T}  ) =  n^{- 2 \overline{\tau} \tau_{\rm max} }$ and $ \mu_{\overline{T}}^2 \leq \exp ( - 2 \underline{\tau} \overline{T}  ) = n^{ - 2 \underline{\tau} \tau_{\rm max}} $ by the definition. 
Let $D_{3} = D_{3} ( \tau_{\rm max}, \overline{\tau}, D_{2} ) > 0$ be a constant such that $D_{3} \geq D_{2} $ and $n^{- 2 \overline{\tau} \tau_{\rm max} } \leq 1/2 $ for every $n \geq D_{3}$.
Then, the last display is further bounded by
\bean
\frac{5 D \mu_{\overline{T}}^2 }{4}
\leq \left( \frac{5D}{4} \right)  n^{ - 2 \underline{\tau} \tau_{\rm max}}
\leq \left( \frac{5 D}{4} \right) n^{-\frac{2\beta}{2\beta + d}}, 
\eean
where the last inequality holds because $\tau_{\rm max} \geq \frac{\beta}{\underline{\tau} (2\beta + d)}$.
Combining with (\ref{eqthm2:thiineq}), we have
\be
d_{\rm TV} \left( q_{1}(\cdot, 0),  q_{2}(\cdot, 0)  \right) 
= d_{\rm TV} \left( p_{\overline{T}}, \phi_1 \right)   \leq \sqrt{5D/8} n^{-\frac{\beta}{2\beta + d}},
\quad \forall n \geq D_{3}.
\label{eqthm2:thibd}
\ee


Lemma~\ref{sec:covering} implies that there exists a constant $D_{4} = D_{4} (\beta, d, D, D_{1})$ such that the metric entropy bound for $\cF$ follows by
\be
\log N \left(n^{-2}, \cF, \| \cdot \|_{L^{\infty}([-D_{5},D_{5}]^{D+1})} \right) \leq D_{4} n^{\frac{d}{2\beta +d}} (\log n)^{17} (  \log \log n )^2, \label{eqthm2:covering}
\ee
where $D_{5} = (1+2\sqrt{2 \log n}) \vee (\tau_{\rm max} \log n)$.
Let $\widetilde C_{14} = \widetilde C_{14} ( D, K, \tau_{1}, \overline{\tau}, \underline{\tau})$ be the constant in Proposition~\ref{sec:oracle}.
Then, by replacing $F$ in Proposition~\ref{sec:oracle} with $D_{1} \sqrt{ \log n}$, it follows that
\bean
&& \int_{\underline{T}}^{\overline{T}} \bbE \left[  \left\| \widehat \bff (\bX_t,t) - \nabla \log p_t(X_t) \right\|_2^2 \right] \d t
\\
&& \leq 3 \inf_{\bff \in \cF \cap \cF_{\infty}} \int_{\underline{T}}^{\overline{T}} \bbE \left[  \left\| \bff (\bX_t,t) - \nabla \log p_t( \bX_t) \right\|_2^2 \right] \d t
\\
&& \quad + \frac{D_{6} (\log n)^2}{n} \left\{\log N \left(n^{-2}, \cF \cap \cF_{\infty}, \| \cdot \|_{L^{\infty}([-D_{5},D_{5}]^{D+1})} \right) + \log(2n) \right\},
\eean
where $D_{6} = D_{6}( \tau_{\rm max}, \tau_{\rm min}, D_{1}, \widetilde C_{14} )$.
Combining (\ref{eqthm2:approx}) and (\ref{eqthm2:covering}) with the last display, Jensen's inequality, we have
\bean
 && \bbE \left[ \left\{ \int_{\underline{T}}^{\overline{T}}   \left\| \widehat \bff (\bX_t,t) - \nabla \log p_t( \bX_t) \right\|_2^2 \d t \right\}^{1/2}  \right]
 \\
 && \leq \left\{ \int_{\underline{T}}^{\overline{T}} \bbE \left[    \left\| \widehat \bff (\bX_t,t) - \nabla \log p_t( \bX_t) \right\|_2^2    \right] \d t \right\}^{1/2} 
 \\
 && \leq
D_{7} n^{-\frac{\beta}{2\beta + d}} \left\{ (\log n )^{ 2D+ 2\beta +1 } + (\log n)^{10}   \right\},
\quad \forall n \geq D_{3},
\eean
where the first inequality holds by Jensen's inequality and $D_{7} = D_{7} (D_{1}, D_{4}, D_{6} )$.
Since $\alpha_{t} \leq \overline{\tau} $ for all $t \in [\underline{T}, \overline{T}]$, combining (\ref{eqthm2:secineq}) and (\ref{eqthm2:thibd}) with the last display implies that
\bean
\bbE \left[ d_{\rm TV}  \left (  P_{ \underline{T}}, \widehat P_{\underline{T}} \right) \right]
\leq D_{8} n^{-\frac{\beta}{2\beta + d}} \left\{ (\log n )^{ 2D+ 2\beta +1 } +  (\log n)^{10} \right\},
\quad \forall n \geq D_{3}
\eean
where $D_{8} = D_{8} ( D, \overline{\tau}, D_{6}) $
Combining (\ref{eqthm2:pftriangle}) and (\ref{eqthm2:firbd}) with the last display, we have
\bean
\bbE \left[ d_{\rm TV} \left( P_{0},  \widehat P_{\underline{T}} \right) \right]
\leq D_{9} n^{-\frac{\beta}{2\beta+d}} \left\{ (\log n )^{ 2D+ 2\beta +1 } +  (\log n)^{10} \right\},
\quad \forall n \geq D_{3},
\eean
where $D_{9} = D_{9}( \beta, d, \widetilde C_{12}, D_{7})$.
The assertion follows by re-defining the constants. \hfill\BlackBox\\[2mm]

\end{document}